\numberwithin{equation}{section}
\theoremstyle{plain}
\newtheorem{thm}{Theorem}[section]
\newtheorem{cor}[thm]{Corollary}
\newtheorem{lem}[thm]{Lemma}
\newtheorem{prop}[thm]{Proposition}
\theoremstyle{definition}
\newtheorem{defn}[thm]{Definition}
\newtheorem{rem}[thm]{Remark}
\numberwithin{equation}{section}
\def\a{\alpha}
\def\Re{\operatorname{Re}}
\def\beq{\begin{eqnarray}}
\def\eeq{\end{eqnarray}}
\def\beqa{\begin{eqnarray*}}
\def\eeqa{\end{eqnarray*}}
\def\beqn{\begin{equation}}
\def\eeqn{\end{equation}}
\def\mg#1{}
\renewcommand{\epsilon}{\varepsilon}
\renewcommand{\phi}{\varphi}
\def\red{\color{red}}
\renewcommand{\bf}[1]{\textbf{#1}}
\renewcommand{\it}[1]{\textit{#1}}
\renewcommand{\sf}[1]{\textsf{#1}}
\renewcommand{\Re}[1]{\sf{Re}(#1)}
\numberwithin{equation}{section}
\setlist[enumerate]{font=\upshape,noitemsep, topsep=0pt} 
\setlist[itemize]{noitemsep, topsep=0pt}
\begin{document}
	
	\title{Operators on Hilbert Space having $\Gamma_{E(3; 3; 1, 1, 1)}$ and $\Gamma_{E(3; 2; 1, 2)}$ as Spectral Sets}
	\author{Dinesh Kumar Keshari ~ \hspace{0.1cm} Avijit Pal ~ \hspace{0.1cm} Bhaskar Paul}
	\address[ D. K. Keshari]{School of Mathematical Sciences, National Institute of Science Education and Research Bhubaneswar, An OCC of Homi Bhabha National Institute, Jatni, Khurda,  Odisha-752050, India}
\email{dinesh@niser.ac.in}

\address[A. Pal]{Department of Mathematics, IIT Bhilai, 6th Lane Road, Jevra, Chhattisgarh 491002}
\email{A. Pal:avijit@iitbhilai.ac.in}

\address[B. Paul]{Department of Mathematics, IIT Bhilai, 6th Lane Road, Jevra, Chhattisgarh 491002}
\email{B. Paul:bhaskarpaul@iitbhilai.ac.in }

\subjclass[2010]{47A20, 47A25, 47A45.}

\keywords{$\Gamma_{E(3; 3; 1, 1, 1)} $-contraction, $\Gamma_{E(3; 2; 1, 2)} $-contraction, Spectral set, Complete spectral set, $\Gamma_{E(3; 3; 1, 1, 1)}$-unitary,  $\Gamma_{E(3; 2; 1, 2)}$-unitary, Wold Decomposition, $\Gamma_{E(3; 3; 1, 1, 1)}$-isometry, $\Gamma_{E(3; 2; 1, 2)}$-isometry}
	
	\maketitle
	
	\begin{abstract}

  A $7$-tuple of commuting bounded operators $\textbf{T} = (T_1, \dots, T_7)$ on a Hilbert space $\mathcal{H}$ is called a \textit{$\Gamma_{E(3; 3; 1, 1, 1)} $-contraction} if $\Gamma_{E(3; 3; 1, 1, 1)}$ is a spectral set for $\textbf{T}. $ Let $(S_1, S_2, S_3)$ and $(\tilde{S}_1, \tilde{S}_2)$ be tuples of commuting bounded operators defined on a Hilbert space $\mathcal{H}$ with $S_i\tilde{S}_j = \tilde{S}_jS_i$ for $1 \leqslant i \leqslant 3$ and $1 \leqslant j \leqslant 2$. We say that $\textbf{S} = (S_1, S_2, S_3, \tilde{S}_1, \tilde{S}_2)$ is a $\Gamma_{E(3; 2; 1, 2)} $-contraction if $ \Gamma_{E(3; 2; 1, 2)}$ is a spectral set for $\textbf{S}$.  We derive various properties of $\Gamma_{E(3; 3; 1, 1, 1)}$-contractions and $\Gamma_{E(3; 2; 1, 2)}$-contractions and establish a relationship between them. We discuss the fundamental equations for $\Gamma_{E(3; 3; 1, 1,1 )}$-contractions and $\Gamma_{E(3; 2; 1, 2)}$-contractions. We explore the structure of $\Gamma_{E(3; 3; 1, 1, 1)}$-unitaries and $\Gamma_{E(3; 2; 1, 2)}$-unitaries and elaborate on the relationship between them. We also study various properties of $\Gamma_{E(3; 3; 1, 1, 1)}$-isometries and $\Gamma_{E(3; 2; 1, 2)}$-isometries. We discuss the Wold Decomposition for a $\Gamma_{E(3; 3; 1, 1, 1)}$-isometry and a $\Gamma_{E(3; 2; 1, 2)}$-isometry. We further outline the structure theorem for a pure $\Gamma_{E(3; 3; 1, 1, 1)}$-isometry and a pure $\Gamma_{E(3; 2; 1, 2)}$-isometry.
  

\end{abstract}
	\section{Introduction}
	
Let $\mathbb C[z_1,\dots,z_n]$ denotes the polynomial ring in $n$ variables over the field of complex numbers. Let $\Omega$ be a compact subset of $\mathbb C^m,$ and let $\mathcal{O}(\Omega)$ denotes the algebra of holomorphic functions on an open set containing $\Omega.$ Let  $\mathbf{T}=(T_1,\ldots,T_m)$ be a commuting $m$-tuple of bounded operators defined on a  Hilbert space $\mathcal H$ and $\sigma(\mathbf T)$ denotes the joint spectrum of $\mathbf {T}.$  Consider the map $\rho_{\mathbf T}:\mathcal{O}(\Omega)\rightarrow\mathcal B(\mathcal H)$  defined by $$1\to I~{\rm{and}}~z_i\to T_i ~{\rm{for}}~1\leq i\leq m.$$ Clearly, $\rho_{\mathbf T}$ is a homomorphism. A compact set $\Omega\subset \mathbb C^m$ is a spectral set for a  $m$-tuple of commuting bounded operators $\mathbf{T}=(T_1,\ldots,T_m)$ if $\sigma(\mathbf T)\subseteq \Omega$ and the homomorphism $\rho_{\mathbf T}:\mathcal{O}(\Omega)\rightarrow\mathcal B(\mathcal H)$ is contractive. The following theorem says that the closed unit disc is a spectral set for a contraction defined on a Hilbert space $\mathcal H$ [Corollary 1.2, \cite {paulsen}].
\begin{thm} 
Suppose that $T\in \mathcal B(\mathcal H)$ is a contraction on a Hilbert space $\mathcal H.$ Then
$$\|p(T)\|\leq \|p\|_{\infty, \bar {\mathbb D}}:=\sup\{|p(z)|: |z|\leq1\}$$ for every polynomial $p.$

\end{thm}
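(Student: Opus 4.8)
The plan is to prove this via Sz.-Nagy's unitary dilation theorem, which reduces the inequality for the contraction $T$ to the elementary case of a unitary operator. Concretely, I would first construct a Hilbert space $\mathcal{K} \supseteq \mathcal{H}$ and a unitary $U \in \mathcal{B}(\mathcal{K})$ that power-dilates $T$, meaning $T^n = P_{\mathcal{H}}\, U^n|_{\mathcal{H}}$ for every $n \geq 0$, where $P_{\mathcal{H}}$ is the orthogonal projection of $\mathcal{K}$ onto $\mathcal{H}$. Granting such a dilation, the result follows quickly: by linearity $p(T) = P_{\mathcal{H}}\, p(U)|_{\mathcal{H}}$ for any polynomial $p$, so that $\|p(T)\| \leq \|p(U)\|$ since compressing an operator to a subspace does not increase its norm.

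The main work is the construction of $U$, and this is where I expect the difficulty to lie. I would use the defect operators $D_T = (I - T^*T)^{1/2}$ and $D_{T^*} = (I - TT^*)^{1/2}$, which are well defined and positive precisely because $T$ is a contraction. Following Schäffer's construction, I would first build an isometric dilation $V$ on $\mathcal{H} \oplus \mathcal{H} \oplus \cdots$ by setting $V(h_0, h_1, h_2, \dots) = (Th_0,\, D_T h_0,\, h_1,\, h_2, \dots)$, that is, sending the first copy of $\mathcal{H}$ via $T$ into itself and via $D_T$ into the second copy while shifting the remaining copies one place. A direct computation using the identity $T^*T + D_T^2 = I$ shows that $V$ is an isometry, and an easy induction gives $P_{\mathcal{H}}\, V^n|_{\mathcal{H}} = T^n$. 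Since every isometry extends to a unitary on a possibly larger Hilbert space, this yields the desired unitary power dilation $U$.

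With $U$ in hand, the conclusion is immediate from the spectral theorem and the maximum modulus principle. Because $U$ is unitary, its spectrum $\sigma(U)$ is contained in the unit circle $\mathbb{T}$, so the spectral theorem gives $\|p(U)\| = \sup_{z \in \sigma(U)} |p(z)| \leq \sup_{|z| = 1} |p(z)|$. The maximum modulus principle then yields $\sup_{|z| = 1} |p(z)| = \sup_{|z| \leq 1} |p(z)| = \|p\|_{\infty, \bar{\mathbb{D}}}$, and combining these inequalities produces $\|p(T)\| \leq \|p(U)\| \leq \|p\|_{\infty, \bar{\mathbb{D}}}$, as required. An alternative route would bypass dilation entirely and argue directly, for instance through a Poisson-type representation recovering $p(T)$ from the boundary values of $p$ on $\mathbb{T}$, but the dilation argument is both cleaner and more in keeping with the operator-theoretic methods developed throughout this paper.
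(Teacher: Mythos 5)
Your proof is correct and follows essentially the same route the paper indicates: the paper cites the result from Paulsen and remarks that "the spectral theorem for a unitary operator and power dilation for a contraction guarantee the von Neumann inequality," with the dilation obtained via Schäffer's construction, exactly as you do. Your reduction $\|p(T)\| \leq \|p(U)\|$ via the compression identity $p(T) = P_{\mathcal{H}}\, p(U)|_{\mathcal{H}}$, followed by the spectral theorem and maximum modulus principle, is precisely the intended argument.
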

  The following theorem is a refined version of the Sz.-Nagy dilation theorem [Theorem 1.1,  \cite {paulsen}].
\begin{thm} Let $T\in \mathcal B(\mathcal H)$ be a contraction on a Hilbert space $\mathcal H.$ Then there exists a Hilbert space $\mathcal K$ containing $\mathcal H$ as a subspace and a unitary operator $U$ acting on a Hilbert space $\mathcal K \supseteq \mathcal H$ with the property that $\mathcal K$ is the smallest closed reducing subspace for $U$ containing $\mathcal H$
such that
$$P_\mathcal H\,U^n_{|\mathcal H}=T^n,$$ for all non-negative integer $n.$
\end{thm}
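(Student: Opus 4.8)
The plan is to prove this by the classical two-stage dilation scheme: first build a minimal isometric dilation of $T$ from its defect operator, then extend that isometry to a unitary, and finally cut down to the smallest reducing subspace to obtain minimality. The hypothesis that $T$ is a contraction enters exactly once, but decisively: since $\|T\|\le 1$ we have $I-T^*T\ge 0$, so the \emph{defect operator} $D_T=(I-T^*T)^{1/2}$ is a well-defined positive operator, and we set $\mathcal{D}_T=\overline{\operatorname{Ran}\,D_T}$.

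First I would construct the isometric dilation. Put $\mathcal{K}_+=\mathcal{H}\oplus\ell^2(\mathbb{N},\mathcal{D}_T)$ and define $V\in\mathcal{B}(\mathcal{K}_+)$ by $V(h,d_0,d_1,\dots)=(Th,\,D_Th,\,d_0,\,d_1,\dots)$. A one-line computation using the key identity $T^*T+D_T^2=I$ shows $\|V(h,d_0,d_1,\dots)\|^2=\|h\|^2+\sum_{j}\|d_j\|^2$, so $V$ is an isometry. An easy induction gives $V^n h=(T^nh,\,\dots)$ for $h\in\mathcal{H}$, the omitted entries lying in the $\ell^2$-tail; consequently $P_{\mathcal{H}}V^n|_{\mathcal{H}}=T^n$ for every $n\ge 0$, and $\mathcal{K}_+$ is $V$-invariant.

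Next I would extend $V$ to a unitary. Writing $\mathcal{N}=\mathcal{K}_+\ominus V\mathcal{K}_+$ for the cokernel of $V$, set $\widetilde{\mathcal{K}}=\mathcal{K}_+\oplus\ell^2(\mathbb{N},\mathcal{N})$ and define $U(k,x_1,x_2,\dots)=(Vk+x_1,\,x_2,\,x_3,\dots)$. Because $Vk\perp x_1$ in $\mathcal{K}_+$, the map $U$ is isometric, and solving $Vk+x_1=k'$ via the orthogonal decomposition of $k'$ along $V\mathcal{K}_+\oplus\mathcal{N}$ shows $U$ is onto; hence $U$ is unitary and restricts to $V$ on $\mathcal{K}_+$. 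Since $U(k,0,0,\dots)=(Vk,0,0,\dots)$, the subspace $\mathcal{K}_+$ is $U$-invariant, so for $h\in\mathcal{H}$ and $n\ge 0$ we get $U^nh=V^nh\in\mathcal{K}_+$ and therefore $P_{\mathcal{H}}U^n|_{\mathcal{H}}=P_{\mathcal{H}}V^n|_{\mathcal{H}}=T^n$.

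Finally, to obtain the minimal dilation asserted in the statement, I would pass to $\mathcal{K}=\overline{\operatorname{span}}\{U^nh:h\in\mathcal{H},\ n\in\mathbb{Z}\}$. Taking the span over all $n\in\mathbb{Z}$ makes $\mathcal{K}$ invariant under both $U$ and $U^{-1}=U^*$, so $\mathcal{K}$ reduces $U$ and is by construction the smallest closed reducing subspace containing $\mathcal{H}$; restricting $U$ to $\mathcal{K}$ yields a unitary with the same compression property. The only place real care is needed is the verification that $V$ is an isometry, which is precisely where positivity of $I-T^*T$ is used; the extension-to-a-unitary step and the passage to the minimal reducing subspace are then routine Hilbert-space bookkeeping, with the main thing to check being that the invariance of $\mathcal{K}_+$ under $U$ keeps the identity $P_{\mathcal{H}}U^n|_{\mathcal{H}}=T^n$ intact for all $n\ge 0$.
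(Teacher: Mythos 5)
Your construction is correct: the defect-operator computation shows $V$ is an isometry on $\mathcal{H}\oplus\ell^2(\mathbb{N},\mathcal{D}_T)$ with $P_{\mathcal{H}}V^n|_{\mathcal{H}}=T^n$, the extension $U(k,x_1,x_2,\dots)=(Vk+x_1,x_2,\dots)$ is a genuine unitary extension of $V$ (isometric because $Vk\perp x_1$, surjective via the decomposition $\mathcal{K}_+=V\mathcal{K}_+\oplus\mathcal{N}$), and cutting down to $\overline{\operatorname{span}}\{U^nh:h\in\mathcal{H},\,n\in\mathbb{Z}\}$ yields exactly the smallest closed reducing subspace containing $\mathcal{H}$, on which the compression identity survives. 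Note, however, that the paper does not prove this theorem at all: it is quoted from Paulsen's book, with the remark that the unitary dilation "was constructed by Schaffer," so there is no in-paper argument to compare against. Your route is the classical Sz.-Nagy two-stage scheme (minimal isometric dilation built from $D_T$, then unitary extension of an isometry, then minimality), whereas Schäffer's construction, which the paper alludes to, produces the unitary in one shot as an explicit block operator matrix on a two-sided $\ell^2$-type space built from $\mathcal{H}$, $\mathcal{D}_T$ and $\mathcal{D}_{T^*}$, with the dilation property read off from the matrix entries. The two-stage argument buys structural insight (the isometric dilation and the wandering-subspace picture appear as intermediate objects), while Schäffer's one-step matrix is shorter to verify but more opaque; both are standard and both establish the statement, including the minimality clause once one passes to the reducing subspace generated by $\mathcal{H}$.
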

The existence  of unitary dilation for a  contraction $T$ was constructed by Schaffer. The spectral theorem for a unitary operator and power dilation for a contraction guarantee the von Neumann inequality. Let $\Omega \subset \mathbb C^m$ be a compact and let $F=\left(\!(f_{ij})\!\right)$  be a matrix valued polynomial on  
 $\Omega,$ we refer to $\Omega$ as a complete spectral set (complete $\Omega$-contraction) for $\mathbf T$, if $\|F(\mathbf T) \| \leq \|F\|_{\infty,
 	\Omega}$ for all $F\in \mathcal O(\Omega)\otimes \mathcal
 M_{k\times k}(\mathbb C), k\geq 1$.  We say that the $\Omega$ has the property $P$ if following holds: if $\Omega$ is a spectral set for a commuting $ m$-tuple of operators $\mathbf{T},$ then it is a complete spectral set for $\mathbf{T}$. A commuting $m$-tuples of operators $\mathbf{T}$ with $\Omega$ as a spectral set, have a $\partial \Omega$ normal  dilation if there exists a Hilbert space $\mathcal K$ containing $\mathcal H$ as a subspace and a commuting $m$-tuples of normal operators $\mathbf{N}=(N_1,\ldots,N_m)~{\rm{ on }}~\mathcal K~{\rm{with}}~\sigma(N)\subseteq \partial \Omega$ such that
$$P_{\mathcal H}F(\mathbf N)\mid_{\mathcal H}=F(\mathbf T) ~{\rm{for~ all~}} F\in \mathcal O(\Omega).$$
In 1969, Arveson \cite{A} demonstrated that a commuting $m$-tuple of operators $\mathbf{T}$ having $\Omega$ as a spectral set for $\mathbf{T}$ admits a $\partial \Omega$ normal dilation if and only if it satisfies the property $P.$  J. Agler \cite{agler} proved in 1984 that the annulus possesses the property $P$. M. Dristchell and S. McCullough \cite{michel} showed that the property $P$ is not satisfied for any domain in $\mathbb{C}$ of connectivity $n\geq 2$. In the multi-variable setting, both the symmetrized bi-disc and the bi-disc possess the property $P$, as demonstrated by Agler and Young \cite{young} and Ando \cite{paulsen}, respectively. The first counterexample in the setting of multi-variable context was given by Parrott \cite{paulsen} for $\mathbb D^n$ when $n > 2. $ G. Misra \cite{GM,sastry}, V. Paulsen \cite{vern}, and E. Ricard \cite{pisier} demonstrated that $\Omega$ cannot possess property $P$ if it is a ball in $\mathbb{C}^m$, with respect to some norm $\|\cdot\|_{\Omega}$, for $m \geq 3$. It is demonstrated in \cite{cv} that if $B_1$ and $B_2$ are not simultaneously diagonalizable via unitary, then the set $ \Omega_{\mathbf B}:=   \{(z_1
  ,z_2) :\|z_1 B_1 + z_2 B_2 \|_{\rm op} < 1\}, $ fails to possess the property $P,$ where $\mathbf B=(B_1, B_2)$ in $\mathbb C^2 \otimes \mathcal
  M_2(\mathbb C)$ and $B_1$ and $B_2$ are independent.  

We recall the definition of generalized tetrablock from \cite{Zapalowski}. Let $n \geqslant 2$, $s \leqslant n$, and $r_1, \dots, r_s \geqslant 1$ be such that $\sum_{j=1}^{s} r_j = n$. Let us consider the set $$A(r_1, \dots, r_s) = \{0, \dots, r_1\} \times \dots \times \{0, \dots, r_s\} \setminus \{(0, \dots, 0)\}$$ and introduce an order on this set. For any two elements $\alpha = (\alpha_1, \dots, \alpha_s), \beta = (\beta_1, \dots, \beta_s) \in A(r_1, \dots, r_s)$, we assert that 

\begin{equation*}
\alpha < \beta  ~\text{if and only if }~\alpha_{j_0} < \beta_{j_0}, ~\text{where}~ j_0 = \max\{j : \alpha_j \ne \beta_j\}.
\end{equation*}
For ${\textbf{x}=(x_{1},\dots,x_{N})\in \mathbb{C}^{N}} $ and ${\bold z=(z_{1},\dots,z_{s})\in \mathbb{C}^{s}}, $ we define 
\begin{equation}\label{Rz1}
R^{(n;s;r_1,\ldots,r_s)}_{\textbf{x}}(\bold z):= 1+\sum_{j=1}^{N} (-1) ^{|\alpha^{(j)}|}x_{j}{\textbf z}^{\alpha^{(j)}}.
\end{equation}
Let $$\mathcal A_{(n;s;r_1,\ldots,r_s)}:=\{\textbf{x}=(x_1,\ldots,x_N)\in \mathbb C^N:R^{(n;s;r_1,\ldots,r_s)}_{\textbf{x}} (\bold z)\neq 0~{\rm{for~all}}~\bold z\in {\bar{\mathbb D}}^s\}$$ and 
$$\mathcal  B_{(n;s;r_1,\ldots,r_s)}:=\{\textbf{x}=(x_1,\ldots,x_N)\in \mathbb C^N:R^{(n;s;r_1,\ldots,r_s)}_{\textbf{x}}(\bold z)\neq 0~{\rm{for~all}}~\bold z\in {\mathbb D}^s\}. $$ We call the set $\mathcal  A_{(n;s;r_1,\ldots,r_s)}$ as a generalized tetrablock. 

Let  $E(n;s;r_{1},\dots,r_{s})\subset \mathcal M_{n\times n}(\mathbb{C})$ is the vector subspace comprising block diagonal matrices, defined as follows:
\begin{equation}\label{ls}
    	E=E(n;s;r_{1},...,r_{s}):=\{\operatorname{diag}[z_{1}I_{r_{1}},....,z_{s}I_{r_{s}}]\in \mathcal M_{n\times n}(\mathbb{C}): z_{1},...,z_{s}\in \mathbb{C}\},
\end{equation}
 where $\sum_{i=1}^{s}r_i=n.$ Let $$\Omega_{E(n;s;r_{1},\dots,r_{s})}:=\{A\in \mathcal M_{n\times n}(\mathbb{C}):\mu_{E(n;s;r_{1},\dots,r_{s})}(A)<1\}.$$ Define $$\mathcal I^{j}:=\{(i_1,\ldots,,i_j)\in {\mathbb N}^j:1\leq i_1<\ldots<i_j\leq n, j\leq n\}.$$ For each $\alpha\in A(r_1,\ldots,r_s)$, we define $I^{|\alpha|}_{\alpha}$ as follows:
   \begin{align}
 \mathcal I^{|\alpha|}_{\alpha}:\nonumber&=\{(i_1,\ldots,,i_{|\alpha|})\in \mathcal I^{|\alpha|}:r_0+r_1+\ldots+r_{j-1}+1\leq i_{(\alpha_1+\ldots+\alpha_{j-1}+1)}\\&<\ldots <i_{(\alpha_1+\ldots+\alpha_{j})}\leq r_1+\ldots+r_{j}, j=1,\ldots,s\},\end{align} where $r_0:=0$ and $\alpha_0=0.$ Observe that $\cup_{j=1}^{n}\mathcal I^{j}=\cup_{j=1}^{N}\mathcal I_{\alpha^{(j)}}^{|\alpha^{(j)}|}$ and $\mathcal  I_{\alpha}^{|\alpha|}\cap \mathcal I_{\beta}^{|\beta|}=\emptyset$ for $\alpha \neq \beta.$ For $I\in \mathcal I^{j}$ and $A\in \mathcal M_{n\times n}(\mathbb{C}),$ let $A_{I}$ indicate the $j\times j$ submatrix of $A$ with rows and columns are indexed by $I.$ We define a polynomial map $\pi_{E(n;s;r_{1},\dots,r_{s})}: \mathcal M_{n\times n}(\mathbb{C}) \to \mathbb C^N$ as follows:
   $$\pi_{E(n;s;r_{1},\dots,r_{s})}(A):=\big(\sum_{I\in \mathcal I_{\alpha^{(1)}}^{|\alpha^{(1)}|}} \det A_I,\ldots,\sum_{I\in \mathcal I_{\alpha^{(N)}}^{|\alpha^{(N)}|}} \det A_I\big).$$ Define 
  $$G_{E{(n;s;r_1,\ldots,r_s)}}:=\{\pi_{E(n;s;r_{1},\dots,r_{s})}(A):\mu_{E(n;s;r_{1},\dots,r_{s})}(A)<1\}$$ $${\rm{and }}$$
   $$\Gamma_{E{(n;s;r_1,\ldots,r_s)}}:=\{\pi_{E(n;s;r_{1},\dots,r_{s})}(A):\mu_{E(n;s;r_{1},\dots,r_{s})}(A)\leq 1\}.$$   
It is important to observe that $G_{E{(n;s;r_1,\ldots,r_s)}}$ forms an open set, whereas  $\Gamma_{E{(n;s;r_1,\ldots,r_s)}}$ represents the closure of $G_{E{(n;s;r_1,\ldots,r_s)}}.$  However, if $$E=E(n;1;n)=\{zI_n\in \mathcal M_{n\times n}(\mathbb{C}):z\in \mathbb{C}\},$$ then the domain associated with this $\mu_{E}$-synthesis problem indicates a \textit{symmetrized polydisc} \cite{costara}. For the case where $n=2,$ the corresponding domain is identified as \textit{symmetrized bidisc} \cite{JAgler,ay,ay1,JAY, JANY}. If $$E=E(n;2;1,n-1)=\{\operatorname{diag}(z_1,z_2I_{n-1})\in \mathcal M_{n\times n}(\mathbb{C}):z_1,z_2\in \mathbb{C}\},$$ then the domain associated with this $\mu_{E}$-synthesis problem is defined as $\mu_{1,n}$-\textit{quotients} \cite{Bharali} and the corresponding domains are designated as $\mathbb E_n$ \cite{Bharali}. The domain  $G_{E(2;2;1,1)}$ is represented by the $\mu_{1,2}$-\textit{quotients} and its closure is denoted by $\Gamma_{E(2;2;1,1)}$ which is referred as tetrablock. We use the notation $G_{E(3;2;1,2)}$ in place of $\mathbb E_3$, where $E(3;2;1,2)$ represents the vector subspace of $\mathcal M_{3\times 3}(\mathbb C)$ defined by the following form:
			$$E(3;2;1,2)=\left\{\left(\begin{array}{cc}z_1& 0\\0 &z_2I_2
			\end{array}\right)\in \mathbb{C}^{3\times 3}:z_1,z_2\in \mathbb{C} \right\}. $$ The closure of $G_{E(3;2;1,2)}$ is denoted as $\Gamma_{E(3;2;1,2)}$. We recall the definition of the domain $G_{E(3;2;1,2)}$ from \cite{Bharali}. 
 Let $\tilde{\textbf{x}}=(x_1,x_2,x_3,y_1,y_2)\in \mathbb{C}^{5}$ and for $\tilde{\textbf{z}}=(z_1,z_2)\in \bar{\mathbb{D}}^2,$ consider the polynomial $$R_3(\tilde{\textbf{z}};\tilde{\textbf{x}})=\left(y_2z_2^2-y_1z_2+1\right)-z_1\left(x_3z_2^2-x_2z_2+x_1\right)=Q_3(z_2;y_1,y_2)-z_1P_3(z_2;x_1,x_2,x_3).$$ 
We define the domains $\mathcal  A_{E(3;2;1,2)}$ and $\mathcal  B_{E(3;2;1,2)}$ as follows
			$$\mathcal  A_{E(3;2;1,2)}:=\left \{\tilde{\textbf{x}}\in \mathbb{C}^{5}:R_3(\tilde{\textbf{z}};\tilde{\textbf{x}})\neq0, \,\, \text{for all} \,\, \tilde{\textbf{z}}\in \bar{\mathbb{D}}^2 \right \}$$ $${\rm{and }}$$ $$\mathcal  B_{E(3;2;1,2)}:=\left \{\tilde{\textbf{x}}\in \mathbb{C}^{5}:R_3(\tilde{\textbf{z}};\tilde{\textbf{x}})\neq0, \,\, \text{for all} \,\, \tilde{\textbf{z}}\in \mathbb{D}^2 \right \}.$$ Let us consider the polynomial map $\pi_{{E(3;2;1,2)}}:\mathcal M_{3\times 3}(\mathbb C)\to \mathbb C^5$ \cite{Bharali} which is defined by 
			\begin{equation}\label{pi123}\pi_{{E(3;2;1,2)}}(A):=\left( a_{11},\det \left(\begin{smallmatrix} a_{11} & a_{12}\\
					a_{21} & a_{22}
				\end{smallmatrix}\right)+\det \left(\begin{smallmatrix}
					a_{11} & a_{13}\\
					a_{31} & a_{33}
				\end{smallmatrix}\right),\operatorname{det}A, a_{22}+a_{33}, \det  \left(\begin{smallmatrix}
					a_{22} & a_{23}\\
					a_{32} & a_{33}\end{smallmatrix}\right)\right).
					\end{equation}			
			
Define $$G_{E(3;2;1,2)}:=\{\pi_{{E(3;2;1,2)}}(A):\mu_{E(3;2;1,2)}(A)<1\}.$$
The domain $G_{E(3;2;1,2)}$ is referred to as $\mu_{1,3}-$\textit{quotients} \cite{Bharali}. For $\tilde{\textbf{x}}=(x_1,x_2,x_3,y_1,y_2)\in \mathbb{C}^{5},$ we consider the rational function 
\begin{equation}\label{psi1}\Psi_{3}(z_2,\tilde{\textbf{x}})=\frac{P_3(z_2;x_1,x_2,x_3)}{Q_3(z_2;y_1,y_2)}=\frac{x_3z_2^2-x_2z_2+x_1}{y_2z_2^2-y_1z_2+1}~{\rm{ for ~all}}~z_2\in \mathbb C ~{\rm{with}}~y_2z_2^2-y_1z_2+1\neq0.\end{equation}
The following proposition characterizes $G_{E(3;2;1,2)}$ [Theorem $3.5$,~\cite{Bharali}]. 
	\begin{prop}\label{bhch}
				For $\tilde{\textbf{x}}\in \mathbb{C}^{5}$ the following are equivalent:
				\begin{enumerate}
					\item $\tilde{\textbf{x}}\in G_{E(3;2;1,2)}$.
					\item $\tilde{\textbf{x}} \in \mathcal  A_{E(3;2;1,2)}.$
					\item $y_2z_2^2-y_1z_2+1\neq0$ for all $z_2\in \bar{\mathbb{D}}$ and $\|\Psi_{3}(\cdot,\tilde{\textbf{x}})\|_{H^{\infty}(\bar{\mathbb{D}})}:=\sup_{z_2\in \bar{\mathbb{D}}}|\Psi_{3}(z_2,\tilde{\textbf{x}})|<1$.  Moreover, if  $x_3=x_1y_2,x_2=x_1y_1$ then $|x_1|<1$.
					\item For every  $z_1\in \bar{\mathbb{D}}$, we have
					$\left(\frac{y_1-z_1x_2}{1-z_1x_1},\frac{y_2-z_1x_3}{1-z_1x_1}\right)\in G_{E(2;1;2)}.$ 
				\end{enumerate}
							\end{prop}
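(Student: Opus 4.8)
The entire statement is governed by one determinant identity tying $\pi_{E(3;2;1,2)}$ to the polynomial $R_3$. I would first record the expansion $\det(I_3 - AD) = \sum_{S\subseteq\{1,2,3\}} (-1)^{|S|}(\det A_S)\prod_{i\in S} d_i$ for a diagonal $D = \operatorname{diag}(d_1,d_2,d_3)$, specialize to $D = \operatorname{diag}(z_1,z_2,z_2)$, and collect powers of $z_1,z_2$. Comparing the resulting coefficients with the definition \eqref{pi123} produces
\[
\det\bigl(I_3 - A\operatorname{diag}(z_1,z_2,z_2)\bigr) = R_3\bigl(z_1,z_2;\pi_{E(3;2;1,2)}(A)\bigr).
\]
Because an element of $E(3;2;1,2)$ is $\operatorname{diag}(z_1, z_2 I_2)$, with operator norm $\max(|z_1|,|z_2|)$, the definition of $\mu_{E(3;2;1,2)}$ shows that $\mu_{E(3;2;1,2)}(A)<1$ is equivalent to $R_3(\cdot\,;\pi_{E(3;2;1,2)}(A))$ being zero-free on $\bar{\mathbb{D}}^2$. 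This gives $(1)\Rightarrow(2)$ at once.

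For the converse $(2)\Rightarrow(1)$ it suffices to exhibit a single $A$ with $\pi_{E(3;2;1,2)}(A)=\tilde{\textbf{x}}$, since then $R_3(\cdot\,;\pi_{E(3;2;1,2)}(A))=R_3(\cdot\,;\tilde{\textbf{x}})$ is zero-free on $\bar{\mathbb{D}}^2$ by the hypothesis $\tilde{\textbf{x}}\in\mathcal{A}_{E(3;2;1,2)}$, forcing $\mu_{E(3;2;1,2)}(A)<1$. I would prove $\pi_{E(3;2;1,2)}$ surjective onto $\mathbb{C}^5$ by an explicit construction: take the lower $2\times 2$ block to be the companion matrix with trace $y_1$ and determinant $y_2$, set $a_{11}=x_1$, and then solve the two scalar equations coming from $\det A_{\{1,2\}}+\det A_{\{1,3\}}=x_2$ and $\det A = x_3$ for the remaining first-row and first-column entries; these decouple after one variable is fixed, and a brief case split handles the degenerate values.

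The equivalences $(2)\Leftrightarrow(3)$ and $(2)\Leftrightarrow(4)$ are elementary manipulations of $R_3$. For $(2)\Leftrightarrow(3)$ I would freeze $z_2$ and observe that $R_3 = Q_3(z_2)-z_1P_3(z_2)$ is affine in $z_1$, hence zero-free on $\{|z_1|\le1\}$ precisely when $|Q_3(z_2)|>|P_3(z_2)|$ (the cases $P_3(z_2)=0$ and $P_3(z_2)\ne 0$ both reduce to this inequality). Letting $z_2$ run over $\bar{\mathbb{D}}$ turns this into $Q_3\ne0$ on $\bar{\mathbb{D}}$ together with $\sup_{\bar{\mathbb{D}}}|\Psi_3|<1$, the strict bound on the supremum coming from compactness; the ``moreover'' clause is exactly the proportional case $P_3 = x_1 Q_3$ (i.e. $x_2=x_1y_1$, $x_3=x_1y_2$), where $\Psi_3\equiv x_1$ and the norm condition collapses to $|x_1|<1$. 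For $(2)\Leftrightarrow(4)$ I would use the factorization
\[
R_3(z_1,z_2;\tilde{\textbf{x}}) = (1-z_1x_1)\Bigl(1 - \tfrac{y_1-z_1x_2}{1-z_1x_1}\,z_2 + \tfrac{y_2-z_1x_3}{1-z_1x_1}\,z_2^2\Bigr),
\]
where $1-z_1x_1\ne0$ is automatic from $R_3(z_1,0)\ne0$. Since $G_{E(2;1;2)}$ is the symmetrized bidisc, the same $\mu_E$--determinant computation for $n=2$ identifies its defining condition as $1-sw+pw^2\ne0$ for all $w\in\bar{\mathbb{D}}$; thus the second factor is zero-free in $z_2\in\bar{\mathbb{D}}$ iff $\bigl(\tfrac{y_1-z_1x_2}{1-z_1x_1},\tfrac{y_2-z_1x_3}{1-z_1x_1}\bigr)\in G_{E(2;1;2)}$, and letting $z_1$ range over $\bar{\mathbb{D}}$ recovers $(2)$.

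The step I expect to require the most care is the case analysis underlying $(2)\Leftrightarrow(3)$: one must treat uniformly the situations where $P_3$ or $Q_3$ lose degree ($x_3=0$ or $y_2=0$) and the proportional case, so that the single inequality $|Q_3|>|P_3|$ on $\bar{\mathbb{D}}$ faithfully records zero-freeness of $R_3$ and lines up with the $H^\infty$ formulation. By contrast the surjectivity in $(2)\Rightarrow(1)$, though it is the ``existence'' core of the proposition, is routine here because the defining equations decouple once the $\{2,3\}$ block is chosen.
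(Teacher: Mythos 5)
There is nothing in the paper to compare your argument against: Proposition \ref{bhch} is imported verbatim from [Theorem $3.5$, \cite{Bharali}], and the paper never reproduces its proof. Judged on its own terms, your proposal is correct and self-contained. The identity $\det\bigl(I_3-A\operatorname{diag}(z_1,z_2,z_2)\bigr)=R_3\bigl(z_1,z_2;\pi_{E(3;2;1,2)}(A)\bigr)$ is exactly the right bridge between the $\mu$-definition of $G_{E(3;2;1,2)}$ and the zero-free description $\mathcal{A}_{E(3;2;1,2)}$, and your surjectivity construction for $(2)\Rightarrow(1)$ works even more cleanly than you indicate: the single choice
\begin{equation*}
A=\left(\begin{smallmatrix} x_1 & x_3-x_1y_2 & x_1y_1-x_2\\ 0 & 0 & 1\\ 1 & -y_2 & y_1 \end{smallmatrix}\right)
\end{equation*}
satisfies $\pi_{E(3;2;1,2)}(A)=\tilde{\textbf{x}}$ for every $\tilde{\textbf{x}}\in\mathbb{C}^5$, with no case split at all. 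The affine-in-$z_1$ analysis for $(2)\Leftrightarrow(3)$ (pointwise $|Q_3|>|P_3|$, then compactness of $\bar{\mathbb{D}}$ for the strict supremum bound) and the factorization $R_3=(1-z_1x_1)\bigl(1-\tfrac{y_1-z_1x_2}{1-z_1x_1}z_2+\tfrac{y_2-z_1x_3}{1-z_1x_1}z_2^2\bigr)$ for $(2)\Leftrightarrow(4)$ are both right, as is your observation that the ``moreover'' clause of $(3)$ is automatically implied in the proportional case $P_3=x_1Q_3$ (given the explicit clause $Q_3\neq0$ on $\bar{\mathbb{D}}$, it is redundant but harmless). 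Two details you should make explicit in a write-up: (i) the equivalence ``$\mu_{E(3;2;1,2)}(A)<1$ iff $\det(I-AX)\neq0$ for every $X\in E(3;2;1,2)$ with $\|X\|\leq1$'' needs the one-line remark that, $E$ being finite-dimensional, the infimum defining $\mu$ cannot equal $1$ without being attained; (ii) in the direction $(4)\Rightarrow(2)$ you cannot obtain $1-z_1x_1\neq0$ from $R_3(z_1,0)\neq0$, since that is part of what is being proved; it comes instead from the well-definedness of the quotients implicit in statement $(4)$.
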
 

\begin{prop}\label{bhch1}
				For $\tilde{\textbf{x}}\in \mathbb{C}^{5}$ the following are equivalent:
				\begin{enumerate}
					\item $\tilde{\textbf{x}}\in \Gamma_{E(3;2;1,2)}$.
					\item $\tilde{\textbf{x}} \in \mathcal  B_{E(3;2;1,2)}.$
					\item $y_2z_2^2-y_1z_2+1\neq 0$ for all $z_2\in {\mathbb{D}}$ and $\|\Psi_{3}(\cdot,\tilde{\textbf{x}})\|_{H^{\infty}({\mathbb{D}})}:=\sup_{z_2\in {\mathbb{D}}}|\Psi_{3}(z_2,\tilde{\textbf{x}})|\leq 1$.  Moreover, if  $x_3=x_1y_2,x_2=x_1y_1$ then $|x_1|\leq 1$.
					\item For every  $z_1\in {\mathbb{D}}$, we have
					$\left(\frac{y_1-z_1x_2}{1-z_1x_1},\frac{y_2-z_1x_3}{1-z_1x_1}\right)\in \Gamma_{E(2;1;2)}.$ 
				\end{enumerate}
							\end{prop}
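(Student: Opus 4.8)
The plan is to deduce the closed-domain characterization from its open-domain counterpart, Proposition~\ref{bhch}, converting the strict inequalities there into non-strict ones by a one-parameter scaling. The basic device is the dilation $\tilde{\textbf{x}} \mapsto \tilde{\textbf{x}}_r := (rx_1, r^2 x_2, r^3 x_3, r y_1, r^2 y_2)$ for $r \in (0,1)$. Because the five coordinates produced by $\pi_{E(3;2;1,2)}$ are built from minors of $A$ of sizes $1,2,3,1,2$ respectively, one checks that $\tilde{\textbf{x}}_r = \pi_{E(3;2;1,2)}(rA)$ whenever $\tilde{\textbf{x}} = \pi_{E(3;2;1,2)}(A)$; together with the homogeneity $\mu_{E(3;2;1,2)}(rA) = r\,\mu_{E(3;2;1,2)}(A)$ of the structured singular value and the fact that $\Gamma_{E(3;2;1,2)} = \overline{G_{E(3;2;1,2)}}$, this yields the scaling lemma: $\tilde{\textbf{x}} \in \Gamma_{E(3;2;1,2)}$ if and only if $\tilde{\textbf{x}}_r \in G_{E(3;2;1,2)}$ for every $r \in (0,1)$. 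The scaling interacts cleanly with the relevant objects, and I would record
\begin{equation*}
R_3((z_1,z_2);\tilde{\textbf{x}}_r) = R_3((rz_1, rz_2);\tilde{\textbf{x}}), \qquad \Psi_3(z_2;\tilde{\textbf{x}}_r) = r\,\Psi_3(rz_2;\tilde{\textbf{x}}),
\end{equation*}
writing $s(z_1) = \frac{y_1 - z_1 x_2}{1 - z_1 x_1}$ and $p(z_1) = \frac{y_2 - z_1 x_3}{1 - z_1 x_1}$ for the slice coordinates appearing in $(4)$. These identities permit purely scaling-based derivations of all remaining equivalences, but for $(2)\Leftrightarrow(3)$ and $(2)\Leftrightarrow(4)$ I find a direct argument more transparent.

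For $(1)\Leftrightarrow(2)$ I would combine the scaling lemma with the open-domain identification $G_{E(3;2;1,2)} = \mathcal{A}_{E(3;2;1,2)}$ from Proposition~\ref{bhch}. By the first transformation identity, $\tilde{\textbf{x}}_r \in \mathcal{A}_{E(3;2;1,2)}$ — that is, $R_3(\cdot;\tilde{\textbf{x}}_r)$ is zero-free on $\bar{\mathbb{D}}^2$ — is equivalent to $R_3(\cdot;\tilde{\textbf{x}})$ being zero-free on $(r\bar{\mathbb{D}})^2$. Letting $r \uparrow 1$ and using $\bigcup_{r<1}(r\bar{\mathbb{D}})^2 = \mathbb{D}^2$, the scaling lemma shows $\tilde{\textbf{x}} \in \Gamma_{E(3;2;1,2)}$ exactly when $R_3(\cdot;\tilde{\textbf{x}})$ is zero-free on $\mathbb{D}^2$, which is the definition of $\mathcal{B}_{E(3;2;1,2)}$. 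One direction may also be obtained from the several-variable Hurwitz theorem applied to a sequence in $\mathcal{A}_{E(3;2;1,2)}$ converging to $\tilde{\textbf{x}}$, since $R_3(0,0;\cdot)\equiv 1$ rules out the identically-zero limit.

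The equivalence $(2)\Leftrightarrow(3)$ I would prove by directly analysing the zero set of $R_3$ on $\mathbb{D}^2$. Evaluating at $z_1 = 0$ gives $R_3(0,z_2;\tilde{\textbf{x}}) = Q_3(z_2;y_1,y_2)$, so membership in $\mathcal{B}_{E(3;2;1,2)}$ forces $Q_3 \neq 0$ on $\mathbb{D}$; granting this, dividing $R_3 = 0$ by $Q_3$ recasts the equation as $z_1\,\Psi_3(z_2;\tilde{\textbf{x}}) = 1$, and the absence of solutions $(z_1,z_2)\in\mathbb{D}^2$ is equivalent, fibrewise in $z_2$, to $|\Psi_3(z_2;\tilde{\textbf{x}})| \leq 1$, hence to $\|\Psi_3(\cdot,\tilde{\textbf{x}})\|_{H^\infty(\mathbb{D})} \leq 1$. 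The degenerate locus $x_2 = x_1 y_1$, $x_3 = x_1 y_2$ must be isolated: there $P_3 = x_1 Q_3$, so $\Psi_3 \equiv x_1$ and $R_3 = (1 - z_1 x_1)Q_3$, whence zero-freeness on $\mathbb{D}^2$ reduces to $|x_1| \leq 1$ — precisely the ``moreover'' clause.

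For $(2)\Leftrightarrow(4)$ the key is the factorization identity
\begin{equation*}
R_3((z_1,z_2);\tilde{\textbf{x}}) = (1 - z_1 x_1)\bigl(1 - s(z_1)\,z_2 + p(z_1)\,z_2^2\bigr),
\end{equation*}
valid wherever $1 - z_1 x_1 \neq 0$, whose right-hand factor is exactly the defining polynomial of $\mathcal{B}_{E(2;1;2)} = \Gamma_{E(2;1;2)}$ evaluated at the slice $(s(z_1),p(z_1))$. Setting $z_2 = 0$ shows any $\tilde{\textbf{x}}\in\mathcal{B}_{E(3;2;1,2)}$ forces $1 - z_1 x_1 \neq 0$ on $\mathbb{D}$, hence $|x_1|\leq 1$, so every slice is well-defined; the factorization then gives, for each fixed $z_1\in\mathbb{D}$, that $R_3(z_1,\cdot)$ is zero-free on $\mathbb{D}$ if and only if $(s(z_1),p(z_1))\in\mathcal{B}_{E(2;1;2)} = \Gamma_{E(2;1;2)}$, which is condition $(4)$. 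I expect the main obstacle to be the bookkeeping on the degenerate set where $1 - z_1 x_1$ vanishes for some $z_1\in\mathbb{D}$ (i.e.\ $|x_1|>1$): there the slices acquire a pole, or, in the removable subcase $P_3 = x_1 Q_3$, collapse to the constant $(y_1,y_2)$ while $R_3$ genuinely vanishes at $(1/x_1,0)\in\mathbb{D}^2$. Verifying that both $(2)$ and $(4)$ fail in exactly these situations unless $|x_1|\leq 1$ is what ties the ``moreover'' clause of $(3)$ to the other three conditions and closes the circle; throughout I would rely on the closed symmetrized-bidisc identification $\Gamma_{E(2;1;2)} = \mathcal{B}_{E(2;1;2)}$, the $n=2$ analogue of the present statement.
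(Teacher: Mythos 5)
The paper gives no proof of this proposition for you to be measured against: it is recalled as background, the closed-domain companion of Proposition \ref{bhch}, which is quoted from [Theorem 3.5, \cite{Bharali}], and the text immediately moves on. Judged on its own, your argument is correct, and it supplies exactly the deduction the paper leaves to the literature. The three external facts you use are all available: Proposition \ref{bhch} itself, the paper's assertion that $\Gamma_{E(3;2;1,2)}$ is the closure of $G_{E(3;2;1,2)}$, and the classical identification $\Gamma_{E(2;1;2)}=\{(s,p): 1-sz+pz^{2}\neq 0 \ \text{for all}\ z\in\mathbb{D}\}$ (see \cite{ay1}). Your scaling lemma is sound: the coordinates of $\pi_{E(3;2;1,2)}$ are sums of $k\times k$ minors with $k=1,2,3,1,2$, so $\pi_{E(3;2;1,2)}(rA)=\tilde{\textbf{x}}_r$, and $\mu_{E(3;2;1,2)}(rA)=r\,\mu_{E(3;2;1,2)}(A)$; both transformation identities check out; and the chain ``$\tilde{\textbf{x}}\in\Gamma_{E(3;2;1,2)}$ iff $\tilde{\textbf{x}}_r\in\mathcal{A}_{E(3;2;1,2)}$ for all $r<1$ iff $R_3(\cdot\,;\tilde{\textbf{x}})$ is zero-free on $\bigcup_{r<1}(r\bar{\mathbb{D}})^{2}=\mathbb{D}^{2}$'' settles $(1)\Leftrightarrow(2)$. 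The fibrewise argument for $(2)\Leftrightarrow(3)$ and the factorization $R_3=(1-z_1x_1)\bigl(1-s(z_1)z_2+p(z_1)z_2^{2}\bigr)$ for $(2)\Leftrightarrow(4)$ are likewise correct. The only delicate spot is the one you flagged yourself: when $x_2=x_1y_1$, $x_3=x_1y_2$ and $|x_1|>1$, the polynomial $R_3$ vanishes identically on the slice $z_1=1/x_1$, so $(2)$ fails, while the quotients in $(4)$ have a removable singularity at that point; the equivalence therefore holds only under the reading that $(4)$ demands the displayed quotients be defined (denominator nonzero) at every $z_1\in\mathbb{D}$. That is an ambiguity of the statement as quoted, not a gap in your proof, and you resolve it in the only consistent way. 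A further byproduct of your analysis worth noting is that the ``moreover'' clause in $(3)$ is redundant given the other two conditions there (in the degenerate case $\Psi_3\equiv x_1$, so the sup-norm bound already forces $|x_1|\leq 1$); this is harmless, since an extra hypothesis can only help the direction $(3)\Rightarrow(2)$, and you verify it explicitly in the direction $(2)\Rightarrow(3)$.
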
 														
From one of the characterization of the domain $G_{E{(3;3;1,1,1)}}$ and its closure $\Gamma_{E{(3;3;1,1,1)}}$ \cite{pal1}, we have
\begin{equation}
\begin{aligned}
G_{E{(3;3;1,1,1)}}:=\Big \{ \textbf{x}=(x_1=a_{11}, x_2=a_{22}, x_3=a_{11}a_{22}-a_{12}a_{21}, x_4=a_{33}, x_5=a_{11}a_{33}-a_{13}a_{31},&\\ x_6=a_{22}a_{33}-a_{23}a_{32},x_7=\det A )\in \mathbb C^7: A\in \mathcal M_{3\times 3}(\mathbb C)~{\rm{and}}~\mu_{E(3;3;1,1,1)}(A)< 1 \Big \}
\end{aligned}
\end{equation}
$${\rm{and }}$$
\begin{equation}\label {GammaE}
\begin{aligned}
\Gamma_{E{(3;3;1,1,1)}}:=\Big \{\textbf{x}=(x_1=a_{11}, x_2=a_{22}, x_3=a_{11}a_{22}-a_{12}a_{21}, x_4=a_{33}, x_5=a_{11}a_{33}-a_{13}a_{31},&\\ x_6=a_{22}a_{33}-a_{23}a_{32},x_7=\det A)\in \mathbb C^7: A\in \mathcal M_{3\times 3}(\mathbb C)~{\rm{and}}~\mu_{E(3;3;1,1,1)}(A)\leq 1\Big \}.
\end{aligned}
\end{equation}

The study of the symmetrized bidisc $\Gamma_{E(2;1;2)}$ and the tetrablock $\Gamma_{E(2; 2;1,1)}$ holds significant importance in the realms of complex analysis and operator theory. Edigarian, Kosinski, Costara, and Zwonek investigated the geometric properties of the symmetrized bidisc $\Gamma_{E(2;1;2)}$ and the tetrablock $\Gamma_{E(2; 2;1,1)}$ \cite{Abouhajar, Ccostara, Ae1, Ae2, Ae3, Ae4, Zwonek}. Young studied the symmetrized bidisc $\Gamma_{E(2;1;2)}$ and the tetrablock $\Gamma_{E(2; 2;1,1)}$, in collaboration with several co-authors \cite{Abouhajar,JAgler,young,ay,ay1,JAY, JANY}, from an operator theoretic point of view. The symmetrized bidisc $\Gamma_{E(2;1;2)}$ and the tetrablock $\Gamma_{E(2; 2;1,1)}$ are both non-convex domains. Also, they cannot be exhausted by domains biholomorphic to convex ones. However, both domains have identical Caratheodory distance and the Lempert function. Normal dilation is valied for a pair of commuting operators that possess the symmetrized bidisc $\Gamma_{E(2;1;2)}$ as a spectral set \cite{JAgler, young}. S. Biswas and S. S. Roy investigated the properties of $\Gamma_{E(n; 1; n)}$ isometries, $\Gamma_{E(n; 1; n)}$ unitaries, and the Wold decomposition associated with $\Gamma_{E(n; 1; n)}$ \cite{SS, A. Pal}.  T. Bhattacharyya studied the tetrablock isometries, tetrablock unitaries, the Wold decomposition, and conditional dilation for the tetrablock $\Gamma_{E(2; 2;1,1)}$  \cite{Bhattacharyya}. However, the question of whether the tetrablock $\Gamma_{E(2; 2;1,1)}$ and $\Gamma_{E(n; 1; n)}$ possess the property $P$ remains unresolved.

Various characterizations of $G_{E(3; 3; 1, 1, 1)} $ have been discussed in \cite{pal1}. By the definition of $G_{E(3; 3; 1, 1, 1)} ,$ it follows that 
\begin{equation*}
\begin{aligned}
1 - zx_2 - wx_4 + zwx_6\neq 0, 1 - zx_1 - wx_4 + zwx_5\neq 0~{\rm{and}}~1 - zx_1 - wx_2 + zwx_3\neq 0
\end{aligned}
\end{equation*}
for each $\textbf{x}=(x_1,x_2,x_3,x_4,x_5,x_6,x_7)\in G_{E(3; 3; 1, 1, 1)} $ and any $z,w\in \mathbb {\bar{D}}$. Consider the rational functions $\Psi^{(1)}, \Psi^{(2)}$ and $\Psi^{(3)}$ defined on $\mathbb {\bar{D}}^2\times G_{E(3; 3; 1, 1, 1)}$  as follows:	\begin{equation}\label{psi11}
		\begin{aligned}
			\Psi^{(1)}(z, w, \textbf{x})
			&= \frac{x_1 - zx_3 - wx_5 + zwx_7}{1 - zx_2 - wx_4 + zwx_6}, \, z,w \in \overline{\mathbb{D}},
		\end{aligned}
	\end{equation}
	\begin{equation}\label{psi12}
		\begin{aligned}
			\Psi^{(2)}(z, w, \textbf{x})
			&= \frac{x_2 - zx_3 - wx_6 + zwx_7}{1 - zx_1 - wx_4 + zwx_5}, \, z,w \in \overline{\mathbb{D}},
		\end{aligned}
	\end{equation}
	
	\begin{equation}\label{psi13}
		\begin{aligned}
			\Psi^{(3)}(z, w, \textbf{x})
			&= \frac{x_4 - zx_5 - wx_6 + zwx_7}{1 - zx_1 - wx_2 + zwx_3}, \, z,w \in \overline{\mathbb{D}}.
		\end{aligned}
	\end{equation}
We provide various characterization from \cite{pal1} for determining whether a point $\textbf{x}$ belongs to $G_{E(3;3;1,1,1)}$. 
\begin{thm}\label{mainthm}
				For $\textbf{x}=(x_1,\ldots,x_7)\in \mathbb C^7$ the following are equivalent.
				\begin{enumerate}
					\item[1.] $\textbf{x}=(x_1,\ldots,x_7)\in G_{E(3;3;1,1,1)}$;
					
					\item[2.] $R_{\bf{x}}^{(3;3;1,1,1)}(\textbf{z})= 1-x_1z_1-x_2z_2+x_3z_1z_2-x_4z_3+x_5z_1z_3+x_6z_2z_3-x_7z_1z_2z_3\neq 0$, for all, $z_1,z_2,z_3\in \bar{\mathbb{D}}$;
					
					\item[$3.$] $(x_2,x_4,x_6)\in G_{E(2;2;1,1)}$ and 
					$\|\Psi^{(1)}(\cdot,\textbf{x})\|_{H^{\infty}(\bar{\mathbb{D}}^{2})}=\|\Psi^{(1)}(\cdot,\textbf{x})\|_{H^{\infty}(\mathbb{T}^{2})}< 1$ and if  we suppose that $x_{7}=x_{6}x_{1},\,\,x_{3}=x_{2}x_{1},\,\,x_{5}=x_{4}x_{1}$ then $|x_1|<1;$
					
					\item[$3^{\prime}.$]
					$(x_1,x_4,x_5)\in G_{E(2;2;1,1)}$ and 
					$\|\Psi^{(2)}(\cdot,\textbf{x})\|_{H^{\infty}(\bar{\mathbb{D}}^{2})}=\|\Psi^{(2)}(\cdot,\textbf{x})\|_{H^{\infty}(\mathbb{T}^{2})}< 1$  and if  we suppose that $x_{7}=x_{5}x_{2},\,\,x_{3}=x_{2}x_{1},\,\,x_{6}=x_{4}x_{2}$ then $|x_2|<1;$
					
					\item[$3^{\prime\prime}.$]
					$(x_1,x_2,x_3)\in G_{E(2;2;1,1)}$ and 
					$\|\Psi^{(3)}(\cdot,\textbf{x})\|_{H^{\infty}(\bar{\mathbb{D}}^{2})}=\|\Psi^{(3)}(\cdot,\textbf{x})\|_{H^{\infty}(\mathbb{T}^{2})}< 1$  and if  we suppose that $x_{7}=x_{3}x_{4},\,\,x_{6}=x_{2}x_{4},\,\,x_{5}=x_{4}x_{1}$ then $|x_4|<1$ ;
					
					\item[$4.$] $\left(\frac{x_1-z_3x_5}{1-x_4z_3},\frac{x_2-z_3x_6}{1-x_4z_3},\frac{x_3-z_3x_7}{1-x_4z_3}\right)\in G_{E(2;2;1,1)}$ for all $z_3\in \bar{\mathbb D};$

					\item[$4^{\prime}.$] $\left(\frac{x_2-z_1x_3}{1-x_1z_1},\frac{x_4-z_1x_5}{1-x_1z_1},\frac{x_6-z_1x_7}{1-x_1z_1}\right)\in G_{E(2;2;1,1)}$  for all $z_1\in \bar{\mathbb D}$;
					
					\item[$4^{\prime\prime}.$] $\left(\frac{x_4-z_2x_6}{1-x_2z_2},\frac{x_1-z_2x_3}{1-x_2z_2},\frac{x_5-z_2x_7}{1-x_2z_2}\right)\in G_{E(2;2;1,1)}$  for all $z_2\in \bar{\mathbb D}$;

					\item[$5.$]
					There exists a $2\times 2$ symmetric matrix $A(z_3)$ with $\|A(z_3)\|<1$ such that $$A_{11}(z_3)=\frac{x_1-z_3x_5}{1-x_4z_3},A_{22}(z_3)=\frac{x_2-z_3x_6}{1-x_4z_3}~{\rm{ and}}~ \det(A(z_3))=\frac{x_3-z_3x_7}{1-x_4z_3}~{\rm{for~all}}~ z_3\in \bar{\mathbb D};$$

					\item[$5^{\prime}.$]
					There exists a $2\times 2$ symmetric matrix $B(z_1)$ with $\|B(z_1)\|<1$ such that $$B_{11}(z_1)=\frac{x_2-z_1x_3}{1-x_1z_1},B_{22}(z_1)=\frac{x_4-z_1x_5}{1-x_1z_1}~{\rm{ and }}~\det(B(z_1))=\frac{x_6-z_1x_7}{1-x_1z_1}~{\rm{for ~all }}~z_1\in \bar{\mathbb D};$$
					
					\item[$5^{\prime\prime}.$]
					There exists a $2\times 2$ symmetric matrix $C(z_2)$ with $\|C(z_2)\|<1$ 		such that $$C_{11}(z_2)=\frac{x_4-z_2x_6}{1-x_2z_2},C_{22}(z_2)=\frac{x_1-z_2x_3}{1-x_2z_2}~{\rm{and}}~ \det(C(z_2))=\frac{x_5-z_2x_7}{1-x_2z_2}~{\rm{for~all}}~z_2\in \bar{\mathbb D};$$
					
				\item[$6$.]  There exists a $3\times 3$ matrix $A\in \mathcal M_{3\times 3}(\mathbb C)$ such that \small{$x_{1}=a_{11},x_{2}=a_{22},x_{3}=a_{11}a_{22}-a_{12}a_{21},$}\\$x_{4}=a_{33},x_{5}=a_{11}a_{33}-a_{13}a_{31}, x_{6}=a_{33}a_{22}-a_{23}a_{32}~{\rm{and}}~x_{7}=\operatorname{det}A,$ $$\sup_{(z_2,z_3)\in\bar{\mathbb D}^2} |\mathcal G_{\mathcal F_{A}(z_3)}(z_2)|=\sup_{(z_2,z_3)\in\bar{\mathbb D}^2} |\mathcal G_{A}\left(\left(\begin{smallmatrix}z_2 & 0 \\0 &z_{3}
				\end{smallmatrix}\right)\right)|<1$$ and $\det\left(I_{2}-\left(\begin{smallmatrix}a_{22}&a_{23}\\a_{32} & a_{33}
				\end{smallmatrix}\right)\left(\begin{smallmatrix}z_2 & 0 \\0 &z_{3}
				\end{smallmatrix}\right)\right)\neq 0$ for all $(z_2,z_3)\in\bar{\mathbb D}^2$.

				\item[$6^{\prime}$.]  There exists a $3\times 3$ matrix $\tilde{A}\in \mathcal M_{3\times 3}(\mathbb C)$ such that \small{$x_{1}=\tilde{a}_{33},x_{2}=\tilde{a}_{11},x_{3}=\tilde{a}_{11}\tilde{a}_{33}-\tilde{a}_{13}\tilde{a}_{31},$} \\$x_{4}=\tilde{a}_{22},x_{5}=\tilde{a}_{22}\tilde{a}_{33}-\tilde{a}_{23}\tilde{a}_{32}, x_{6}=\tilde{a}_{11}\tilde{a}_{22}-\tilde{a}_{12}\tilde{a}_{21}~{\rm{and}}~{x}_{7}=\operatorname{det} \tilde{A},$ $$\sup_{(z_1,z_2)\in\bar{\mathbb D}^2} |\mathcal G_{\mathcal F_{\tilde{A}}(z_2)}(z_1)|=\sup_{(z_1,z_2)\in\bar{\mathbb D}^2} |\mathcal G_{\tilde{A}}\left(\left(\begin{smallmatrix}z_1 & 0 \\0 &z_{2}
				\end{smallmatrix}\right)\right)|<1$$ and  $\det\left(I_{2}-\left(\begin{smallmatrix}\tilde{a}_{11}&\tilde{a}_{12}\\\tilde{a}_{21} & \tilde{a}_{22}
				\end{smallmatrix}\right)\left(\begin{smallmatrix}z_1 & 0 \\0 &z_{2}
				\end{smallmatrix}\right)\right)\neq 0$ for all $(z_1,z_2)\in\bar{\mathbb D}^2$.
				
				\item[$6^{\prime\prime}$.] There exists a $3\times 3$ matrix $\tilde{B}\in \mathcal M_{3\times 3}(\mathbb C)$  such that \small{$ x_1=\tilde{b}_{22},x_2=\tilde{b}_{33},x_3=\tilde{b}_{22}\tilde{b}_{33}-\tilde{b}_{23}\tilde{b}_{32},$} \\$ x_4=\tilde{b}_{11},x_{5}=\tilde{b}_{11}\tilde{b}_{22}-\tilde{b}_{12}\tilde{b}_{21}, x_{6}=\tilde{b}_{11}\tilde{b}_{33}-\tilde{b}_{13}\tilde{b}_{31}~{\rm{and}}~x_{7}=\operatorname{det} \tilde{B},$  $$\sup_{(z_1,z_3)\in\bar{\mathbb D}^2} |\mathcal G_{\mathcal F_{\tilde{B}}(z_1)}(z_3)|=\sup_{(z_1,z_3)\in\bar{\mathbb D}^2} |\mathcal G_{\tilde{B}}\left(\left(\begin{smallmatrix}z_1 & 0 \\0 &z_{3}
				\end{smallmatrix}\right)\right)|<1$$ and $\det\left(I_{2}-\left(\begin{smallmatrix}\tilde{b}_{11}&\tilde{b}_{13}\\\tilde{b}_{31} & \tilde{b}_{33}
				\end{smallmatrix}\right)\left(\begin{smallmatrix}z_1 & 0 \\0 &z_{3}
				\end{smallmatrix}\right)\right)\neq 0$  for all $(z_1,z_3)\in\bar{\mathbb D}^2$.
					
				\end{enumerate}
			\end{thm}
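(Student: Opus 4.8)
The plan is to treat condition (2)---the non-vanishing of the single polynomial $R_{\textbf{x}}^{(3;3;1,1,1)}$ on $\bar{\mathbb{D}}^3$---as the hub, since it is expressed purely in the coordinates $x_1,\dots,x_7$ and carries no reference to a realizing matrix. The equivalence (1) $\Leftrightarrow$ (2) would come straight from the definition of the structured singular value: for $X=\operatorname{diag}(z_1,z_2,z_3)\in E(3;3;1,1,1)$ one has $\mu_{E(3;3;1,1,1)}(A)<1$ if and only if $\det(I_3-AX)\neq 0$ for every $X$ with $\|X\|\le 1$, i.e.\ for all $z_1,z_2,z_3\in\bar{\mathbb{D}}$. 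Expanding this $3\times 3$ determinant and collecting the $1\times1$, $2\times2$ and $3\times3$ principal-minor contributions gives the identity $\det(I_3-AX)=R_{\textbf{x}}^{(3;3;1,1,1)}(z_1,z_2,z_3)$ once the minors are renamed via $\pi_{E(3;3;1,1,1)}$. The forward direction of (1) $\Leftrightarrow$ (2) is then immediate; the reverse is a realization statement---from non-vanishing of $R_{\textbf{x}}$ one must exhibit a matrix $A$ with $\pi_{E(3;3;1,1,1)}(A)=\textbf{x}$ and $\mu_{E(3;3;1,1,1)}(A)<1$---which I would defer to, and obtain from, condition (6).

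To connect (2) with (3), (3$'$), (3$''$), I would view $R_{\textbf{x}}$ as affine in $z_1$, writing $R_{\textbf{x}}=D(z_2,z_3)-z_1N(z_2,z_3)$ with $D=1-z_2x_2-z_3x_4+z_2z_3x_6$ and $N=x_1-z_2x_3-z_3x_5+z_2z_3x_7$, so that $N/D=\Psi^{(1)}(z_2,z_3,\textbf{x})$ and $D$ is exactly the defining polynomial of the tetrablock point $(x_2,x_4,x_6)$. Since $R_{\textbf{x}}(0,z_2,z_3)=D(z_2,z_3)$, non-vanishing on $\bar{\mathbb{D}}^3$ forces $D\neq0$ on $\bar{\mathbb{D}}^2$, which is $(x_2,x_4,x_6)\in G_{E(2;2;1,1)}$; granting this, $R_{\textbf{x}}=D\,(1-z_1\Psi^{(1)})$ is non-vanishing for all $z_1\in\bar{\mathbb{D}}$ precisely when $\sup_{\bar{\mathbb{D}}^2}|\Psi^{(1)}|<1$, and the maximum modulus principle on the bidisc identifies this supremum with the one over $\mathbb{T}^2$. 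The degenerate clause is the case $N=x_1D$ (that is, $x_3=x_2x_1,\ x_5=x_4x_1,\ x_7=x_6x_1$), where $\Psi^{(1)}\equiv x_1$ and the bound reduces to $|x_1|<1$. Permuting the roles of $z_1,z_2,z_3$ yields (3$'$) and (3$''$).

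The passage from (2) to (4), (4$'$), (4$''$) is analogous but slices a different variable: grouping $R_{\textbf{x}}$ by powers of $z_3$ and dividing by $1-x_4z_3$ gives the factorization $R_{\textbf{x}}(z_1,z_2,z_3)=(1-x_4z_3)\,R^{(2;2;1,1)}_{\textbf{y}(z_3)}(z_1,z_2)$, where $\textbf{y}(z_3)=\big(\tfrac{x_1-z_3x_5}{1-x_4z_3},\tfrac{x_2-z_3x_6}{1-x_4z_3},\tfrac{x_3-z_3x_7}{1-x_4z_3}\big)$ and $R^{(2;2;1,1)}$ is the tetrablock polynomial. Evaluating at $z_1=z_2=0$ forces $1-x_4z_3\neq0$ on $\bar{\mathbb{D}}$, after which non-vanishing of $R_{\textbf{x}}$ is equivalent to $\textbf{y}(z_3)\in G_{E(2;2;1,1)}$ for every $z_3\in\bar{\mathbb{D}}$, namely (4). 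Condition (5) is then obtained from (4) through the standard realization of the tetrablock by a $2\times2$ symmetric contraction whose two diagonal entries and determinant are the three tetrablock coordinates, applied slicewise in $z_3$ (and in $z_1$, $z_2$ for (5$'$), (5$''$)).

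Finally, for (2) $\Leftrightarrow$ (6) (and (6$'$), (6$''$)) I would unwind the linear-fractional maps: with $A$ partitioned so that the trailing block is $\left(\begin{smallmatrix}a_{22}&a_{23}\\a_{32}&a_{33}\end{smallmatrix}\right)$, the hypothesis $\det\!\left(I_2-\left(\begin{smallmatrix}a_{22}&a_{23}\\a_{32}&a_{33}\end{smallmatrix}\right)\left(\begin{smallmatrix}z_2&0\\0&z_3\end{smallmatrix}\right)\right)\neq0$ makes the Schur complement---hence the transfer function $\mathcal{G}_A(\operatorname{diag}(z_2,z_3))$ and its partial reduction $\mathcal{G}_{\mathcal{F}_A(z_3)}(z_2)$---well defined, and $\sup|\mathcal{G}_A|<1$ is exactly the reformulation of $\mu_{E(3;3;1,1,1)}(A)<1$ obtained by closing the $z_1$-loop one diagonal variable at a time. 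Matching the entries dictated by $\pi_{E(3;3;1,1,1)}$ both supplies the realizing matrix needed to close (2) $\Rightarrow$ (1) and returns us to (2). I expect the two genuinely delicate points to be this realization step---producing a $3\times3$ matrix with prescribed principal minors and controlled structured singular value---and the careful bookkeeping of the proportionality (degenerate) cases that generate the auxiliary $|x_i|<1$ conditions; everything else is maximum-principle and Schur-complement manipulation.
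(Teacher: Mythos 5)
The paper you were given never proves this theorem: it is imported verbatim from the companion work \cite{pal1} (``We provide various characterization from \cite{pal1}\dots''), so there is no in-paper proof to compare yours against, and your proposal has to stand on its own. Its routine parts do stand. The expansion $\det\bigl(I_3-A\,\mathrm{diag}(z_1,z_2,z_3)\bigr)=R^{(3;3;1,1,1)}_{\mathbf{x}}(z_1,z_2,z_3)$ for $\mathbf{x}=\pi_{E(3;3;1,1,1)}(A)$ is correct and gives $(1)\Rightarrow(2)$; the factorizations $R_{\mathbf{x}}=D\,(1-z_1\Psi^{(1)})$ with $D=1-x_2z_2-x_4z_3+x_6z_2z_3$, and $R_{\mathbf{x}}=(1-x_4z_3)\,R^{(2;2;1,1)}_{\mathbf{y}(z_3)}(z_1,z_2)$, are also correct, and, granted the Abouhajar--White--Young characterization of the tetrablock \cite{Abouhajar}, they yield $(2)\Leftrightarrow(3)$ and $(2)\Leftrightarrow(4)$, with $(4)\Leftrightarrow(5)$ coming from the slicewise symmetric $2\times2$ realization and the Schur-complement identity giving $(6)\Rightarrow(2)$ and $(6)\Rightarrow(1)$.

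The genuine gap is the realization direction $(2)\Rightarrow(6)$, on which your entire scheme leans. You defer $(2)\Rightarrow(1)$ to condition $(6)$, but then assert that $(6)$ is obtained by ``matching the entries dictated by $\pi_{E(3;3;1,1,1)}$,'' which is circular: the existence of a single constant $3\times3$ matrix with seven prescribed invariants (three diagonal entries, three $2\times2$ principal minors, the determinant) and structured singular value less than one is exactly what has to be proved, and nothing in your argument produces it. The slicewise data you extract from $(5)$ is a symmetric $2\times2$ matrix $A(z_3)$ for each $z_3\in\bar{\mathbb{D}}$, and these do not assemble into a constant $3\times3$ matrix. Concretely, after setting $a_{11}=x_1$, $a_{22}=x_2$, $a_{33}=x_4$ and choosing off-diagonal entries with $a_{12}a_{21}=x_1x_2-x_3$, $a_{13}a_{31}=x_1x_4-x_5$, $a_{23}a_{32}=x_2x_4-x_6$, the remaining constraint $\det A=x_7$ involves the cross terms $a_{12}a_{23}a_{31}+a_{13}a_{21}a_{32}$, which are not determined by those three products; one must make a judicious choice of the individual entries and then deduce $\mu_{E(3;3;1,1,1)}(A)<1$ from the non-vanishing of $R_{\mathbf{x}}$ alone. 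Already in the two-variable case this realization step is the bulk of the proof of Theorem 2.2 in \cite{Abouhajar}; here it is harder, and it is precisely the content delegated to \cite{pal1}. As written, your argument establishes the mutual equivalence of $(2)$, $(3)$, $(4)$, $(5)$ (and their primed variants), together with $(1)\Rightarrow(2)$ and $(6)\Rightarrow(1)$, but not the full cycle.
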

The following theorem establishes a connection between $G_{E(3;2;1,2)}$ and $G_{E(3;3;1,1,1)}$.
			\begin{thm}[Theorem $2.45$, \cite{pal1}]\label{matix AE12}
				Suppose $\textbf{x}=(x_1,x_2,x_3,x_4,x_5,x_6,x_7) \in \mathbb{C}^7 .$ Then the following conditions are equivalent:
\begin{enumerate}
\item  $\textbf{x} \in G_{E(3;3;1,1,1)}$;
\item  $(x_1,x_3+\eta x_5,\eta x_7,x_2+\eta x_4,\eta x_6) \in G_{E(3;2;1,2)}$ for all $\eta \in \mathbb{T}$;
\item $(x_1,x_3+\eta x_5,\eta x_7,x_2+\eta x_4,\eta x_6) \in G_{E(3;2;1,2)}$ for all $\eta \in \bar{\mathbb{D}}.$
\end{enumerate}
			\end{thm}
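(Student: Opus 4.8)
My plan is to reduce all three conditions to nonvanishing statements for the two defining polynomials and to connect them through a single algebraic identity. Set $\textbf{y}_{\eta}:=(x_1,x_3+\eta x_5,\eta x_7,x_2+\eta x_4,\eta x_6)\in\mathbb{C}^5$. Substituting the five coordinates of $\textbf{y}_\eta$ into the definition of $R_3$ and expanding, both $R_3\big((z_1,z_2);\textbf{y}_\eta\big)$ and $R_{\bf{x}}^{(3;3;1,1,1)}(z_1,z_2,\eta z_2)$ collapse to
\begin{equation}\label{keyid}
1-x_1z_1-x_2z_2+x_3z_1z_2-\eta x_4z_2+\eta x_5z_1z_2+\eta x_6z_2^2-\eta x_7z_1z_2^2,
\end{equation}
so that $R_3\big((z_1,z_2);\textbf{y}_\eta\big)=R_{\bf{x}}^{(3;3;1,1,1)}(z_1,z_2,\eta z_2)$ as a polynomial identity in $z_1,z_2,\eta$. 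By Theorem \ref{mainthm}(2), condition (1) is equivalent to $R_{\bf{x}}^{(3;3;1,1,1)}\neq0$ on $\bar{\mathbb{D}}^3$, while by Proposition \ref{bhch} the membership $\textbf{y}_\eta\in G_{E(3;2;1,2)}$ is equivalent to $R_3\big((z_1,z_2);\textbf{y}_\eta\big)\neq0$ on $\bar{\mathbb{D}}^2$. Identity \eqref{keyid} is the bridge between these two nonvanishing conditions.

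For $(1)\Rightarrow(3)$, assume $R_{\bf{x}}^{(3;3;1,1,1)}\neq0$ on $\bar{\mathbb{D}}^3$. For any $\eta\in\bar{\mathbb{D}}$ and $(z_1,z_2)\in\bar{\mathbb{D}}^2$ we have $|\eta z_2|\leq1$, so $\eta z_2\in\bar{\mathbb{D}}$ and the right-hand side of \eqref{keyid} is nonzero; hence $R_3\big((z_1,z_2);\textbf{y}_\eta\big)\neq0$ and $\textbf{y}_\eta\in G_{E(3;2;1,2)}$. The implication $(3)\Rightarrow(2)$ is immediate, since $\mathbb{T}\subseteq\bar{\mathbb{D}}$.

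The essential direction is $(2)\Rightarrow(1)$. From (2) and \eqref{keyid} I only know that $R_{\bf{x}}^{(3;3;1,1,1)}(z_1,z_2,\eta z_2)\neq0$ for all $z_1,z_2\in\bar{\mathbb{D}}$ and all $\eta\in\mathbb{T}$; that is, the polynomial is nonvanishing on the scaled-diagonal set $\{|z_3|=|z_2|\}$, and this must be upgraded to nonvanishing on the whole tridisc. Fixing $z_1\in\bar{\mathbb{D}}$, I write $R_{\bf{x}}^{(3;3;1,1,1)}(z_1,z_2,z_3)=A(z_1,z_2)+z_3\,B(z_1,z_2)$, affine in $z_3$, where $A=1-x_1z_1-x_2z_2+x_3z_1z_2$ and $B=-x_4+x_5z_1+x_6z_2-x_7z_1z_2$. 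The choice $z_2=0$ in the hypothesis gives $A(z_1,0)=1-x_1z_1\neq0$. Where $B=0$ the expression equals the constant $A$, which the hypothesis forces to be nonzero; where $B\neq0$ the unique $z_3$-root is $\psi(z_2):=-A(z_1,z_2)/B(z_1,z_2)$, a rational function of degree one, and the hypothesis says exactly that $|\psi(z_2)|\neq|z_2|$ for every $z_2\in\bar{\mathbb{D}}\setminus\{0\}$.

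I would finish with the maximum modulus principle. The continuous function $|\psi(z_2)|-|z_2|$ is nonvanishing on $\bar{\mathbb{D}}$ minus the at most one pole of $\psi$ (a connected set), is positive at $z_2=0$, and tends to $+\infty$ at the pole; by connectedness it is positive throughout, so $|\psi(z_2)|>|z_2|$, which in particular forces $\psi$ to be zero-free and hence $A(z_1,\cdot)$ to be zero-free on $\bar{\mathbb{D}}$. Then $1/\psi=-B/A$ is holomorphic on $\bar{\mathbb{D}}$ with $|1/\psi|<1$ on $|z_2|=1$, so $|1/\psi|<1$ on all of $\bar{\mathbb{D}}$ by the maximum principle; equivalently $|\psi(z_2)|>1$ for every $z_2\in\bar{\mathbb{D}}$. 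Thus the $z_3$-root lies outside $\bar{\mathbb{D}}$ and $R_{\bf{x}}^{(3;3;1,1,1)}(z_1,z_2,z_3)\neq0$ on $\bar{\mathbb{D}}^2$; since $z_1\in\bar{\mathbb{D}}$ was arbitrary, (1) follows. I expect the main obstacle to be precisely this last step: passing from nonvanishing on the circles $|z_3|=|z_2|$ to nonvanishing on the full bidisc genuinely requires the one-variable maximum principle together with the Möbius (degree-one) structure of $\psi$, and the degenerate cases where $B$ vanishes identically or where $\psi$ has a pole in $\bar{\mathbb{D}}$ must be handled separately.
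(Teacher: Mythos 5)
There is no in-paper proof to compare you against: this statement is imported verbatim as [Theorem $2.45$, \cite{pal1}], exactly like Theorem \ref{mainthm} and Propositions \ref{bhch}--\ref{bhch1}, and the present paper never proves it. So your argument must be judged on its own merits, and on its own merits it is correct and essentially complete. The central identity $R_3\big((z_1,z_2);\textbf{y}_\eta\big)=R_{\textbf{x}}^{(3;3;1,1,1)}(z_1,z_2,\eta z_2)$ checks out by direct expansion, and it legitimately converts (1) into nonvanishing of $R_{\textbf{x}}^{(3;3;1,1,1)}$ on $\bar{\mathbb{D}}^3$ (item 2 of Theorem \ref{mainthm}) and (2)/(3) into nonvanishing of $R_3(\cdot\,;\textbf{y}_\eta)$ on $\bar{\mathbb{D}}^2$ (items 1 and 2 of Proposition \ref{bhch}), after which $(1)\Rightarrow(3)\Rightarrow(2)$ are one-liners. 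The hard direction $(2)\Rightarrow(1)$ is also sound: with $z_1$ fixed and $R=A+z_3B$, both $A$ and $B$ affine in $z_2$, the hypothesis says precisely that $A\neq0$ at every zero of $B$ in $\bar{\mathbb{D}}$ (evaluate at $z_3=\eta z_2$ there) and that $|\psi(z_2)|\neq|z_2|$ wherever $B\neq0$; since $B$ has at most one zero, your connectedness argument on $\bar{\mathbb{D}}$ minus that point yields $|\psi|>|z_2|$, hence $A$ is zero-free on $\bar{\mathbb{D}}$, and the maximum principle applied to $-B/A$ closes the argument.

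Two polish remarks. First, the endgame becomes cleaner and case-free if you record the conclusion of the maximum principle as $|B(z_1,z_2)|<|A(z_1,z_2)|$ for every $z_2\in\bar{\mathbb{D}}$: then for any $z_3\in\bar{\mathbb{D}}$ one has $|z_3B|\leq|B|<|A|$, so $R=A+z_3B\neq0$ simultaneously in the cases $B=0$ and $B\neq0$, and the root $\psi$ never needs to be mentioned again. Second, the caveat in your last sentence undersells what you actually proved: the degenerate case $B\equiv0$ is already covered by your pointwise observation that $R=A\neq0$ wherever $B$ vanishes, and the only genuinely separate verification needed is the one you did make, namely that a pole of $\psi$ inside $\bar{\mathbb{D}}$ cannot be a common zero of $A$ and $B$, because condition (2) evaluated at $z_3=\eta z_2$ forbids it.
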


{Let $\Omega$ be a domain in $ \mathbb {C} ^m$ and $\bar{\Omega}$ denote its closure. Let $C\subseteq \bar{\Omega}.$ We define $C$ to be a boundary  for $\Omega$ if every function in the algebra $\mathcal{A}(\Omega)$ of functions continuous on $\bar{\Omega}$ and analytic on  $\Omega$, attains its maximum modulus on $C.$  The distinguished boundary of $\Omega$ is the smallest closed subset of $\bar{\Omega}$ on which every function in $\mathcal{A}(\Omega)$ attains its maximum modulus. We denote it as $b\Omega.$ The distinguished boundary of  $\Omega$ exists if $\bar{\Omega}$ is polynomially convex [Corollary $2.2.10$, \cite{Browder}]. The polynomially convexity of  $\Gamma_{E(3;3;1,1,1)} $ and $\Gamma_{E(3;2;1,2)}$ ensure the existence of its distinguished boundary.

Set $\tilde{x}_1(z_1)=\frac{x_2-z_1x_3}{1-x_1z_1},~\tilde{x}_2(z_1)=\frac{x_4-z_1x_5}{1-x_1z_1},~\tilde{x}_3(z_1)=\frac{x_6-z_1x_7}{1-x_1z_1},~\tilde{y}_1(z_2)=\frac{x_1-z_2x_3}{1-x_2z_2},~\tilde{y}_2(z_2)=\frac{x_4-z_2x_6}{1-x_2z_2},~\tilde{y}_3(z_2)=\frac{x_5-z_2x_7}{1-x_2z_2}, \tilde{z}_1(z_3)=\frac{x_1-z_3x_5}{1-x_4z_3},\tilde{z}_2(z_3)=\frac{x_2-z_3x_6}{1-x_4z_3}~{\rm{and}}~\tilde{z}_3(z_3)=\frac{x_3-z_3x_7}{1-x_4z_3}.$ 
 For each $(z_2,z_3)\in \mathbb{D}^2$ and $\textbf{x}=(x_1,x_2,x_3,x_4,x_5,x_6,x_7) \in \Gamma_{E(3;3;1,1,1)}$, it is important to observe the following:
	\begin{align}\label{psi11154}
		\Psi^{(1)}((z_2,z_3),\textbf{x})\nonumber&=\frac{x_1-z_2x_3-z_3x_5+z_2z_3x_7}{1-z_2x_2-z_3x_4+z_2z_3x_6}\\ \nonumber &= \frac{\frac{x_1-z_3x_5}{1-z_3x_4}- z_2\frac{x_3-z_3x_7}{1-z_3x_4}}{1- z_2\frac{x_2-z_3 x_6}{1-z_3 x_4}}
		\\ \nonumber&=\frac{\tilde{z}_1(z_3)-z_2\tilde{z}_3(z_3)}{1-z_2\tilde{z}_2(z_3)}\\&=\Psi(z_2,(\tilde{z}_1(z_3),\tilde{z}_2(z_3),\tilde{z}_3(z_3)))
	\end{align}
$${\rm{and }}$$
\begin{align}\label{psi111546}
		\Psi^{(1)}((z_2,z_3),\textbf{x})\nonumber&=\frac{x_1-z_2x_3-z_3x_5+z_2z_3x_7}{1-z_2x_2-z_3x_4+z_2z_3x_6}\\  &=\Psi(z_3,(\tilde{y}_1(z_2),\tilde{y}_2(z_2),\tilde{y}_3(z_2))). 
	\end{align}
In a similar manner,  for $\textbf{x}=(x_1,x_2,x_3,x_4,x_5,x_6,x_7) \in \Gamma_{E(3;3;1,1,1)}$, we note that
\begin{align}\label{psi1115467}
		\Psi^{(2)}((z_1,z_3),\textbf{x})=\Psi(z_3,(\tilde{x}_1(z_1),\tilde{x}_2(z_1),\tilde{x}_3(z_1)))=\Psi(z_1,(\tilde{z}_1(z_3),\tilde{z}_2(z_3),\tilde{z}_3(z_3)))~{\rm{for~all~}} z_1,z_3\in \mathbb D
	\end{align}
and \begin{align}\label{psi11154678}
		\Psi^{(3)}((z_1,z_2),\textbf{x})=\Psi(z_2,(\tilde{x}_2(z_1),\tilde{x}_1(z_1),\tilde{x}_3(z_1)))=\Psi(z_1,(\tilde{y}_2(z_2),\tilde{y}_1(z_2),\tilde{y}_3(z_2)))~{\rm{for~all~}} z_1,z_2\in \mathbb D
	\end{align} respectively. 	
Let $\mathcal U(3)$ be the set of all $3\times 3$ unitary matrix and $$K=\{\textbf{x}=(x_1,\ldots,x_7)\in \Gamma_{E(3;3;1,1,1)} :x_1=\bar{x}_6x_7, x_3=\bar{x}_4x_7,x_5=\bar{x}_2x_7 ~{\rm{and}}~|x_7|=1\}.$$ 
\begin{thm}[Theorem $4.4$, \cite{pal1}]\label{distinguish}
$\pi_{E(3;3;1,1,1)}(\mathcal U(3))\subseteq K.$
\end{thm}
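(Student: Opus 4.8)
The plan is to verify directly that $\mathbf{x} := \pi_{E(3;3;1,1,1)}(A)$ satisfies every defining condition of $K$ whenever $A \in \mathcal U(3)$. Two things must be checked: first, that $\mathbf{x} \in \Gamma_{E(3;3;1,1,1)}$, and second, that the algebraic relations $x_1 = \bar x_6 x_7$, $x_3 = \bar x_4 x_7$, $x_5 = \bar x_2 x_7$ together with $|x_7| = 1$ all hold.

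For the membership in $\Gamma_{E(3;3;1,1,1)}$, recall from \eqref{GammaE} that $\Gamma_{E(3;3;1,1,1)} = \{\pi_{E(3;3;1,1,1)}(A) : \mu_{E(3;3;1,1,1)}(A) \leq 1\}$, so it suffices to show $\mu_{E(3;3;1,1,1)}(A) \leq 1$ for unitary $A$. Since $\|A\| = 1$, for any $Y = \operatorname{diag}(z_1,z_2,z_3) \in E(3;3;1,1,1)$ with $\|Y\| < 1$ one has $\|AY\| \leq \|A\|\,\|Y\| < 1$, so $I - AY$ is invertible. Hence no $Y$ with $\|Y\| < 1$ can make $I - AY$ singular, and therefore the infimum of $\|Y\|$ over those $Y \in E(3;3;1,1,1)$ for which $I - AY$ is singular is at least $1$; this is exactly the statement $\mu_{E(3;3;1,1,1)}(A) \leq 1$.

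For the algebraic relations, the key observation is that the three coordinates $x_3, x_5, x_6$ are precisely the diagonal cofactors of $A$. Writing $C_{ij} = (-1)^{i+j} M_{ij}$ for the cofactors, one reads off from \eqref{GammaE} that $C_{11} = x_6$, $C_{22} = x_5$, and $C_{33} = x_3$, while $a_{11} = x_1$, $a_{22} = x_2$, $a_{33} = x_4$ and $\det A = x_7$. Unitarity now enters through the adjugate identity $A^{-1} = (\det A)^{-1}\operatorname{adj}(A)$ combined with $A^{-1} = A^*$: comparing the $(i,i)$ entries of these two expressions for $A^{-1}$ gives $\overline{a_{ii}} = (\det A)^{-1} C_{ii}$, that is, $C_{ii} = (\det A)\,\overline{a_{ii}}$ for $i = 1,2,3$. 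Substituting the identifications above yields $x_6 = x_7\bar x_1$, $x_5 = x_7\bar x_2$, and $x_3 = x_7\bar x_4$. Since $|x_7| = |\det A| = 1$, the last two relations are already in the required form $x_5 = \bar x_2 x_7$ and $x_3 = \bar x_4 x_7$, and conjugating the first relation and multiplying through by $x_7$ (using $|x_7|^2 = 1$) produces $x_1 = \bar x_6 x_7$. This exhausts the conditions defining $K$.

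The computation is essentially routine once the cofactor structure is recognized, so I do not expect a genuine obstacle. The only point demanding care is the bookkeeping: the labeling of the coordinates $x_3, x_5, x_6$ is not in index order, so one must confirm that the pairings $(x_1,x_6)$, $(x_2,x_5)$, $(x_4,x_3)$ coming from the diagonal cofactors match the pairings prescribed in the definition of $K$, and that the single asymmetric relation $x_1 = \bar x_6 x_7$ is correctly recovered from $x_6 = x_7\bar x_1$ by invoking $|x_7| = 1$.
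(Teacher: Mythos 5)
Your proposal is correct and complete. Note that this paper does not actually contain a proof of this statement: it is quoted as Theorem 4.4 of the cited reference, so there is no in-paper argument to compare against. Your two steps are exactly what is needed and both are sound: the Neumann-series bound $\|AY\|\leq\|A\|\|Y\|<1$ shows $I-AY$ is invertible for every $Y\in E(3;3;1,1,1)$ with $\|Y\|<1$, giving $\mu_{E(3;3;1,1,1)}(A)\leq 1$ and hence $\pi_{E(3;3;1,1,1)}(A)\in\Gamma_{E(3;3;1,1,1)}$ by definition; and the identification of $x_3,x_5,x_6$ with the diagonal cofactors $C_{33},C_{22},C_{11}$, combined with $A^*=A^{-1}=(\det A)^{-1}\operatorname{adj}(A)$, yields $C_{ii}=(\det A)\,\overline{a_{ii}}$, which is precisely $x_6=\bar{x}_1x_7$, $x_5=\bar{x}_2x_7$, $x_3=\bar{x}_4x_7$, and the first of these is equivalent to $x_1=\bar{x}_6x_7$ after conjugating and using $|x_7|=|\det A|=1$. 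Your bookkeeping of the pairings $(x_1,x_6)$, $(x_2,x_5)$, $(x_4,x_3)$ matches the definition of $K$ used in the paper, so no gap remains.
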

Let	\[ K_1 = \{x = (x_1, x_2, x_3, y_1, y_2) \in \Gamma_{E(3;2;1,2)} : x_1 = \overline{y}_2 x_3, x_2 = \overline{y}_1 x_3, |x_3| = 1 \}. \]
\begin{thm}[Theorem $4.9$, \cite{pal1}]\label{distinguish1}
$\pi_{E(3;2;1,2)}(\mathcal U(3))\subseteq K_1.$
\end{thm}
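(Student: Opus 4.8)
The plan is to take an arbitrary $U=(u_{ij})\in\mathcal U(3)$, read off the five coordinates of $\pi_{E(3;2;1,2)}(U)=(x_1,x_2,x_3,y_1,y_2)$ from \eqref{pi123}, and then verify the four requirements defining $K_1$: membership in $\Gamma_{E(3;2;1,2)}$, the normalization $|x_3|=1$, and the two relations $x_1=\overline{y}_2x_3$ and $x_2=\overline{y}_1x_3$. Substituting $A=U$ into \eqref{pi123} gives $x_1=u_{11}$, $x_3=\det U$, $y_1=u_{22}+u_{33}$, $y_2=u_{22}u_{33}-u_{23}u_{32}$, and $x_2=\det\left(\begin{smallmatrix}u_{11}&u_{12}\\u_{21}&u_{22}\end{smallmatrix}\right)+\det\left(\begin{smallmatrix}u_{11}&u_{13}\\u_{31}&u_{33}\end{smallmatrix}\right)$. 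The condition $|x_3|=1$ is immediate, since $|\det U|=1$ for a unitary $U$.

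For membership in $\Gamma_{E(3;2;1,2)}$, the first step is to record that the polynomial $R_3$ defined just before Proposition~\ref{bhch} is a disguised determinant: writing $Z(z_1,z_2)=\operatorname{diag}(z_1,z_2,z_2)\in E(3;2;1,2)$, a direct expansion of $\det\!\big(I_3-A\,Z(z_1,z_2)\big)$ in the monomials $1,z_2,z_2^2,z_1,z_1z_2,z_1z_2^2$ produces exactly the coefficients $1,-y_1,y_2,-x_1,x_2,-x_3$ appearing in $R_3(\tilde{\mathbf z};\pi_{E(3;2;1,2)}(A))$; in particular the two principal $2\times2$ minor sum and the full determinant emerge as the coefficients of $z_1z_2$ and $z_1z_2^2$. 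Granting this identity, for unitary $U$ and any $(z_1,z_2)\in\mathbb D^2$ we have $\|Z(z_1,z_2)\|_{\rm op}=\max(|z_1|,|z_2|)<1$, hence $\|U\,Z(z_1,z_2)\|_{\rm op}<1$ and $I_3-U\,Z(z_1,z_2)$ is invertible by a Neumann series argument. Thus $R_3(\tilde{\mathbf z};\pi_{E(3;2;1,2)}(U))\neq 0$ throughout $\mathbb D^2$, so $\pi_{E(3;2;1,2)}(U)\in\mathcal B_{E(3;2;1,2)}$, and the equivalence of $(1)$ and $(2)$ in Proposition~\ref{bhch1} places $\pi_{E(3;2;1,2)}(U)$ in $\Gamma_{E(3;2;1,2)}$.

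The two algebraic identities are where unitarity is used decisively, via the adjugate formula. Since $U^{-1}=U^{*}=\tfrac{1}{\det U}\operatorname{adj}(U)$, comparing the $(i,i)$ entries gives $\overline{u_{ii}}=\tfrac{1}{\det U}\,C_{ii}$, where $C_{ii}$ is the $(i,i)$ cofactor. Now $C_{11}=\det\left(\begin{smallmatrix}u_{22}&u_{23}\\u_{32}&u_{33}\end{smallmatrix}\right)=y_2$, so $y_2=\overline{u_{11}}\det U=\overline{x_1}\,x_3$; conjugating, multiplying by $x_3$, and using $|x_3|^2=1$ yields $x_1=\overline{y}_2\,x_3$. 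Likewise $C_{22}=\det\left(\begin{smallmatrix}u_{11}&u_{13}\\u_{31}&u_{33}\end{smallmatrix}\right)$ and $C_{33}=\det\left(\begin{smallmatrix}u_{11}&u_{12}\\u_{21}&u_{22}\end{smallmatrix}\right)$, whose sum is exactly $x_2$; hence $\overline{u_{22}}+\overline{u_{33}}=\tfrac{x_2}{\det U}$, that is $\overline{y}_1=x_2/x_3$, which gives $x_2=\overline{y}_1\,x_3$.

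I expect the main obstacle to be computational rather than conceptual: establishing the determinant identity $R_3(\tilde{\mathbf z};\pi_{E(3;2;1,2)}(A))=\det(I_3-A\,Z(z_1,z_2))$ requires a careful $3\times3$ expansion and some bookkeeping to recognize the principal-minor sum $x_2$ and the full determinant $x_3$ among the coefficients, while matching the cofactors $C_{ii}$ against the coordinates of $\pi_{E(3;2;1,2)}$ demands attention to the minor and sign conventions. The conceptual point worth isolating is that the three relations defining $K_1$ are precisely the transported form, under $\pi_{E(3;2;1,2)}$, of the single identity $U^{*}=U^{-1}$; once the coordinate dictionary and the determinant identity are in place, the proof closes immediately.
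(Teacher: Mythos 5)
Your proof is correct and complete. Note that this paper does not actually prove the statement — it is quoted as Theorem $4.9$ of \cite{pal1} — so there is no internal argument to compare against; but your route is the natural (and almost certainly the intended) one. The key identity you verify by expansion, $R_3(\tilde{\textbf{z}};\pi_{E(3;2;1,2)}(A))=\det\big(I_3-A\,\operatorname{diag}(z_1,z_2,z_2)\big)$, is precisely the structural fact built into the definition \eqref{Rz1} of the generalized tetrablock (the coefficients of $R^{(n;s;r_1,\ldots,r_s)}_{\textbf{x}}$ are signed sums of minors of $A$), so it holds by construction and your coefficient bookkeeping just confirms it in the case $(3;2;1,2)$. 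Given that identity, your Neumann-series argument ($\|U\,\operatorname{diag}(z_1,z_2,z_2)\|<1$ on $\mathbb{D}^2$) correctly yields $R_3\neq 0$ on $\mathbb{D}^2$, hence membership in $\mathcal{B}_{E(3;2;1,2)}=\Gamma_{E(3;2;1,2)}$ via Proposition \ref{bhch1}; and the adjugate form of unitarity, $\overline{u}_{ii}=C_{ii}/\det U$, with $C_{11}=y_2$ and $C_{22}+C_{33}=x_2$, gives exactly $x_1=\overline{y}_2x_3$ and $x_2=\overline{y}_1x_3$, while $|x_3|=|\det U|=1$ is immediate. Your closing observation — that the defining relations of $K_1$ are the image under $\pi_{E(3;2;1,2)}$ of the single identity $U^*=U^{-1}$ — is the right conceptual summary, and it is the same mechanism that underlies the companion statement $\pi_{E(3;3;1,1,1)}(\mathcal{U}(3))\subseteq K$ (Theorem \ref{distinguish}).
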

Set $\tilde{\textbf{x}}^{(z_1)}=(\tilde{x_1}(z_1),\tilde{x_2}(z_1),\tilde{x_3}(z_1)), \tilde{\textbf{y}}^{(z_2)}= (\tilde{y_1}(z_2),\tilde{y_2}(z_2),\tilde{y_3}(z_2))$ and $\tilde{\textbf{z}}^{(z_3)}=(\tilde{z_1}(z_3),\tilde{z_2}(z_3),\tilde{z_3}(z_3)).$

\begin{thm}[Theorem $4.5$, \cite{pal1}]\label{relation bw}
	Let $\textbf{x}\in \mathbb{C}^7$. Then the following conditions are equivalent:

	\begin{enumerate}
		\item $\textbf{x}\in K=\{(x_1,\dots,x_7)\in \Gamma_{E(3;3;1,1,1)}:x_1=\bar{x}_6x_7,x_2=\bar{x}_5x_7,x_4=\bar{x}_3x_7,|x_7|=1\}$.
		
		\item 	$\begin{cases}
			\tilde{\textbf{z}}^{(z_3)}\in b\Gamma_{E(2;2;1,1)}~\rm{and}~\tilde{\textbf{y}}^{(z_2)}\in b\Gamma_{E(2;2;1,1)} ~~{\rm{for~~all}}~z_2,z_3\in \mathbb{T}~\rm{with}~|x_4|<1 ~\rm{and}~|x_2|<1 \\
			\tilde{\textbf{z}}^{(z_3)}\in b\Gamma_{E(2;2;1,1)} ~~{\rm{for~~all}} ~z_3\in \mathbb{D}~\rm{with}~|x_4|=1\\
	                \tilde{\textbf{y}}^{(z_2)}\in b\Gamma_{E(2;2;1,1)} ~~{\rm{for~~all}}~z_2\in \mathbb{D}~\rm{with}~|x_2|=1;
		\end{cases}$
	    \item 	$\begin{cases}
	\tilde{\textbf{x}}^{(z_1)}\in b\Gamma_{E(2;2;1,1)}~\rm{and}~\tilde{\textbf{y}}^{(z_2)}\in b\Gamma_{E(2;2;1,1)} ~~{\rm{for~~all}}~z_1,z_2\in \mathbb{T}~\text{with}~|x_1|<1 ~\rm{and}~|x_2|<1 \\
		  \tilde{\textbf{x}}^{(z_1)}\in b\Gamma_{E(2;2;1,1)}~~{\rm{for~~all}}~z_1\in \mathbb{D}~\rm{with}~|x_1|=1\\
			 \tilde{\textbf{y}}^{(z_2)}\in b\Gamma_{E(2;2;1,1)} ~~{\rm{for~~all}}~z_2\in \mathbb{D}~\rm{with}~|x_2|=1;
	     \end{cases}$
		\item 	$\begin{cases}
			\tilde{\textbf{z}}^{(z_3)}\in b\Gamma_{E(2;2;1,1)}~\rm{and}~\tilde{\textbf{x}}^{(z_1)}\in b\Gamma_{E(2;2;1,1)} ~~{\rm{for~~all}}~z_2,z_3\in \mathbb{T}~\rm{with}~|x_4|<1 ~\rm{and}~|x_1|<1 \\
		\tilde{\textbf{z}}^{(z_3)}\in b\Gamma_{E(2;2;1,1)}~~{\rm{for~~all}}~z_3\in \mathbb{D}~\rm{with}~|x_1|=1\\
			\tilde{\textbf{x}}^{(z_1)}\in b\Gamma_{E(2;2;1,1)} ~~{\rm{for~~all}}~z_1\in \mathbb{D}~\rm{with}~|x_4|=1.
	        	\end{cases}$
	   	\end{enumerate}
\end{thm}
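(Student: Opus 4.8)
The plan is to establish the equivalence $(1)\Leftrightarrow(2)$ in full and then to deduce $(1)\Leftrightarrow(3)$ and $(1)\Leftrightarrow(4)$ from the $S_3$-symmetry of the whole set-up. Indeed, both $K$ and the map $\pi_{E(3;3;1,1,1)}$ are invariant under simultaneously permuting the diagonal coordinates $x_1=a_{11},x_2=a_{22},x_4=a_{33}$, their complementary $2\times2$ minors $x_6,x_5,x_3$, and the fixed determinant $x_7$; moreover the three families $\tilde{\textbf{x}}^{(z_1)},\tilde{\textbf{y}}^{(z_2)},\tilde{\textbf{z}}^{(z_3)}$ are precisely the slices dual to $a_{11},a_{22},a_{33}$. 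Since conditions $(2),(3),(4)$ are the images of one another under this action (each using two of the three slice families, with the matching case split on the two relevant diagonal coordinates), it suffices to prove $(1)\Leftrightarrow(2)$. The two external ingredients are: the description of the distinguished boundary of the tetrablock \cite{Abouhajar}, namely $(u_1,u_2,u_3)\in b\Gamma_{E(2;2;1,1)}$ if and only if $(u_1,u_2,u_3)\in\Gamma_{E(2;2;1,1)}$ and $|u_3|=1$ (whence automatically $u_1=\bar{u}_2u_3$ and $|u_2|\le1$); and the slice identities \eqref{psi11154}--\eqref{psi111546}, read together with the closure analogue of Theorem \ref{mainthm}(4),(4$''$), which identify $\textbf{x}\in\Gamma_{E(3;3;1,1,1)}$ with the membership of $\tilde{\textbf{z}}^{(z_3)}$ and $\tilde{\textbf{y}}^{(z_2)}$ in $\Gamma_{E(2;2;1,1)}$.

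For $(1)\Rightarrow(2)$, assume $\textbf{x}\in K$. Then $\textbf{x}\in\Gamma_{E(3;3;1,1,1)}$, so the slice characterisation already places $\tilde{\textbf{z}}^{(z_3)}$ and $\tilde{\textbf{y}}^{(z_2)}$ in $\Gamma_{E(2;2;1,1)}$, and by the tetrablock description it only remains to verify that their determinant coordinates are unimodular. Substituting $x_4=\bar{x}_3x_7$ and $|x_7|=1$ into $\tilde{z}_3(z_3)=\tfrac{x_3-z_3x_7}{1-x_4z_3}$ exhibits it as $-x_7$ times the Blaschke factor $\tfrac{z_3-x_3\bar{x}_7}{1-\overline{x_3\bar{x}_7}z_3}$, so $|\tilde{z}_3(z_3)|=1$ for all $z_3\in\mathbb{T}$ when $|x_4|<1$; when $|x_4|=1$ (so $|x_3|=1$) the same substitution collapses it to the unimodular constant $\tilde{z}_3(z_3)\equiv x_3$ for $z_3\in\mathbb{D}$. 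In either case $|\tilde{z}_3|=1$, hence $\tilde{\textbf{z}}^{(z_3)}\in b\Gamma_{E(2;2;1,1)}$. The computation for $\tilde{\textbf{y}}^{(z_2)}$, using $x_2=\bar{x}_5x_7$ on $\tilde{y}_3(z_2)=\tfrac{x_5-z_2x_7}{1-x_2z_2}$, is identical, and the cases $|x_4|<1$ versus $|x_4|=1$ (and likewise for $x_2$) reproduce exactly the three branches of $(2)$.

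For $(2)\Rightarrow(1)$, the hypothesis first yields $\textbf{x}\in\Gamma_{E(3;3;1,1,1)}$: the slices lie in $\Gamma_{E(2;2;1,1)}$ on $\mathbb{T}$ (or on $\mathbb{D}$ in the degenerate branches), and the passage from the torus to $\bar{\mathbb{D}}$ needed for the closure slice characterisation is supplied by the maximum-modulus identity $\|\Psi^{(1)}(\cdot,\textbf{x})\|_{H^{\infty}(\bar{\mathbb{D}}^2)}=\|\Psi^{(1)}(\cdot,\textbf{x})\|_{H^{\infty}(\mathbb{T}^2)}$ of Theorem \ref{mainthm}(3), applied coordinatewise. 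To recover the algebraic relations I invoke the rigidity of unimodular rational functions: $|\tilde{z}_3(z_3)|=1$ on $\mathbb{T}$ forces $\tfrac{x_3-z_3x_7}{1-x_4z_3}$ to be a Blaschke factor of degree $\le1$, which pins down $|x_7|=1$ and $x_4=\bar{x}_3x_7$. Feeding these into the distinguished-boundary relation $\tilde{z}_1(z_3)=\overline{\tilde{z}_2(z_3)}\,\tilde{z}_3(z_3)$ and comparing the coefficients of $z_3^{-1},z_3^0,z_3^1$ in the resulting Laurent identity yields $x_1=\bar{x}_6x_7$ and $x_2=\bar{x}_5x_7$, so $\textbf{x}\in K$. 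In the degenerate branch the determinant coordinate is a unimodular constant, and the same relation, now an identity of a holomorphic against an antiholomorphic function, forces the remaining slice coordinates to be constant and produces the product relations $x_5=x_1x_4$, $x_6=x_2x_4$, from which the membership relations follow again.

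The main obstacle is the degenerate stratum where a diagonal coordinate is unimodular, that is $|x_4|=1$ or $|x_2|=1$. There the corresponding slice cannot be continued to $\mathbb{T}$, its determinant coordinate degenerates to a constant, and one must argue entirely over $\mathbb{D}$ while checking both that the collapsed slice still lands in $b\Gamma_{E(2;2;1,1)}$ and that it still determines all the relations defining $K$. A secondary technical point is making the torus-to-disc continuation uniform in the frozen variable, and for this the equality of the $H^{\infty}(\bar{\mathbb{D}}^2)$ and $H^{\infty}(\mathbb{T}^2)$ norms recorded in Theorem \ref{mainthm} is exactly the tool required.
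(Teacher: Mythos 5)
You should first be aware that the paper never proves this statement: it is imported verbatim from \cite{pal1}, so your proposal can only be judged on its own merits. Its architecture (prove $(1)\Leftrightarrow(2)$, transport to $(3)$ and $(4)$ by permutation symmetry, use the Abouhajar--White--Young description of $b\Gamma_{E(2;2;1,1)}$) is reasonable, and your $(1)\Rightarrow(2)$ direction and the degenerate branch of $(2)\Rightarrow(1)$ are essentially sound. But the non-degenerate branch of $(2)\Rightarrow(1)$ contains a genuine gap: you claim that $|\tilde z_3(z_3)|=1$ for all $z_3\in\mathbb{T}$ ``pins down $|x_7|=1$ and $x_4=\bar x_3x_7$''. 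It does not. Comparing Fourier coefficients in $|x_3-z_3x_7|^2=|1-x_4z_3|^2$ on $\mathbb{T}$ gives $x_4=\bar x_3x_7$ together with $(1-|x_3|^2)(1-|x_7|^2)=0$, so the alternative $|x_3|=1$, $x_7=x_3x_4$, $|x_7|=|x_4|<1$ (where $\tilde z_3\equiv x_3$ is a unimodular constant) survives, and it is realized by genuine points of $\Gamma_{E(3;3;1,1,1)}$: for $\textbf{x}=(0,0,c,r,0,0,cr)$ with $|c|=1$ and $0<r<1$, which is $\pi_{E(3;3;1,1,1)}$ of the norm-one matrix with rows $(0,1,0)$, $(-c,0,0)$, $(0,0,r)$, one has $|x_2|<1$, $|x_4|<1$ and $\tilde{\textbf{z}}^{(z_3)}\equiv(0,0,c)\in b\Gamma_{E(2;2;1,1)}$ for every $z_3$, yet $|x_7|=r<1$, so $\textbf{x}\notin K$. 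Since your argument in this direction uses only the $\tilde{\textbf{z}}^{(z_3)}$-family, it would place this point in $K$, which is false; so the omission is not cosmetic. The repair must use the second family supplied by hypothesis $(2)$: from $|\tilde y_3|=1$ on $\mathbb{T}$ one gets $x_2=\bar x_5x_7$ and $(1-|x_5|^2)(1-|x_7|^2)=0$, while clearing denominators in $\tilde z_1=\overline{\tilde z_2}\,\tilde z_3$ (using $\bar z_3=1/z_3$) gives the polynomial identity $(x_1-z_3x_5)(z_3-\bar x_4)=(\bar x_2z_3-\bar x_6)(x_3-z_3x_7)$, whose $z_3^2$-coefficient yields $x_5=\bar x_2x_7$; if $|x_7|<1$ then the $\tilde{\textbf{y}}$-dichotomy forces $|x_5|=1$, contradicting $|x_5|=|x_2||x_7|<1$. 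Only after this may you conclude $|x_7|=1$ and run your coefficient comparison, which then correctly produces $x_1=\bar x_6x_7$ and $x_2=\bar x_5x_7$.

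Two further defects. First, to obtain $\textbf{x}\in\Gamma_{E(3;3;1,1,1)}$ from the boundary slice data you invoke the norm identity $\|\Psi^{(1)}(\cdot,\textbf{x})\|_{H^{\infty}(\bar{\mathbb{D}}^2)}=\|\Psi^{(1)}(\cdot,\textbf{x})\|_{H^{\infty}(\mathbb{T}^2)}$ ``of Theorem \ref{mainthm}(3)''; this is circular, since that identity is stated only for points already in $G_{E(3;3;1,1,1)}$, and no point of $K$ ever lies in $G_{E(3;3;1,1,1)}$ (all coordinates of points of $G_{E(3;3;1,1,1)}$ have modulus $<1$ by Lemma \ref{lem-2}). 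What you actually need is the classical maximum principle applied to $z_3\mapsto\Psi^{(1)}((z_2,z_3),\textbf{x})$, and to apply it you must first show the denominator $(1-z_2x_2)(1-z_3\tilde y_2(z_2))$ is zero-free on $\mathbb{D}^2$; this follows from $|\tilde y_2|\leq 1$ on $\mathbb{T}$ (a consequence of $\tilde{\textbf{y}}^{(z_2)}\in b\Gamma_{E(2;2;1,1)}$) together with the strict maximum principle, because $\tilde y_2$ cannot be a unimodular constant when $|x_4|<1$. Second, your symmetry reduction can only produce item $(4)$ with the case split ``$\tilde{\textbf{z}}^{(z_3)}$, $z_3\in\mathbb{D}$, when $|x_4|=1$'' and ``$\tilde{\textbf{x}}^{(z_1)}$, $z_1\in\mathbb{D}$, when $|x_1|=1$'', whereas the printed statement interchanges these two moduli (and writes $z_2,z_3\in\mathbb{T}$ where $z_1,z_3$ is meant); the printed version appears to be a typographical slip, but you should flag this rather than assert that symmetry yields $(4)$ exactly as stated. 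As written, the central step of $(2)\Rightarrow(1)$ proves a false statement, so the proposal is not yet a proof.
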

Set $p_1(z)=\frac{2x_1-zx_2}{2-y_1z},p_2(z)=\frac{y_1-2zy_2}{2-y_1z}~{\rm{and}}~p_3(z)=\frac{x_2-2zx_3}{2-y_1z}.$ 
\begin{thm}[Theorem $4.9$,\cite{pal1}]\label{relation bw1}
	Let $\textbf{x}\in \mathbb{C}^5$. Then the following conditions are equivalent:

	\begin{enumerate}
		\item $\tilde{\textbf{x}}\in K_1=\{(x_1,x_2,x_3,y_1,y_2)\in \Gamma_{E(3;2;1,2)}:x_1=\bar{y}_2x_3, x_2=\bar{y}_1x_3, |x_3|=1\}.	$	
		\item 	$\begin{cases}
			(p_1(z),p_2(z),p_3(z))\in b\Gamma_{E(2;2;1,1)} ~~{\rm{for~~all}}~z\in \mathbb{T}~\rm{with}~|y_1|<2 \\
			(p_1(z),p_2(z),p_3(z))\in b\Gamma_{E(2;2;1,1)} ~~{\rm{for~~all}}~z\in \mathbb{D}~\rm{with}~|y_1|=2.
		\end{cases}$
		\end{enumerate}
\end{thm}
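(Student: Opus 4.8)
The plan is to reduce both statements to explicit algebraic identities among $x_1,x_2,x_3,y_1,y_2$, using the (classical) description of the distinguished boundary of the tetrablock: $(a_1,a_2,a_3)\in b\Gamma_{E(2;2;1,1)}$ if and only if $|a_3|=1$, $a_1=\bar a_2a_3$ and $|a_2|\le1$, with $a_3$ the determinant coordinate. Two preliminary remarks organise the argument. First, setting $z_1=0$ in Proposition \ref{bhch1}(4) shows that every $\tilde{\textbf x}\in\Gamma_{E(3;2;1,2)}$ has $(y_1,y_2)\in\Gamma_{E(2;1;2)}$, so in particular $|y_1|\le2$; moreover $p_2(z)=(y_1-2zy_2)/(2-y_1z)$ is, up to sign, the standard M\"obius function whose modulus on $\bar{\mathbb D}$ characterises $\Gamma_{E(2;1;2)}$, so that $|p_2(z)|\le1$ on the relevant $z$-set is equivalent to $(y_1,y_2)\in\Gamma_{E(2;1;2)}$. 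Second, the common denominator $2-y_1z$ of $p_1,p_2,p_3$ vanishes only at $z=2/y_1$, a point of modulus $2/|y_1|\ge1$; it lies on $\mathbb T$ precisely when $|y_1|=2$ and outside $\bar{\mathbb D}$ when $|y_1|<2$, which is exactly why condition (2) evaluates on $\mathbb T$ in the first case and on $\mathbb D$ in the second.

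For $(1)\Rightarrow(2)$ I substitute $x_1=\bar y_2x_3$, $x_2=\bar y_1x_3$, $|x_3|=1$ into $p_1,p_2,p_3$. When $|y_1|<2$ and $z\in\mathbb T$, the one-line identity $|\bar y_1-2z|=|2-y_1z|$ (valid on $\mathbb T$) gives $|p_3(z)|=|x_3|=1$; the sign cancellations $2\bar y_2-z\bar y_1=-(z\bar y_1-2\bar y_2)$ and $\bar y_1-2z=-(2z-\bar y_1)$ give $p_1(z)=\overline{p_2(z)}\,p_3(z)$; and $|p_2(z)|\le1$ comes from the first remark. When $|y_1|=2$, the symmetrized-bidisc constraint forces $y_2=y_1^2/4$, whereupon $p_1,p_2,p_3$ all reduce to constants of modulus $1$ satisfying $p_1=\bar p_2p_3$ and $|p_2|\le1$; in either case the slice lies in $b\Gamma_{E(2;2;1,1)}$.

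For $(2)\Rightarrow(1)$ I reverse the computation. In the case $|y_1|<2$, equating constant terms and coefficients of $\bar z$ in $|x_2-2zx_3|^2=|2-y_1z|^2$ (which is $|p_3|=1$ on $\mathbb T$) yields $\bar x_2x_3=y_1$ and $|x_2|^2+4|x_3|^2=4+|y_1|^2$; eliminating $|y_1|$ gives $(|x_3|^2-1)(4-|x_2|^2)=0$, and the bound $|p_2|\le1$ excludes the branch $|x_2|=2$, leaving $|x_3|=1$ and $x_2=\bar y_1x_3$, after which $p_1=\bar p_2p_3$ yields $x_1=\bar y_2x_3$. In the case $|y_1|=2$ and $z\in\mathbb D$, $p_3$ is holomorphic with constant modulus $1$, so the maximum principle makes it constant, and reading off numerator and denominator gives $|x_3|=1$ and $x_2=\bar y_1x_3$ directly; the pointwise identity $p_1=\overline{p_2}\,p_3$---an equality of a holomorphic and an antiholomorphic function---then forces $p_1,p_2$ constant and hence $x_1=\bar y_2x_3$. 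Finally, in both cases the derived relations make $\Psi_3(\cdot,\tilde{\textbf x})=x_3(z_2^2-\bar y_1z_2+\bar y_2)/(y_2z_2^2-y_1z_2+1)$ unimodular on $\mathbb T$, while $(y_1,y_2)\in\Gamma_{E(2;1;2)}$ (from $|p_2|\le1$) keeps $y_2z_2^2-y_1z_2+1$ zero-free on $\mathbb D$, so Proposition \ref{bhch1}(3) places $\tilde{\textbf x}$ in $\Gamma_{E(3;2;1,2)}$ and therefore in $K_1$.

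I expect the degenerate stratum $|y_1|=2$ to be the main obstacle: there the torus identities lose meaning because $p_1,p_2,p_3$ have a pole on $\mathbb T$, so the coefficient-matching must be replaced by the maximum-principle/constancy argument on $\mathbb D$, and one must separately certify genuine membership in $\Gamma_{E(3;2;1,2)}$ rather than mere satisfaction of the boundary identities. A secondary point requiring care is the spurious alternative $|x_2|=2$ in the converse, which is eliminated precisely by the inequality $|p_2|\le1$ built into membership in $b\Gamma_{E(2;2;1,1)}$.
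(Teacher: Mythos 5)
A preliminary remark: the paper itself does not prove Theorem \ref{relation bw1}; it is imported verbatim from \cite{pal1} (Theorem 4.9 there) and used as a black box, so there is no in-paper proof to compare your argument against. I can only assess your proposal on its own terms. Your overall strategy is sound: reduce both sides to the Abouhajar--White--Young description of $b\Gamma_{E(2;2;1,1)}$ (namely $a_1=\bar a_2 a_3$, $|a_3|=1$, $|a_2|\leq 1$), do coefficient matching on $\mathbb{T}$ when $|y_1|<2$, use the constancy/open-mapping argument on the stratum $|y_1|=2$, and certify membership in $\Gamma_{E(3;2;1,2)}$ at the end via Proposition \ref{bhch1}. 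The computations in $(1)\Rightarrow(2)$, and in the $|y_1|=2$ case of $(2)\Rightarrow(1)$, check out.

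There is, however, one genuinely flawed step in $(2)\Rightarrow(1)$, case $|y_1|<2$: you claim that the bound $|p_2|\leq 1$ excludes the branch $|x_2|=2$ of the dichotomy $(|x_3|^2-1)(4-|x_2|^2)=0$. It cannot, because $p_2(z)=(y_1-2zy_2)/(2-y_1z)$ involves only $y_1,y_2$ and so carries no information whatsoever about $x_2$. Concretely, the data $x_3=0$, $x_2=2$, $y_1=0$, $|y_2|\leq 1$ satisfies both $|p_3|\equiv 1$ and $|p_2|\leq 1$ on $\mathbb{T}$ while $|x_3|\neq 1$; so the two conditions you use at that point do not suffice. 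What actually kills this branch is the third condition $p_1=\overline{p_2}\,p_3$, which you invoke only afterwards. Clearing denominators (using $\bar z=1/z$ on $\mathbb{T}$) turns it into the polynomial identity
\[
(2x_1-zx_2)(2z-\bar y_1)=(\bar y_1 z-2\bar y_2)(x_2-2zx_3),
\]
whose $z^2$-coefficient gives $x_2=\bar y_1 x_3$ and whose $z$-coefficient gives $x_1=\bar y_2 x_3$. Combining $x_2=\bar y_1x_3$ with $\bar x_2x_3=y_1$ (which you correctly extracted from $|p_3|\equiv 1$) yields $y_1\,(|x_3|^2-1)=0$: if $y_1\neq 0$ then $|x_3|=1$ at once, and if $y_1=0$ then $x_2=0$ and the constant-term identity $|x_2|^2+4|x_3|^2=4+|y_1|^2$ forces $|x_3|=1$. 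So your proof is repairable with tools already present in it, but the exclusion must be rerouted through the identity $p_1=\overline{p_2}\,p_3$ rather than attributed to $|p_2|\leq 1$.
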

It is important to notice  that a point  $(x_1,x_3+\eta x_5,\eta x_7,x_2+\eta x_4,\eta x_6)\in K_1 ~{\rm{for~ all }}~\eta \in \mathbb T$  if and only if $\textbf{x}=(x_1,x_2,x_3,x_4,x_5,x_6,x_7)\in K$  \cite{pal1}.
By Theorem \ref{relation bw} (respectively, Theorem \ref{relation bw1}) and  Theorem \ref{distinguish} (respectively, Theorem \ref{distinguish1}), it is also noteworthy to determine whether a point $\textbf{x}\in b \Gamma_{E(3;3;1,1,1)}$ (respectively,  $\textbf{x}\in b \Gamma_{E(3;2;1,2)}$) is equivalent to $\textbf{x}\in K$ (respectively, $\textbf{x}\in K_1$) or not. We propose the following conjecture:

\noindent{\textbf{Conjecture}:} The subset $K$ ( respectively, $K_1$) of $\Gamma_{E(3;3;1,1,1)}$ ( respectively, $\Gamma_{E(3;2;1,2)}$ ) is distinguish boundary of $\mathcal A(G_{E(3;3;1,1,1)})$ ( respectively, $\mathcal A(G_{E(3;2;1,2)})$). 

We begin with the following definitions that will be essential for our discussion.

\begin{defn}\label{def-1}
		\begin{enumerate}
			\item If $\Gamma_{E(3; 3; 1, 1, 1)}$ is a spectral set for $\textbf{T} = (T_1, \dots, T_7)$, then a $7$-tuple of commuting bounded operators $\textbf{T}$ defined on a  Hilbert space $\mathcal{H}$ is referred to as a \textit{$\Gamma_{E(3; 3; 1, 1, 1)}$-contraction}.
			
			\item Let $(S_1, S_2, S_3)$ and $(\tilde{S}_1, \tilde{S}_2)$ be tuples of commuting bounded operators defined on a Hilbert space $\mathcal{H}$ with $S_i\tilde{S}_j = \tilde{S}_jS_i$ for $1 \leqslant i \leqslant 3$ and $1 \leqslant j \leqslant 2$. We say that  $\textbf{S} = (S_1, S_2, S_3, \tilde{S}_1, \tilde{S}_2)$ is a $\Gamma_{E(3; 2; 1, 2)}$-contraction if $ \Gamma_{E(3; 2; 1, 2)}$ is a spectral set for $\textbf{S}$.
			
\item A commuting $7$-tuple of normal operators $\textbf{N} = (N_1, \dots, N_7)$ defined on a Hilbert space $\mathcal{H}$ is  a \textit{$\Gamma_{E(3; 3; 1, 1, 1)}$-unitary} if the Taylor joint spectrum $\sigma(\textbf{N})$ is contained in the set $K$.  		
			\item A commuting $5$-tuple of normal operators $\textbf{M} = (M_1, M_2, M_3, \tilde{M}_1, \tilde{M}_2)$ on a Hilbert space $\mathcal{H}$ is referred as a \textit{$\Gamma_{E(3; 2; 1, 2)}$-unitary} if the Taylor joint spectrum $\sigma(\textbf{M})$ is contained in $K_1.$ 
					
\item A  $\Gamma_{E(3; 3; 1, 1, 1)}$-isometry (respectively, $\Gamma_{E(3; 2; 1, 2)}$-isometry) is defined as the restriction of a $\Gamma_{E(3; 3; 1, 1, 1)}$-unitary (respectively, $\Gamma_{E(3; 2; 1, 2)}$-unitary)  to a joint invariant subspace. In other words, a $\Gamma_{E(3; 3; 1, 1, 1)}$-isometry ( respectively, $\Gamma_{E(3; 2; 1, 2)}$-isometry) is a $7$-tuple (respectively, $5$-tuple) of commuting bounded operators that possesses simultaneous extension to a \textit{$\Gamma_{E(3; 3; 1, 1, 1)}$-unitary} (respectively, \textit{$\Gamma_{E(3; 2; 1, 2)}$-unitary}). It is important to observe that a $\Gamma_{E(3; 3; 1, 1, 1)}$-isometry (respectively, $\Gamma_{E(3; 2; 1, 2)}$-isometry ) $\textbf{V}=(V_1\dots,V_7)$ (respectively,  $\textbf{W}=(W_1,W_2,W_3,\tilde{W}_1,\tilde{W}_2)$) consists of commuting subnormal operators with $V_7$ (respectively, $W_3$)  is an isometry.

\item  We say that $\textbf{V}$ (respectively, $\textbf{W}$)   is a pure $\Gamma_{E(3; 3; 1, 1, 1)}$-isometry (respectively, pure $\Gamma_{E(3; 2; 1, 2)}$-isometry) if $V_7$ (respectively, $W_3$) is a  pure isometry,  that is, a shift of some multiplicity.			
\end{enumerate}
	\end{defn}	

In Section 2, we discuss various properties of $\Gamma_{E(3; 3; 1, 1, 1)}$-contractions and $\Gamma_{E(3; 2; 1, 2)}$-contractions, and investigate the fundamental equations associated with these operators. Section 3 is devoted to the study of $\Gamma_{E(3; 3; 1, 1, 1)}$-unitaries and $\Gamma_{E(3; 2; 1, 2)}$-unitaries, where we examine their properties and establish relationships between the two classes of unitaries. Section 4 focuses on $\Gamma_{E(3; 3; 1, 1, 1)}$-isometries and $\Gamma_{E(3; 2; 1, 2)}$-isometries. In this section, we present the Wold decomposition for both types of isometries. We also provide the structure theorem for pure $\Gamma_{E(3; 3; 1, 1, 1)}$-isometries and pure $\Gamma_{E(3; 2; 1, 2)}$-isometries.

	\section{$\Gamma_{E(3; 3; 1, 1, 1)}$-Contractions and $\Gamma_{E(3; 2; 1, 2)}$-Contractions}
	
In this section, we discuss $\Gamma_{E(3; 3; 1, 1, 1)}$-contractions and $\Gamma_{E(3; 2; 1, 2)}$-contractions. We start with few lemmas which will be helpful in the characterization of $\Gamma_{E(3; 3; 1, 1, 1)}$-contractions.

\begin{lem}\label{lem-2}
		Let $(x_1, \dots, x_7)$ be a point of  $ G_{E(3; 3; 1, 1, 1)}$. Then $|x_i| < 1$ for $1 \leqslant i \leqslant 7$.
	\end{lem}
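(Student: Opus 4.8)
The plan is to read the conclusion off the non-vanishing criterion in Theorem \ref{mainthm}. Since $\textbf{x}=(x_1,\dots,x_7)\in G_{E(3;3;1,1,1)}$, the equivalence of parts (1) and (2) there tells me that
\[
R_{\textbf{x}}^{(3;3;1,1,1)}(z_1,z_2,z_3)=1-x_1z_1-x_2z_2+x_3z_1z_2-x_4z_3+x_5z_1z_3+x_6z_2z_3-x_7z_1z_2z_3\neq0
\]
for all $z_1,z_2,z_3\in\bar{\mathbb{D}}$. Conceptually, $R_{\textbf{x}}^{(3;3;1,1,1)}(z_1,z_2,z_3)=\det\bigl(I-A\,\operatorname{diag}(z_1,z_2,z_3)\bigr)$ for a matrix $A$ realizing $\textbf{x}$, and each coordinate $x_i$ is a principal minor of $A$; morally the whole statement is just that every principal submatrix of $A$ has spectral radius below $1$, so the product of its eigenvalues again lies in $\mathbb{D}$. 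I will phrase the argument purely in terms of $R_{\textbf{x}}^{(3;3;1,1,1)}$, with no reference to $A$.

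First I would record the single elementary fact used throughout: if $p(z)=1+c_1z+\cdots+c_dz^d$ has no zero in $\bar{\mathbb{D}}$, then $|c_d|<1$. If $c_d=0$ this is trivial; otherwise write $p(z)=c_d\prod_{k=1}^d(z-r_k)$, where non-vanishing on the closed disk forces $|r_k|>1$ for every $k$, and evaluating at $z=0$ gives $1=|p(0)|=|c_d|\prod_k|r_k|$, hence $|c_d|=\prod_k|r_k|^{-1}<1$.

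Next I would specialize the three variables so that each $x_i$ appears as the top-degree coefficient of such a polynomial. Setting two of the variables to $0$ leaves the degree-one expressions $1-x_1z$, $1-x_2z$, $1-x_4z$, which give $|x_1|,|x_2|,|x_4|<1$. Setting one variable to $0$ and the remaining two equal to a common $z$ leaves the quadratics $1-(x_1+x_2)z+x_3z^2$, $1-(x_1+x_4)z+x_5z^2$, $1-(x_2+x_4)z+x_6z^2$, whose leading coefficients $x_3,x_5,x_6$ therefore satisfy $|x_3|,|x_5|,|x_6|<1$. Finally $z_1=z_2=z_3=z$ produces $1-(x_1+x_2+x_4)z+(x_3+x_5+x_6)z^2-x_7z^3$, with leading coefficient $-x_7$, so $|x_7|<1$.

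I do not anticipate a genuine obstacle; the argument is elementary once Theorem \ref{mainthm}(2) is available. The only points deserving a word of care are the degenerate cases in which a leading coefficient vanishes (already absorbed into the elementary fact, since then the corresponding $|x_i|=0$) and the strictness of each inequality, which holds precisely because the hypothesis is non-vanishing on the \emph{closed} disk $\bar{\mathbb{D}}$, pushing every reciprocal root strictly outside the unit circle.
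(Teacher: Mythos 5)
Your proof is correct, but it takes a genuinely different route from the paper's. The paper deduces Lemma \ref{lem-2} from the fibered characterization $(4^{\prime})$ of Theorem \ref{mainthm}: since $\left(\frac{x_2-z_1x_3}{1-x_1z_1},\frac{x_4-z_1x_5}{1-x_1z_1},\frac{x_6-z_1x_7}{1-x_1z_1}\right)$ lies in $G_{E(2;2;1,1)}$ for every $z_1\in\bar{\mathbb{D}}$, the Abouhajar--White--Young characterization of the tetrablock (Theorem 2.2 of \cite{Abouhajar}) yields that $(x_1,x_2,x_3)$, $(x_1,x_4,x_5)$ and $(x_1,x_6,x_7)$ all lie in $G_{E(2;2;1,1)}$, and every coordinate of a tetrablock point has modulus less than one. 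You instead work directly with characterization $(2)$ of Theorem \ref{mainthm}, restricting the nonvanishing polynomial $R^{(3;3;1,1,1)}_{\textbf{x}}$ to coordinate axes and partial diagonals of $\bar{\mathbb{D}}^3$ so that each $x_i$ appears (up to sign) as the top coefficient of a one-variable polynomial with constant term $1$ and no zeros in $\bar{\mathbb{D}}$; your root-product lemma then gives $|x_i|<1$, with the degenerate case of a vanishing leading coefficient correctly absorbed and the strictness correctly traced to nonvanishing on the \emph{closed} disc. All of your specializations are legitimate instances of the hypothesis, so there is no gap. What your argument buys is self-containedness: beyond the implication $(1)\Rightarrow(2)$ of Theorem \ref{mainthm} it uses nothing but elementary facts about polynomial roots, and in particular no tetrablock theory. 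What the paper's argument buys is economy within its own framework: the paper is organized around reducing $G_{E(3;3;1,1,1)}$ to the tetrablock, so quoting \cite{Abouhajar} makes the lemma a two-line corollary and keeps its logical dependencies uniform with the rest of Section 2.
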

	
	\begin{proof}
 By Theorem \ref{mainthm}, it follows that 	  $(x_1, \dots, x_7) \in G_{E(3; 3; 1, 1, 1)}$ if and only if 
		\begin{equation*}
			\begin{aligned}
				\Big(\frac{x_2 - z_1x_3}{1 - z_1x_1}, \frac{x_4 - z_1x_5}{1 - z_1x_1}, \frac{x_6 - z_1x_7}{1 - z_1x_1}\Big) \in G_{E(2; 2; 1, 1)} ~ \text{for} ~ z_1 \in \overline{\mathbb{D}}.
			\end{aligned}
		\end{equation*}
Thus, by the characterization of tetrablock [\cite{Abouhajar}, Theorem $2.2$], we get  $(x_1, x_2, x_3)$, $(x_1, x_4, x_5)$ and $(x_1, x_6, x_7)$ are in  $G_{E(2; 2; 1, 1)}$ and hence $|x_i| < 1$ for $1 \leqslant i \leqslant 7$. This completes the proof.
	\end{proof}
The following lemma's proof is identical to that of lemma \ref{lem-2}. We therefore omit the proof.

\begin{lem}\label{lemm-2}
		Let $(x_1, \dots, x_7)$ be a point of $\Gamma_{E(3; 3; 1, 1, 1)}$. Then $|x_i| \leq 1$ for $1 \leqslant i \leqslant 7$.
	\end{lem}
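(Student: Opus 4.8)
The plan is to exploit the fact, recorded in the introduction, that $\Gamma_{E(3;3;1,1,1)}$ is exactly the closure of the open domain $G_{E(3;3;1,1,1)}$, together with the already established Lemma \ref{lem-2}. Fix $\textbf{x}=(x_1,\dots,x_7)\in\Gamma_{E(3;3;1,1,1)}$. Since $\Gamma_{E(3;3;1,1,1)}=\overline{G_{E(3;3;1,1,1)}}$, I would first choose a sequence $\textbf{x}^{(k)}=(x_1^{(k)},\dots,x_7^{(k)})\in G_{E(3;3;1,1,1)}$ with $\textbf{x}^{(k)}\to\textbf{x}$ in $\mathbb{C}^7$. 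By Lemma \ref{lem-2} applied to each term of the sequence, $|x_i^{(k)}|<1$ for every $k$ and every $1\leqslant i\leqslant 7$. Because each coordinate projection is continuous, passing to the limit $k\to\infty$ gives $|x_i|=\lim_k|x_i^{(k)}|\leqslant 1$, which is precisely the assertion.

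Alternatively, one can mirror the proof of Lemma \ref{lem-2} verbatim at the level of the closed domains, as the authors suggest. Here the starting point is the closure analogue of condition $4^{\prime}$ in Theorem \ref{mainthm}: a point $\textbf{x}$ lies in $\Gamma_{E(3;3;1,1,1)}$ precisely when $\left(\frac{x_2-z_1x_3}{1-z_1x_1},\frac{x_4-z_1x_5}{1-z_1x_1},\frac{x_6-z_1x_7}{1-z_1x_1}\right)\in\Gamma_{E(2;2;1,1)}$ for all $z_1\in\mathbb{D}$. Evaluating along this slice and invoking the characterization of the \emph{closed} tetrablock in place of its open counterpart [\cite{Abouhajar}, Theorem $2.2$] would yield $(x_1,x_2,x_3)$, $(x_1,x_4,x_5)$ and $(x_1,x_6,x_7)\in\Gamma_{E(2;2;1,1)}$, whence $|x_i|\leqslant 1$ for all $i$ follows immediately.

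The only point requiring care is the passage from strict to non-strict inequalities: on the open domain the characterizations produce the bound $<1$, and in the closed setting one must ensure that the correct (open versus closed) disk appears in the slicing variable $z_1$ and that the boundary cases are absorbed by continuity. I do not expect any genuine obstacle, however, since the coordinate functions are continuous and $\Gamma_{E(3;3;1,1,1)}$ is by definition the closure of $G_{E(3;3;1,1,1)}$; the limiting argument therefore gives the cleanest route and needs no ingredient beyond Lemma \ref{lem-2}.
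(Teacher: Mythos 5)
Your proposal is correct, and your primary argument takes a genuinely different route from the paper's. The paper omits the proof entirely, saying only that it is identical to that of Lemma \ref{lem-2}; in other words, the authors intend exactly your \emph{alternative} route: re-run the slicing argument with the closed analogue of condition $4^{\prime}$ of Theorem \ref{mainthm} (slices lying in $\Gamma_{E(2;2;1,1)}$ for $z_1$ in the \emph{open} disc $\mathbb{D}$) together with the characterization of the closed tetrablock. Your first argument — writing $\Gamma_{E(3;3;1,1,1)} = \overline{G_{E(3;3;1,1,1)}}$, approximating the given point by a sequence in $G_{E(3;3;1,1,1)}$, applying Lemma \ref{lem-2} termwise, and letting $k \to \infty$ — is different in nature (topological rather than function-theoretic) and, in this instance, cleaner: it uses nothing beyond Lemma \ref{lem-2}, continuity of the modulus, and the closure fact recorded in the introduction. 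It also sidesteps the very point you flag: the closed-domain analogue of Theorem \ref{mainthm} is never explicitly stated in this paper (only the $\Gamma_{E(3;2;1,2)}$ version appears, as Proposition \ref{bhch1}, quoted from \cite{pal1}), and in the closed setting one must remember that the slicing parameter runs over $\mathbb{D}$ rather than $\overline{\mathbb{D}}$. The trade-off is minor: the mirrored argument yields slightly more information (membership of the coordinate triples in the closed tetrablock), while the density argument delivers exactly the modulus bounds $|x_i| \leq 1$ — which is all the lemma claims.
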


\begin{lem}\label{lem-1}
		Suppose $\textbf{x} = (x_1, \dots, x_7)$ is a point of $\mathbb{C}^7$.  The followings are equivalent:
		\begin{enumerate}
			\item $\textbf{x} \in G_{E(3; 3; 1, 1, 1)}.$ 
			
			\item $(\omega x_1, x_2, \omega x_3, \omega x_4, \omega^2 x_5, \omega x_6, \omega^2 x_7) \in G_{E(3; 3; 1, 1, 1)}$ for all $\omega \in \mathbb T.$
			
			\item[$(2^{'})$] $ (\omega x_1, \omega x_2, \omega^2 x_3, x_4, \omega x_5, \omega x_6, \omega^2 x_7) \in G_{E(3; 3; 1, 1, 1)}$ for all $\omega \in \mathbb T.$
			
			\item[$(2^{''})$] $ (x_1, \omega x_2, \omega x_3, \omega x_4, \omega x_5, \omega^2 x_6, \omega^2 x_7) \in G_{E(3; 3; 1, 1, 1)}$ for all $\omega \in \mathbb T.$
		\end{enumerate}
	\end{lem}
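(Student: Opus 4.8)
The plan is to reduce everything to the non-vanishing characterization in condition $2$ of Theorem \ref{mainthm}: $\textbf{x}\in G_{E(3;3;1,1,1)}$ if and only if the polynomial
$$R_{\textbf{x}}^{(3;3;1,1,1)}(z_1,z_2,z_3)=1-x_1z_1-x_2z_2+x_3z_1z_2-x_4z_3+x_5z_1z_3+x_6z_2z_3-x_7z_1z_2z_3$$
is zero-free on $\bar{\mathbb D}^3$. The guiding observation is that each of the twistings in $(2)$, $(2')$, $(2'')$ is precisely the effect on $R_{\textbf{x}}^{(3;3;1,1,1)}$ of rotating two of the three variables by a unimodular constant, and such a rotation is a bijection of $\bar{\mathbb D}^3$ onto itself.

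First I would record the substitution identity underlying $(2)$. Writing $\textbf{x}'=(\omega x_1,x_2,\omega x_3,\omega x_4,\omega^2 x_5,\omega x_6,\omega^2 x_7)$, a direct substitution of $z_1\mapsto\omega z_1$ and $z_3\mapsto\omega z_3$ gives
$$R_{\textbf{x}'}^{(3;3;1,1,1)}(z_1,z_2,z_3)=R_{\textbf{x}}^{(3;3;1,1,1)}(\omega z_1,z_2,\omega z_3).$$
This is because the power of $\omega$ attached to $x_i$ in $\textbf{x}'$ equals $\deg_{z_1}+\deg_{z_3}$ of the monomial paired with $x_i$ in $R_{\textbf{x}}^{(3;3;1,1,1)}$, which is exactly the exponent the substitution $z_1\mapsto\omega z_1,\ z_3\mapsto\omega z_3$ produces on that monomial.

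Since $\omega\in\mathbb T$, the map $(z_1,z_2,z_3)\mapsto(\omega z_1,z_2,\omega z_3)$ is a bijection of $\bar{\mathbb D}^3$ onto itself. Hence $R_{\textbf{x}'}^{(3;3;1,1,1)}$ is zero-free on $\bar{\mathbb D}^3$ if and only if $R_{\textbf{x}}^{(3;3;1,1,1)}$ is, and by condition $2$ of Theorem \ref{mainthm} this yields $\textbf{x}'\in G_{E(3;3;1,1,1)}$ precisely when $\textbf{x}\in G_{E(3;3;1,1,1)}$; that is, $(1)\Leftrightarrow(2)$.

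For $(2')$ and $(2'')$ I would repeat this argument verbatim. The tuple in $(2')$ corresponds to the rotation $z_1\mapsto\omega z_1,\ z_2\mapsto\omega z_2$, since there the $\omega$-power on $x_i$ equals $\deg_{z_1}+\deg_{z_2}$ of the associated monomial; likewise the tuple in $(2'')$ corresponds to $z_2\mapsto\omega z_2,\ z_3\mapsto\omega z_3$, with $\omega$-power equal to $\deg_{z_2}+\deg_{z_3}$. In each case the relevant rotation is a bijection of $\bar{\mathbb D}^3$, so the same equivalence with $(1)$ follows. The only genuine work here is the bookkeeping that matches the prescribed $\omega$-exponents against the bidegrees of the seven monomials; once that is checked the three substitutions are identical in form and no analytic difficulty arises.
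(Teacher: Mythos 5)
Your proof is correct, but it takes a genuinely different route from the paper's. The paper proves $(1)\Leftrightarrow(2)$ via condition $4''$ of Theorem \ref{mainthm}: $\textbf{x}\in G_{E(3;3;1,1,1)}$ if and only if $\left(\frac{x_1-z_2x_3}{1-x_2z_2},\frac{x_4-z_2x_6}{1-x_2z_2},\frac{x_5-z_2x_7}{1-x_2z_2}\right)$ lies in the tetrablock $G_{E(2;2;1,1)}$ for every $z_2\in\bar{\mathbb{D}}$, and then invokes the external tetrablock scaling result [Lemma $2.1$, \cite{Pal}], under which the twisting in $(2)$ is exactly the map $(a,b,p)\mapsto(\omega a,\omega b,\omega^2 p)$ applied to that fractional triple. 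You instead use condition $2$ of Theorem \ref{mainthm} (zero-freeness of $R^{(3;3;1,1,1)}_{\textbf{x}}$ on $\bar{\mathbb{D}}^3$) together with the observation that each twisting is induced by rotating two of the three disc variables by $\omega$, an automorphism of $\bar{\mathbb{D}}^3$; your substitution identity and the degree bookkeeping are correct in all three cases (the $\omega$-exponent on $x_i$ matches the bidegree of its monomial in the two rotated variables). What your route buys: it is more elementary and self-contained, needing no result about the tetrablock, and it handles $(2)$, $(2')$, $(2'')$ uniformly, whereas the paper's ``analogous'' cases implicitly require pairing each twisting with the correct one of the characterizations $4$, $4'$, $4''$. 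What the paper's route buys: it stays inside the fractional-map/tetrablock formalism that the rest of the paper reuses for the operator-theoretic statements (e.g., Proposition \ref{prop-1} and Theorem \ref{thm-1}), so the same mechanism recurs later.
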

	
	\begin{proof}
		We only prove the equivalence of $(1)$ and $(2)$. The proof for other cases is analogous. By Theorem \ref{mainthm},  we have $\textbf{x}\in G_{E(3; 3; 1, 1, 1)}$ if and only if 
		$$\Big(\frac{x_1 - z_2x_3}{1 - z_2x_2}, \frac{x_4 - z_2x_6}{1 - z_2x_2}, \frac{x_5 - z_2x_7}{1 - z_2x_2}\Big) \in G_{E(2; 2; 1, 1)} ~\text{for}~ z_2 \in \overline{\mathbb{D}}.$$ 
		
By [Lemma $2.1$ \cite{Pal}], we get $$\Big(\frac{x_1 - z_2x_3}{1 - z_2x_2}, \frac{x_4 - z_2x_6}{1 - z_2x_2}, \frac{x_5 - z_2x_7}{1 - z_2x_2}\Big) \in G_{E(2; 2; 1, 1)} ~\text{for}~ z_2 \in \overline{\mathbb{D}}$$ if and only if $$\Big(\frac{(\omega x_1) - z_2(\omega x_3)}{1 - z_2x_2}, \frac{(\omega x_4) - z_2(\omega x_6)}{1 - z_2x_2}, \frac{(\omega^2 x_5) - z_2(\omega^2 x_7)}{1 - z_2x_2}\Big) \in G_{E(2; 2; 1, 1)} ~\text{for}~ z_2 \in \overline{\mathbb{D}} ~{\rm{and~for~}} \omega \in \mathbb T.$$ Hence, again by Theorem \ref{mainthm}, we deduce  that $\textbf{x} \in G_{E(3; 3; 1, 1, 1)}$ if and only if $(\omega x_1, x_2, \omega x_3, \omega x_4, \omega^2 x_5, \omega x_6, \omega^2 x_7) \in G_{E(3; 3; 1, 1, 1)}$ for $\omega \in \mathbb{T}$. This completes the proof.
	\end{proof}
	
The proof of following lemma is identical to that of Lemma \ref{lem-1}. We therefore omit the proof.
	
	\begin{lem}\label{lem-121}
		Suppose $\textbf{x} = (x_1, \dots, x_7)$ is a point of $\mathbb{C}^7$. The followings are equivalent:
		\begin{enumerate}
			\item $\textbf{x} \in \Gamma_{E(3; 3; 1, 1, 1)}.$ 
			
			\item $(\omega x_1, x_2, \omega x_3, \omega x_4, \omega^2 x_5, \omega x_6, \omega^2 x_7) \in \Gamma_{E(3; 3; 1, 1, 1)}$ for all $\omega \in \mathbb T.$
			
			\item[$(2^{'})$] $ (\omega x_1, \omega x_2, \omega^2 x_3, x_4, \omega x_5, \omega x_6, \omega^2 x_7) \in \Gamma_{E(3; 3; 1, 1, 1)}$ for all $\omega \in \mathbb T.$
			
			\item[$(2^{''})$] $ (x_1, \omega x_2, \omega x_3, \omega x_4, \omega x_5, \omega^2 x_6, \omega^2 x_7) \in \Gamma_{E(3; 3; 1, 1, 1)}$ for all $\omega \in \mathbb T.$
		\end{enumerate}
	\end{lem}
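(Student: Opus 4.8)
The plan is to deduce the closed-domain statement directly from its open-domain counterpart in Lemma \ref{lem-1}, using only the fact that $\Gamma_{E(3;3;1,1,1)}$ is by definition the closure of $G_{E(3;3;1,1,1)}$ together with the elementary principle that a homeomorphism of $\mathbb{C}^7$ carries closures to closures. This makes the whole argument a short transfer and avoids reproving any slice characterization.

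First I would fix $\omega \in \mathbb{T}$ and introduce the coordinate rescaling $\Phi_\omega : \mathbb{C}^7 \to \mathbb{C}^7$ given by $\Phi_\omega(x_1,\dots,x_7) = (\omega x_1, x_2, \omega x_3, \omega x_4, \omega^2 x_5, \omega x_6, \omega^2 x_7)$. Since $\omega \in \mathbb{T}$, a direct check gives $\Phi_\omega \circ \Phi_{\bar\omega} = \Phi_{\bar\omega}\circ\Phi_\omega = \mathrm{id}$, so $\Phi_\omega$ is a linear homeomorphism of $\mathbb{C}^7$ with inverse $\Phi_{\bar\omega}$. The equivalence $(1)\Leftrightarrow(2)$ of Lemma \ref{lem-1} says exactly that $\Phi_\omega(G_{E(3;3;1,1,1)}) \subseteq G_{E(3;3;1,1,1)}$ for every $\omega$; applying this with $\bar\omega$ and then $\Phi_\omega$ yields the reverse inclusion, so $\Phi_\omega(G_{E(3;3;1,1,1)}) = G_{E(3;3;1,1,1)}$ for all $\omega \in \mathbb{T}$.

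Next, because $\Phi_\omega$ is a homeomorphism it commutes with closure, whence $\Phi_\omega(\Gamma_{E(3;3;1,1,1)}) = \Phi_\omega\bigl(\overline{G_{E(3;3;1,1,1)}}\bigr) = \overline{\Phi_\omega(G_{E(3;3;1,1,1)})} = \overline{G_{E(3;3;1,1,1)}} = \Gamma_{E(3;3;1,1,1)}$. Thus $\textbf{x} \in \Gamma_{E(3;3;1,1,1)}$ if and only if $\Phi_\omega(\textbf{x}) \in \Gamma_{E(3;3;1,1,1)}$ for each $\omega$ (the ``for all $\omega$'' direction is immediate, and taking $\omega = 1$ gives the converse), which is the equivalence $(1)\Leftrightarrow(2)$. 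The equivalences $(1)\Leftrightarrow(2')$ and $(1)\Leftrightarrow(2'')$ are proved identically, replacing $\Phi_\omega$ by the two diagonal rescalings read off from parts $(2')$ and $(2'')$ of Lemma \ref{lem-1}.

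I expect no substantive obstacle here: the content is entirely carried by Lemma \ref{lem-1}, and the passage from $G_{E(3;3;1,1,1)}$ to $\Gamma_{E(3;3;1,1,1)}$ is purely topological. The only point needing attention is to confirm that $\Phi_\omega$ maps $G_{E(3;3;1,1,1)}$ \emph{onto} (not merely into) itself, so that the homeomorphism argument produces an equality of closures rather than a one-sided inclusion; this is handled by the $\bar\omega$ trick above. Should one instead wish to mirror the proof of Lemma \ref{lem-1} line by line, one would invoke the closure analogue of the tetrablock-slice characterization of Theorem \ref{mainthm} together with the closed version of the rotation invariance [Lemma $2.1$, \cite{Pal}] of $\Gamma_{E(2;2;1,1)}$, and the single computation to verify is that the seven-coordinate rescaling induces precisely the tetrablock rotation $(a,b,c)\mapsto(\omega a,\omega b,\omega^2 c)$ on each slice.
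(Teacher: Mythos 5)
Your proof is correct, and it takes a genuinely different route from the paper's. The paper omits the proof of Lemma \ref{lem-121}, declaring it ``identical'' to that of Lemma \ref{lem-1}; that is, it would re-run the slice argument, invoking a closed-domain analogue of the characterization in Theorem \ref{mainthm} (membership of $\textbf{x}$ in $\Gamma_{E(3;3;1,1,1)}$ via tetrablock slices in $\Gamma_{E(2;2;1,1)}$) together with the closed version of the rotation invariance of the tetrablock from [Lemma $2.1$, \cite{Pal}]. Notably, the paper never actually states that closed-domain analogue of Theorem \ref{mainthm} --- Theorem \ref{mainthm} as written characterizes only the open domain $G_{E(3;3;1,1,1)}$ --- so the ``identical'' proof quietly leans on results from \cite{pal1} that are not reproduced. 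Your transfer argument sidesteps this entirely: all the content is carried by the already-proved open-domain Lemma \ref{lem-1}, and the passage to $\Gamma_{E(3;3;1,1,1)}$ uses only that $\Gamma_{E(3;3;1,1,1)}$ is by definition the closure of $G_{E(3;3;1,1,1)}$ (stated explicitly in the introduction) and that the linear map $\Phi_\omega$, being a homeomorphism of $\mathbb{C}^7$ with inverse $\Phi_{\bar\omega}$, commutes with closures. You also correctly flag and resolve the one genuine subtlety --- that one needs $\Phi_\omega(G_{E(3;3;1,1,1)})=G_{E(3;3;1,1,1)}$ onto, not merely into, which the $\bar\omega$ trick supplies (in fact even the one-sided inclusion $\Phi_\omega(\Gamma)\subseteq\Gamma$ for all $\omega\in\mathbb{T}$, applied at both $\omega$ and $\bar\omega$, would suffice). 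What your approach buys is self-containedness and robustness: it needs no closed-domain slice characterization and no closed tetrablock rotation lemma; what the paper's approach buys is uniformity of exposition, since the open and closed lemmas then have literally parallel proofs.
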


	It is important to note that the defining condition for $\Gamma_{E(3; 3; 1, 1, 1)}$-contractions can be significantly simplified in the following proposition.
	\begin{prop}\label{prop-1}
		Let $\textbf{T} = (T_1, \dots, T_7)$ be a commuting $7$-tuple of bounded operators acting on a Hilbert space $\mathcal{H}$. Then, the following are equivalent:
		\begin{enumerate}
			\item $\textbf{T}$ is a $\Gamma_{E(3; 3; 1, 1, 1)}$-contraction.
			
			\item For any holomorphic polynomial $p$ of seven variables,
			\begin{equation}
				\begin{aligned}
					||p(T_1, \dots, T_7)||
					&\leqslant \sup\{|p(x_1, \dots, x_7)| : (x_1, \dots, x_7) \in \Gamma_{E(3; 3; 1, 1, 1)}\}\\
					&= ||p||_{\infty, \Gamma_{E(3; 3; 1, 1, 1)}}.
				\end{aligned}
			\end{equation}
			
			\item $(\omega T_1,T_2, \omega T_3, \omega T_4, \omega^2 T_5, \omega T_6, \omega^2 T_7)$ is a $\Gamma_{E(3; 3; 1, 1, 1)}$-contraction for all $\omega \in \mathbb{T}.$
			
			\item[$(3^{'})$] $ (\omega T_1, \omega T_2, \omega^2 T_3, T_4, \omega T_5, \omega T_6, \omega^2 T_7)$ is a $\Gamma_{E(3; 3; 1, 1, 1)}$-contraction for all $\omega \in \mathbb{T}.$
			
			\item[$(3^{''})$] $ (T_1, \omega T_2, \omega T_3, \omega T_4, \omega T_5, \omega^2 T_6, \omega^2 T_7)$ is a $\Gamma_{E(3; 3; 1, 1, 1)}$-contraction for all $\omega \in \mathbb{T}.$
		\end{enumerate}
	\end{prop}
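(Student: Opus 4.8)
The plan is to prove the chain (1) $\Leftrightarrow$ (2) first, and then to deduce the equivalence of (2) with (3), (3$'$), (3$''$) by exploiting the fact, already recorded in Lemma \ref{lem-121}, that the relevant scaling maps fix $\Gamma_{E(3; 3; 1, 1, 1)}$ setwise. The equivalence (1) $\Leftrightarrow$ (2) is the content that reduces the a priori requirement of contractivity on all of $\mathcal{O}(\Gamma_{E(3; 3; 1, 1, 1)})$ to a polynomial inequality, and the equivalence with (3)--(3$''$) is then a symmetry argument.

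For (1) $\Leftrightarrow$ (2), the forward implication is immediate: if $\Gamma_{E(3; 3; 1, 1, 1)}$ is a spectral set for $\textbf{T}$, then contractivity of $\rho_{\textbf{T}}$ on $\mathcal{O}(\Gamma_{E(3; 3; 1, 1, 1)})$ restricts to the polynomial inequality in (2), since every polynomial lies in $\mathcal{O}(\Gamma_{E(3; 3; 1, 1, 1)})$. For the converse I would use the polynomial convexity of $\Gamma_{E(3; 3; 1, 1, 1)}$ noted earlier. First, the spectral inclusion $\sigma(\textbf{T}) \subseteq \Gamma_{E(3; 3; 1, 1, 1)}$ follows from (2): by the spectral mapping theorem, $\sup_{\lambda \in \sigma(\textbf{T})} |p(\lambda)| = r(p(\textbf{T})) \leq \|p(\textbf{T})\| \leq \|p\|_{\infty, \Gamma_{E(3; 3; 1, 1, 1)}}$ for every polynomial $p$, which confines $\sigma(\textbf{T})$ to the polynomial hull of $\Gamma_{E(3; 3; 1, 1, 1)}$; polynomial convexity identifies this hull with $\Gamma_{E(3; 3; 1, 1, 1)}$ itself. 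Second, contractivity on all of $\mathcal{O}(\Gamma_{E(3; 3; 1, 1, 1)})$ follows by the Oka--Weil theorem: any $f$ holomorphic on a neighborhood of the polynomially convex set $\Gamma_{E(3; 3; 1, 1, 1)}$ is a uniform limit of polynomials $p_n$ on a neighborhood of $\sigma(\textbf{T})$, so $p_n(\textbf{T}) \to f(\textbf{T})$ in operator norm by continuity of the holomorphic functional calculus, and passing to the limit in (2) yields $\|f(\textbf{T})\| \leq \|f\|_{\infty, \Gamma_{E(3; 3; 1, 1, 1)}}$.

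For (2) $\Leftrightarrow$ (3), I would introduce the linear map $\Phi_\omega(x_1, \dots, x_7) = (\omega x_1, x_2, \omega x_3, \omega x_4, \omega^2 x_5, \omega x_6, \omega^2 x_7)$, which is invertible with inverse $\Phi_{\bar\omega}$. By Lemma \ref{lem-121}, $\Phi_\omega$ restricts to a bijection of $\Gamma_{E(3; 3; 1, 1, 1)}$ onto itself, whence $\|p \circ \Phi_\omega\|_{\infty, \Gamma_{E(3; 3; 1, 1, 1)}} = \|p\|_{\infty, \Gamma_{E(3; 3; 1, 1, 1)}}$ for every polynomial $p$. Writing $\Phi_\omega(\textbf{T}) = (\omega T_1, T_2, \omega T_3, \omega T_4, \omega^2 T_5, \omega T_6, \omega^2 T_7)$, which is a commuting tuple since scalars commute with the $T_i$, one checks on monomials that $p(\Phi_\omega(\textbf{T})) = (p \circ \Phi_\omega)(\textbf{T})$. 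Hence, assuming (2) for $\textbf{T}$, $\|p(\Phi_\omega(\textbf{T}))\| = \|(p \circ \Phi_\omega)(\textbf{T})\| \leq \|p \circ \Phi_\omega\|_{\infty, \Gamma_{E(3; 3; 1, 1, 1)}} = \|p\|_{\infty, \Gamma_{E(3; 3; 1, 1, 1)}}$, which is exactly (2) for $\Phi_\omega(\textbf{T})$; by the equivalence (1) $\Leftrightarrow$ (2) this means $\Phi_\omega(\textbf{T})$ is a $\Gamma_{E(3; 3; 1, 1, 1)}$-contraction, giving (3). The reverse implication follows by setting $\omega = 1$, so that $\Phi_1 = \mathrm{id}$. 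The equivalences with (3$'$) and (3$''$) run identically, using the variants $(2')$ and $(2'')$ of Lemma \ref{lem-121}.

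The main obstacle is the converse half of (1) $\Leftrightarrow$ (2): one must invoke polynomial convexity of $\Gamma_{E(3; 3; 1, 1, 1)}$ both to pass from the polynomial inequality to the spectral inclusion and to upgrade that inequality to contractivity of the full holomorphic functional calculus via Oka--Weil approximation. Once this is secured, the symmetry argument for (3), (3$'$), (3$''$) is a routine consequence of the invariance of $\Gamma_{E(3; 3; 1, 1, 1)}$ under the maps $\Phi_\omega$ established in Lemma \ref{lem-121}.
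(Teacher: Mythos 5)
Your proposal is correct and follows essentially the same route as the paper: the forward direction of $(1)\Leftrightarrow(2)$ is immediate, the converse uses polynomial convexity of $\Gamma_{E(3;3;1,1,1)}$ together with the spectral mapping theorem for the spectral inclusion and Oka--Weil plus the Taylor functional calculus for contractivity on $\mathcal{O}(\Gamma_{E(3;3;1,1,1)})$, and the equivalences with $(3)$, $(3')$, $(3'')$ are the same symmetry argument via the maps $\Phi_\omega$ and the invariance lemma. The only (cosmetic) difference is that you correctly cite the closed-domain invariance result (Lemma \ref{lem-121}) where the paper's text points to its open-domain analogue (Lemma \ref{lem-1}); this is a citation slip in the paper, not a difference in substance.
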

	
	\begin{proof}
		We just prove $(1) \Leftrightarrow (2) \Leftrightarrow (3),$ as the proofs for the equivalence $(1) \Leftrightarrow (2^{'}) \Leftrightarrow (3^{'})$ and $(1) \Leftrightarrow (2^{''}) \Leftrightarrow (3^{''})$ are same.
		
		First we show that $(1)$ implies $(2).$  Let $\textbf{T} = (T_1, \dots, T_7)$ be a $\Gamma_{E(3; 3; 1, 1, 1)}$-contraction. Then by definition of $\Gamma_{E(3; 3; 1, 1, 1)}$-contraction, we have
		\begin{equation*}
			\begin{aligned}
				||p(T_1, \dots, T_7)||
				&\leqslant \sup\{|p(x_1, \dots, x_7)| : (x_1, \dots, x_7) \in \Gamma_{E(3; 3; 1, 1, 1)}\}\\
				&= ||p||_{\infty, \Gamma_{E(3; 3; 1, 1, 1)}}.
			\end{aligned}
		\end{equation*}
		
		\vspace{0.2cm}
		
		To establish the implication of $(2) \Rightarrow (1),$ we use the polynomial convexity of $\Gamma_{E(3; 3; 1, 1, 1)}.$ Suppose that the joint spectrum of $(T_1, \dots, T_7)$ is not contained in $\Gamma_{E(3; 3; 1, 1, 1)}. $ Then there exists a point $(y_1, \dots, y_7)$ in $\sigma(T_1, \dots, T_7)$ that is not in $ \Gamma_{E(3; 3; 1, 1, 1)}$.  Since $\Gamma_{E(3; 3; 1, 1, 1)}$ is polynomially convex, then there exists a polynomial $p$ in $7$ variables such that $|p(y_1, \dots, y_7)| > ||p||_{\infty, \Gamma_{E(3; 3; 1, 1, 1)}}.$ By the polynomial spectral mapping theorem, we have
		
		\begin{equation*}
			\begin{aligned}
				\sigma(p(T_1, \dots, T_7)) = p(\sigma(T_1, \dots, T_7)).
			\end{aligned}
		\end{equation*}
		Therefore, we  deduce that the spectral radius of $p(T_1, \dots, T_7)$ is bigger than $\rvert |p\lvert |_{\Gamma_{E(3; 3; 1, 1, 1)}, \infty}.$ 
		This is a contradiction. By \textit{Oka-Weil} theorem [Theorem $5.1$, \cite{Gamelin}] and functional calculus in several commuting operators [Theorem $9.9$, \cite{Vasilescu}], we can extend the result to holomorphic functions $f$ in a neighbourhood of $\Gamma_{E(3; 3; 1, 1, 1)}$. 
		\vspace{0.3cm}
		
		We now prove $(2) \Rightarrow (3). $ Let $\textbf{T}^{'} = (\omega T_1, T_2, \omega T_3, \omega T_4, \omega^2 T_5, \omega T_6, \omega^2 T_7)$. In order to show the implication $(2) \Rightarrow (3)$, by \textit{Oka-Weil} theorem, it suffices to show $\rvert |p(\textbf{T}^{\prime} )\lvert |\leq \rvert| p \lvert|_{\infty,\Gamma_{E(3; 3; 1, 1, 1)}}$ for any holomorphic polynomial $p$ in the co-ordinates of $\Gamma_{E(3; 3; 1, 1, 1)}.$ Let $p$ be any polynomial that is holomorphic in the co-ordinates of $\Gamma_{E(3; 3; 1, 1, 1)}$ and $\omega \in \mathbb T,$ let
		\begin{equation*}
			\begin{aligned}
				p_1(x_1, \dots, x_7)
				&= p(\omega x_1, x_2, \omega x_3, \omega x_4, \omega^2 x_5, \omega x_6, \omega^2 x_7).
			\end{aligned}
		\end{equation*}
		Since $(x_1,\ldots,x_7) \in \Gamma_{E(3; 3; 1, 1, 1)}$, according to the Lemma \ref{lem-1}, it follows that  $$(\omega x_1, x_2, \omega x_3, \omega x_4, \omega^2 x_5, \omega x_6, \omega^2 x_7)\in\Gamma_{E(3; 3; 1, 1, 1)}~{\rm{for ~all}}~\omega \in \mathbb{T}.$$ Note that $\rvert |p\lvert |_{\infty, \Gamma_{E(3; 3; 1, 1, 1)}}= \rvert |p_1\lvert |_{\infty, \Gamma_{E(3; 3; 1, 1, 1)}}.$
		Thus for any holomorphic polynomial $p$ in the co-ordinates of $\Gamma_{E(3; 3; 1, 1, 1)},$ we have
		\begin{equation*}
			\begin{aligned}
				||p(\textbf{T}^{'})||
				= ||p_1(T_1, \dots, T_7)||
				&\leqslant \sup\{|p_1(x_1, \dots, x_7)| : (x_1, \dots, x_7) \in \Gamma_{E(3; 3; 1, 1, 1)}\}\\
				&= \sup\{|p(x_1, \dots, x_7)| : (x_1, \dots, x_7) \in \Gamma_{E(3; 3; 1, 1, 1)}\}\\
				&= ||p||_{\infty, \Gamma_{E(3; 3; 1, 1, 1)}}.
			\end{aligned}
		\end{equation*}
		This shows that  $\textbf{T}^{'} $ is a $\Gamma_{E(3; 3; 1, 1, 1)}$-contraction.
		
		The implication of $(3) \Rightarrow (1)$ can be easily demonstrated by choosing $\omega$ to be equal to $1$. This completes the proof.
	\end{proof}
	
The following lemmas give the 	characterization of $G_{E(3; 2; 1, 2)}$.
\begin{lem}\label{lem-3}
		Let $(x_1, x_2, x_3, y_1, y_2)$ be an element of $G_{E(3; 2; 1, 2)}$. Then $|x_1|<1, |x_3|<1, |y_2| < 1$ and $|x_2|<2, |y_1| < 2$. 
	\end{lem}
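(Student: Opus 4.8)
The plan is to reduce the five‐variable bound to the already‐established characterization of the domain $G_{E(3;2;1,2)}$ via Proposition \ref{bhch}, and then invoke the known estimates for the tetrablock (equivalently $G_{E(2;1;2)}$ and $G_{E(2;2;1,1)}$). First I would take a point $(x_1,x_2,x_3,y_1,y_2)\in G_{E(3;2;1,2)}$ and apply part (4) of Proposition \ref{bhch}, which says that for every $z_1\in\bar{\mathbb D}$ the pair
\begin{equation*}
\left(\frac{y_1-z_1x_2}{1-z_1x_1},\frac{y_2-z_1x_3}{1-z_1x_1}\right)\in G_{E(2;1;2)}.
\end{equation*}
The symmetrized bidisc $G_{E(2;1;2)}$ has the standard description $\{(s,p):|s-\bar s p|+|p|^2<1\}$, and in particular every point $(s,p)$ of it satisfies $|s|<2$ and $|p|<1$. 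Specializing $z_1=0$ gives $(y_1,y_2)\in G_{E(2;1;2)}$, whence immediately $|y_1|<2$ and $|y_2|<1$.

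Next I would extract the bounds on the $x_i$. The cleanest route uses part (3) of Proposition \ref{bhch}: the rational function
\begin{equation*}
\Psi_3(z_2,\tilde{\textbf{x}})=\frac{x_3z_2^2-x_2z_2+x_1}{y_2z_2^2-y_1z_2+1}
\end{equation*}
satisfies $\|\Psi_3(\cdot,\tilde{\textbf{x}})\|_{H^\infty(\bar{\mathbb D})}<1$, while $y_2z_2^2-y_1z_2+1\neq0$ on $\bar{\mathbb D}$. Evaluating at $z_2=0$ gives $|\Psi_3(0)|=|x_1|<1$. To bound $x_3$ I would use the behaviour at the other boundary value: writing $\Psi_3$ in terms of its reciprocal-degree structure (the numerator and denominator are degree-two polynomials in $z_2$), the leading coefficients give $|x_3/y_2|\le 1$ in an appropriate limiting sense, but more robustly I would argue directly from the supremum bound. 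Since $|\Psi_3(z_2)|<1$ for all $z_2\in\bar{\mathbb D}$, the maximum modulus principle forces $|\Psi_3|<1$ on $\mathbb T$ as well; integrating $\log|\Psi_3|$ or comparing the two degree-two polynomials as Blaschke-type quotients yields $|x_3|\le|y_2|<1$, hence $|x_3|<1$. Finally, for $x_2$ I would combine the numerator bound with the already-derived estimates: from $|\Psi_3(z_2)|<1$ one gets $|x_3z_2^2-x_2z_2+x_1|<|y_2z_2^2-y_1z_2+1|$ pointwise, and examining the coefficient of $z_2$ (via, e.g., the derivative at $0$ or a two-point evaluation at $z_2=\pm\varepsilon$ and letting the constraints on $x_1,x_3,y_1,y_2$ feed in) produces $|x_2|<2$.

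An alternative, and in my view safer, approach is to lift everything back to the matricial model. By the very definition $G_{E(3;2;1,2)}=\{\pi_{E(3;2;1,2)}(A):\mu_{E(3;2;1,2)}(A)<1\}$, any point arises as $\pi_{E(3;2;1,2)}(A)$ for some $A=(a_{ij})$ with the relevant numerical radius strictly below one; reading off the coordinates from \eqref{pi123} gives $x_1=a_{11}$, $y_1=a_{22}+a_{33}$, $x_3=\det A$, $y_2=\det\left(\begin{smallmatrix}a_{22}&a_{23}\\a_{32}&a_{33}\end{smallmatrix}\right)$, and $x_2$ as the indicated sum of two $2\times2$ minors. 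The strict bound $\mu_{E(3;2;1,2)}(A)<1$ controls $\|A\|$ (and the norms of its principal submatrices) by a constant, and the entries and minors then inherit the quantitative inequalities $|x_1|<1$, $|x_3|<1$, $|y_2|<1$, $|x_2|<2$, $|y_1|<2$ by direct estimation of eigenvalues, traces, and determinants of the contractive blocks. The main obstacle in either approach is the factor-of-two bounds on $x_2$ and $y_1$: these are trace-type quantities (sums of two eigenvalues or two minors of contractive $2\times2$ blocks), so the estimate $|y_1|<2$ really is sharp and must be squeezed out carefully rather than read off trivially; I would handle it by identifying $y_1$ and $y_2$ as the symmetrization coordinates $(s,p)$ of the $2\times2$ lower-right block and citing the defining inequality of $G_{E(2;1;2)}$, which gives $|s|<2$ for free.
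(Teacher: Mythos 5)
Your bounds $|y_1|<2$, $|y_2|<1$ (from part (4) of Proposition \ref{bhch} at $z_1=0$) and $|x_1|<1$ (from $\Psi_3(0,\tilde{\textbf{x}})=x_1$) are correct, but both of your routes to the remaining bounds break down, most seriously for $x_3$. The claimed inequality $|x_3|\le|y_2|$ is not just unproven, it is false on $G_{E(3;2;1,2)}$: take
\begin{equation*}
A=\begin{pmatrix}0&0&a\\0&b&0\\c&0&0\end{pmatrix},\qquad |b|<1,\quad |ac|<1,
\end{equation*}
for which $\det\big(I-A\operatorname{diag}(z_1,z_2,z_2)\big)=(1-bz_2)(1-acz_1z_2)$, so $\mu_{E(3;2;1,2)}(A)=\max\{|b|,|ac|^{1/2}\}<1$, while by \eqref{pi123} one has $\pi_{E(3;2;1,2)}(A)=(0,-ac,-abc,b,0)$. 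With $a=b=c=0.9$ this is a point of $G_{E(3;2;1,2)}$ with $|x_3|=0.729>0=|y_2|$; hence no Blaschke-quotient or leading-coefficient comparison can yield $|x_3|\le|y_2|$, and that step is a genuine gap. Your ``safer'' matricial fallback fails for a related reason: $\mu_{E(3;2;1,2)}(A)<1$ does \emph{not} control $\|A\|$ or the entries of $A$ (the same matrix with $a=100$, $c=10^{-4}$, $b=0$ has $\mu_{E(3;2;1,2)}(A)=10^{-1}$ but $\|A\|\ge 100$), since the structured singular value only satisfies $\mu_E(A)\le\|A\|$; the boundedness of the minors appearing in $\pi_{E(3;2;1,2)}(A)$ is exactly what has to be proved and cannot be read off from a norm bound on $A$. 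By contrast, your $|x_2|<2$ step is merely incomplete and can be salvaged as you hint: Schwarz--Pick for $\Psi_3(\cdot,\tilde{\textbf{x}}):\mathbb{D}\to\mathbb{D}$ at $0$ gives $|x_2-x_1y_1|=|\Psi_3'(0,\tilde{\textbf{x}})|\le 1-|x_1|^2$, whence $|x_2|\le 1-|x_1|^2+|x_1|\,|y_1|<1+2|x_1|-|x_1|^2=2-(1-|x_1|)^2<2$.

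The paper's own proof sidesteps all of this by using the other characterization in [Theorem 3.5, \cite{Bharali}]: $(x_1,x_2,x_3,y_1,y_2)\in G_{E(3;2;1,2)}$ if and only if
$\Big(\frac{2x_1-zx_2}{2-zy_1},\frac{y_1-2zy_2}{2-zy_1},\frac{x_2-2zx_3}{2-zy_1}\Big)$
lies in the tetrablock $G_{E(2;2;1,1)}$ for all $z\in\overline{\mathbb{D}}$. From this it extracts that $\big(x_1,\frac{y_1}{2},\frac{x_2}{2}\big)$, $\big(\frac{x_2}{2},\frac{y_1}{2},x_3\big)$ and $\big(\frac{y_1}{2},\frac{y_1}{2},y_2\big)$ all lie in $G_{E(2;2;1,1)}$, and then all five bounds follow simultaneously from the fact that every coordinate of a tetrablock point has modulus $<1$ [Theorem 2.2, \cite{Abouhajar}]. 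In particular, $|x_3|<1$ is obtained there from membership of a tetrablock triple having $x_3$ as its \emph{third coordinate}, not from any comparison of $x_3$ with $y_2$. To repair your argument you should pass to this tetrablock formulation (or otherwise exploit the symmetry interchanging the constant and leading coefficients of $P_3$ and $Q_3$); as written, the $x_3$ bound does not follow.
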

	
	\begin{proof}
By [Theorem 3.5, \cite{Bharali}], it follows that $(x_1, x_2, x_3, y_1, y_2) \in G_{E(3; 2; 1, 2)}$ if and only if
		\begin{equation*}
			\Big(\frac{2x_1 - zx_2}{2 - zy_1}, \frac{y_1 - 2zy_2}{2 - zy_1}, \frac{x_2 - 2zx_3}{2 - zy_1}\Big) \in G_{E(2; 2; 1, 1)} ~ \text{for} ~ z \in \overline{\mathbb{D}}.
		\end{equation*}
It follows from the characterization of the tetrablock [Theorem $2.2$, \cite{Abouhajar}] that $\Big(x_1, \frac{y_1}{2}, \frac{x_2}{2}\Big), \Big(\frac{x_2}{2}, \frac{y_1}{2}, x_3\Big)$ and $\Big(\frac{y_1}{2}, \frac{y_1}{2}, y_2\Big)$ are in the tetrablock $G_{E(2; 2; 1, 1)}$ and hence   $|x_1|<1, |x_3|<1, |y_2|< 1$ and $|x_2|<2, |y_1| < 2$.  This completes the proof.
	\end{proof}	
\begin{lem}\label{lemm1}
		Let $\tilde{\textbf{x}} = (x_1, x_2, x_3, y_1, y_2)$ be an element of $\mathbb{C}^5$. Then $\tilde{\textbf{x}} \in G_{E(3; 2; 1; 2)}$ if and only if
		\[(x_1, \omega x_2, \omega^2 x_3, \omega y_1, \omega^2 y_2) \in G_{E(3; 2; 1, 2)} ~\text{for all}~ \omega \in \mathbb{T}.\]
	\end{lem}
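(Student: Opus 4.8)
The plan is to mimic the strategy of Lemma \ref{lem-1} by reducing the five-dimensional statement about $G_{E(3; 2; 1, 2)}$ to the known rotational invariance of the tetrablock $G_{E(2; 2; 1, 1)}$. The key tool is the characterization from [Theorem $3.5$, \cite{Bharali}] already invoked in Lemma \ref{lem-3}, namely that $\tilde{\textbf{x}} = (x_1, x_2, x_3, y_1, y_2) \in G_{E(3; 2; 1, 2)}$ if and only if
\begin{equation*}
\Big(\tfrac{2x_1 - zx_2}{2 - zy_1}, \tfrac{y_1 - 2zy_2}{2 - zy_1}, \tfrac{x_2 - 2zx_3}{2 - zy_1}\Big) \in G_{E(2; 2; 1, 1)} \quad \text{for all} \quad z \in \overline{\mathbb{D}}.
\end{equation*}
First I would write down the corresponding slice for the rotated tuple $(x_1, \omega x_2, \omega^2 x_3, \omega y_1, \omega^2 y_2)$ and massage the denominator and numerators so that the rotation parameter $\omega$ can be absorbed into a reparametrization of the slicing variable $z$ together with an overall unimodular rescaling of the tetrablock coordinates.

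Concretely, for the rotated point the three slice-functions become $\tfrac{2x_1 - \omega z x_2}{2 - \omega z y_1}$, $\tfrac{\omega y_1 - \omega^2 z \cdot 2 y_2}{2 - \omega z y_1}$, and $\tfrac{\omega x_2 - \omega^2 z \cdot 2 x_3}{2 - \omega z y_1}$. The substitution $z \mapsto \bar{\omega} z$ (which preserves $\overline{\mathbb{D}}$ since $\omega \in \mathbb{T}$) turns the first coordinate into exactly $\tfrac{2x_1 - z x_2}{2 - z y_1}$, and factors of $\omega$ out of the remaining two. This exhibits the rotated slice as $(u_1, \omega u_2, \omega u_3)$ where $(u_1, u_2, u_3)$ is the original tetrablock slice of $\tilde{\textbf{x}}$. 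I would then invoke the tetrablock rotation invariance [Lemma $2.1$, \cite{Pal}] already cited in the proof of Lemma \ref{lem-1}, which asserts precisely that $(u_1, u_2, u_3) \in G_{E(2; 2; 1, 1)}$ if and only if $(u_1, \omega u_2, \omega u_3) \in G_{E(2; 2; 1, 1)}$ for all $\omega \in \mathbb{T}$.

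Chaining these equivalences gives: $\tilde{\textbf{x}} \in G_{E(3; 2; 1, 2)}$ $\iff$ the original slice lies in $G_{E(2; 2; 1, 1)}$ for all $z \in \overline{\mathbb{D}}$ $\iff$ the rotated slice lies in $G_{E(2; 2; 1, 1)}$ for all $z$ $\iff$ $(x_1, \omega x_2, \omega^2 x_3, \omega y_1, \omega^2 y_2) \in G_{E(3; 2; 1, 2)}$, which is the claim. The main obstacle I anticipate is purely bookkeeping: one must verify that the exponents of $\omega$ attached to $x_2, x_3, y_1, y_2$ (namely $1, 2, 1, 2$) are exactly the ones that make the $z \mapsto \bar{\omega} z$ substitution produce a uniform single factor of $\omega$ in the two tetrablock slots that \emph{should} be rotated, while leaving the first slot untouched. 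Getting these degree counts to line up is the one place an error could creep in, but once the substitution is set up correctly the rest follows formally from [Lemma $2.1$, \cite{Pal}] and [Theorem $3.5$, \cite{Bharali}] exactly as in Lemma \ref{lem-1}. Since the paper explicitly states the proof is omitted as being identical in spirit, I would present the substitution explicitly and then defer to these two cited results.
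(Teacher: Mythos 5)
Your proposal is correct, but it takes a genuinely different route from the paper's own proof (which, incidentally, is given in full in the paper --- it is the closed-domain analogue $\Gamma_{E(3;2;1,2)}$ whose proof is omitted, not this one). The paper never slices by the tetrablock: it uses condition (4) of Proposition \ref{bhch}, namely $\tilde{\textbf{x}} \in G_{E(3;2;1,2)}$ if and only if $\bigl(\frac{y_1 - zx_2}{1 - zx_1}, \frac{y_2 - zx_3}{1 - zx_1}\bigr) \in G_{E(2;1;2)}$ for all $z \in \overline{\mathbb{D}}$, and then applies the rotational invariance $(s,p) \mapsto (\omega s, \omega^2 p)$ of the symmetrized bidisc [Theorem $1.1$, \cite{ay1}]. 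The advantage of that slicing is that $x_1$ --- the one coordinate the lemma keeps fixed --- enters only through the denominator $1 - zx_1$, which the rotation does not disturb; hence the rotated point's slice at the \emph{same} $z$ is exactly $(\omega s(z), \omega^2 p(z))$, no reparametrization of $z$ is needed, and the argument becomes a verbatim copy of Lemma \ref{lem-1}. Your tetrablock slicing (the one from Lemma \ref{lem-3}) forces the extra substitution $z \mapsto \bar{\omega}z$ because $\omega y_1$ sits in the denominator $2 - z\omega y_1$; your bookkeeping for that substitution is correct and does produce $(u_1(z), \omega u_2(z), \omega u_3(z))$ from the original slice $(u_1(z), u_2(z), u_3(z))$.

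The one point you must tighten is the invariance you then invoke. You need: $(u_1,u_2,u_3) \in G_{E(2;2;1,1)}$ if and only if $(u_1, \omega u_2, \omega u_3) \in G_{E(2;2;1,1)}$, with the \emph{first} coordinate untouched. But the form of [Lemma $2.1$, \cite{Pal}] that the paper extracts in the proof of Lemma \ref{lem-1} is the diagonal action $(u_1,u_2,u_3) \mapsto (\omega u_1, \omega u_2, \omega^2 u_3)$, and that one-parameter family does not generate yours: both are restrictions of the two-parameter torus action $(u_1,u_2,u_3) \mapsto (\omega_1 u_1, \omega_2 u_2, \omega_1\omega_2 u_3)$, but neither one-parameter subgroup implies the other. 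So either verify that the cited lemma is stated for the full two-parameter action, or insert the one-line substitute via [Theorem $2.2$, \cite{Abouhajar}]: the quantities $|u_1 - \bar{u}_2 u_3|$, $|u_2 - \bar{u}_1 u_3|$ and $|u_3|$ occurring in that criterion are all unchanged when $(u_1,u_2,u_3)$ is replaced by $(u_1, \omega u_2, \omega u_3)$. With that small patch your argument is complete.
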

	
	\begin{proof}
	
	It follows from Theorem \ref{bhch} that  $\tilde{\textbf{x}} \in G_{E(3; 2; 1, 2)}$ if and only if
		\begin{eqnarray}\label{tetra}
		\Big(\frac{y_1 - zx_2}{1 - zx_1}, \frac{y_2 - zx_3}{1 - zx_1}\Big) \in G_{E(2; 1; 2)} ~\text{for}~ z \in \overline{\mathbb{D}}.
		\end{eqnarray}
		By [Theorem 1.1, \cite{ay1}], we get
		\begin{eqnarray}\label{tetra1}
		\Big(\frac{y_1 - zx_2}{1 - zx_1}, \frac{y_2 - zx_3}{1 - zx_1}\Big) \in G_{E(2; 1; 2)} ~\text{for}~ z \in \overline{\mathbb{D}}\end{eqnarray}
		if and only if
		\begin{eqnarray}\label{tetra2}
		\Big(\frac{(\omega y_1) - z(\omega x_2)}{1 - zx_1}, \frac{(\omega^2y_2) - z(\omega^2x_3)}{1 - zx_1}\Big) \in G_{E(2; 1; 2)} ~\text{for}~ z \in \overline{\mathbb{D}} ~\text{and for all}~ \omega \in \mathbb{T}.\end{eqnarray}
From \eqref{tetra}, \eqref{tetra1} and \eqref{tetra2}, we deduce  that $\tilde{\textbf{x}} \in G_{E(3; 2; 1, 2)}$ if and only if $(x_1, \omega x_2, \omega^2 x_3, \omega y_1, \omega^2 y_2) \in G_{E(3; 2; 1, 2)} ~\text{for all}~ \omega \in \mathbb{T}$.
This completes the proof.
\end{proof}

	We state a result for $\Gamma_{E(3; 2; 1, 2)},$ which is similar to the Lemma \ref{lemm1}. The proof is similar to the Lemma \ref{lemm1}. Therefore, we omit the proof.

	\begin{lem}
		Let $\tilde{\textbf{x}} = (x_1, x_2, x_3, y_1, y_2)$ be an element of $\mathbb{C}^5$. Then $\tilde{\textbf{x}} \in \Gamma_{E(3; 2; 1; 2)}$ if and only if
		\[(x_1, \omega x_2, \omega^2 x_3, \omega y_1, \omega^2 y_2) \in \Gamma_{E(3; 2; 1, 2)} ~\text{for all}~ \omega \in \mathbb{T}.\]
	\end{lem}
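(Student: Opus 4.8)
The plan is to reproduce the argument of Lemma \ref{lemm1} verbatim, replacing the open-domain characterizations with their closed analogues. First I would invoke the equivalence of parts $(1)$ and $(4)$ of Proposition \ref{bhch1}, which gives that $\tilde{\textbf{x}} = (x_1,x_2,x_3,y_1,y_2) \in \Gamma_{E(3;2;1,2)}$ if and only if
\[
\Big(\frac{y_1 - zx_2}{1 - zx_1}, \frac{y_2 - zx_3}{1 - zx_1}\Big) \in \Gamma_{E(2;1;2)} \quad \text{for all } z \in \mathbb{D}.
\]
This reduces the five-variable closure-membership problem to a one-parameter family of membership tests in the closed symmetrized bidisc $\Gamma_{E(2;1;2)}$, exactly as \eqref{tetra} did for the open case.

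Next I would use the rotation invariance of the symmetrized bidisc. Since every point of $\Gamma_{E(2;1;2)}$ is of the form $(\lambda_1 + \lambda_2, \lambda_1\lambda_2)$ with $|\lambda_1|, |\lambda_2| \leqslant 1$, and the map $\lambda_i \mapsto \omega\lambda_i$ preserves the closed unit disc for $\omega \in \mathbb{T}$, one has $(s,p) \in \Gamma_{E(2;1;2)}$ if and only if $(\omega s, \omega^2 p) \in \Gamma_{E(2;1;2)}$ for every $\omega \in \mathbb{T}$; this is the closed-disc analogue of the result [Theorem $1.1$, \cite{ay1}] used in Lemma \ref{lemm1}. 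Applying this, and observing that for fixed $x_1$ the substitutions $x_2 \mapsto \omega x_2$, $x_3 \mapsto \omega^2 x_3$, $y_1 \mapsto \omega y_1$, $y_2 \mapsto \omega^2 y_2$ produce
\[
\frac{\omega y_1 - z(\omega x_2)}{1 - zx_1} = \omega\,\frac{y_1 - zx_2}{1 - zx_1}, \qquad \frac{\omega^2 y_2 - z(\omega^2 x_3)}{1 - zx_1} = \omega^2\,\frac{y_2 - zx_3}{1 - zx_1},
\]
I conclude that the pair associated with the rotated tuple is precisely $(\omega,\omega^2)$ times the pair associated with $\tilde{\textbf{x}}$. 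Hence the two membership conditions in $\Gamma_{E(2;1;2)}$ are equivalent for each fixed $z \in \mathbb{D}$ and every $\omega \in \mathbb{T}$.

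Combining the two displays and applying Proposition \ref{bhch1} once more (this time to the rotated tuple), I would conclude that $\tilde{\textbf{x}} \in \Gamma_{E(3;2;1,2)}$ if and only if $(x_1, \omega x_2, \omega^2 x_3, \omega y_1, \omega^2 y_2) \in \Gamma_{E(3;2;1,2)}$ for all $\omega \in \mathbb{T}$, which is the assertion. I do not anticipate any genuine obstacle here: the only point requiring a word of care is that Proposition \ref{bhch1} quantifies $z$ over the open disc $\mathbb{D}$ rather than $\overline{\mathbb{D}}$, but since the rotation identity above holds for each fixed $z$ separately, the quantifier over $z$ is untouched. The rotation invariance of $\Gamma_{E(2;1;2)}$ is immediate from its description via symmetrized pairs, so no deeper dilation-theoretic input is needed, and this is precisely why the authors regard the proof as identical to that of Lemma \ref{lemm1}.
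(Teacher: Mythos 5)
Your proposal is correct and is exactly the argument the paper intends: the authors omit this proof, declaring it identical to that of Lemma \ref{lemm1}, and you reproduce that argument faithfully, replacing Proposition \ref{bhch} and $G_{E(2;1;2)}$ by Proposition \ref{bhch1} and $\Gamma_{E(2;1;2)}$, with the rotation invariance $(s,p)\in\Gamma_{E(2;1;2)} \Leftrightarrow (\omega s,\omega^2 p)\in\Gamma_{E(2;1;2)}$ justified directly from the symmetrization description of the closed symmetrized bidisc. Your remark that the quantifier over $z$ (open disc here, closed disc in the open-domain case) is untouched by the fixed-$z$ rotation identity correctly disposes of the only point where the two proofs differ.
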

Similar to the Proposition \ref{prop-1}, the defining condition for $\Gamma_{E(3;2; 1, 2)}$-contractions is greatly simplified in the following proposition. The proof is identical to that of Proposition \ref{prop-1}. Thus, we omit the proof.
	\begin{prop}\label{prop-2}
		Let $\textbf{S} = (S_1, S_2, S_3, \tilde{S}_1, \tilde{S}_2)$ be a $5$-tuple of commuting bounded operators on a Hilbert space $\mathcal{H}$. Then the following are equivalent:
		\begin{enumerate}
			\item $\textbf{S}$ is a $\Gamma_{E(3; 2; 1, 2)}$-contraction.
			
			\item For any holomorphic polynomial $p$,
			\begin{equation}
				\begin{aligned}
					||p(S_1, S_2, S_3, \tilde{S}_1, \tilde{S}_2))||
					&\leqslant \sup\{|p(x_1, x_2, x_3, y_1, y_2))| : (x_1, x_2, x_3, y_1, y_2) \in \Gamma_{E(3; 2; 1, 2)}\}\\
					&= ||p||_{\infty, \Gamma_{E(3; 2; 1, 2)}}.
				\end{aligned}
			\end{equation}
			
			\item $(S_1, \omega S_2, \omega^2S_3, \omega \tilde{S}_1, \omega^2 \tilde{S}_2)$ is a $\Gamma_{E(3; 2; 1, 2)}$-contraction for all $\omega \in \mathbb{T}$.
		\end{enumerate}
	\end{prop}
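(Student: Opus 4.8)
The plan is to mirror the proof of Proposition~\ref{prop-1} exactly, since the structural ingredients are identical. The three equivalences in Proposition~\ref{prop-2} are the $\Gamma_{E(3;2;1,2)}$-analogues of $(1)\Leftrightarrow(2)\Leftrightarrow(3)$ from Proposition~\ref{prop-1}, and the proof relies on three facts: the definition of a spectral set (which gives $(1)\Leftrightarrow(2)$ on one side), the polynomial convexity of $\Gamma_{E(3;2;1,2)}$ together with the polynomial spectral mapping theorem (which closes $(2)\Rightarrow(1)$), and the rotational invariance supplied by Lemma~\ref{lemm1} and its closure-version (which drives $(2)\Leftrightarrow(3)$). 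First I would establish $(1)\Rightarrow(2)$: this is immediate from the definition, since $\textbf{S}$ being a $\Gamma_{E(3;2;1,2)}$-contraction means $\Gamma_{E(3;2;1,2)}$ is a spectral set, which by definition gives $\lVert p(S_1,S_2,S_3,\tilde{S}_1,\tilde{S}_2)\rVert \leqslant \lVert p\rVert_{\infty,\Gamma_{E(3;2;1,2)}}$ for every holomorphic polynomial $p$.

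Next I would prove $(2)\Rightarrow(1)$ using polynomial convexity. Arguing by contraposition, suppose the Taylor joint spectrum $\sigma(S_1,S_2,S_3,\tilde{S}_1,\tilde{S}_2)$ is not contained in $\Gamma_{E(3;2;1,2)}$; then there is a point of the spectrum lying outside $\Gamma_{E(3;2;1,2)}$, and since $\Gamma_{E(3;2;1,2)}$ is polynomially convex, a separating polynomial $p$ exists with $|p(\text{that point})| > \lVert p\rVert_{\infty,\Gamma_{E(3;2;1,2)}}$. By the polynomial spectral mapping theorem $\sigma(p(\textbf{S})) = p(\sigma(\textbf{S}))$, so the spectral radius of $p(\textbf{S})$, and hence $\lVert p(\textbf{S})\rVert$, exceeds $\lVert p\rVert_{\infty,\Gamma_{E(3;2;1,2)}}$, contradicting~$(2)$. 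To upgrade from polynomials to all functions holomorphic in a neighbourhood of $\Gamma_{E(3;2;1,2)}$, I would invoke the Oka--Weil theorem [Theorem~$5.1$, \cite{Gamelin}] and the holomorphic functional calculus for commuting tuples [Theorem~$9.9$, \cite{Vasilescu}], exactly as in Proposition~\ref{prop-1}.

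For $(2)\Leftrightarrow(3)$ I would use the rotation map. Set $\textbf{S}' = (S_1, \omega S_2, \omega^2 S_3, \omega\tilde{S}_1, \omega^2\tilde{S}_2)$ and, for a holomorphic polynomial $p$, define $p_1(x_1,x_2,x_3,y_1,y_2) = p(x_1, \omega x_2, \omega^2 x_3, \omega y_1, \omega^2 y_2)$. By Lemma~\ref{lemm1} (in its closure form), the map $(x_1,x_2,x_3,y_1,y_2)\mapsto(x_1,\omega x_2,\omega^2 x_3,\omega y_1,\omega^2 y_2)$ is a bijection of $\Gamma_{E(3;2;1,2)}$ onto itself for each $\omega\in\mathbb{T}$, so the supremum of $|p_1|$ over $\Gamma_{E(3;2;1,2)}$ equals the supremum of $|p|$ over $\Gamma_{E(3;2;1,2)}$, i.e. $\lVert p_1\rVert_{\infty,\Gamma_{E(3;2;1,2)}} = \lVert p\rVert_{\infty,\Gamma_{E(3;2;1,2)}}$. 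Then $\lVert p(\textbf{S}')\rVert = \lVert p_1(\textbf{S})\rVert \leqslant \lVert p_1\rVert_{\infty,\Gamma_{E(3;2;1,2)}} = \lVert p\rVert_{\infty,\Gamma_{E(3;2;1,2)}}$, which shows $\textbf{S}'$ is a $\Gamma_{E(3;2;1,2)}$-contraction, giving $(2)\Rightarrow(3)$; the reverse $(3)\Rightarrow(2)$ follows by specializing $\omega=1$.

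The only genuine subtlety, and the step I expect to be the main obstacle, is the invariance of the rotation map on $\Gamma_{E(3;2;1,2)}$ together with polynomial convexity. Both are supplied externally — invariance by Lemma~\ref{lemm1} and its closure analogue, and polynomial convexity is asserted in the text as ensuring the existence of the distinguished boundary — so the proof reduces to routine bookkeeping once those are cited. Because the argument is line-for-line parallel to Proposition~\ref{prop-1} with the seven-coordinate rotation replaced by the five-coordinate rotation of Lemma~\ref{lemm1}, the statement correctly notes that the proof may be omitted; were it to be written out, it would require no new idea beyond substituting the $\Gamma_{E(3;2;1,2)}$ data into the template above.
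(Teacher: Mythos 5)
Your proposal is correct and is exactly what the paper intends: the paper omits this proof, stating it is identical to that of Proposition \ref{prop-1}, and your argument is precisely that proof transported to the five-coordinate setting — definition of spectral set for $(1)\Rightarrow(2)$, polynomial convexity plus the spectral mapping theorem (with Oka--Weil and the holomorphic functional calculus) for $(2)\Rightarrow(1)$, and the rotational invariance from Lemma \ref{lemm1} and its closure version for the equivalence with $(3)$. No gaps; the only cosmetic remark is that setting $\omega=1$ in $(3)$ most directly recovers $(1)$ rather than $(2)$, exactly as in the paper's template.
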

The relationship between $\Gamma_{E(3; 2; 1, 2)}$-contraction and $\Gamma_{E(3; 3; 1, 1, 1)}$-contraction is given by the following propositions. \begin{prop}\label{prop-3}
Let $(T_1, \dots, T_7)$ be a $\Gamma_{E(3; 3; 1, 1, 1)}$-contraction. Then
		\[\{(T_1, T_3 + \eta T_5, \eta T_7, T_2 + \eta T_4, \eta T_6) : \eta \in \overline{\mathbb{D}}\}\]
		is a family of $\Gamma_{E(3; 2; 1, 2)}$-contraction.
	\end{prop}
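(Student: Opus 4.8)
The plan is to reduce the operator-theoretic statement to the point-level map
of Theorem \ref{matix AE12} by using the functional-calculus characterization
of spectral sets already established in Proposition \ref{prop-1}. Fix
$\eta \in \overline{\mathbb{D}}$ and write $\textbf{S}^{(\eta)} = (S_1, S_2, S_3, \tilde{S}_1, \tilde{S}_2)$ where
$S_1 = T_1$, $S_2 = T_3 + \eta T_5$, $S_3 = \eta T_7$, $\tilde{S}_1 = T_2 + \eta T_4$,
and $\tilde{S}_2 = \eta T_6$. Since $\textbf{T}$ is a commuting tuple, every $S_i$ and
$\tilde{S}_j$ is a polynomial in $T_1,\dots,T_7$, so these five operators commute with
one another; in particular $S_i\tilde{S}_j = \tilde{S}_jS_i$, and $\textbf{S}^{(\eta)}$ is a
legitimate candidate for a $\Gamma_{E(3;2;1,2)}$-contraction. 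By Proposition \ref{prop-2}
it suffices to verify the polynomial von Neumann inequality
$\|q(\textbf{S}^{(\eta)})\| \le \|q\|_{\infty, \Gamma_{E(3;2;1,2)}}$ for every holomorphic
polynomial $q$ in five variables.

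The key step is the identity that converts this into an inequality for $\textbf{T}$.
Given $q$, define the seven-variable polynomial $p$ by
\[
p(x_1,\dots,x_7) := q\bigl(x_1,\, x_3 + \eta x_5,\, \eta x_7,\, x_2 + \eta x_4,\, \eta x_6\bigr).
\]
Because $S_i$ and $\tilde{S}_j$ are the corresponding polynomial expressions in the $T_k$
and the $T_k$ commute, the composition survives substitution of operators, giving
$q(\textbf{S}^{(\eta)}) = p(T_1,\dots,T_7)$ as bounded operators. Applying
Proposition \ref{prop-1} (the implication $(1)\Rightarrow(2)$ for the
$\Gamma_{E(3;3;1,1,1)}$-contraction $\textbf{T}$) yields
$\|q(\textbf{S}^{(\eta)})\| = \|p(T_1,\dots,T_7)\| \le \|p\|_{\infty,\Gamma_{E(3;3;1,1,1)}}$.
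It then remains to bound $\|p\|_{\infty,\Gamma_{E(3;3;1,1,1)}}$ by
$\|q\|_{\infty,\Gamma_{E(3;2;1,2)}}$.

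This last estimate is exactly where Theorem \ref{matix AE12} enters, and I expect it
to be the main obstacle. For $\textbf{x} \in \Gamma_{E(3;3;1,1,1)}$ I want to conclude that
the image point $(x_1,\, x_3+\eta x_5,\, \eta x_7,\, x_2+\eta x_4,\, \eta x_6)$ lies in
$\Gamma_{E(3;2;1,2)}$, so that
$|p(\textbf{x})| = |q(x_1, x_3+\eta x_5, \eta x_7, x_2+\eta x_4, \eta x_6)|
\le \|q\|_{\infty,\Gamma_{E(3;2;1,2)}}$ for all such $\textbf{x}$, whence
$\|p\|_{\infty,\Gamma_{E(3;3;1,1,1)}} \le \|q\|_{\infty,\Gamma_{E(3;2;1,2)}}$. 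Theorem
\ref{matix AE12} supplies precisely this map, but at the level of the open domains
$G_{E(\cdot)}$ and for $\eta$ in $\mathbb{T}$ or $\overline{\mathbb{D}}$; the delicate
point is passing to the closures $\Gamma_{E(\cdot)}$. I would handle this by a
continuity/approximation argument: $\Gamma_{E(3;3;1,1,1)}$ is the closure of
$G_{E(3;3;1,1,1)}$, the map $\textbf{x} \mapsto (x_1, x_3+\eta x_5, \eta x_7, x_2+\eta x_4, \eta x_6)$
is polynomial hence continuous, and $\Gamma_{E(3;2;1,2)}$ is closed, so the inclusion
of open images from Theorem \ref{matix AE12} passes to the closures. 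Combining the two
bounds gives $\|q(\textbf{S}^{(\eta)})\| \le \|q\|_{\infty,\Gamma_{E(3;2;1,2)}}$ for every
$q$, and since $\eta \in \overline{\mathbb{D}}$ was arbitrary, the whole family is a family
of $\Gamma_{E(3;2;1,2)}$-contractions, completing the proof.
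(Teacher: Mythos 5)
Your proposal is correct and follows essentially the same route as the paper's proof: compose a five-variable polynomial $q$ with the map $\pi_\eta$, apply the spectral-set property of $\mathbf{T}$ to $q\circ\pi_\eta$, and then bound $\|q\circ\pi_\eta\|_{\infty,\Gamma_{E(3;3;1,1,1)}} = \|q\|_{\infty,\pi_\eta(\Gamma_{E(3;3;1,1,1)})}$ by $\|q\|_{\infty,\Gamma_{E(3;2;1,2)}}$ via the inclusion $\pi_\eta(\Gamma_{E(3;3;1,1,1)})\subseteq\Gamma_{E(3;2;1,2)}$. The only difference is that you justify this inclusion by a continuity argument (image of the closure lies in the closure of the image, combined with the open-domain statement of Theorem \ref{matix AE12}), whereas the paper simply asserts the closed-domain version outright, so your write-up is, if anything, slightly more careful on that point.
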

	
	\begin{proof}
Observe that a point $(x_1, \dots, x_7) \in \Gamma_{E(3; 3; 1, 1, 1)}$ if and only if $(x_1, x_3 + \eta x_5, \eta x_7, x_2 + \eta x_4, \eta x_6) \in \Gamma_{E(3; 2; 1, 2)}$ for all $\eta \in \overline{\mathbb{D}}$. For $\eta \in \overline{\mathbb{D}}$, we define the map $\pi_{\eta} : \mathbb{C}^7 \to \mathbb{C}^5$ by
		\begin{equation*}
			\begin{aligned}
				\pi_{\eta}(x_1, \dots, x_7)
				&= (x_1, x_3 + \eta x_5, \eta x_7, x_2 + \eta x_4, \eta x_6).
			\end{aligned}
		\end{equation*}
We note that for any $p \in \mathbb{C}[z_1, \dots, z_5]$, we have $p \circ \pi_{\eta} \in \mathbb{C}[z_1, \dots, z_7]$, therefore by hypothesis, we deduce that \begin{equation*}
\begin{aligned}
||p(T_1, T_3 + \eta T_5, \eta T_7, T_2 + \eta T_4, \eta T_6)|| &= ||p \circ \pi_{\eta}(T_1, \dots, T_7)||\\
&\leqslant ||p \circ \pi_{\eta}||_{\infty, \Gamma_{E(3; 3; 1, 1, 1)}}\\
&= ||p||_{\infty, \pi_{\eta}(\Gamma_{E(3; 3; 1, 1, 1)})}\\
&\leqslant ||p||_{\infty, \Gamma_{E(3; 2; 1, 2)}}.
\end{aligned}
\end{equation*}
This completes the proof.
	\end{proof}
\begin{prop}\label{props-1}
Let $(T_1, \dots, T_7)$ be a $\Gamma_{E(3; 3; 1, 1, 1)}$-contraction. Then $(T_1,T_6,T_7), (T_2,T_5,T_7)$ and $(T_3,T_4,T_7)$ are 
$\Gamma_{E(2; 2; 1,1)}$-contractions.
\end{prop}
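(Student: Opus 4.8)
The plan is to reduce the claim to the tetrablock characterization of $G_{E(3;3;1,1,1)}$ together with a spectral-set pullback argument identical in spirit to Proposition \ref{prop-3}. The key observation is that the three triples $(T_1,T_6,T_7)$, $(T_2,T_5,T_7)$ and $(T_3,T_4,T_7)$ arise as images of $(T_1,\dots,T_7)$ under three fixed coordinate-projection maps $\mathbb{C}^7\to\mathbb{C}^3$, and that these projections carry $\Gamma_{E(3;3;1,1,1)}$ \emph{into} the tetrablock $\Gamma_{E(2;2;1,1)}$. Indeed, by Lemma \ref{lemm-2} every point $\textbf{x}\in\Gamma_{E(3;3;1,1,1)}$ satisfies $|x_i|\le 1$, and the proof of Lemma \ref{lem-2} (via Theorem \ref{mainthm}(4$'$), the characterization $\left(\frac{x_2-z_1x_3}{1-x_1z_1},\frac{x_4-z_1x_5}{1-x_1z_1},\frac{x_6-z_1x_7}{1-x_1z_1}\right)\in G_{E(2;2;1,1)}$) shows that $(x_1,x_6,x_7)$, $(x_1,x_4,x_5)$ and $(x_1,x_2,x_3)$ all lie in the tetrablock. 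The same extraction, applied to conditions $(4)$ and $(4'')$ of Theorem \ref{mainthm}, yields the remaining triples in the closed tetrablock for points of the closure.

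First I would define, for the triple $(T_1,T_6,T_7)$, the linear coordinate map $\pi\colon\mathbb{C}^7\to\mathbb{C}^3$, $\pi(x_1,\dots,x_7)=(x_1,x_6,x_7)$, and record the set-theoretic containment
\begin{equation*}
\pi(\Gamma_{E(3;3;1,1,1)})\subseteq\Gamma_{E(2;2;1,1)},
\end{equation*}
which is exactly the closure version of Lemma \ref{lem-2}. Next I would run the pullback estimate: for any matrix-valued (or scalar) polynomial $p$ in three variables, $p\circ\pi$ is a polynomial in seven variables, so the $\Gamma_{E(3;3;1,1,1)}$-contractivity of $\textbf{T}$ gives
\begin{equation*}
\|p(T_1,T_6,T_7)\|=\|(p\circ\pi)(T_1,\dots,T_7)\|\le\|p\circ\pi\|_{\infty,\Gamma_{E(3;3;1,1,1)}}=\|p\|_{\infty,\pi(\Gamma_{E(3;3;1,1,1)})}\le\|p\|_{\infty,\Gamma_{E(2;2;1,1)}}.
\end{equation*}
The last inequality uses the containment just established. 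Since $(T_1,T_6,T_7)$ is a commuting triple and its joint spectrum maps into $\Gamma_{E(2;2;1,1)}$ by the spectral mapping theorem (as in Proposition \ref{prop-1}, using polynomial convexity of the tetrablock), this is precisely the statement that $\Gamma_{E(2;2;1,1)}$ is a spectral set for $(T_1,T_6,T_7)$, i.e.\ that the triple is a $\Gamma_{E(2;2;1,1)}$-contraction. The identical argument with $\pi'(x_1,\dots,x_7)=(x_2,x_5,x_7)$ and $\pi''(x_1,\dots,x_7)=(x_3,x_4,x_7)$ handles the other two triples.

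The main obstacle, and the only nonroutine point, is verifying the containment $\pi(\Gamma_{E(3;3;1,1,1)})\subseteq\Gamma_{E(2;2;1,1)}$ for the \emph{specific} coordinate orderings $(x_1,x_6,x_7)$, $(x_2,x_5,x_7)$, $(x_3,x_4,x_7)$ rather than the orderings that fall out verbatim from Theorem \ref{mainthm}. The triples $(x_1,x_2,x_3)$, $(x_1,x_4,x_5)$, $(x_1,x_6,x_7)$ emerge directly from condition $(4')$, but one must also check $(x_2,x_5,x_7)$ and $(x_3,x_4,x_7)$; these follow by applying conditions $(4)$ and $(4'')$ of Theorem \ref{mainthm} and extracting the appropriate diagonal-and-determinant triples, possibly after using the tetrablock symmetry that $(a,b,c)\in\Gamma_{E(2;2;1,1)}$ iff $(b,a,c)\in\Gamma_{E(2;2;1,1)}$ (the tetrablock is invariant under swapping its first two coordinates). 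Once the three containments are in hand, the pullback estimate is formal, so I would present the containment verification carefully and then treat the three contraction conclusions in parallel, remarking (as the paper repeatedly does) that the latter two cases are proved exactly as the first.
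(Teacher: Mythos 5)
Your proposal is correct and follows essentially the same route as the paper: the paper also defines the coordinate projections $\pi_1(x_1,\dots,x_7)=(x_1,x_6,x_7)$ (and its analogues), runs the identical polynomial pullback estimate $\|p(T_1,T_6,T_7)\|=\|(p\circ\pi_1)(\textbf{T})\|\le\|p\circ\pi_1\|_{\infty,\Gamma_{E(3;3;1,1,1)}}\le\|p\|_{\infty,\Gamma_{E(2;2;1,1)}}$, and dispatches the other two triples by symmetry. The only difference is that you spell out the containments $\pi(\Gamma_{E(3;3;1,1,1)})\subseteq\Gamma_{E(2;2;1,1)}$ for the specific orderings $(x_2,x_5,x_7)$ and $(x_3,x_4,x_7)$ via Theorem \ref{mainthm}(4), (4$''$) and the swap symmetry of the tetrablock, a point the paper leaves implicit in its ``similarly''.
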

\begin{proof}
Define the map $\pi_1:\mathbb C^7 \to \mathbb C^3$ by 
$$\pi_1(x_1, \dots, x_7)=(x_1,x_6,x_7).$$
We notice that for any $p \in \mathbb{C}[z_1, z_6,z_7]$, we have $p \circ \pi_1 \in \mathbb{C}[z_1, \dots, z_7]$, therefore by hypothesis, we have \begin{equation*}
\begin{aligned}
||p(T_1, T_6,T_7)|| &= ||p \circ \pi_{1}(T_1, \dots, T_7)||\\
&\leqslant ||p \circ \pi_{1}||_{\infty, \Gamma_{E(3; 3; 1, 1, 1)}}\\
&= ||p||_{\infty, \pi_{1}(\Gamma_{E(3; 3; 1, 1, 1)})}\\
&\leqslant ||p||_{\infty, \Gamma_{E(2; 2;1,1)}}.
\end{aligned}
\end{equation*}
This show that $(T_1,T_6,T_7)$ is a $\Gamma_{E(2; 2; 1,1)}$-contraction. Similarly, we can also show that $(T_2,T_5,T_7)$ and $(T_3,T_4,T_7)$ are 
$\Gamma_{E(2; 2; 1,1)}$-contractions.
This completes the proof.
\end{proof}
\begin{prop}\label{prop-4}
Suppose that  $(S_1, S_2, S_3, \tilde{S_1}, \tilde{S_2})$ is a $\Gamma_{E(3; 2; 1, 2)}$-contraction. Then $(S_1, \frac{\tilde{S}_1}{2}, \frac{S_2}{2}, \frac{\tilde{S}_1}{2}, \frac{S_2}{2}, \tilde{S}_2, S_3)$ is a $\Gamma_{E(3; 3; 1, 1, 1)}$-contraction.
	\end{prop}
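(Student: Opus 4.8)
The plan is to mimic the functional-calculus argument of Proposition \ref{prop-3}, but run it in the reverse direction: rather than pushing a $\Gamma_{E(3;3;1,1,1)}$-point forward to a family of $\Gamma_{E(3;2;1,2)}$-points, I will pull a single $\Gamma_{E(3;2;1,2)}$-point back to a $\Gamma_{E(3;3;1,1,1)}$-point through an explicit polynomial map. Define $\pi : \mathbb{C}^5 \to \mathbb{C}^7$ by
$$\pi(x_1, x_2, x_3, y_1, y_2) = \Big(x_1, \tfrac{y_1}{2}, \tfrac{x_2}{2}, \tfrac{y_1}{2}, \tfrac{x_2}{2}, y_2, x_3\Big),$$
so that the $7$-tuple in the statement is precisely $\pi(S_1, S_2, S_3, \tilde S_1, \tilde S_2)$. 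Since each entry of $\pi(\textbf{S})$ is a scalar multiple of one of the commuting operators $S_1, S_2, S_3, \tilde S_1, \tilde S_2$, the resulting $7$-tuple is automatically commuting, so no additional commutativity check is needed.

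The crucial geometric step is the inclusion $\pi(\Gamma_{E(3;2;1,2)}) \subseteq \Gamma_{E(3;3;1,1,1)}$, which I would first establish on the open domains by comparing the two slice characterizations already recorded. On one side, the proof of Lemma \ref{lem-3} (via [Theorem $3.5$, \cite{Bharali}]) gives that $(x_1, x_2, x_3, y_1, y_2) \in G_{E(3;2;1,2)}$ if and only if
$$\Big(\tfrac{2x_1 - z x_2}{2 - z y_1}, \tfrac{y_1 - 2 z y_2}{2 - z y_1}, \tfrac{x_2 - 2 z x_3}{2 - z y_1}\Big) \in G_{E(2;2;1,1)} \quad \text{for all } z \in \overline{\mathbb{D}}.$$
On the other side, condition $(4)$ of Theorem \ref{mainthm} characterizes membership of $\textbf{w} = (w_1, \dots, w_7)$ in $G_{E(3;3;1,1,1)}$ by requiring $\big(\tfrac{w_1 - z_3 w_5}{1 - w_4 z_3}, \tfrac{w_2 - z_3 w_6}{1 - w_4 z_3}, \tfrac{w_3 - z_3 w_7}{1 - w_4 z_3}\big) \in G_{E(2;2;1,1)}$ for all $z_3 \in \overline{\mathbb{D}}$. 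Substituting $\textbf{w} = \pi(x_1, x_2, x_3, y_1, y_2)$ and clearing the factor $\tfrac12$ from each numerator and denominator reduces this triple verbatim to the one above; hence $\pi(G_{E(3;2;1,2)}) \subseteq G_{E(3;3;1,1,1)}$ (indeed the two membership conditions are equivalent for points of the form $\pi(\textbf{x})$). Passing to closures by continuity of the polynomial map $\pi$, I obtain $\pi(\Gamma_{E(3;2;1,2)}) = \pi(\overline{G_{E(3;2;1,2)}}) \subseteq \overline{G_{E(3;3;1,1,1)}} = \Gamma_{E(3;3;1,1,1)}$.

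With this inclusion, the conclusion follows exactly as in Propositions \ref{prop-3} and \ref{props-1}. For any holomorphic polynomial $p$ in seven variables, $p \circ \pi$ is a holomorphic polynomial in five variables, so the hypothesis that $\textbf{S}$ is a $\Gamma_{E(3;2;1,2)}$-contraction, combined with the inclusion, yields
$$\|p(\pi(\textbf{S}))\| = \|(p \circ \pi)(\textbf{S})\| \leqslant \|p \circ \pi\|_{\infty, \Gamma_{E(3;2;1,2)}} = \|p\|_{\infty, \pi(\Gamma_{E(3;2;1,2)})} \leqslant \|p\|_{\infty, \Gamma_{E(3;3;1,1,1)}}.$$
By the Oka--Weil theorem and the holomorphic functional calculus invoked in Proposition \ref{prop-1}, this polynomial estimate upgrades to all functions holomorphic near $\Gamma_{E(3;3;1,1,1)}$, so $\pi(\textbf{S})$ is a $\Gamma_{E(3;3;1,1,1)}$-contraction.

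The only delicate point I anticipate is the closure step. The identification of the two slice-triples uses the denominators $2 - z y_1$ and $1 - w_4 z_3$, which are guaranteed nonvanishing only on the open domains (where $|y_1| < 2$ by Lemma \ref{lem-3}); so the algebraic equivalence of the triples must be carried out on $G_{E(3;2;1,2)}$ first and then propagated to the boundary by continuity, rather than by applying the slice characterization directly on $\Gamma$. This is handled cleanly because $\pi$ is a polynomial (hence continuous) map and $\Gamma_{E(3;3;1,1,1)}$ is closed.
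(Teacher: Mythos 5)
Your proposal is correct and follows essentially the same route as the paper: the same map $\pi(x_1,x_2,x_3,y_1,y_2)=\big(x_1,\tfrac{y_1}{2},\tfrac{x_2}{2},\tfrac{y_1}{2},\tfrac{x_2}{2},y_2,x_3\big)$, the same inclusion $\pi(\Gamma_{E(3;2;1,2)})\subseteq\Gamma_{E(3;3;1,1,1)}$ obtained by matching Bharali's slice characterization with condition $(4)$ of Theorem \ref{mainthm}, and the same polynomial norm estimate $\|p\circ\pi(\textbf{S})\|\leqslant\|p\|_{\infty,\pi(\Gamma_{E(3;2;1,2)})}\leqslant\|p\|_{\infty,\Gamma_{E(3;3;1,1,1)}}$. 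Your explicit open-domain-first treatment of the inclusion, passing to closures by continuity of $\pi$, is in fact slightly more careful than the paper's own wording, which states the slice equivalence directly for the closed domains.
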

	
	\begin{proof}
		In order to show $\left(S_1, \frac{\tilde{S}_1}{2}, \frac{S_2}{2}, \frac{\tilde{S}_1}{2}, \frac{S_2}{2}, \tilde{S}_2, S_3\right)$ is a $\Gamma_{E(3; 3; 1, 1, 1)}$-contraction, consider the map $\pi : \mathbb{C}^5 \to \mathbb{C}^7$ defined by
		\begin{equation*}
			\begin{aligned}
				\pi(x_1, x_2, x_3, y_1, y_2)
				&= \Big(x_1, \frac{y_1}{2}, \frac{x_2}{2}, \frac{y_1}{2}, \frac{x_2}{2}, y_2, x_3\Big).
			\end{aligned}
		\end{equation*}
We first demonstrate that $\pi( \Gamma_{E(3; 2; 1, 2)})\subseteq \Gamma_{E(3; 3; 1, 1, 1)}$. It follows from  [\cite{Bharali},Theorem 3.5] that $(x_1, x_2, x_3, y_1, y_2) $ belongs to $ G_{E(3; 2; 1, 2)}$ if and only if
\begin{equation*}
\Big(\frac{2x_1 - zx_2}{2 - zy_1}, \frac{y_1 - 2zy_2}{2 - zy_1}, \frac{x_2 - 2zx_3}{2 - zy_1}\Big) \in \Gamma_{E(2; 2; 1, 1)} ~ \text{for} ~ z \in \mathbb{D}.
\end{equation*}
By Theorem \ref{mainthm}, we deduce that $\Big(x_1, \frac{y_1}{2}, \frac{x_2}{2}, \frac{y_1}{2}, \frac{x_2}{2}, y_2, x_3\Big)\in 
\Gamma_{E(3; 3; 1, 1, 1)}$. This shows that $\pi( \Gamma_{E(3; 2; 1, 2)})\subseteq \Gamma_{E(3; 3; 1, 1, 1)}$. For any $p \in \mathbb{C}[z_1, \dots, z_7],$ clearly, $p \circ \pi \in \mathbb{C}[z_1, \dots, z_5]$. We have 
		\begin{equation*}
			\begin{aligned}
				||p(S_1, \frac{\tilde{S}_1}{2}, \frac{S_2}{2}, \frac{\tilde{S}_1}{2}, \frac{S_2}{2}, \tilde{S}_2, S_3)||
				&= ||p \circ \pi(S_1, S_2, S_3, \tilde{S_1}, \tilde{S_2})||\\
				&\leqslant ||p \circ \pi||_{\infty, \Gamma_{E(3; 2; 1, 2)}}\\
				&= ||p||_{\infty, \pi(\Gamma_{E(3; 2; 1, 2)})}\\
				&\leqslant ||p||_{\infty, \Gamma_{E(3; 3; 1, 1, 1)}}.
			\end{aligned}
		\end{equation*}
This completes the proof.
	\end{proof}
	
\begin{prop}\label{prop-41}
Let  $(S_1, S_2, S_3, \tilde{S_1}, \tilde{S_2})$ be a $\Gamma_{E(3; 2; 1, 2)}$-contraction. Then $(S_1, \tilde{S}_2, S_3), ( \frac{\tilde{S}_1}{2},\frac{S_2}{2},  S_3)$ and $(\frac{S_2}{2}, \frac{\tilde{S}_1}{2}, S_3)$ are $\Gamma_{E(2; 2; 1,1)}$-contractions.

\end{prop}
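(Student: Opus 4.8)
The plan is to obtain this proposition as an immediate consequence of the two preceding results, composing the embedding of Proposition \ref{prop-4} with the coordinate projections of Proposition \ref{props-1}, rather than reproving the underlying containments from scratch.

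First I would invoke Proposition \ref{prop-4}: since $(S_1, S_2, S_3, \tilde{S}_1, \tilde{S}_2)$ is a $\Gamma_{E(3; 2; 1, 2)}$-contraction, the seven-tuple
\[
(T_1, \dots, T_7) := \Big(S_1, \tfrac{\tilde{S}_1}{2}, \tfrac{S_2}{2}, \tfrac{\tilde{S}_1}{2}, \tfrac{S_2}{2}, \tilde{S}_2, S_3\Big)
\]
is a $\Gamma_{E(3; 3; 1, 1, 1)}$-contraction. Next I would apply Proposition \ref{props-1} to this tuple $(T_1, \dots, T_7)$, which asserts that $(T_1, T_6, T_7)$, $(T_2, T_5, T_7)$ and $(T_3, T_4, T_7)$ are $\Gamma_{E(2; 2; 1, 1)}$-contractions. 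Reading off the components yields exactly
\[
(T_1, T_6, T_7) = (S_1, \tilde{S}_2, S_3), \quad (T_2, T_5, T_7) = \Big(\tfrac{\tilde{S}_1}{2}, \tfrac{S_2}{2}, S_3\Big), \quad (T_3, T_4, T_7) = \Big(\tfrac{S_2}{2}, \tfrac{\tilde{S}_1}{2}, S_3\Big),
\]
which are precisely the three tuples in the statement, completing the argument.

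I anticipate essentially no obstacle, since every nontrivial containment (namely $\pi(\Gamma_{E(3; 2; 1, 2)}) \subseteq \Gamma_{E(3; 3; 1, 1, 1)}$ together with the three coordinate projections landing inside $\Gamma_{E(2; 2; 1, 1)}$) has already been verified inside Propositions \ref{prop-4} and \ref{props-1}. A self-contained direct route is equally available for those preferring not to chain the two propositions: define projection maps $\rho_i : \mathbb{C}^5 \to \mathbb{C}^3$ sending $(x_1, x_2, x_3, y_1, y_2)$ to $(x_1, y_2, x_3)$, to $(\tfrac{y_1}{2}, \tfrac{x_2}{2}, x_3)$, and to $(\tfrac{x_2}{2}, \tfrac{y_1}{2}, x_3)$ respectively, verify $\rho_i(\Gamma_{E(3; 2; 1, 2)}) \subseteq \Gamma_{E(2; 2; 1, 1)}$ using the tetrablock-slice characterization underlying Lemma \ref{lem-3} (passed to the closed domains via Proposition \ref{bhch1}), and then run the standard estimate $\|p(\rho_i(\mathbf{S}))\| = \|p \circ \rho_i(\mathbf{S})\| \leq \|p \circ \rho_i\|_{\infty, \Gamma_{E(3; 2; 1, 2)}} = \|p\|_{\infty, \rho_i(\Gamma_{E(3; 2; 1, 2)})} \leq \|p\|_{\infty, \Gamma_{E(2; 2; 1, 1)}}$ for every $p \in \mathbb{C}[z_1, z_2, z_3]$, exactly as in the proofs of Propositions \ref{props-1} and \ref{prop-4}. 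The only point requiring any care in this alternative is checking the three containments, but these are merely the closed-domain versions of the three tetrablock memberships already recorded in the proof of Lemma \ref{lem-3}.
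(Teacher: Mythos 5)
Your proposal is correct and is exactly the paper's own proof: the paper also deduces the result by combining Proposition \ref{props-1} and Proposition \ref{prop-4}, with your explicit coordinate identification $(T_1,T_6,T_7)=(S_1,\tilde{S}_2,S_3)$, $(T_2,T_5,T_7)=(\tfrac{\tilde{S}_1}{2},\tfrac{S_2}{2},S_3)$, $(T_3,T_4,T_7)=(\tfrac{S_2}{2},\tfrac{\tilde{S}_1}{2},S_3)$ merely spelling out what the paper leaves implicit.
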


\begin{proof}
By Proposition \ref{props-1} and Proposition \ref{prop-4}, we deduce that  $(S_1, \tilde{S}_2, S_3), ( \frac{\tilde{S}_1}{2},\frac{S_2}{2},  S_3)$ and $(\frac{S_2}{2}, \frac{\tilde{S}_1}{2}, S_3)$ are $\Gamma_{E(2; 2; 1,1)}$-contractions. This completes the proof.

\end{proof}

The following give the relationship between $\Gamma_{E(3; 2; 1, 2)}$-contractions and $\Gamma_{E(2; 1; 2)}$-contractions.
	
	\begin{prop}\label{prop-5}
Let $(S_1, S_2, S_3, \tilde{S}_1, \tilde{S}_2)$ be a $\Gamma_{E(3; 2; 1, 2)}$-contraction. Then
		\[\{((\tilde{S}_1 - zS_2)(I - zS_1)^{-1}, (\tilde{S}_2 - zS_3)(I - zS_1)^{-1}) : z \in \mathbb{D}\}\]
		is a family of $\Gamma_{E(2; 1; 2)}$-contractions.
	\end{prop}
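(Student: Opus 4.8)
The plan is to adapt the composition-of-maps argument already used in Propositions~\ref{prop-3} and~\ref{props-1}, the only new feature being that the change of variables is now \emph{rational} rather than polynomial, so the holomorphic form of the spectral-set inequality (obtained through the Oka--Weil theorem and the several-variable functional calculus, exactly as in the proof of Proposition~\ref{prop-1}) has to be invoked. Fix $z \in \mathbb{D}$. The first point is that $I - zS_1$ is invertible: since $\sigma(\textbf{S}) \subseteq \Gamma_{E(3;2;1,2)}$, the spectral mapping theorem applied to the first coordinate projection gives $\sigma(S_1) \subseteq \overline{\mathbb{D}}$, whence $1 - z\lambda \neq 0$ for all $\lambda \in \sigma(S_1)$ and $0 \notin \sigma(I - zS_1)$. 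As $(I - zS_1)^{-1}$ belongs to the commutant of $\{S_1, S_2, S_3, \tilde{S}_1, \tilde{S}_2\}$, the two operators $(\tilde{S}_1 - zS_2)(I - zS_1)^{-1}$ and $(\tilde{S}_2 - zS_3)(I - zS_1)^{-1}$ commute, so the pair is a legitimate candidate for a $\Gamma_{E(2;1;2)}$-contraction.

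Next I would introduce the rational map $\psi_z$ defined, wherever $1 - zx_1 \neq 0$, by
\begin{equation*}
\psi_z(x_1, x_2, x_3, y_1, y_2) = \Big(\frac{y_1 - zx_2}{1 - zx_1}, \frac{y_2 - zx_3}{1 - zx_1}\Big).
\end{equation*}
Because $|x_1| \leqslant 1$ on $\Gamma_{E(3;2;1,2)}$ and $z \in \mathbb{D}$, the denominator $1 - zx_1$ is nonvanishing on $\Gamma_{E(3;2;1,2)}$, hence on an open neighbourhood of this compact set by continuity, so $\psi_z$ is holomorphic there; and Proposition~\ref{bhch1}(4) says precisely that $\psi_z$ maps $\Gamma_{E(3;2;1,2)}$ into $\Gamma_{E(2;1;2)}$. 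Consequently, for every holomorphic polynomial $q$ in two variables the composite $q \circ \psi_z$ lies in $\mathcal{O}(\Gamma_{E(3;2;1,2)})$, and the spectral-set hypothesis in holomorphic form gives
\begin{equation*}
\|(q \circ \psi_z)(\textbf{S})\| \leqslant \|q \circ \psi_z\|_{\infty, \Gamma_{E(3;2;1,2)}}.
\end{equation*}

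The conclusion then follows from two identifications. On the operator side, the homomorphism property of the functional calculus yields $(q \circ \psi_z)(\textbf{S}) = q\big(\psi_z(\textbf{S})\big)$, where $\psi_z(\textbf{S})$ is exactly the commuting pair $\big((\tilde{S}_1 - zS_2)(I - zS_1)^{-1},\, (\tilde{S}_2 - zS_3)(I - zS_1)^{-1}\big)$. On the scalar side, the inclusion $\psi_z(\Gamma_{E(3;2;1,2)}) \subseteq \Gamma_{E(2;1;2)}$ gives $\|q \circ \psi_z\|_{\infty, \Gamma_{E(3;2;1,2)}} \leqslant \|q\|_{\infty, \Gamma_{E(2;1;2)}}$. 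Combining these relations produces $\|q(\psi_z(\textbf{S}))\| \leqslant \|q\|_{\infty, \Gamma_{E(2;1;2)}}$ for all $q$, the desired contractivity; and the spectral inclusion $\sigma(\psi_z(\textbf{S})) = \psi_z(\sigma(\textbf{S})) \subseteq \Gamma_{E(2;1;2)}$, again by spectral mapping, completes the proof that $\Gamma_{E(2;1;2)}$ is a spectral set for the pair.

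I expect the main obstacle to be the careful justification of the identity $(q \circ \psi_z)(\textbf{S}) = q(\psi_z(\textbf{S}))$ for the rational map $\psi_z$: one must verify that applying the several-variable holomorphic functional calculus to the composed rational function reproduces the operator obtained by literally substituting $S_i$, $\tilde{S}_j$ and $(I - zS_1)^{-1}$, which rests on the commutativity of $(I - zS_1)^{-1}$ with the tuple and on $\psi_z$ being holomorphic on a genuine neighbourhood of $\Gamma_{E(3;2;1,2)}$. Once this is secured, the remaining ingredients---invertibility, the point-set inclusion from Proposition~\ref{bhch1}, and spectral mapping---are routine.
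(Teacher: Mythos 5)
Your proposal is correct and follows essentially the same route as the paper's own proof: fix $z\in\mathbb{D}$, compose polynomials with the rational map $\pi_z(x_1,x_2,x_3,y_1,y_2)=\Big(\frac{y_1-zx_2}{1-zx_1},\frac{y_2-zx_3}{1-zx_1}\Big)$, invoke Proposition \ref{bhch1} for the inclusion $\pi_z(\Gamma_{E(3;2;1,2)})\subseteq\Gamma_{E(2;1;2)}$, and apply the spectral-set hypothesis to $p\circ\pi_z$ to get $\|p(\pi_z(\textbf{S}))\|\leqslant\|p\|_{\infty,\Gamma_{E(2;1;2)}}$. The only difference is that you spell out details the paper leaves implicit (invertibility of $I-zS_1$, holomorphy of $\pi_z$ on a neighbourhood of the compact set, the functional-calculus identification, and the joint-spectrum inclusion), which makes your version more careful but not a different argument.
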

	
	\begin{proof}
For any $z\in \mathbb D,$ consider the map $\pi_z : \Gamma_{E(3; 2; 1, 2)} \to \mathbb{C}^2$ defined by
		\begin{equation*}
			\begin{aligned}
				\pi_z(x_1, x_2, x_3, y_1, y_2) &= \Big(\frac{y_1 - zx_2}{1 - zx_1}, \frac{y_2 - zx_3}{1 - zx_1}\Big).
			\end{aligned}
		\end{equation*}
For any $p \in \mathbb{C}[z_1, z_2]$ we observe that  $p \circ \pi_z $ is a rational function defined on $\Gamma_{E(3; 2; 1, 2)}$. By proposition \ref{bhch1}, it follows that  $\pi_z (\Gamma_{E(3; 2; 1, 2)})\subseteq \Gamma_{E(2; 1; 2)}$. Hence by hypothesis we conclude that
		\begin{equation*}
			\begin{aligned}
				||p((\tilde{S}_1 - zS_2)(I - zS_1)^{-1}, (\tilde{S}_2 - zS_3)(I - zS_1)^{-1}))|| &= ||p \circ \pi_z(S_1, S_2, S_3, \tilde{S}_1, \tilde{S}_2)||\\ &\leqslant ||p \circ \pi_z||_{\infty, \Gamma_{E(3; 2; 1, 2)}}\\
				&= ||p||_{\infty, \pi_z(\Gamma_{E(3; 2; 1, 2)})}\\
				&\leqslant ||p||_{\infty, \Gamma_{E(2; 1; 2)}}.
			\end{aligned}
		\end{equation*}
		This completes the proof.
	\end{proof}
	
	\begin{rem}\label{rem-1}
\begin{enumerate}		
\item Let $\mathcal{Q}$ be a joint invariant subspace of $\textbf{T} = (T_1, \dots, T_7).$ Then for any polynomial any $p \in \mathbb{C}[z_1, \dots, z_7],$ we notice that
		\begin{equation*}
			\begin{aligned}
				||p(T_1|_{\mathcal{Q}}, \dots, T_7|_{\mathcal{Q}})||
				&= ||p(T_1, \dots, T_7)|_{\mathcal{Q}}||\\
				&\leqslant ||p(T_1, \dots, T_7)||\\
				&\leqslant ||p||_{\infty, \Gamma_{E(3; 3; 1, 1, 1)}}.
			\end{aligned}
		\end{equation*}
This indicates that a $\Gamma_{E(3; 3; 1, 1, 1)}$-contraction remains a  $\Gamma_{E(3; 3; 1, 1, 1)}$-contraction when it is restricted to a joint invariant subspace.  Similarily, a $\Gamma_{E(3; 2; 1, 2)}$-contractions remains a $\Gamma_{E(3; 2; 1, 2)}$-contractions when it is restricted to a joint invariant subspace.

\item From Proposition \ref{prop-1}, it follows that the adjoint $\textbf{T}^* = (T_1^*, \dots, T_7^*)$ is a $\Gamma_{E(3; 3; 1, 1, 1)}$-contraction. Similarly, from Proposition \ref{prop-2}, we deduce that $\textbf{S}^*= (S_1^*, S_2^*, S_3^*, \tilde{S}_1^*, \tilde{S}_2^*)$ is a $\Gamma_{E(3; 2; 1, 2)}$-contraction.

\end{enumerate}

	\end{rem}
	
	\subsection{Various properties of $\Gamma_{E(3; 3; 1, 1, 1)}$-Contractions and $\Gamma_{E(3; 2; 1, 2}$-Contractions}

Let $T$ be a contraction on a Hilbert space $\mathcal H.$ Define the defect operator $D_{T}=(I-T^*T)^{\frac{1}{2}}$ associated with $T$. The closure of the range of $ D_{T}$ is denoted by $\mathcal D_{T}$.  In this subsection, we discuss the fundamental equations for $\Gamma_{E(3; 3; 1, 1,1 )}$-contractions and $\Gamma_{E(3; 2; 1, 2)}$-contractions. The definition of operator functions $\rho_{G_{E(2; 1; 2)}} $ and $\rho_{G_{E(2; 2; 1,1)}} $ for symmetrized bidisc and tetrablock are defined as follows:
$$\rho_{G_{E(2; 1; 2)}} (S,P)=2(I-P^*P)-(S-S^*P)-(S^*-P^*S)$$ and 
$$\rho_{G_{E(2; 2; 1,1)}} (T_1,T_2,T_3)=(I-T_3^*T_3)-(T_2^*T_2-T_1^*T_1)-2\Re {T_2-T_1^*T_3}$$  where $P,T_3$ are contractions and $S,P$ and $T_1,T_2,T_3$ are commuting bounded operators defined on Hilbert spaces $\mathcal H_1$ and $\mathcal H_2$, respectively. The above operator functions are crucial for characterizing $\Gamma_{E(2; 1; 2)}$-contraction and $\Gamma_{E(2; 2; 1,1)}$-contraction, respectively. Similar to the operator functions $\rho_{G_{E(2; 1; 2)}}$ and $\rho_{G_{E(2; 2; 1,1)}} $, we now introduce three operator functions for a $7$ commuting bounded operators $T_1, \dots, T_7$, with $\|T_7\|\leq 1$ , which are crucial for characterization of the $\Gamma_{E(3; 3; 1, 1, 1)}$-contractions:
\begin{equation}
\begin{aligned}
				\rho^{(1)}_{G_{E(3; 3; 1, 1, 1)}} (T_1, \dots, T_7)
				&= D^2_{T_7} + (T^*_2T_2 + T^*_4T_4 + T^*_6T_6 - T^*_1T_1 - T^*_3T_3 - T^*_5T_5)\\
				&\hspace{0.4cm} - 2Re ~ (T_2 - T^*_1T_3) - 2Re ~ (T_4 - T^*_1T_5) + 2Re ~ (T_6 - T^*_1T_7)\\
				&\hspace{0.4cm} - 2Re ~ (T^*_4T_6 - T^*_5T_7) - 2Re ~ (T^*_2T_6 - T^*_3T_7) -2Re ~ (T^*_5T_3 - T^*_4T_2), 
			\end{aligned}
		\end{equation}
		
		\begin{equation}
			\begin{aligned}
				\rho^{(2)}_{G_{E(3; 3; 1, 1, 1)}} (T_1, \dots, T_7)
				&= D^2_{T_7} + (T^*_1T_1 + T^*_4T_4 + T^*_5T_5 - T^*_2T_2 - T^*_3T_3 - T^*_6T_6)\\
				&\hspace{0.4cm} - 2Re ~ (T_1 - T^*_2T_3) - 2Re ~ (T_4 - T^*_2T_6) + 2Re ~ (T_5 - T^*_2T_7)\\ 
				&\hspace{0.4cm} -2Re ~ (T^*_4T_5 - T^*_6T_7) - 2Re ~ (T^*_1T_5 - T^*_3T_7) -2Re ~ (T^*_6T_3 - T^*_4T_1)
			\end{aligned}
		\end{equation}
and		
		\begin{equation}
			\begin{aligned}
				\rho^{(3)}_{G_{E(3; 3; 1, 1, 1)}} (T_1, \dots, T_7)
				&= D^2_{T_7} + (T^*_1T_1 + T^*_2T_2 + T^*_3T_3 - T^*_4T_4 - T^*_5T_5 - T^*_6T_6)\\
				&\hspace{0.4cm} - 2Re ~ (T_1 - T^*_4T_5) - 2Re ~ (T_2 - T^*_4T_6) + 2Re ~ (T_3 - T^*_4T_7)\\
				&\hspace{0.4cm} -2Re ~ (T^*_2T_3 - T^*_6T_7) - 2Re ~ (T^*_1T_3 - T^*_5T_7) -2Re ~ (T^*_6T_5 - T^*_2T_1). 
			\end{aligned}
		\end{equation}
Let us assume that $T_1, T_2$ and $T_4$ are contractions. We introduce three triples of bounded operators as follows:
		\begin{equation*}
			\begin{aligned}
				(A_{z_1}, B_{z_1}, P_{z_1})
				&= \{ ((T_2 - z_1T_3)(I - z_1T_1)^{-1}, (T_4 - z_1T_5)(I - z_1T_1)^{-1},\\
				&~~~~ \,\,\,\,\,\,\,\,(T_6 - z_1T_7)(I - z_1T_1)^{-1}) : z_1 \in \mathbb{D}\},
			\end{aligned}
		\end{equation*}
		
		\begin{equation*}
			\begin{aligned}
				(A_{z_2}, B_{z_2}, P_{z_2})
				&= \{ ((T_1 - z_2T_3)(I - z_2T_2)^{-1}, (T_4 - z_2T_6)(I - z_2T_2)^{-1},\\
				&~~~~ \,\,\,\,\,\,\,\,(T_5 - z_2T_7)(I - z_2T_2)^{-1}) : z_2 \in \mathbb{D} \}
			\end{aligned}
		\end{equation*}
and		
		\begin{equation*}
			\begin{aligned}
				(A_{z_3}, B_{z_3}, P_{z_3})
				&= \{ ((T_1 - z_3T_5)(I - z_3T_4)^{-1}, (T_2 - z_3T_6)(I - z_3T_4)^{-1},\\
				&~~~~ \,\,\,\,\,\,\,\, (T_3 - z_3T_7)(I - z_3T_4)^{-1}) : z_3 \in \mathbb{D} \}.
			\end{aligned}
		\end{equation*}

	\begin{defn}\label{fundamental}
		Let $(T_1, \dots, T_7)$ be a $7$-tuples of commuting contractions on a Hilbert space $\mathcal{H}$ and $\{(A_{z_i}, B_{z_i}, P_{z_i}) : z_i \in \mathbb{D}\}$ are as above with $\|P_{z_i}\|\leq 1$ for $1\leq i\leq 3$. For fixed but arbitrary $z_i\in \mathbb D$, the equations
		\begin{equation}\label{Fundamental 1}
			\begin{aligned}
				&A_{z_i} - B^*_{z_i}P_{z_i} = D_{P_{z_i}}F^{(1)}_{z_i}D_{P_{z_i}}, B_{z_i} - A^*_{z_i}P_{z_i} = D_{P_{z_i}}F^{(2)}_{z_i}D_{P_{z_i}},
			\end{aligned}
		\end{equation}
where $F_{z_i}^{(j)}\in \mathcal{B}(\mathcal{D}_{P_{z_i}}), 1\leq j\leq2$, are called the fundamental equations for $(A_{z_i}, B_{z_i},P_{z_i}),\; 1\leq i\leq 3$.
			\end{defn}
We recall the definition of tetrablock contraction from \cite{Bhattacharyya}.
\begin{defn}
Let $(A,B,P)$ be a commuting triple of bounded operators on a Hilbert space $\mathcal H.$ We say that $(A,B,P)$  is a tetrablock contraction if $\Gamma_{E(2;2;1,1)}$ is a spectral set for $(A,B,P)$ .
\end{defn}
	
\begin{thm}\label{thm-1}
		Let $\textbf{T}=(T_1, \dots, T_7)$ denote a $7$-tuple of  commuting contractions on some Hilbert space $\mathcal{H}$. Then the following statements are valid
 $(1) \Rightarrow (2) \Rightarrow (3) \Rightarrow (4) \Rightarrow (5), (1) \Rightarrow (2^{'}) \Rightarrow (3^{'}) \Rightarrow (4^{'}) \Rightarrow (5^{'})$ and $(1) \Rightarrow (2^{''}) \Rightarrow
		(3^{''}) \Rightarrow (4^{''}) \Rightarrow (5^{''})$:
		\begin{enumerate}
			\item $(T_1, \dots, T_7)$ is a $\Gamma_{E(3; 3; 1, 1, 1)}$-contraction.
			
			\item $\{ (A_{z_2}, B_{z_2}, P_{z_2}) : z_2 \in \mathbb{D} \}$ is a family of tetrablock contractions.
			
			\item[($2^{'}$)] $\{ (A_{z_3}, B_{z_3}, P_{z_3}) : z_3 \in \mathbb{D} \}$ is a family of tetrablock contractions.
			
			\item[($2^{''}$)] $\{ (A_{z_1}, B_{z_1}, P_{z_1}) : z_1 \in \mathbb{D} \}$ is a family of tetrablock contractions.
			
			\item For all $\omega \in \mathbb{T}$ and $z_2 \in \mathbb{D},$
			\begin{equation*}
				\begin{aligned}
					\rho_{G_{E(2; 2; 1, 1)}}(A_{z_2}, \omega B_{z_2}, \omega P_{z_2}) &\geqslant 0,
					\rho_{G_{E(2; 2; 1, 1)}}(B_{z_2}, \omega A_{z_2}, \omega P_{z_2}) \geqslant 0.
				\end{aligned}
			\end{equation*}
and the spectral radius of 	$S^{(z_2)}$ is no bigger than $2.$	
			\item[($3^{'}$)] For all $\omega \in \mathbb{T}$ and $z_3 \in \mathbb{D},$
			\begin{equation*}
				\begin{aligned}
					\rho_{G_{E(2; 2; 1, 1)}}(A_{z_3}, \omega B_{z_3}, \omega P_{z_3}) &\geqslant 0,
					\rho_{G_{E(2; 2; 1, 1)}}(B_{z_3}, \omega A_{z_3}, \omega P_{z_3}) \geqslant 0.
				\end{aligned}
			\end{equation*}
and the spectral radius of 	$S^{(z_3)}$ is no bigger than $2.$			
			\item[($3^{''}$)] For all $\omega \in \mathbb{T}$ and $z_1 \in \mathbb{D},$
			\begin{equation*}
				\begin{aligned}
					\rho_{G_{E(2; 2; 1, 1)}}(A_{z_1}, \omega B_{z_1}, \omega P_{z_1}) &\geqslant 0,
					\rho_{G_{E(2; 2; 1, 1)}}(B_{z_1}, \omega A_{z_1}, \omega P_{z_1}) \geqslant 0.
				\end{aligned}
			\end{equation*}
and the spectral radius of 	$S^{(z_1)}$ is no bigger than $2.$			
			\item For each fixed but arbitrary $z_2 \in \mathbb{D}$, the pair $(S^{(z_2)}(\omega), P^{(z_2)}(\omega)) = (A_{z_2} + \omega B_{z_2}, \omega P_{z_2})$ is a $\Gamma_{E(2; 1; 2)}$-contraction for every $\omega \in \mathbb{T}$.
			
			\item[($4^{'}$)] For each fixed but arbitrary $z_3 \in \mathbb{D}$, the pair $(S^{(z_3)}(\omega), P^{(z_3)}(\omega)) = (A_{z_3} + \omega B_{z_3}, \omega P_{z_3})$ is a $\Gamma_{E(2; 1; 2)}$-contraction for every $\omega \in \mathbb{T}$.
			
			\item[($4^{''}$)]  For each fixed but arbitrary $z_1 \in \mathbb{D}$, the pair $(S^{(z_1)}(\omega), P^{(z_1)}(\omega)) = (A_{z_1} + \omega B_{z_1}, \omega P_{z_1})$is a $\Gamma_{E(2; 1; 2)}$-contraction for every $\omega \in \mathbb{T}$.
			
			\item For each fixed but arbitrary  $z_2 \in \mathbb{D}$ the  fundamental equations  (\ref{Fundamental 1}) possess unique solutions $F^{(1)}_{z_2}, F^{(2)}_{z_2}$ in $\mathcal{B}(\mathcal{D}_{P_{z_2}})$. Furthermore, the numerical radius of the operator $F^{(1)}_{z_2} + w_1F^{(2)}_{z_2}$ is less than or equal to $1$ for every $w_1 \in \mathbb{T}$.
			
			\item[($5^{'}$)] For each fixed but arbitrary  $z_3 \in \mathbb{D}$ the  fundamental equations  (\ref{Fundamental 1}) possess unique solutions $F^{(1)}_{z_3}, F^{(2)}_{z_3}$ in $\mathcal{B}(\mathcal{D}_{P_{z_3}})$. Furthermore, the numerical radius of the operator $F^{(1)}_{z_3} + w_2F^{(2)}_{z_3}$ is less than or equal to $1$ for every $w_2 \in \mathbb{T}$.
			
			\item[($5^{''}$)] For each fixed but arbitrary  $z_1 \in \mathbb{D}$ the  fundamental equations  (\ref{Fundamental 1}) possess unique solutions $F^{(1)}_{z_1}, F^{(2)}_{z_1}$ in $\mathcal{B}(\mathcal{D}_{P_{z_1}})$. Furthermore, the numerical radius of the operator $F^{(1)}_{z_1} + w_3F^{(2)}_{z_1}$ is less than or equal to $1$ for every $w_3 \in \mathbb{T}$.
		\end{enumerate}
	\end{thm}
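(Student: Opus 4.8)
The plan is to establish the unprimed chain $(1)\Rightarrow(2)\Rightarrow(3)\Rightarrow(4)\Rightarrow(5)$ in full; the primed and doubly-primed chains then follow verbatim after replacing the role of characterization $4''$ of Theorem~\ref{mainthm} by $4$ and $4'$, and the triple $(A_{z_2},B_{z_2},P_{z_2})$ by $(A_{z_3},B_{z_3},P_{z_3})$ and $(A_{z_1},B_{z_1},P_{z_1})$. For $(1)\Rightarrow(2)$ I would reuse the rational-slice argument of Propositions~\ref{prop-3} and~\ref{prop-5}. Since $T_2$ is a contraction and $z_2\in\mathbb{D}$, the operator $I-z_2T_2$ is invertible, so $(A_{z_2},B_{z_2},P_{z_2})$ is well defined, and by Lemma~\ref{lemm-2} the scalar $1-z_2x_2$ does not vanish on $\Gamma_{E(3;3;1,1,1)}$. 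Hence the rational map $\phi_{z_2}(x_1,\dots,x_7)=\big(\frac{x_1-z_2x_3}{1-z_2x_2},\frac{x_4-z_2x_6}{1-z_2x_2},\frac{x_5-z_2x_7}{1-z_2x_2}\big)$ is holomorphic on a neighbourhood of $\Gamma_{E(3;3;1,1,1)}$, and condition $4''$ of Theorem~\ref{mainthm} (in its closure form) shows that $\phi_{z_2}$ maps $\Gamma_{E(3;3;1,1,1)}$ into $\Gamma_{E(2;2;1,1)}$. For a matrix polynomial $p$ on the tetrablock, $p\circ\phi_{z_2}$ is holomorphic near $\Gamma_{E(3;3;1,1,1)}$, and the spectral-set hypothesis together with the functional calculus used in the proof of Proposition~\ref{prop-1} gives $\|p(A_{z_2},B_{z_2},P_{z_2})\|=\|(p\circ\phi_{z_2})(\mathbf{T})\|\le\|p\|_{\infty,\Gamma_{E(2;2;1,1)}}$, so the triple is a tetrablock contraction (here $A_{z_2},B_{z_2}$ appear swapped relative to $4''$, but the tetrablock is symmetric in its first two coordinates).

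For $(2)\Rightarrow(3)$ I would invoke the characterization of tetrablock contractions via the tetrablock operator function \cite{Bhattacharyya}: because $(a,b,p)\mapsto(a,\omega b,\omega p)$ is a symmetry of $\Gamma_{E(2;2;1,1)}$, realized by $A\mapsto A\,\operatorname{diag}(1,\omega)$, the rotated triples $(A_{z_2},\omega B_{z_2},\omega P_{z_2})$ and $(B_{z_2},\omega A_{z_2},\omega P_{z_2})$ are again tetrablock contractions, and the positivity of the tetrablock operator function then gives $\rho_{G_{E(2;2;1,1)}}(A_{z_2},\omega B_{z_2},\omega P_{z_2})\ge0$ and $\rho_{G_{E(2;2;1,1)}}(B_{z_2},\omega A_{z_2},\omega P_{z_2})\ge0$ for every $\omega\in\mathbb{T}$. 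The Taylor joint spectrum of a tetrablock contraction lies in $\Gamma_{E(2;2;1,1)}$, on which $|a+\omega b|\le2$; by the spectral mapping property this yields $r(A_{z_2}+\omega B_{z_2})\le2$ for every $\omega$, which is the spectral-radius clause.

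The crux is $(3)\Rightarrow(4)$, which rests on two algebraic identities for commuting $A,B,P$ with $P$ a contraction. Writing $\operatorname{Re}X=\tfrac12(X+X^*)$, the indefinite terms $\pm(B^*B-A^*A)$ cancel in the sum
\begin{align*}
\rho_{G_{E(2;2;1,1)}}(A,\mu B,\mu P)+\rho_{G_{E(2;2;1,1)}}(B,\nu A,\nu P)&=2(I-P^*P)-2\operatorname{Re}\big(\mu(B-A^*P)\big)-2\operatorname{Re}\big(\nu(A-B^*P)\big),
\end{align*}
while, using $D_{\omega_0P}=D_P$,
\begin{align*}
\rho_{G_{E(2;1;2)}}\big(\omega(A+\omega_0B),\,\omega^2(\omega_0P)\big)&=2(I-P^*P)-2\operatorname{Re}\big(\omega(A-B^*P)\big)-2\operatorname{Re}\big(\omega\omega_0(B-A^*P)\big).
\end{align*}
Taking $\nu=\omega$ and $\mu=\omega\omega_0$, both in $\mathbb{T}$, the two right-hand sides coincide, so the positivity in $(3)$, quantified over all $\omega\in\mathbb{T}$, forces $\rho_{G_{E(2;1;2)}}\big(\omega\,S^{(z_2)}(\omega_0),\omega^2P^{(z_2)}(\omega_0)\big)\ge0$ for all $\omega,\omega_0\in\mathbb{T}$. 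Combined with $r(S^{(z_2)}(\omega_0))\le2$, which places $\sigma(S^{(z_2)}(\omega_0),P^{(z_2)}(\omega_0))$ inside $\Gamma_{E(2;1;2)}$, the Agler--Young $\rho$-criterion for $\Gamma_{E(2;1;2)}$-contractions \cite{JAgler,young} yields that $(S^{(z_2)}(\omega_0),P^{(z_2)}(\omega_0))=(A_{z_2}+\omega_0B_{z_2},\omega_0P_{z_2})$ is a $\Gamma_{E(2;1;2)}$-contraction.

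Finally, for $(4)\Rightarrow(5)$ I would apply the Agler--Young fundamental-equation theorem \cite{JAgler} to each $\Gamma_{E(2;1;2)}$-contraction $(S^{(z_2)}(\omega),P^{(z_2)}(\omega))$: since $D_{\omega P_{z_2}}=D_{P_{z_2}}$, there is a unique $G_\omega\in\mathcal{B}(\mathcal{D}_{P_{z_2}})$ with numerical radius $w(G_\omega)\le1$ solving $S^{(z_2)}(\omega)-S^{(z_2)}(\omega)^*P^{(z_2)}(\omega)=D_{P_{z_2}}G_\omega D_{P_{z_2}}$. A direct computation gives $S^{(z_2)}(\omega)-S^{(z_2)}(\omega)^*P^{(z_2)}(\omega)=(A_{z_2}-B_{z_2}^*P_{z_2})+\omega(B_{z_2}-A_{z_2}^*P_{z_2})$, which is affine in $\omega$; evaluating at two distinct unimodular values of $\omega$ and averaging produces $F^{(1)}_{z_2},F^{(2)}_{z_2}\in\mathcal{B}(\mathcal{D}_{P_{z_2}})$ solving the fundamental equations~(\ref{Fundamental 1}), and since $D_{P_{z_2}}XD_{P_{z_2}}=0$ forces $X=0$ on $\mathcal{D}_{P_{z_2}}$, these solutions are unique with $G_\omega=F^{(1)}_{z_2}+\omega F^{(2)}_{z_2}$. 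The bound $w(G_\omega)\le1$ then reads $w(F^{(1)}_{z_2}+\omega F^{(2)}_{z_2})\le1$ for all $\omega\in\mathbb{T}$, which is $(5)$. I expect the main obstacle to be $(3)\Rightarrow(4)$: fixing the precise form of the two $\rho$-identities with the rotation parameters placed correctly, matching them to the exact Agler--Young criterion, and verifying that the spectral-radius clause genuinely secures $\sigma(S^{(z_2)}(\omega_0),P^{(z_2)}(\omega_0))\subseteq\Gamma_{E(2;1;2)}$ for every $\omega_0$.
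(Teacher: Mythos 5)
Your proposal is correct, and it follows the paper's route where the paper gives details: the implication $(1)\Rightarrow(2)$ is proved exactly as in the paper, via the rational slice map $\pi_{z_2}(x)=\bigl(\tfrac{x_1-z_2x_3}{1-z_2x_2},\tfrac{x_4-z_2x_6}{1-z_2x_2},\tfrac{x_5-z_2x_7}{1-z_2x_2}\bigr)$, the non-vanishing of the denominator on $\Gamma_{E(3;3;1,1,1)}$ (Lemma~\ref{lemm-2}), the inclusion $\pi_{z_2}(\Gamma_{E(3;3;1,1,1)})\subseteq\Gamma_{E(2;2;1,1)}$ from the closure form of Theorem~\ref{mainthm}, and the spectral-set inequality. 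The divergence is downstream: for $(2)\Rightarrow(3)\Rightarrow(4)\Rightarrow(5)$ the paper simply invokes [Theorem 3.4, \cite{Bhattacharyya}] for each fixed triple $(A_{z_2},B_{z_2},P_{z_2})$, whereas you re-prove those implications. Your ingredients --- rotation invariance of the tetrablock, the cancellation identity writing $\rho_{G_{E(2;1;2)}}\bigl(\omega S^{(z_2)}(\omega_0),\omega^2P^{(z_2)}(\omega_0)\bigr)$ as the sum of two tetrablock pencils, the Agler--Young circle criterion (spectral radius at most $2$ plus positivity on $\mathbb{T}$) \cite{ay1,Roy}, and uniqueness of solutions of $D_PXD_P=Y$ --- are precisely those of Bhattacharyya's own proof, so your version buys self-containedness at the cost of length; each of your identities checks out, including the placement of the rotation parameters $\mu=\omega\omega_0$, $\nu=\omega$.

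Two caveats, neither fatal to your reasoning. First, in $(1)\Rightarrow(2)$ replace ``matrix polynomial'' by ``scalar polynomial'': the hypothesis that $\Gamma_{E(3;3;1,1,1)}$ is a (scalar) spectral set licenses the von Neumann inequality only for scalar-valued functions of $\mathbf{T}$, and scalar polynomials are all that the definition of a tetrablock contraction requires. Second, your step ``a tetrablock contraction satisfies $\rho_{G_{E(2;2;1,1)}}\geqslant 0$'' is valid for Bhattacharyya's pencil $(I-P^*P)+(B^*B-A^*A)-2\,\mathrm{Re}(B-A^*P)$, which is exactly what contractivity of $(A-zP)(I-zB)^{-1}$ yields as $|z|\to1$; with the sign actually printed in this paper (a minus in front of $T_2^*T_2-T_1^*T_1$) that implication is false: for the tetrablock contraction $(A,B,P)=(0,\tfrac12 I,0)$, which arises from the $\Gamma_{E(3;3;1,1,1)}$-contraction $(0,0,0,\tfrac12 I,0,0,0)$ at every $z_2$, one gets $\rho(A,B,P)=-\tfrac14 I$. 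So the displayed definition of $\rho_{G_{E(2;2;1,1)}}$ carries a sign typo that makes conditions $(3)$, $(3')$, $(3'')$ literally false as printed; once the standard sign is restored both the theorem and your proof are correct, and note that your sum identity is insensitive to this sign since the quadratic terms cancel.
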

	
	\begin{proof}
		We demonstrate the implications $(1) \Rightarrow (2) \Rightarrow (3) \Rightarrow (4) \Rightarrow (5)$, as the proofs for the remaining two implications are analogous. We first prove $(1)$ implies $(2).$  Let $\textbf{T}=(T_1, \dots, T_7)$ be a $\Gamma_{E(3; 3; 1, 1, 1)}$-contraction. 
%
For each fixed but arbitrary $z_2 \in \mathbb{D}$ we define the map $\pi_{z_2} : \Gamma_{E(3; 3; 1, 1, 1)} \to \mathbb{C}^3$ as follows:
		\begin{equation*}
			\begin{aligned}
				\pi_{z_2}(x_1, \dots, x_7) &= \Big(\frac{x_1 - z_2x_3}{1 - z_2x_2}, \frac{x_4 - z_2x_6}{1 - z_2x_2}, \frac{x_5 - z_2x_7}{1 - z_2x_2}\Big).
			\end{aligned}
		\end{equation*}

		Note that $p \circ \pi_{z_2}$ is a rational function defined on $\Gamma_{E(3; 3; 1, 1, 1)} $ for any $p \in \mathbb{C}[z_1, z_2, z_3].$ Thus we have 		\begin{equation*}
			\begin{aligned}
				||p(A_{z_2}, B_{z_2}, P_{z_2})||
				&= ||p \circ \pi_{z_2} (T_1, \dots, T_7)||\\
				&\leqslant ||p \circ \pi_{z_2}||_{\infty, \Gamma_{E(3; 3; 1, 1, 1)}}\\
				&= ||p||_{\infty, \pi_{z_2}(\Gamma_{E(3; 3; 1, 1, 1)})}\\
				&\leqslant ||p||_{\infty, \Gamma_{E(2; 2; 1, 1)}}.
			\end{aligned}
		\end{equation*}
This shows that  $\{(A_{z_2}, B_{z_2}, P_{z_2}) : z_2 \in \mathbb{D}\}$ is a family of tetrablock contractions.
%

We can derive the implication $(2) \Rightarrow (3) \Rightarrow (4) \Rightarrow (5)$ from [Theorem $3.4$, \cite{Bhattacharyya}]. 
This completes the proof.	\end{proof}
\begin{rem}\label{remm1}
Let $T=(T_1, \dots, T_7)$ be a $\Gamma_{E(3; 3; 1, 1, 1)}$-contraction and $p_i$ be a polynomial in $\mathbb{C}[x_1,\ldots,x_7]$ which is defined as $p_i(x_1,x_2,\ldots,x_7)=x_i$ for $1\leq i\leq 7$. By  Lemma \ref{lemm-2} and Proposition \ref{prop-1}, it follows that $\|T_i\|=\|p_i(T)\|\leq \|p_i\|_{\infty, \Gamma_{E(3; 3; 1, 1, 1)}}= \|x_i\|_{\infty, \Gamma_{E(3; 3; 1, 1, 1)}}\leq 1$. For $i=1,2,4$, if $\|T_i\|<1$, then $(I-z_iT_i)$ is invertible for all $z_i\in \bar{\mathbb{D}}$.  By using the similar argument as in the Theorem \ref{thm-1}, it yields that if $(T_1, \dots, T_7)$ is a $\Gamma_{E(3; 3; 1, 1, 1)}$-contraction, then  $\{ (A_{z_2}, B_{z_2}, P_{z_2}) : z_2 \in \mathbb{\bar{D}} \}$ is a family of tetrablock contractions. Similarly, it can be demonstrated that if  $(T_1, \dots, T_7)$ is a $\Gamma_{E(3; 3; 1, 1, 1)}$-contraction, then  $\{ (A_{z_1}, B_{z_1}, P_{z_1}) : z_1 \in \mathbb{\bar{D}} \}$ as well as  $\{ (A_{z_3}, B_{z_3}, P_{z_3}) : z_3 \in \mathbb{\bar{D}} \}$ is a family of tetrablock contractions.
\end{rem}	
	\begin{thm}\label{thm-2}
		Let $\textbf{T}=(T_1, \dots, T_7)$ be a $7$-tuple of commuting contractions on some Hilbert space $\mathcal{H}$. Then what follows is true:
		$(1) \Rightarrow (2), (1) \Rightarrow (2'),
		(1) \Rightarrow (2'')$:		\begin{enumerate}
			\item $(T_1, \dots, T_7)$ is a $\Gamma_{E(3; 3; 1, 1, 1)}$-contraction.
			
			\item  For all $z_2, z_3 \in \overline{\mathbb{D}}$,
			\begin{equation*}
				\begin{aligned}
					\rho^{(1)}_{G_{(3; 3; 1, 1, 1)}} (T_1, z_2T_2, z_2T_3, z_3T_4, z_3T_5, z_2z_3T_6, z_2z_3T_7) &\geqslant 0.
				\end{aligned}	
			\end{equation*}
			
			\item[($2^{'}$)] For all $z_1, z_3 \in \overline{\mathbb{D}}$.
			\begin{equation*}
				\begin{aligned}
					\rho^{(2)}_{G_{(3; 3; 1, 1, 1)}} (z_1T_1, T_2, z_1T_3, z_3T_4, z_1z_3T_5, z_3T_6, z_1z_3T_7) &\geqslant 0.
				\end{aligned}
			\end{equation*}
			
			\item[($2^{''}$)] For all $z_1, z_2 \in \overline{\mathbb{D}}$,
			\begin{equation*}
				\begin{aligned}
					\rho^{(3)}_{G_{(3; 3; 1, 1, 1)}} (z_1T_1, z_2T_2, z_1z_2T_3, T_4, z_1T_5, z_2T_6, z_1z_2T_7) &\geqslant 0.
				\end{aligned}
			\end{equation*}
		\end{enumerate}
	
	\end{thm}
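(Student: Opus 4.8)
The plan is to convert the positivity of $\rho^{(1)}_{G_{E(3; 3; 1, 1, 1)}}$ on the prescribed twist of $\textbf{T}$ into a single contractivity estimate supplied by the spectral-set hypothesis. The first step is to record the algebraic identity
$$\rho^{(1)}_{G_{E(3; 3; 1, 1, 1)}}\big(T_1, z_2T_2, z_2T_3, z_3T_4, z_3T_5, z_2z_3T_6, z_2z_3T_7\big)=N(z_2,z_3)^*N(z_2,z_3)-M(z_2,z_3)^*M(z_2,z_3),$$
where $N(z_2,z_3)=I-z_2T_2-z_3T_4+z_2z_3T_6$ and $M(z_2,z_3)=T_1-z_2T_3-z_3T_5+z_2z_3T_7$ are exactly the denominator and numerator operators of $\Psi^{(1)}$ in \eqref{psi11}. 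This is checked by expanding $N^*N-M^*M$, substituting each $T_j$ by its monomial weight ($1,z_2,z_2,z_3,z_3,z_2z_3,z_2z_3$) into the definition of $\rho^{(1)}$, and matching the quadratic, linear, and mixed real-part terms; it is a bookkeeping computation in which one must track the conjugations $T_j^*\mapsto\overline{(\cdots)}\,T_j^*$ induced by the twist.

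With the identity in hand it suffices to prove $N(z_2,z_3)^*N(z_2,z_3)-M(z_2,z_3)^*M(z_2,z_3)\geq 0$, which I would first establish for $z_2,z_3\in\mathbb{D}$. Since $\textbf{T}$ is a $\Gamma_{E(3; 3; 1, 1, 1)}$-contraction we have $\sigma(\textbf{T})\subseteq\Gamma_{E(3; 3; 1, 1, 1)}$, and for $\textbf{x}\in\Gamma_{E(3; 3; 1, 1, 1)}$ the triple $(x_2,x_4,x_6)$ lies in the closed tetrablock, so $1-z_2x_2-z_3x_4+z_2z_3x_6\neq 0$ for $z_2,z_3\in\mathbb{D}$; by the spectral mapping theorem $N(z_2,z_3)$ is invertible. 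The holomorphic functional calculus then gives $\Psi^{(1)}(z_2,z_3,\textbf{T})=M(z_2,z_3)N(z_2,z_3)^{-1}$, and since $N(z_2,z_3)$ is invertible the desired positivity is equivalent to $\|\Psi^{(1)}(z_2,z_3,\textbf{T})\|\leq 1$. This last bound follows from the von Neumann inequality of Proposition~\ref{prop-1} (extended to functions holomorphic near $\Gamma_{E(3; 3; 1, 1, 1)}$) together with $\|\Psi^{(1)}(z_2,z_3,\cdot)\|_{\infty,\Gamma_{E(3; 3; 1, 1, 1)}}\leq 1$, which is immediate from the characterization in Theorem~\ref{mainthm}: for $\textbf{x}\in\Gamma_{E(3; 3; 1, 1, 1)}$ one has $|\Psi^{(1)}(z_2,z_3,\textbf{x})|\leq\|\Psi^{(1)}(\cdot,\textbf{x})\|_{H^{\infty}(\overline{\mathbb{D}}^2)}\leq 1$. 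Alternatively one may invoke Theorem~\ref{thm-1} to see that $(A_{z_3},B_{z_3},P_{z_3})$ is a tetrablock contraction and use the factorizations $N(z_2,z_3)=(I-z_3T_4)(I-z_2B_{z_3})$ and $M(z_2,z_3)=(I-z_3T_4)(A_{z_3}-z_2P_{z_3})$, which yield $\Psi^{(1)}(z_2,z_3,\textbf{T})=(A_{z_3}-z_2P_{z_3})(I-z_2B_{z_3})^{-1}$, a contraction by the tetrablock functional calculus.

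Finally I would pass from $\mathbb{D}^2$ to $\overline{\mathbb{D}}^2$ by continuity: the left-hand side of the identity is a fixed operator polynomial in $z_2,\overline{z_2},z_3,\overline{z_3}$, so $(z_2,z_3)\mapsto\langle\rho^{(1)}_{G_{E(3; 3; 1, 1, 1)}}(\cdots)h,h\rangle$ is continuous for every $h\in\mathcal{H}$, and since the cone of positive semidefinite operators is closed, positivity on the open bidisc forces positivity on its closure. The implications $(1)\Rightarrow(2^{\prime})$ and $(1)\Rightarrow(2^{\prime\prime})$ are handled identically, replacing $\rho^{(1)},\Psi^{(1)}$ by $\rho^{(2)},\Psi^{(2)}$ (with $N=I-z_1T_1-z_3T_4+z_1z_3T_5$ and $M=T_2-z_1T_3-z_3T_6+z_1z_3T_7$) and by $\rho^{(3)},\Psi^{(3)}$, invoking conditions $3^{\prime}$ and $3^{\prime\prime}$ of Theorem~\ref{mainthm}. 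I expect the main obstacle to be the first step: confirming that the particular twist of the seven variables turns $\rho^{(1)}$ \emph{exactly} into $N^*N-M^*M$, since it is precisely the monomial weights that make every cross term recombine into the numerator and denominator of $\Psi^{(1)}$.
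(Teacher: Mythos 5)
Your proof is correct and takes essentially the same route as the paper's: apply the spectral-set hypothesis to the rational function $\Psi^{(1)}(z_2,z_3,\cdot)$ (resp. $\Psi^{(2)},\Psi^{(3)}$) to get $\|\Psi^{(1)}(z_2,z_3,\mathbf{T})\|\leq 1$ for $z_2,z_3\in\mathbb{D}$, identify this bound with the operator inequality $N(z_2,z_3)^*N(z_2,z_3)-M(z_2,z_3)^*M(z_2,z_3)\geq 0$, i.e. with $\rho^{(1)}_{G_{E(3;3;1,1,1)}}\geq 0$ on the twisted tuple, and pass to $\overline{\mathbb{D}}^2$ by continuity. The differences are only presentational (the paper derives the norm bound first and then expands, while you record the algebraic identity first and make explicit the invertibility of $N(z_2,z_3)$ and the $H^{\infty}$ bound from Theorem \ref{mainthm}, which the paper leaves implicit); incidentally, your sign on the cross term $2\,\mathrm{Re}\,z_2\overline{z}_3(T_4^*T_2-T_5^*T_3)$ is the correct one, as the paper's displayed intermediate expansion carries a sign typo there even though its conclusion agrees with the definition of $\rho^{(1)}_{G_{E(3;3;1,1,1)}}$.
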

	
	\begin{proof}
We only prove $(1) \Rightarrow (2)$, as the proofs for $(1) \Rightarrow (2')$ and $(1) \Rightarrow (2'')$ are similar. Suppose that $(T_1, \dots, T_7)$ is a $\Gamma_{E(3; 3; 1, 1, 1)}$-contraction. Consider the function $\Psi^{(1)}$ as defined in \eqref{psi11}. For $z_2,z_3 \in \mathbb{D},$ the function $\Psi^{(1)}(z_2,z_3,\cdot)$ is holomorphic function on $G_{E(3; 3; 1, 1, 1)}$ with
$$\Psi^{(1)}(z_2, z_3, (T_1, \dots, T_7))=(T_1 - z_2T_3 - z_3T_5 + z_2z_3T_7)
				(I - z_2T_2 - z_3T_4 + z_2z_3T_6)^{-1}.$$
As  $\Gamma_{E(3; 3; 1, 1, 1)}$ is a spectral set for $\textbf{T}, $ we have $||\Psi^{(1)}(z_2, z_3, (T_1, \dots, T_7))|| \leqslant 1$ which implies that \begin{equation*}
			\begin{aligned}
				&(I - \overline{z}_2T^*_2 - \overline{z}_3T^*_4 + \overline{z}_2\overline{z}_3T^*_6)^{-1}
				(T^*_1 - \overline{z}_2T^*_3 - \overline{z}_3T^*_5 + \overline{z}_2\overline{z}_3T^*_7)\\
				&(T_1 - z_2T_3 - z_3T_5 + z_2z_3T_7)
				(I - z_2T_2 - z_3T_4 + z_2z_3T_6)^{-1}
				\leqslant I.
			\end{aligned}
		\end{equation*}
		This corresponds  to
		\begin{equation*}
			\begin{aligned}
				&(T^*_1 - \overline{z}_2T^*_3 - \overline{z}_3T^*_5 + \overline{z}_2\overline{z}_3T^*_7)
				(T_1 - z_2T_3 - z_3T_5 + z_2z_3T_7)\\
				&\leqslant (I - \overline{z}_2T^*_2 - \overline{z}_3T^*_4 + \overline{z}_2\overline{z}_3T^*_6)
				(I - z_2T_2 - z_3T_4 + z_2z_3T_6)
			\end{aligned}
		\end{equation*}
		and cosequently, we obtain
		\begin{equation*}
			\begin{aligned}
				&(I - |z_2z_3|^2T^*_7T_7) + (|z_2|^2T^*_2T_2 + |z_3|^2T^*_4T_4 + |z_2z_3|^2T^*_6T_6 - T^*_1T_1
				- |z_2|^2T^*_3T_3 - |z_3|^2T^*_5T_5)\\
				&- 2Re\, z_2(T_2 - T^*_1T_3)
				- 2Re\, z_3(T_4 - T^*_1T_5)
				+ 2Re\, z_2z_3(T_6 - T^*_1T_7) - 2Re\, z_2|z_3|^2(T^*_4T_6 - T^*_5T_7)\\
				&- 2Re\, z_3|z_2|^2(T^*_2T_6 - T^*_3T_7) - 2Re\, z_2\overline{z}_3(T^*_4T_2 - T^*_5T_3)
				\geqslant 0,
			\end{aligned}
		\end{equation*}
		which is equivalent to
		\begin{equation*}
			\begin{aligned}
				\rho^{(1)}_{G_{(3; 3; 1, 1, 1)}} (T_1, z_2T_2, z_2T_3, z_3T_4, z_3T_5, z_2z_3T_6, z_2z_3T_7) &\geqslant 0.
			\end{aligned}
		\end{equation*}
		Since this is true for all $z_2, z_3 \in \mathbb{D}$ and $\rho^{(1)}_{G_{(3; 3; 1, 1, 1)}} $ is a continuous function of $z_2, z_3$, then the stated inequality is valid for all $z_2, z_3 \in \overline{\mathbb{D}}$ as well. 
This completes the proof.
	\end{proof}

	\begin{thm}\label{thm-3}
		Let $\textbf{T}=(T_1, \dots, T_7)$ be a $7$-tuple commuting contractions on some Hilbert space $\mathcal{H}$ and $(A_{z_i}, B_{z_i}, P_{z_i})$, $i = 1, 2, 3$ as above. Then in the following, $(1) \Rightarrow (2), (1^{'}) \Rightarrow (2^{'}), (1^{''}) \Rightarrow (2^{''})$:
		\begin{enumerate}
			\item $\{ (A_{z_2}, B_{z_2}, P_{z_2}) : z_2 \in \mathbb{D} \}$ is a family of tetrablock contractions.
			
			\item[($1^{'}$)] $\{ (A_{z_3}, B_{z_3}, P_{z_3}) : z_3 \in \mathbb{D}\}$ is a family of tetrablock contractions.
			
			\item[($1^{''}$)] $\{ (A_{z_1}, B_{z_1}, P_{z_1}) : z_1 \in \mathbb{D} \}$ is a family of tetrablock contractions.
			
			\item[($2$)]  For all $z_2, z_3 \in \overline{\mathbb{D}}$,
			\begin{equation*}
				\begin{aligned}
					\rho^{(1)}_{G_{(3; 3; 1, 1, 1)}} (T_1, z_2T_2, z_2T_3, z_3T_4, z_3T_5, z_2z_3T_6, z_2z_3T_7) &\geqslant 0.
				\end{aligned}	
			\end{equation*}
			
			\item[($2^{'}$)] For all $z_1, z_3 \in \overline{\mathbb{D}}$.
			\begin{equation*}
				\begin{aligned}
					\rho^{(2)}_{G_{(3; 3; 1, 1, 1)}} (z_1T_1, T_2, z_1T_3, z_3T_4, z_1z_3T_5, z_3T_6, z_1z_3T_7) &\geqslant 0.
				\end{aligned}
			\end{equation*}
			
			\item[($2^{''}$)] For all $z_1, z_2 \in \overline{\mathbb{D}}$,
			\begin{equation*}
				\begin{aligned}
					\rho^{(3)}_{G_{(3; 3; 1, 1, 1)}} (z_1T_1, z_2T_2, z_1z_2T_3, T_4, z_1T_5, z_2T_6, z_1z_2T_7) &\geqslant 0.
				\end{aligned}
			\end{equation*}
		\end{enumerate}
	\end{thm}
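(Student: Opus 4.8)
The plan is to establish $(1)\Rightarrow(2)$; the implications $(1')\Rightarrow(2')$ and $(1'')\Rightarrow(2'')$ then follow by the identical argument applied to $\Psi^{(2)}$, $\Psi^{(3)}$ together with the triples $(A_{z_3},B_{z_3},P_{z_3})$ and $(A_{z_1},B_{z_1},P_{z_1})$. The strategy is to recognise $\Psi^{(1)}$ as the tetrablock rational function evaluated at the slice triple $(A_{z_2},B_{z_2},P_{z_2})$, to convert the tetrablock spectral-set hypothesis into the scalar operator bound $\|\Psi^{(1)}\|\le 1$, and finally to invoke the computation already carried out in the proof of Theorem \ref{thm-2}, which turns that bound into the positivity of $\rho^{(1)}_{G_{(3; 3; 1, 1, 1)}}$.

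First I would record the key algebraic identity. Fix $z_2\in\mathbb{D}$ and put $E=I-z_2T_2$, which is invertible since $\|T_2\|\le 1$ forces $\|z_2T_2\|<1$. For $w\in\mathbb{D}$, the tetrablock function $(s_1,s_2,s_3)\mapsto(s_1-ws_3)(1-ws_2)^{-1}$ evaluated at $(A_{z_2},B_{z_2},P_{z_2})$ is
\begin{equation*}
(A_{z_2}-wP_{z_2})(I-wB_{z_2})^{-1}
=(T_1-z_2T_3-wT_5+z_2wT_7)(I-z_2T_2-wT_4+z_2wT_6)^{-1},
\end{equation*}
the common factor $E^{-1}$ cancelling after multiplying numerator and denominator on the right by $E$. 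The right-hand side is exactly $\Psi^{(1)}((z_2,w),\textbf{T})$, so this is the operator form of the scalar identity \eqref{psi111546}. Note that $\|B_{z_2}\|\le 1$: since $\Gamma_{E(2;2;1,1)}$ is a spectral set for $(A_{z_2},B_{z_2},P_{z_2})$, applying the coordinate function $x_2$ gives $\|B_{z_2}\|\le\|x_2\|_{\infty,\Gamma_{E(2;2;1,1)}}\le 1$; hence $I-wB_{z_2}$ is invertible for $w\in\mathbb{D}$ and the whole expression is legitimate.

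Next I would transfer the bound. For each fixed $w\in\mathbb{D}$, the scalar map $(s_1,s_2,s_3)\mapsto(s_1-ws_3)(1-ws_2)^{-1}$ is holomorphic on a neighbourhood of $\Gamma_{E(2;2;1,1)}$ (its only pole, at $s_2=1/w$, lies outside $\Gamma_{E(2;2;1,1)}$ because $|s_2|\le 1$ there) and, by the characterization of the tetrablock [Theorem $2.2$, \cite{Abouhajar}], has modulus at most $1$ on $\Gamma_{E(2;2;1,1)}$. Applying the contractive homomorphism furnished by hypothesis $(1)$ to this test function yields $\|\Psi^{(1)}((z_2,w),\textbf{T})\|\le 1$ for all $z_2,w\in\mathbb{D}$. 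This is precisely the inequality from which the proof of Theorem \ref{thm-2} proceeds: expanding $M^*M-N^*N\ge 0$ with $N=T_1-z_2T_3-wT_5+z_2wT_7$ and $M=I-z_2T_2-wT_4+z_2wT_6$ gives $\rho^{(1)}_{G_{(3; 3; 1, 1, 1)}}(T_1,z_2T_2,z_2T_3,wT_4,wT_5,z_2wT_6,z_2wT_7)\ge 0$ for all $z_2,w\in\mathbb{D}$. Since this is a polynomial expression in $z_2,w,\overline{z}_2,\overline{w}$ with bounded operator coefficients, continuity extends the inequality to $\overline{\mathbb{D}}^2$, which is statement $(2)$ after renaming $w=z_3$.

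The only genuinely delicate point, and hence the main thing to get right, is the transfer step: one must verify that $(s_1-ws_3)(1-ws_2)^{-1}$ really is a member of $\mathcal O(\Gamma_{E(2;2;1,1)})$ of sup-norm at most $1$ and that the denominator operator $I-wB_{z_2}$ is boundedly invertible. Both hinge on the tetrablock bounds $|s_2|\le 1$ and $\sup_{w\in\mathbb{D}}\big|(s_1-ws_3)(1-ws_2)^{-1}\big|\le 1$ on $\Gamma_{E(2;2;1,1)}$; once these are in place, the remaining manipulations are exactly the bookkeeping already performed in the proof of Theorem \ref{thm-2}, so no new computation is needed.
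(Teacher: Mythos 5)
Your proof is correct and follows essentially the same route as the paper: both identify $\Psi(z_3,(A_{z_2},B_{z_2},P_{z_2}))=(A_{z_2}-z_3P_{z_2})(I-z_3B_{z_2})^{-1}$ with $\Psi^{(1)}((z_2,z_3),\textbf{T})$, use the tetrablock spectral-set hypothesis to get the norm bound $\|\Psi^{(1)}\|\leq 1$, and then invoke the expansion from Theorem \ref{thm-2} plus continuity to obtain positivity of $\rho^{(1)}_{G_{(3;3;1,1,1)}}$ on $\overline{\mathbb{D}}^2$. Your extra care in verifying $\|B_{z_2}\|\leq 1$ (hence invertibility of $I-wB_{z_2}$) and in checking that the slice function lies in $\mathcal{O}(\Gamma_{E(2;2;1,1)})$ with sup-norm at most $1$ only makes explicit steps the paper leaves implicit.
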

	
	\begin{proof}
		We only prove $(1) \Rightarrow (2),$ because the proofs for $(1^{'}) \Rightarrow (2^{'}), (1^{''}) \Rightarrow (2^{''})$ are analogous. Let $\{ (A_{z_2}, B_{z_2}, P_{z_2}) : |z_2| < 1 \}$ be a family of tetrablock contractions. For each $z_3\in \mathbb D,$  the function $\Psi(z_3, \cdot)$ is holomorphic on $G_{E(2; 2; 1, 1)}$ with
		\begin{equation}\label{Az2}
			\begin{aligned}
				\Psi(z_3, ((A_{z_2}, B_{z_2}, P_{z_2})))
				&= (A_{z_2} - z_3P_{z_2})(I - z_3B_{z_2})^{-1}.
			\end{aligned}
		\end{equation}
		As $\Gamma_{E(2; 2; 1, 1)}$ is a spectral set for $(A_{z_2}, B_{z_2}, P_{z_2})$ for fixed but arbitrary $z_2\in \mathbb D,$ it yields that  $$||\Psi(z_3, (A_{z_2}, B_{z_2}, P_{z_2}))|| \leqslant 1$$ which implies to 
		\begin{equation}\label{Az22}
			\begin{aligned}
				\Psi(z_3, ((A_{z_2}, B_{z_2}, P_{z_2})))^*\Psi(z_3, ((A_{z_2}, B_{z_2}, P_{z_2})))
				&= ((I - z_3B_{z_2})^{-1})^*(A_{z_2} - z_3P_{z_2})^*(A_{z_2} - z_3P_{z_2})(I - z_3B_{z_2})^{-1}\\&\leq I.
			\end{aligned}
		\end{equation}
and hence from \eqref{Az22} and \eqref{psi111546}, we have
		\begin{equation*}
			\begin{aligned}
				||\Psi^{(1)}(z_2, z_3, (T_1, \dots, T_7))|| \leqslant 1.
			\end{aligned}
		\end{equation*}
It follows from Theorem \ref{thm-2} that
		\begin{equation*}
			\begin{aligned}
				\rho^{(1)}_{G_{(3; 3; 1, 1, 1)}} (T_1, z_2T_2, z_2T_3, z_3T_4, z_3T_5, z_2z_3T_6, z_2z_3T_7) \geqslant 0
			\end{aligned}
		\end{equation*} for all $z_2,z_3\in \mathbb{\bar{D}}.$
This completes the proof.
	\end{proof}

Let $(S_1, S_2, S_3, \tilde{S}_1, \tilde{S}_2)$ be a $5$-tuple of commuting bounded operators defined on some Hilbert space $\mathcal H_1.$ Similar to the operator functions $\rho_{G_{E(2; 1; 2)}},$ $\rho_{G_{E(2; 2; 1,1)}} $ and  $\rho_{G_{E(3; 3; 1,1,1)}},$ we introduce an operator function for the commuting tuple $(S_1, S_2, S_3, \tilde{S}_1, \tilde{S}_2)$ of bounded operators with $\|S_3\|\leq 1$, which plays an important role in characterization of the $\Gamma_{E(3; 2; 1, 2)}$-contractions:
		\begin{equation}
			\begin{aligned}
				\rho_{G_{E(3; 2; 1, 2)}}(S_1, S_2, S_3, \tilde{S}_1, \tilde{S}_2)
				&= D^2_{S_3} + (\tilde{S}^*_1\tilde{S}_1 - S^*_2S_2 - S^*_1S_1 + \tilde{S}^*_2\tilde{S}_2) - 2Re ~ (\tilde{S}_1 - S^*_1S_2)\\
				&~~~~ + 2Re ~ (\tilde{S}_2 - S^*_1S_3) - 2Re ~ (\tilde{S}^*_1\tilde{S}_2 - S^*_2S_3).
			\end{aligned}
		\end{equation}
Let us assume that $\|\tilde{S}_1\|\leq 2$. We consider a family of triples of bounded operators as follows:
		\begin{equation}
			\begin{aligned}
				(P^{(1)}_z, P^{(2)}_z, P^{(3)}_z) 
				&= \{ ((2S_1 - zS_2)(2I - z\tilde{S}_1)^{-1}, (\tilde{S}_1 - 2z\tilde{S}_2)(2I - z\tilde{S}_1)^{-1},\\
				&\hspace{1cm} (S_2 - 2zS_3)(2I - z\tilde{S}_1)^{-1}) : z \in \mathbb{D} \}.
			\end{aligned}
		\end{equation}

\begin{defn}
		Let $(S_1, S_2, S_2, \tilde{S}_1, \tilde{S}_2)$ be a $5$-tuples of commuting operators on a Hilbert space $\mathcal{H}$ and $\{(P^{(1)}_z, P^{(2)}_z, P^{(3)}_z) : z \in \mathbb{D}\}$ are as above. For fixed but arbitrary $z\in \mathbb D,$ the equations 
		\begin{equation}\label{Fundamental 2}
			\begin{aligned}
				P^{(1)}_z - P^{(2)}_zP^{(3)}_z
				&= D_{P^{(3)}_z}\tilde{F}^{(1)}_zD_{P^{(3)}_z} ~\text{and}~ P^{(2)}_z - P^{(1)}_zP^{(3)}_z
				= D_{P^{(3)}_z}\tilde{F}^{(2)}_zD_{P^{(3)}_z}
			\end{aligned}
			\end{equation}
			
where $F_{z}^{(j)}\in \mathcal{B}(\mathcal{D}_{P_{z}^{(3)}}), 1\leq j\leq2$, are called the fundamental equations for $(S_1, S_2, S_3, \tilde{S}_1, \tilde{S}_2)$.
	\end{defn}
	
	\begin{thm}\label{thm-3}
		Let $(S_1, S_2, S_3, \tilde{S}_1, \tilde{S}_2)$ denote a $5$-tuple of commuting bounded operators defined on a Hilbert space $\mathcal{H}$. Then  the following implications hold  $(1) \Rightarrow (2) 
		\Rightarrow (3)\Rightarrow (4) \Rightarrow (5):$

		\begin{enumerate}
			\item $(S_1, S_2, S_3, \tilde{S}_1, \tilde{S}_2)$ is a $\Gamma_{E(3; 2; 1, 2)}$-contraction.
			
			\item $\{(P^{(1)}_z, P^{(2)}_z, P^{(3)}_z) : z \in \mathbb{D}\}$ is a family of $\Gamma_{E( 2; 2;1, 1)}$-contractions.
			
			\item For fixed but arbitrary $z\in \mathbb D$ and for all $\omega \in \mathbb{T}$,
			\begin{equation*}
				\begin{aligned}
					\rho_{G_{E(2; 2; 1, 1)}}(P^{(1)}_z, \omega P^{(2)}_z, \omega P^{(3)}_z)
					&\geqslant 0, \rho_{G_{E(2; 2; 1, 1)}}(P^{(2)}_z, \omega P^{(1)}_z, \omega P^{(3)}_z) \geqslant 0.
				\end{aligned}
			\end{equation*}
			
			\item For each fixed but arbitrary $z \in \mathbb{D}$, the pair $(\tilde{S}_{\omega}(z), \tilde{P}_{\omega}(z)) = (P^{(1)}_z + \omega P^{(2)}_z, \omega P^{(3)}_z)$ is a $\Gamma_{E(2; 1; 2)}$-contraction for all $\omega \in \mathbb{T}$.
			
			\item For each fixed but arbitrary $z \in \mathbb{D}$ the fundamental equations in \eqref{Fundamental 2} have unique solutions $\tilde{F}^{(1)}_z$ and $\tilde{F}^{(2)}_z$ in $\mathcal{B}(\mathcal{D}_{{P^{(3)}_z}})$. Moreover, the numerical radius of the operator $\tilde{F}^{(1)}_z + \omega\tilde{F}^{(2)}_z$ is not bigger than $1$ for every $\omega \in \mathbb{T}$.
		\end{enumerate}
	\end{thm}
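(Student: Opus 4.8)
The plan is to mirror the proof of Theorem \ref{thm-1}, since the five conditions here are the $\Gamma_{E(3;2;1,2)}$-analogues of those for $\Gamma_{E(3;3;1,1,1)}$-contractions, and the work for the last three implications can be outsourced to the tetrablock theory of \cite{Bhattacharyya}. The only genuinely new implication is $(1)\Rightarrow(2)$; the chain $(2)\Rightarrow(3)\Rightarrow(4)\Rightarrow(5)$ is a pointwise-in-$z$ application of a known tetrablock characterization to the family $\{(P^{(1)}_z,P^{(2)}_z,P^{(3)}_z):z\in\mathbb D\}$.

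For $(1)\Rightarrow(2)$, first I would record that $\|\tilde S_1\|\le 2$; this follows from the coordinate-polynomial estimate of Proposition \ref{prop-2} together with the bound $|y_1|\le 2$ on $\Gamma_{E(3;2;1,2)}$ coming from Lemma \ref{lem-3}, so that for every $z\in\mathbb D$ the operator $2I-z\tilde S_1$ is invertible and the triple $(P^{(1)}_z,P^{(2)}_z,P^{(3)}_z)$ is well defined. Then, for fixed $z\in\mathbb D$, I would introduce the map $\pi_z:\Gamma_{E(3;2;1,2)}\to\mathbb C^3$,
\[
\pi_z(x_1,x_2,x_3,y_1,y_2)=\Big(\frac{2x_1-zx_2}{2-zy_1},\frac{y_1-2zy_2}{2-zy_1},\frac{x_2-2zx_3}{2-zy_1}\Big),
\]
which is holomorphic on a neighbourhood of $\Gamma_{E(3;2;1,2)}$ because $|z\,y_1|<2$ there. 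By Lemma \ref{lem-3} (equivalently [Theorem $3.5$, \cite{Bharali}]) one has $\pi_z(\Gamma_{E(3;2;1,2)})\subseteq\Gamma_{E(2;2;1,1)}$. Since $p\circ\pi_z$ is a rational function holomorphic near $\Gamma_{E(3;2;1,2)}$ for every $p\in\mathbb C[z_1,z_2,z_3]$, the spectral-set hypothesis (via the Oka--Weil and functional-calculus argument used in Proposition \ref{prop-1}) yields
\[
\|p(P^{(1)}_z,P^{(2)}_z,P^{(3)}_z)\|=\|p\circ\pi_z(S_1,S_2,S_3,\tilde S_1,\tilde S_2)\|\le\|p\circ\pi_z\|_{\infty,\Gamma_{E(3;2;1,2)}}=\|p\|_{\infty,\pi_z(\Gamma_{E(3;2;1,2)})}\le\|p\|_{\infty,\Gamma_{E(2;2;1,1)}},
\]
so each $(P^{(1)}_z,P^{(2)}_z,P^{(3)}_z)$ is a $\Gamma_{E(2;2;1,1)}$-contraction, which is exactly $(2)$.

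For $(2)\Rightarrow(3)\Rightarrow(4)\Rightarrow(5)$, I would fix $z\in\mathbb D$ and apply the tetrablock characterization [Theorem $3.4$, \cite{Bhattacharyya}] to the single triple $(A,B,P)=(P^{(1)}_z,P^{(2)}_z,P^{(3)}_z)$. That theorem asserts that a tetrablock contraction $(A,B,P)$ satisfies $\rho_{G_{E(2;2;1,1)}}(A,\omega B,\omega P)\ge 0$ and $\rho_{G_{E(2;2;1,1)}}(B,\omega A,\omega P)\ge 0$ for all $\omega\in\mathbb T$, that $(A+\omega B,\omega P)$ is then a $\Gamma_{E(2;1;2)}$-contraction for all $\omega\in\mathbb T$, and that the fundamental equations $A-B^*P=D_PF_1D_P$ and $B-A^*P=D_PF_2D_P$ have unique solutions $F_1,F_2\in\mathcal B(\mathcal D_P)$ whose combination $F_1+\omega F_2$ has numerical radius at most $1$ for every $\omega\in\mathbb T$. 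Reading these conclusions off with $(A,B,P)=(P^{(1)}_z,P^{(2)}_z,P^{(3)}_z)$ and $F_j=\tilde F^{(j)}_z$ gives precisely $(3)$, $(4)$ and the fundamental-equation statement \eqref{Fundamental 2} of $(5)$; since $z\in\mathbb D$ was arbitrary, the family versions follow.

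The main obstacle is the bookkeeping in $(1)\Rightarrow(2)$: one must check that $\pi_z$ genuinely lands in $\Gamma_{E(2;2;1,1)}$ and, more delicately, that the spectral-set inequality applies to the \emph{rational} (not polynomial) symbols $p\circ\pi_z$. This is legitimate because $2-zy_1$ is zero-free on $\Gamma_{E(3;2;1,2)}$ for $|z|<1$, so $p\circ\pi_z$ is holomorphic in a neighbourhood of the polynomially convex set $\Gamma_{E(3;2;1,2)}$ and the von Neumann inequality extends to it by Oka--Weil approximation and the holomorphic functional calculus, exactly as in Proposition \ref{prop-1}. The remaining implications carry no new difficulty, being a transcription of \cite{Bhattacharyya} parametrised by $z\in\mathbb D$.
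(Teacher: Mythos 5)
Your proposal is correct and follows essentially the same route as the paper's own proof: the same map $\pi_z(x_1,x_2,x_3,y_1,y_2)=\Big(\frac{2x_1-zx_2}{2-zy_1},\frac{y_1-2zy_2}{2-zy_1},\frac{x_2-2zx_3}{2-zy_1}\Big)$ and the same spectral-set inequality chain for $(1)\Rightarrow(2)$, followed by the same appeal to [Theorem $3.4$, \cite{Bhattacharyya}] applied pointwise in $z$ for $(2)\Rightarrow(3)\Rightarrow(4)\Rightarrow(5)$. The additional bookkeeping you supply (the bound $\|\tilde S_1\|\leq 2$ ensuring invertibility of $2I-z\tilde S_1$, and the Oka--Weil justification for applying the spectral-set inequality to the rational symbols $p\circ\pi_z$) is sound and merely makes explicit what the paper leaves implicit.
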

	
	\begin{proof}
We first show that $(1) \Rightarrow (2)$. For each $z \in \mathbb{D},$ we define the map $\pi_z :\Gamma_{E(3; 2; 1, 2)} \to \mathbb{C}^3$ as follows:
		\begin{equation*}
			\begin{aligned}
				\pi_z(x_1, x_2, x_3, y_1, y_2)
				&= \Big(\frac{2x_1 - zx_2}{2 - zy_1}, \frac{y_1 - 2zy_2}{2 - zy_1}, \frac{x_2 - 2zx_3}{2 - zy_1}\Big).
			\end{aligned}
		\end{equation*}
 Observe that  $p \circ \pi_z$ is a rational function on $\Gamma_{E(3; 2; 1, 2)}$ for any $p \in \mathbb{C}[z_1, z_2, z_3].$ Since $(S_1, S_2, S_3, \tilde{S}_1, \tilde{S}_2)$ is a $\Gamma_{E(3; 2; 1, 2)}$-contraction, so $\Gamma_{E(3; 2; 1, 2)}$ is a spectral set for $(S_1, S_2, S_3, \tilde{S}_1, \tilde{S}_2)$. Thus we have
		\begin{equation*}
			\begin{aligned}
				||p(P^{(1)}_z, P^{(2)}_z, P^{(3)}_z)||
				&= ||p \circ \pi_z(S_1, S_2, S_3, \tilde{S}_1, \tilde{S}_2)||\\
				&\leqslant ||p \circ \pi_z||_{\infty, \Gamma_{E(3; 2; 1, 2)}}\\
				&= ||p||_{\infty, \pi_z(\Gamma_{E(3; 2; 1, 2)})}\\
				&\leqslant ||p||_{\infty, \Gamma_{E(2; 2; 1, 1)}}.
			\end{aligned}
		\end{equation*}
This implies that $\{(P^{(1)}_z, P^{(2)}_z, P^{(3)}_z) : z \in \mathbb{D}\}$ is a family of tetrablock contractions.

The proof of the implication $(2) \Rightarrow (3) \Rightarrow (4) \Rightarrow (5)$ is analogous to [Theorem $3.4$, \cite{Bhattacharyya}]. This completes the proof.	
%
%
%
%
%
	\end{proof}
	
	\begin{thm}\label{thm-4}
		Let $\textbf{S} = (S_1, S_2, S_3, \tilde{S}_1, \tilde{S}_2)$ be a $5$-tuple of commuting bounded operators acting on a Hilbert space $\mathcal{H}$ and $\textbf{S}$ be a $\Gamma_{E(3; 2; 1, 2)}$-contraction. Then
		\begin{equation*}
			\begin{aligned}
				\rho_{G_{E(3; 2; 1, 2)}}(S_1, zS_2, z^2S_3, z\tilde{S}_1, z^2\tilde{S}_2) &\geqslant 0~{\rm{for~ all~}}z \in \overline{\mathbb{D}}.
			\end{aligned}
		\end{equation*}
	\end{thm}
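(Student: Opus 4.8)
The plan is to follow the template of the proof of Theorem~\ref{thm-2}, with the single rational function $\Psi_3$ from \eqref{psi1} playing the role that $\Psi^{(1)}$ played there. Write $P(z)=S_1-zS_2+z^2S_3$ and $Q(z)=I-z\tilde S_1+z^2\tilde S_2$, so that for $z\in\mathbb D$
$$\Psi_3(z,\textbf{S})=(S_1-zS_2+z^2S_3)(I-z\tilde S_1+z^2\tilde S_2)^{-1}=P(z)Q(z)^{-1}.$$
First I would check that this operator is well defined and contractive. By Proposition~\ref{bhch1} every $\tilde{\textbf{x}}=(x_1,x_2,x_3,y_1,y_2)\in\Gamma_{E(3;2;1,2)}$ satisfies $y_2w^2-y_1w+1\neq0$ for all $w\in\mathbb D$; hence for the fixed $z\in\mathbb D$ the denominator $Q_3(z;y_1,y_2)$ never vanishes on $\Gamma_{E(3;2;1,2)}$, so $\Psi_3(z,\cdot)\in\mathcal O(\Gamma_{E(3;2;1,2)})$, and the same fact applied to $\sigma(\textbf{S})\subseteq\Gamma_{E(3;2;1,2)}$ shows that $Q(z)$ is invertible. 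Moreover Proposition~\ref{bhch1}(3) gives $\sup_{\tilde{\textbf{x}}\in\Gamma_{E(3;2;1,2)}}|\Psi_3(z,\tilde{\textbf{x}})|\leq1$. Since $\Gamma_{E(3;2;1,2)}$ is a spectral set for $\textbf{S}$, this yields $\|\Psi_3(z,\textbf{S})\|\leq1$.

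Next I would convert this norm bound into an operator inequality. From $\Psi_3(z,\textbf{S})^*\Psi_3(z,\textbf{S})\leq I$, multiplying on the left by $Q(z)^*$ and on the right by $Q(z)$ (which preserves positivity) gives $P(z)^*P(z)\leq Q(z)^*Q(z)$, that is $Q(z)^*Q(z)-P(z)^*P(z)\geq0$.

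The only remaining, purely computational, step is to expand $Q(z)^*Q(z)-P(z)^*P(z)$ and to recognise it as $\rho_{G_{E(3;2;1,2)}}(S_1,zS_2,z^2S_3,z\tilde S_1,z^2\tilde S_2)$. Repeatedly using $A+A^*=2\operatorname{Re}A$, the product $Q(z)^*Q(z)$ equals
$$I+|z|^2\tilde S_1^*\tilde S_1+|z|^4\tilde S_2^*\tilde S_2-2\operatorname{Re}(z\tilde S_1)+2\operatorname{Re}(z^2\tilde S_2)-2\operatorname{Re}(\bar z z^2\tilde S_1^*\tilde S_2),$$
while $P(z)^*P(z)$ equals
$$S_1^*S_1+|z|^2S_2^*S_2+|z|^4S_3^*S_3-2\operatorname{Re}(zS_1^*S_2)+2\operatorname{Re}(z^2S_1^*S_3)-2\operatorname{Re}(\bar z z^2S_2^*S_3).$$
Subtracting, pairing each term of $Q^*Q$ with the corresponding term of $-P^*P$, and writing $D^2_{z^2S_3}=I-|z|^4S_3^*S_3$, one recovers precisely the expression defining $\rho_{G_{E(3;2;1,2)}}$ at the substituted point. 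Hence the inequality holds for every $z\in\mathbb D$, and as the left side is polynomial in $z$ and $\bar z$ it extends by continuity to all $z\in\overline{\mathbb D}$.

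The proof has no conceptual obstacle; the delicate part is purely the bookkeeping in the last paragraph, where one must track carefully which factors carry $z$ versus $\bar z$ and correctly identify each mixed product as a single $2\operatorname{Re}(\cdot)$ contribution, so that the expansion of $Q^*Q-P^*P$ matches $\rho_{G_{E(3;2;1,2)}}$ term by term. One also notes that $\|S_3\|\leq1$, which follows from $\textbf{S}$ being a $\Gamma_{E(3;2;1,2)}$-contraction (cf. Lemma~\ref{lem-3}), so that $D^2_{z^2S_3}$ is a genuine defect operator for $|z|\leq1$.
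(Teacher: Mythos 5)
Your proposal is correct and follows essentially the same route as the paper's proof: apply the spectral-set hypothesis to the rational function $\Psi_3(z,\cdot)$ from \eqref{psi1} to get $\|\Psi_3(z,\textbf{S})\|\leq 1$, convert this to the operator inequality $Q(z)^*Q(z)-P(z)^*P(z)\geq 0$, identify the expansion with $\rho_{G_{E(3;2;1,2)}}(S_1,zS_2,z^2S_3,z\tilde S_1,z^2\tilde S_2)$, and pass to $\overline{\mathbb{D}}$ by continuity. The only differences are cosmetic: you spell out the invertibility of $Q(z)$ via the joint spectrum and the bound $\|S_3\|\leq 1$, details the paper leaves implicit.
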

	
	\begin{proof}
		Let $\textbf{S}$ be a $\Gamma_{E(3; 2; 1, 2)}$-contraction. Consider the function $\Psi_3$ as described in \eqref{psi1}. For $z \in \mathbb{D},$ the function $\Psi_3(z,\cdot)$ is holomorphic function on $G_{E(3; 2; 1,2)}$ with
\begin{equation*}
			\begin{aligned}
		\Psi_3(z, S_1, S_2, S_3, \tilde{S}_1, \tilde{S}_2)=(S_1 - zS_2 + z^2S_3)(I - z\tilde{S}_1 + z^2\tilde{S}_2)^{-1}.
			\end{aligned}
		\end{equation*}
As $\Gamma_{E(3; 2; 1, 2)}$ is a spectral set for $\textbf{S}$, by Theorem \ref{bhch1}, it follows that 

\begin{eqnarray}\label{S1}
&& ||\Psi_3(z, S_1, S_2, S_3, \tilde{S}_1, \tilde{S}_2)||
				\leqslant 1 \nonumber\\
				&\Leftrightarrow& (S^*_1 - \overline{z}S^*_2 + \overline{z}^2S^*_3)(S_1 - zS_2 + z^2S_3)
				\leqslant
				(I - \overline{z}\tilde{S}^*_1 + \overline{z}^2\tilde{S}^*_2)(I - z\tilde{S}_1 + z^2\tilde{S}_2) \nonumber\\
				&\Leftrightarrow & 				
(I - |z|^4S^*_3S_3) + (|z|^2\tilde{S}^*_1\tilde{S}_1 - |z|^2S^*_2S_2 - S^*_1S_1 + |z|^4\tilde{S}^*_2\tilde{S}_2) \nonumber\\
&&
				- 2Re\, z(\tilde{S}_1 - S^*_1S_2)
				+ 2Re\, z^2(\tilde{S}_2 - S^*_1S_3) - 2 Re\, z|z|^2(\tilde{S}^*_1\tilde{S}_2 - S^*_2S_3) \geqslant 0	\nonumber \\ &\Leftrightarrow & 					
				\rho_{G_{E(3; 2; 1, 2)}}(S_1, zS_2, z^2S_3, z\tilde{S}_1, z^2\tilde{S}_2) \geqslant 0.							
\end{eqnarray}

Thus, we have
$\rho_{G_{E(3; 2; 1, 2)}}(S_1, zS_2, z^2S_3, z\tilde{S}_1, z^2\tilde{S}_2) \geqslant 0~{\rm{for~all}}~z\in \mathbb D$. Since this is true for all $z \in \mathbb{D}$, by continuity of $\rho_{G_{E(3; 2; 1, 2)}},$ we conclude that 
		\begin{equation*}
			\begin{aligned}
				\rho_{G_{E(3; 2; 1, 2)}}(S_1, zS_2, z^2S_3, z\tilde{S}_1, z^2\tilde{S}_2) &\geqslant 0~{\rm{for~all}}~z\in \overline{\mathbb D}.
			\end{aligned}
		\end{equation*}
This completes the proof.
	\end{proof}

\begin{lem}\label{psi12}
Let  $\tilde{\textbf{x}}=(x_1,x_2,x_3,y_1,y_2)$ be an element of $\mathbb{C}^{5}.$ Then  $\tilde{\textbf{x}}\in \Gamma_{E(3;2;1,2)}$ if and only if $$\sup_{z\in \mathbb D}\|\Psi(z,(p_1(z),p_2(z),p_3(z)))\|\leq 1, $$ where  $p_1(z)=\frac{2x_1-zx_2}{2-y_1z},p_2(z)=\frac{y_1-2zy_2}{2-y_1z}~{\rm{and}}~p_3(z)=\frac{x_2-2zx_3}{2-y_1z}.$
\end{lem}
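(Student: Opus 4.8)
The plan is to obtain the equivalence by composing two descriptions that are already at our disposal, so that the bulk of the work is reduction rather than fresh estimation. First, exactly as in the proofs of Lemma \ref{lem-3} and Proposition \ref{prop-4} (both resting on [Theorem $3.5$, \cite{Bharali}]), I would use that $\tilde{\textbf{x}}\in\Gamma_{E(3;2;1,2)}$ if and only if the triple $\textbf{p}(z):=(p_1(z),p_2(z),p_3(z))$ belongs to $\Gamma_{E(2;2;1,1)}$ for every $z\in\mathbb D$. Second, I would invoke the description of the closed tetrablock [Theorem $2.2$, \cite{Abouhajar}], which characterizes membership of a triple $\textbf{a}=(a_1,a_2,a_3)$ in $\Gamma_{E(2;2;1,1)}$ through the function $\Psi(\cdot,\textbf{a})$, where $\Psi(w,\textbf{a})=\frac{a_1-wa_3}{1-wa_2}$ is precisely the function appearing in \eqref{psi11154}. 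The symbol $\|\Psi(z,\textbf{p}(z))\|$ in the statement is read as the associated $H^\infty$-norm attached to the tetrablock point $\textbf{p}(z)$. Chaining these two equivalences is the backbone of the argument.

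Before chaining, I would record the computation that makes the two pictures compatible. Substituting the explicit forms of $p_1,p_2,p_3$ and cancelling the common factor $2-y_1z$ yields
\begin{equation*}
\Psi\bigl(w,\textbf{p}(z)\bigr)=\frac{p_1(z)-wp_3(z)}{1-wp_2(z)}=\frac{(2x_1-zx_2)-w(x_2-2zx_3)}{(2-y_1z)-w(y_1-2zy_2)},
\end{equation*}
whose value on the diagonal $w=z$ collapses, after cancellation, to $\Psi_3(z,\tilde{\textbf{x}})=\frac{x_1-zx_2+z^2x_3}{1-y_1z+z^2y_2}$ of \eqref{psi1}. This ties the norm in the statement to the rational function $\Psi_3$ that governs Proposition \ref{bhch1}(3), and it also pinpoints where the side condition enters: by the closure counterpart of Lemma \ref{lem-3} every point of $\Gamma_{E(3;2;1,2)}$ has $|y_1|\le 2$, so $2-y_1z\ne0$ for $z\in\mathbb D$ and the maps $p_1,p_2,p_3$ are genuinely defined.

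The two implications are then formal. For the forward direction I would take $\tilde{\textbf{x}}\in\Gamma_{E(3;2;1,2)}$, deduce $\textbf{p}(z)\in\Gamma_{E(2;2;1,1)}$ for all $z\in\mathbb D$ from the first description, and read off $\|\Psi(\cdot,\textbf{p}(z))\|\le 1$ for each such $z$ from the tetrablock description; taking the supremum over $z$ gives the stated inequality. For the converse I would reverse the implications: the bound $\le 1$ forces $\textbf{p}(z)\in\Gamma_{E(2;2;1,1)}$ for every $z\in\mathbb D$, whence $\tilde{\textbf{x}}\in\Gamma_{E(3;2;1,2)}$ by Proposition \ref{bhch1}.

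The delicate point, and the step I expect to cost the most, is not the skeleton above but the behaviour at the zeros of $Q_3(z)=y_2z^2-y_1z+1$. When $Q_3$ vanishes in $\mathbb D$, the function $\Psi_3$ (equivalently, the diagonal denominator $1-zp_2(z)=\tfrac{2Q_3(z)}{2-y_1z}$) must be read through its removable singularities, and I would split into cases according to the common zeros of $P_3(z)=x_3z^2-x_2z+x_1$ and $Q_3$: an uncancelled zero produces a genuine pole and an infinite supremum, while a complete cancellation forces $x_2=x_1y_1$ and $x_3=x_1y_2$, collapsing the bound to $|x_1|\le 1$, which is exactly the ``moreover'' clause of Proposition \ref{bhch1}(3). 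Verifying that the supremum condition matches condition $(3)$ in each of these cases, together with a maximum-modulus passage to trade $\bar{\mathbb D}$ for $\mathbb D$ where convenient, is what would complete the proof.
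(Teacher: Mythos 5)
You have misread the norm in the statement, and the misreading is load-bearing. In the lemma, $\Psi\big(z,(p_1(z),p_2(z),p_3(z))\big)$ is the \emph{scalar} obtained by evaluating $\Psi$ at the point $w=z$ with parameter $\textbf{p}(z)$ --- the diagonal value --- and $\|\cdot\|$ is simply its modulus. This is exactly how the paper proves the lemma: one computes the identity $\Psi_{3}(z,\tilde{\textbf{x}})=\frac{p_1(z)-zp_3(z)}{1-zp_2(z)}=\Psi\big(z,(p_1(z),p_2(z),p_3(z))\big)$ (the same diagonal collapse you record in your second paragraph), and then cites Proposition \ref{bhch1}(3), which says $\tilde{\textbf{x}}\in\Gamma_{E(3;2;1,2)}$ if and only if $\sup_{z\in\mathbb D}|\Psi_3(z,\tilde{\textbf{x}})|\leq 1$ (with the attendant caveats); that is the whole proof. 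You instead read $\|\Psi(z,\textbf{p}(z))\|$ as the full $H^\infty$-norm $\sup_{w\in\mathbb D}|\Psi(w,\textbf{p}(z))|$ attached to the tetrablock point $\textbf{p}(z)$, and your ``backbone'' chaining depends on that reading. Under the correct diagonal reading, your converse direction is a non sequitur: the hypothesis supplies only \emph{one} scalar value $|\Psi(z,\textbf{p}(z))|\leq 1$ for each tetrablock point $\textbf{p}(z)$ (different $z$ constrain different points), which is far weaker than the bound $\sup_{w\in\mathbb D}|\Psi(w,\textbf{p}(z))|\leq 1$ that [Theorem $2.2$, \cite{Abouhajar}] needs in order to conclude $\textbf{p}(z)\in\Gamma_{E(2;2;1,1)}$. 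So the step ``the bound $\leq 1$ forces $\textbf{p}(z)\in\Gamma_{E(2;2;1,1)}$ for every $z$'' cannot get started, and with it the passage back to $\Gamma_{E(3;2;1,2)}$ via pointwise tetrablock membership collapses.

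The fix is already in your own write-up: the diagonal identity shows that the supremum in the statement is precisely $\|\Psi_3(\cdot,\tilde{\textbf{x}})\|_{H^{\infty}(\mathbb D)}$, so Proposition \ref{bhch1}(3) gives both implications at once, with no detour through $\Gamma_{E(2;2;1,1)}$ at all; you should promote that observation from a ``compatibility remark'' to the argument itself and discard the two-step chaining. Your final paragraph --- the case analysis at zeros of $Q_3$, the common-zero case forcing $x_2=x_1y_1$, $x_3=x_1y_2$, and the ``moreover'' clause $|x_1|\leq 1$ --- is legitimate and is in fact more careful than the paper, which absorbs these caveats silently into its citation of Proposition \ref{bhch1}; but that care belongs to the $\Psi_3$-route, not to a pointwise tetrablock argument that the stated hypothesis does not support.
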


\begin{proof}
By Proposition \ref{bhch1}, we observe that $\tilde{\textbf{x}}\in \Gamma_{E(3;2;1,2)}$ if and only if $y_2z_2^2-y_1z_2+1\neq 0$ for all $z\in {\mathbb{D}}$ and $\|\Psi_{3}(\cdot,\tilde{\textbf{x}})\|_{H^{\infty}({\mathbb{D}})}:=\sup_{z\in {\mathbb{D}}}|\Psi_{3}(z,\tilde{\textbf{x}})|\leq 1$. Note that
\begin{align}\label{psii}
\Psi_{3}(z,\tilde{\textbf{x}})\nonumber &=\frac{x_3z^2-x_2z+x_1}{y_2z^2-y_1z+1}\\&=
\frac {p_1(z)-zp_3(z)}{1-zp_2(z)} \nonumber \\&= \Psi(z,(p_1(z),p_2(z),p_3(z))).
\end{align}
Thus, from \eqref{psii} and  Proposition \ref{bhch1}, we deduce that $\tilde{\textbf{x}}\in \Gamma_{E(3;2;1,2)}$ if and only if $$\sup_{z\in \mathbb D}\|\Psi(z,(p_1(z),p_2(z),p_3(z)))\|\leq 1.$$ This completes the proof.
\end{proof}

	\begin{thm}\label{them-2}
		Let $\{(P^{(1)}_z, P^{(2)}_z, P^{(3)}_z) : z \in \mathbb{D}\}$ be a family of tetrablock contraction. Then
		\begin{equation*}
			\begin{aligned}
				\rho_{G_{E(3; 2; 1, 2)}}(S_1, zS_2, z^2S_3, z\tilde{S}_1, z^2\tilde{S}_2) &\geqslant 0~{\rm{for~all}}~z\in \mathbb {\bar{D}}.
			\end{aligned}
		\end{equation*}
	\end{thm}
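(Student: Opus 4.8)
The plan is to mirror the argument used for the analogous statement in the $\Gamma_{E(3;3;1,1,1)}$ setting (Theorem \ref{thm-2} and the family-of-tetrablock-contractions theorem preceding it), now exploiting the \emph{single-variable} correspondence recorded in Lemma \ref{psi12} together with the chain of equivalences \eqref{S1} obtained in the proof of Theorem \ref{thm-4}. In outline: a boundedness estimate for the tetrablock function applied to the triple $(P^{(1)}_z,P^{(2)}_z,P^{(3)}_z)$ will be transported, via an algebraic identity, into the estimate $\|\Psi_3(z,\textbf{S})\|\leqslant 1$, which in turn is equivalent to the desired positivity.

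First I would fix $z\in\mathbb{D}$ and use the hypothesis that $(P^{(1)}_z,P^{(2)}_z,P^{(3)}_z)$ is a tetrablock contraction, so that $\Gamma_{E(2;2;1,1)}$ is a spectral set for this triple. For this fixed $z$, consider the tetrablock function evaluated at parameter value equal to $z$, namely $(a_1,a_2,a_3)\mapsto(a_1-za_3)(1-za_2)^{-1}$. Since $|a_2|\leqslant 1$ on $\Gamma_{E(2;2;1,1)}$ and $|z|<1$, the denominator does not vanish, so this rational map lies in $\mathcal{O}(\Gamma_{E(2;2;1,1)})$; by the characterization of the tetrablock [Theorem $2.2$, \cite{Abouhajar}] it satisfies $\|\Psi(z,\cdot)\|_{\infty,\Gamma_{E(2;2;1,1)}}\leqslant 1$. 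The spectral-set property then yields $\|\Psi(z,(P^{(1)}_z,P^{(2)}_z,P^{(3)}_z))\|\leqslant 1$.

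Next I would establish the operator form of the identity proved inside Lemma \ref{psi12}. Because $\|\tilde{S}_1\|\leqslant 2$ the operator $(2I-z\tilde{S}_1)$ is invertible for $z\in\mathbb{D}$, and since all the operators commute one clears the common factor $(2I-z\tilde{S}_1)^{-1}$ to get $P^{(1)}_z-zP^{(3)}_z=2(S_1-zS_2+z^2S_3)(2I-z\tilde{S}_1)^{-1}$ and $I-zP^{(2)}_z=2(I-z\tilde{S}_1+z^2\tilde{S}_2)(2I-z\tilde{S}_1)^{-1}$ (the latter is invertible because $\|P^{(2)}_z\|\leqslant 1$ for a tetrablock contraction). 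The ratio then collapses to
\[
\Psi(z,(P^{(1)}_z,P^{(2)}_z,P^{(3)}_z))=(P^{(1)}_z-zP^{(3)}_z)(I-zP^{(2)}_z)^{-1}=(S_1-zS_2+z^2S_3)(I-z\tilde{S}_1+z^2\tilde{S}_2)^{-1}=\Psi_3(z,\textbf{S}).
\]
Hence $\|\Psi_3(z,\textbf{S})\|\leqslant 1$. Finally, invoking the equivalences \eqref{S1} from the proof of Theorem \ref{thm-4}, $\|\Psi_3(z,\textbf{S})\|\leqslant 1$ is equivalent to $\rho_{G_{E(3;2;1,2)}}(S_1,zS_2,z^2S_3,z\tilde{S}_1,z^2\tilde{S}_2)\geqslant 0$. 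Since this holds for every $z\in\mathbb{D}$ and $\rho_{G_{E(3;2;1,2)}}$ depends continuously on $z$, the inequality extends to all $z\in\overline{\mathbb{D}}$.

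The hard part will be the bookkeeping in the operator identity rather than any deep estimate: one must check the invertibility of $I-zP^{(2)}_z$ on $\mathbb{D}$ and the clean cancellation of the factor $(2I-z\tilde{S}_1)$, and, more delicately, justify that the tetrablock function $\Psi(z,\cdot)$ genuinely belongs to $\mathcal{O}(\Gamma_{E(2;2;1,1)})$ so that the spectral-set inequality may be applied to it as a rational (rather than polynomial) symbol. Once the identity $\Psi(z,(P^{(1)}_z,P^{(2)}_z,P^{(3)}_z))=\Psi_3(z,\textbf{S})$ is secured, the remainder is a direct transfer through Theorem \ref{thm-4}.
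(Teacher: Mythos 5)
Your proposal is correct and follows essentially the same route as the paper's proof: apply the tetrablock spectral-set inequality to $(P^{(1)}_z,P^{(2)}_z,P^{(3)}_z)$ at the parameter value $w=z$, transfer the bound through the identity \eqref{psii} to get $\|\Psi_3(z,\textbf{S})\|\leqslant 1$, and conclude via the equivalences \eqref{S1} together with continuity in $z$. The only difference is cosmetic: you verify the cancellation of the factor $(2I-z\tilde{S}_1)$ and the invertibility of $I-zP^{(2)}_z$ explicitly at the operator level, whereas the paper invokes the scalar identity \eqref{psii} without writing out this bookkeeping.
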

	
	\begin{proof}
Suppose that  $\{(P^{(1)}_z, P^{(2)}_z, P^{(3)}_z) : z \in \mathbb{D}\}$ is a family of tetrablock contractions. For each $w\in \mathbb D,$  the function $\Psi(w, \cdot)$ is holomorphic on $G_{E(2; 2; 1, 1)}$ with
\begin{equation*}
			\begin{aligned}
				\Psi(w, (P^{(1)}_z, P^{(2)}_z, P^{(3)}_z)) &= (P^{(1)}_z - wP^{(3)}_z)(I - wP^{(2)}_z)^{-1}.
			\end{aligned}
		\end{equation*}

As $\Gamma_{E(2; 2; 1, 1)}$ is a spectral set for $(P^{(1)}_z, P^{(2)}_z, P^{(3)}_z)$ for fixed but arbitrary $z\in \mathbb D,$ by [Therem 2.4, \cite {Abouhajar}], we have $$||\Psi(w, (P^{(1)}_z, P^{(2)}_z, P^{(3)}_z))|| \leqslant 1, \;\;\;\mbox{for all}\;\;w\in \mathbb{D}$$ In particular, for  $w=z$, we get
		\begin{equation*}
			\begin{aligned}
				\|\Psi(z, (P^{(1)}_z, P^{(2)}_z, P^{(3)}_z))\| &\leqslant 1
			\end{aligned}
		\end{equation*}
		and consequently by \eqref{psii} we derive the following inequality
		\begin{equation*}
			\begin{aligned}
				||\Psi_3(z, S_1, S_2, S_3, \tilde{S}_1, \tilde{S}_2)||
				&\leqslant 1.
			\end{aligned}
		\end{equation*}
By \eqref{S1} and continuity of $\rho_{G_{E(3; 2; 1, 2)}}$ in the variable $z$,  we conclude that
		\begin{equation*}
			\begin{aligned}
				\rho_{G_{E(3; 2; 1, 2)}}(S_1, zS_2, z^2S_3, z\tilde{S}_1, z^2\tilde{S}_2) &\geqslant 0 ~{\rm{for~all}}~z\in \mathbb {\bar{D}}.
			\end{aligned}
		\end{equation*}
		This completes the proof.
	\end{proof}
	
\begin{rem}\label{rem-5}

Let $\textbf{S} = (S_1, S_2, S_3, \tilde{S}_1, \tilde{S}_2)$ be a $\Gamma_{E(3; 2; 1, 2)}$-contraction.
Let $p$ be the polynomial in $\mathbb{C}[x_1,x_2,x_3,y_1,y_2]$ defined by $p(x_1,x_2,x_3,y_1,y_2)=y_1$. By Lemma \ref{lem-3} and Proposition \ref{prop-1}, it follows that $\|\tilde{S}_1\|=\|p(\textbf{S})\|\leq \|p\|_{\infty,\Gamma_{E(3; 2; 1, 2)}}=\|y_1\|_{\infty, \Gamma_{E(3; 2; 1, 2)}}\leq 2$. If $\|\tilde{S}_1\|=\|p(\textbf{S})\|<2$, then $(2I-z\tilde{S}_1)$ is invertible for all $z\in \mathbb {\bar{D}}$.
By using the similar argument as in the Theorem \ref{thm-3}, we deduce that if $\textbf{S}$ is a $\Gamma_{E(3; 2; 1, 2)}$-contraction, then  $\{(P^{(1)}_z, P^{(2)}_z, P^{(3)}_z) : z \in \mathbb{\bar{D}}\}$ is a family of tetrablock contractions. 
\end{rem}

\section{$\Gamma_{E(3; 3; 1, 1, 1)}$-Unitaries and $\Gamma_{E(3; 2; 1, 2)}$-Unitaries}
In this section, we investigate the $\Gamma_{E(3; 3; 1, 1, 1)}$-unitaries and $\Gamma_{E(3; 2; 1, 2)}$-unitaries and elaborate on the relationship between them. The following lemma provides the various characterization of the set $K$. As the proof of the lemma follows easily, therefore we omit the proof.

\begin{lem}\label{lem}
		Let $\textbf{x} = (x_1, \dots, x_7) \in \mathbb{C}^7$. Then the followings are equivalent:
		\begin{enumerate}
			\item $\textbf{x} \in K$. 
			
			\item $(\omega x_1, x_2, \omega x_3, \omega x_4, \omega^2 x_5, \omega x_6, \omega^2 x_7) \in K$ for all $\omega \in \mathbb T$.
			
			\item[$(2^{'})$] $ (\omega x_1, \omega x_2, \omega^2 x_3, x_4, \omega x_5, \omega x_6, \omega^2 x_7) \in K$ for all $\omega \in \mathbb T$.
			
			\item[$(2^{''})$] $ (x_1, \omega x_2, \omega x_3, \omega x_4, \omega x_5, \omega^2 x_6, \omega^2 x_7) \in K$ for all $\omega \in \mathbb T$.
		\end{enumerate}
	\end{lem}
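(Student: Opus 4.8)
The plan is to reduce everything to Lemma~\ref{lem-121} together with a direct verification that the algebraic relations cutting out $K$ inside $\Gamma_{E(3;3;1,1,1)}$ transform covariantly under each of the three rotation actions. Recall that $K$ is defined as the subset of $\textbf{x}\in\Gamma_{E(3;3;1,1,1)}$ satisfying $x_1=\bar{x}_6x_7$, $x_3=\bar{x}_4x_7$, $x_5=\bar{x}_2x_7$ and $|x_7|=1$; since $|x_7|=1$, these are equivalent to the symmetric forms $x_6=\bar{x}_1x_7$, $x_4=\bar{x}_3x_7$, $x_2=\bar{x}_5x_7$ used elsewhere, so the choice of presentation is immaterial. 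The membership part of this definition is already handled by Lemma~\ref{lem-121}, which says that $\textbf{x}\in\Gamma_{E(3;3;1,1,1)}$ if and only if its image under any of the three actions lies in $\Gamma_{E(3;3;1,1,1)}$ for all $\omega\in\mathbb{T}$. Hence all that remains is to check that the three scalar identities and the normalization are preserved.

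I would prove $(1)\Leftrightarrow(2)$ in detail, the other two being identical after relabelling. Write $\textbf{x}'=(\omega x_1,x_2,\omega x_3,\omega x_4,\omega^2 x_5,\omega x_6,\omega^2 x_7)$. Since $|\omega|=1$ we get $|x_7'|=|\omega^2x_7|=|x_7|$, so the normalization is respected. For the relations, the useful bookkeeping is that each coordinate is rescaled by a power $\omega^{n_i}$, with $(n_1,\dots,n_7)=(1,0,1,1,2,1,2)$ for action $(2)$, and a relation $x_i=\bar{x}_jx_7$ is preserved precisely when $n_i=-n_j+n_7$. A one-line substitution confirms this: for instance $\bar{x}_6'x_7'=\overline{\omega x_6}\,\omega^2x_7=\omega\,\bar{x}_6x_7$ while $x_1'=\omega x_1$, so $x_1=\bar{x}_6x_7\Leftrightarrow x_1'=\bar{x}_6'x_7'$; the identities for $x_3$ and $x_5$ fall out of the same computation. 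This gives $(1)\Rightarrow(2)$, and since every step is an equivalence valid for all $\omega\in\mathbb{T}$, the converse follows by setting $\omega=1$ in $(2)$.

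For $(2')$ and $(2'')$ the argument is word-for-word the same, now with weight patterns $(n_1,\dots,n_7)=(1,1,2,0,1,1,2)$ and $(0,1,1,1,1,2,2)$ respectively; in each case one verifies the three homogeneity conditions $n_1=-n_6+n_7$, $n_3=-n_4+n_7$, $n_5=-n_2+n_7$, which hold exactly because the exponents of $\omega$ attached to each coordinate are inherited from the way $\pi_{E(3;3;1,1,1)}$ scales. There is no genuine obstacle here; the entire content is routine bookkeeping, and the only point requiring care is matching the exponent pattern of each rotation to the coordinates $x_1,\dots,x_7$ so that both sides of every defining identity pick up the same power of $\omega$.
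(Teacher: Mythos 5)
Your proof is correct, and it fills in exactly the routine verification that the paper omits (the authors state the lemma "follows easily" and give no proof): membership in $\Gamma_{E(3;3;1,1,1)}$ is handled by Lemma~\ref{lem-121}, and the weight computation $n_i=-n_j+n_7$ for each defining relation $x_i=\bar{x}_jx_7$ of $K$, together with $|x_7'|=|x_7|$, is the natural bookkeeping; your exponent patterns and the reduction of the converse to $\omega=1$ all check out. Nothing further is needed.
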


	\begin{thm}\label{thm-5}
		Let $\textbf{N} = (N_1, \dots, N_7)$ denote a commuting $7$-tuple of bounded operators defined on a Hilbert space $\mathcal{H}$. Then, the following conditions are equivalent:
		\begin{enumerate}		
			\item $\textbf{N}$ is a $\Gamma_{E(3; 3; 1, 1, 1)}$-unitary.
			
			\item For $1\leq i\leq 6,$ $N_i$'s are contractions, $N_i = N^*_{7-i} N_7$ and $N_7$ is unitary.
			\item For $1\leq i\leq 6,$ $(N_i,N_{7-i},N_7)$ are $\Gamma_{E(2; 2; 1, 1)}$-unitary.
			
			\item $N_7$ is a unitary and $\textbf{N}$ is a $\Gamma_{E(3; 3; 1, 1, 1)}$-contraction.
					
			\item $||N_2|| < 1$, $\|N_4\|<1,$ $\{((N_1 - z_2N_3)(I - z_2N_2)^{-1}, (N_4 - z_2N_6)(I - z_2N_2)^{-1}, (N_5 - z_2N_7)(I - z_2N_2)^{-1}) : |z_2| = 1\}$  and  $\{((N_1 - z_3N_5)(I - z_3N_4)^{-1},  (N_2 - z_3N_6)(I - z_3N_4)^{-1}, (N_3 - z_3N_7)(I - z_3N_4)^{-1}) : |z_3| = 1\}$ are  commuting family of tetrablock unitaries.
				
				\item[$(5)^{\prime}$]  $N_2$ is unitary and $\{((N_1 - z_2N_3)(I - z_2N_2)^{-1}, (N_4 - z_2N_6)(I - z_2N_2)^{-1}, (N_5 - z_2N_7)(I - z_2N_2)^{-1}) : |z_2| < 1\}$ is a commuting family of tetrablock unitaries.
			\item[$(5)^{\prime\prime}$] $N_4$ is unitary and $ \{((N_1 - z_3N_5)(I - z_3N_4)^{-1},  (N_2 - z_3N_6)(I - z_3N_4)^{-1}, (N_3 - z_3N_7)(I - z_3N_4)^{-1}) : |z_3| < 1\}$ is a commuting family of tetrablock unitaries.
						
				\item  $||N_1|| < 1$, $\|N_2\|<1,$  $\{((N_2 - z_1N_3)(I - z_1N_1)^{-1},  (N_4 - z_1N_5)(I - z_1N_1)^{-1}, (N_6 - z_1N_7)(I - z_1N_1)^{-1}) : |z_1| = 1\}$ and  $\{((N_1 - z_2N_3)(I - z_2N_2)^{-1}, (N_4 - z_2N_6)(I - z_2N_2)^{-1}, (N_5 - z_2N_7)(I - z_2N_2)^{-1}) : |z_2| = 1\}$ are  commuting family of tetrablock unitaries.
				
				\item [$(6)^{\prime}$]  $N_1$ is unitary and $ \{((N_2 - z_1N_3)(I - z_1N_1)^{-1},  (N_4 - z_1N_5)(I - z_1N_1)^{-1}, (N_6 - z_1N_7)(I - z_1N_1)^{-1}) : |z_1| < 1\}$ is a commuting family of tetrablock unitaries.
			
			\item[$(6)^{\prime\prime}$]  $N_2$ is unitary and $\{((N_1 - z_2N_3)(I - z_2N_2)^{-1}, (N_4 - z_2N_6)(I - z_2N_2)^{-1}, (N_5 - z_2N_7)(I - z_2N_2)^{-1}) : |z_2| < 1\}$ is a commuting family of tetrablock unitaries.
					
			\item  $\|N_1\|<1,$ $\|N_4\| < 1$, $ \{((N_1 - z_3N_5)(I - z_3N_4)^{-1},  (N_2 - z_3N_6)(I - z_3N_4)^{-1}, (N_3 - z_3N_7)(I - z_3N_4)^{-1}) : |z_3| = 1\}$ and $\{((N_2 - z_1N_3)(I - z_1N_1)^{-1},  (N_4 - z_1N_5)(I - z_1N_1)^{-1}, (N_6 - z_1N_7)(I - z_1N_1)^{-1}) : |z_1| = 1\}$ are  commuting family of tetrablock unitaries.				
				\item [$(7)^{\prime}$] $N_1$ is unitary and $ \{((N_2 - z_1N_3)(I - z_1N_1)^{-1},  (N_4 - z_1N_5)(I - z_1N_1)^{-1}, (N_6 - z_1N_7)(I - z_1N_1)^{-1}) : |z_1| < 1\}$ is a commuting family of tetrablock unitaries.
				\item [$(7)^{\prime\prime}$]  $N_4$ is unitary and $\mathcal{F}^{'}_2 = \{((N_1 - z_3N_5)(I - z_3N_4)^{-1},  (N_2 - z_3N_6)(I - z_3N_4)^{-1}, (N_3 - z_3N_7)(I - z_3N_4)^{-1}) : |z_3| < 1\}$ is a commuting family of tetrablock unitaries.

		\end{enumerate}
	\end{thm}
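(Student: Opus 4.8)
The plan is to split the twelve conditions into an algebraic/spectral core $(1)\Leftrightarrow(2)\Leftrightarrow(3)\Leftrightarrow(4)$ and the nine fractional-transformation conditions $(5)$–$(7^{\prime\prime})$, deriving the latter from the former together with Theorem \ref{relation bw} and the characterization of tetrablock unitaries \cite{Bhattacharyya}. For $(1)\Rightarrow(2)$ I would invoke the spectral theorem for a commuting normal tuple: writing $N_i=\int_K x_i\,dE$ against the joint spectral measure supported on $\sigma(\mathbf N)\subseteq K$, the defining relations of $K$ pass through the functional calculus, so $N_7$ becomes unitary (as $|x_7|=1$ on $K$), $N_i=N_{7-i}^*N_7$, and each $N_i$ is a contraction by Lemma \ref{lemm-2}. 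The delicate reverse implication $(2)\Rightarrow(1)$ hinges on a normality trick: since $N_{7-i}=N_i^*N_7$ and the tuple commutes, expanding $N_i(N_i^*N_7)=(N_i^*N_7)N_i$ and cancelling the unitary $N_7$ on the right gives $N_iN_i^*=N_i^*N_i$, so each $N_i$ is normal; Fuglede--Putnam then makes the tuple commute with all adjoints, so it generates a commutative $C^*$-algebra. Evaluating the relations on its joint spectrum forces $x_i=\bar x_{7-i}x_7$ with $|x_7|=1$, and checking condition $4$ of Theorem \ref{mainthm} at each spectral point places $\sigma(\mathbf N)$ in $\Gamma_{E(3;3;1,1,1)}$, hence in $K$.

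The equivalence $(2)\Leftrightarrow(3)$ is then a direct application of the characterization of tetrablock unitaries — a commuting triple $(A,B,P)$ is a tetrablock unitary precisely when $P$ is unitary, $A,B$ are commuting normal contractions and $A=B^*P$ — to the triples $(N_i,N_{7-i},N_7)$; note that $(3)$ even supplies the normality used above without the commutativity trick. For $(1)\Rightarrow(4)$, a $\Gamma$-unitary is a commuting normal tuple with $\sigma(\mathbf N)\subseteq K\subseteq\Gamma_{E(3;3;1,1,1)}$, so by the spectral theorem $\|p(\mathbf N)\|=\sup_{\sigma(\mathbf N)}|p|\le\|p\|_{\infty,\Gamma_{E(3;3;1,1,1)}}$, i.e.\ $\mathbf N$ is a $\Gamma$-contraction, and $N_7$ is unitary. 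The converse $(4)\Rightarrow(1)$ is where the fundamental inequalities enter: applying Theorem \ref{thm-2} to both $\mathbf N$ and its adjoint (a $\Gamma$-contraction by Remark \ref{rem-1}) and using $D_{N_7}=0$ when $N_7$ is unitary forces the inequalities $\rho^{(j)}\ge 0$ to become equalities, which in turn yield the relations $N_i=N_{7-i}^*N_7$ of condition $(2)$; I would then close the loop through $(2)\Rightarrow(1)$.

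For the fractional-transformation conditions I would run Theorem \ref{relation bw} through the spectral measure of $\mathbf N$. Each of its three cases describes $K$ by demanding that the slices $\tilde{\mathbf z}^{(z_3)},\tilde{\mathbf y}^{(z_2)},\tilde{\mathbf x}^{(z_1)}$ lie in $b\Gamma_{E(2;2;1,1)}$, with a dichotomy according to whether the coordinate $|x_1|,|x_2|$ or $|x_4|$ is $<1$ or $=1$. At the operator level, a strict bound such as $\|N_2\|<1$ makes $I-z_2N_2$ invertible throughout $\overline{\mathbb D}$, so operators like $(N_1-z_2N_3)(I-z_2N_2)^{-1}$ are well-defined; applying the functional calculus pointwise on $\sigma(\mathbf N)$, such a family consists of tetrablock unitaries exactly when its joint spectra sit in $b\Gamma_{E(2;2;1,1)}$. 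This is the operator mirror of the branch with two coordinates of modulus $<1$ and gives the unprimed conditions $(5),(6),(7)$ (two families, parameters on the circle), while the primed conditions correspond to the degenerate branch where $N_2$, $N_4$, or $N_1$ is itself unitary, for which a single family with parameter in $\mathbb D$ suffices. Transcribing the three equivalent descriptions of $K$ from Theorem \ref{relation bw} in this fashion yields $(1)\Leftrightarrow(5),(5^{\prime}),(5^{\prime\prime})$ and the analogous $(6)$- and $(7)$-blocks.

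I expect the main obstacle to be the bookkeeping surrounding the fractional transformations rather than a single deep step. One must justify invertibility of $I-zN_k$ uniformly — on $\overline{\mathbb D}$ when $\|N_k\|<1$, on $\mathbb D$ when $N_k$ is unitary — to define the families in $(5)$–$(7^{\prime\prime})$ at all; then show that ``family of tetrablock unitaries'' is genuinely equivalent, via the spectral measure, to the pointwise spectral condition $\sigma\subseteq b\Gamma_{E(2;2;1,1)}$; and finally match the three branches of Theorem \ref{relation bw} to the primed/unprimed split while tracking which coordinate is unimodular. The normality step in $(2)\Rightarrow(1)$, though short, is equally essential: without the observation that $N_i$ commutes with $N_i^*N_7=N_{7-i}$, condition $(2)$ would be strictly weaker than being a $\Gamma$-unitary.
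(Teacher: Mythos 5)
Your architecture for the core block $(1)\Leftrightarrow(2)\Leftrightarrow(3)\Leftrightarrow(4)$ --- Gelfand/spectral calculus for $(1)\Rightarrow(2)$, the commutation--normality trick plus Fuglede--Putnam for $(2)\Rightarrow(1)$, Bhattacharyya's tetrablock theorem for $(2)\Leftrightarrow(3)$ --- is essentially the paper's; the only real divergence is $(4)\Rightarrow(1)$, where the paper goes through Proposition \ref{props-1} and the tetrablock theorem to get $(4)\Rightarrow(3)$, while you propose extracting the relations from the $\rho^{(j)}$-inequalities of Theorem \ref{thm-2} applied to $\mathbf N$ and $\mathbf N^*$ (a workable but sketchier route, which in any case you still funnel back through $(2)\Rightarrow(1)$). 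The first genuine gap is exactly there: you correctly observe that membership in $K$ requires membership in $\Gamma_{E(3;3;1,1,1)}$ --- a verification the paper's own proof of $(2)\Rightarrow(1)$ silently omits --- but your proposed fix, ``checking condition 4 of Theorem \ref{mainthm} at each spectral point,'' cannot be carried out. The hypotheses of $(2)$ give at a spectral point only the algebraic relations $x_i=\bar x_{7-i}x_7$, $|x_i|\le 1$, $|x_7|=1$, and these do not imply membership in $\Gamma_{E(3;3;1,1,1)}$: the scalar tuple $\mathbf N=(1,0,1,1,0,1,1)$ on $\mathcal H=\mathbb C$ satisfies every requirement of $(2)$ (and of $(3)$), yet its slice at $z_3=\tfrac{1}{2}$ in condition 4 of Theorem \ref{mainthm} is $(2,-1,1)\notin\Gamma_{E(2;2;1,1)}$; equivalently $R^{(3;3;1,1,1)}_{\mathbf x}(r,-r,r)=1-2r-2r^2+r^3$ changes sign on $(0,1)$, so $R_{\mathbf x}$ vanishes inside $\mathbb D^3$. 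Thus $(2)$ is strictly weaker than $(1)$, and the step in your plan (and in the paper) that concludes $\sigma(\mathbf N)\subseteq K$ from the algebraic relations fails.

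The second gap is in your transcription of Theorem \ref{relation bw} into the block $(5)$--$(7'')$. In the direction $(1)\Rightarrow(5)$ you need a $\Gamma_{E(3;3;1,1,1)}$-unitary to satisfy the strict bounds $\|N_2\|<1$ and $\|N_4\|<1$, but this is false: the scalar $\Gamma_{E(3;3;1,1,1)}$-unitary $\pi_{E(3;3;1,1,1)}(I_3)=(1,1,1,1,1,1,1)$ has $\|N_2\|=\|N_4\|=1$. The dichotomy in Theorem \ref{relation bw} is pointwise on the joint spectrum and does not globalize to the operator trichotomy ``$\|N_2\|<1$ and $\|N_4\|<1$'', or ``$N_2$ unitary'', or ``$N_4$ unitary'': a direct sum of two $\Gamma_{E(3;3;1,1,1)}$-unitaries, one whose $(2,4)$-coordinates are unimodular and one whose are strict contractions, satisfies none of $(5)$, $(5')$, $(5'')$. (The paper makes the same unjustified deduction of $\|N_2\|<1$, $\|N_4\|<1$ from the Gelfand map.) So the strict-norm conditions are sufficient for $(1)$ but not necessary, and no bookkeeping of the fractional transformations can rescue the claimed equivalence. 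In short, your proposal reproduces the paper's strategy faithfully, but the two steps above are not merely unproved --- they are contradicted by explicit scalar examples, so both your argument and the paper's theorem as stated need to be repaired (e.g., by adding membership of $\sigma(\mathbf N)$ in $\Gamma_{E(3;3;1,1,1)}$ to condition $(2)$, and by replacing the strict norm hypotheses in $(5)$--$(7)$ with spectral conditions).
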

	
	\begin{proof}
We show the equivalence of $(1), (2), (3),(4),(5), (5)^{\prime},(5)^{\prime\prime},(6), (6)^{\prime},(6)^{\prime\prime}, (7) (7)^{\prime}$ and $(7)^{\prime\prime}$  by demonstrating the following relationships: 
\\
\small{$(1) \Leftrightarrow (2) \Leftrightarrow (3), (1) \Rightarrow (4),(4)  \Rightarrow (3), (4)  \Rightarrow (5), (4)  \Rightarrow (5)^{\prime}, (4)  \Rightarrow (5)^{\prime \prime},(5) \Rightarrow(1), (5)^{\prime} \Rightarrow(1), $} \small{$ (5)^{\prime\prime} \Rightarrow(1), (4)  \Rightarrow (6),(4)  \Rightarrow (6)^{\prime}, (4)  \Rightarrow (6)^{\prime \prime}, (6) \Rightarrow(1), (6)^{\prime} \Rightarrow(1), (6)^{\prime\prime} \Rightarrow(1), (4)\Rightarrow (7),$}
$ (4)  \Rightarrow (7)^{\prime},  (4)  \Rightarrow (7)^{\prime \prime}, (7) \Rightarrow (1), (7)^{\prime} \Rightarrow(1) ~{\rm{and}}~  (7)^{\prime\prime} \Rightarrow(1).$
\\
$(1) \Rightarrow (2):$ Suppose that $\textbf{N} = (N_1, \dots, N_7)$ is a $\Gamma_{E(3; 3; 1, 1, 1)}$-unitary. By the definition of $\Gamma_{E(3; 3; 1, 1, 1)}$-unitary, it follows that $N_i$ for $1\leq i\leq 7$ are commuting normal operators, and their Taylor joint spectrum $\sigma(\textbf{N})$ is contained in the set  $ K$. Let $P_7:\mathbb{C}^7 \to \mathbb{C}^7$ be the projection map onto the $7$th coordinate. It follows from the \textit{Spectral mapping theorem} that $\sigma(N_7) =\sigma (P_7(\textbf{N})) =P_7(\sigma(\textbf{N}))$. Since $\sigma(\textbf{N})$ is a subset of the set  $ K$, it implies that $|\mu|=1$ for every $\mu \in \sigma(N_7) .$ Thus, $N_7$ is a normal operator with  $\sigma(N_7) \subseteq \mathbb{T}$; consequently,  $N_7$ is a unitary operator.
				
Consider the $C^*$-algebra $\mathcal A$ generated by the commuting normal operators $I,N_1, \dots, N_7$. By \textit{Gelfand-Naimark theorem}, this commutative $C^*$-algebra $\mathcal A$ is isometrically $^*$-isomorphic to $C(\sigma(\textbf{N}))$ via the \textit{Gelfand map.}  The \textit{Gelfand map}  corresponds  $N_i$ to the coordinate function $ x_i$ for $ i = 1, \dots, 7$. The coordinate functions satisfy the conditions $x_1 = \overline{x}_6 x_7, x_2 = \overline{x}_5 x_7, ~\text{and}~ x_3 = \overline{x}_4 x_7$ on the set $K$ and consequently on  $\sigma(\textbf{N})$, which is a subset of $ K$. Thus, we have $N_1 = N^*_6 N_7, N_2 = N^*_5 N_7, N_3 = N^*_4 N_7$ and $N_1, \dots, N_6$ are contractions.

\vspace{0.3cm}
$(2) \Rightarrow (1):$ We first show that $N_{7-i}$'s are normal for $i=1,\dots,6$. For $1\leq i\leq 6,$ we have $N_iN_{7-i}=N^*_{7-i}N_7N_{7-i}=N^*_{7-i}N_{7-i}N_{7}.$ On the other hand,  for $1\leq i\leq 6,$ we get $N_{7-i}N_{i}=N_{7-i}N^*_{7-i}N_7.$ Since for $1\leq i\leq 6,$ $N_i$ and $N_{7-i}$ are commute, it follows from the above observation that  $N^*_{7-i}N_{7-i}N_{7}=N_{7-i}N^*_{7-i}N_7.$ Multiplying on the right by $N_7^*$, we deduce that  $N_{7-i}$'s are normal for $i=1,\dots,6$. Let  $\mathcal A$ be the $C^*$-algebra  generated by the commuting normal operators $I,N_1, \dots, N_7$. By the  \textit{Gelfand-Naimark theorem}, this commutative $C^*$-algebra $\mathcal A$ is isometrically $^*$-isomorphic to $C(\sigma(\textbf{N}))$ via the \textit{Gelfand map.} The inverse of the \textit{Gelfand map} takes the co-ordinate functions $x_j$ to $N_j$ for $1\leq j\leq 7$. In order to complete the proof, it is sufficient to show that the Taylor joint spectrum $\sigma(\textbf{N})$ is contained in the set  $ K$. Let $(x_1,\dots,x_7)\in \sigma(\textbf{N}).$
Since $N_i$'s are contractions, $N_i = N^*_{7-i} N_7$ for $1\leq i\leq 6$, and $N_7$ is unitary; the fact that the inverse \textit{Gelfand map} sends $x_i$ to $N_i$ for $i=1,\dots,7$, we deduce that $x_i=\bar{x}_{7-i}x_7$ for $1\leq i\leq 6$, $~{\rm{and}}~|x_7|=1$, which implies that $(x_1,\dots,x_7)\in K$.

		$(2) \Leftrightarrow (3):$ 
The equivalence of $(2)$ and $(3)$ follows from [Theorem $5.4$,\cite{Bhattacharyya}].

$(1) \Rightarrow (4):$ Suppose that $\textbf{N}$ is a $\Gamma_{E(3; 3; 1, 1, 1)}$-unitary. By equivalence of $(1)$ and $(2)$, we deduce that $N_7$ is a unitary. Let $p$ be a polynomial of seven variables. Since $N_1,N_2,\ldots,N_7$ are commuting normal operators, so $p(N_1,\dots,N_7)$ is a normal operator. Then 
\begin{align*} \|p(N_1,\dots,N_7)\|&=\sup\{|p(\lambda_1,\dots,\lambda_7)|: (\lambda_1,\dots,\lambda_7)\in \sigma(\textbf{N})\}\\&\leq
\sup\{|p(\lambda_1,\dots,\lambda_7)|: (\lambda_1,\dots,\lambda_7)\in K\}
\\&\leq\sup\{|p(\lambda_1,\dots,\lambda_7)|: (\lambda_1,\dots,\lambda_7)\in \Gamma_{E(3; 3; 1, 1, 1)}\}\\
&= \|p\|_{\infty, \Gamma_{E(3; 3; 1, 1, 1)}},
\end{align*}
by Proposition \ref{prop-1}, it follows that $\textbf{N}$ is a $\Gamma_{E(3; 3; 1, 1, 1)}$-contraction.

$(4) \Rightarrow (3):$  As $\textbf{N}$ is a $\Gamma_{E(3; 3; 1, 1, 1)}$-contraction, by Proposition \ref{props-1}, it yields that $(N_1,N_6,N_7),$ $ (N_2,N_5,N_7)$ and $(N_3,N_4,N_7)$ are  $\Gamma_{E(2; 2; 1,1)}$-contractions. Since $N_7$ is a unitary, it follows from  [Theorem $5.4$,\cite{Bhattacharyya}] that  the triplet $(N_i,N_{7-i},N_7)$ is $\Gamma_{E(2; 2; 1, 1)}$-unitary for $1\leq i\leq 6$.
\newline We demonstrate only $(1)\Leftrightarrow(5)$ and $(1)\Leftrightarrow(5)^{\prime}$, other equivalences, namely, $(1)\Leftrightarrow(5)^{\prime\prime}, (1)\Leftrightarrow(6), (1)\Leftrightarrow(6)^{\prime},(1) \Leftrightarrow(6)^{\prime\prime}, (1)\Leftrightarrow(7), (1)\Leftrightarrow(7)^{\prime}, (1)\Leftrightarrow(7)^{\prime\prime}$ are established in a similar manner.

$(1) \Rightarrow (5):$  Since $\mathbf{N}$ is a $\Gamma_{E(3; 3; 1, 1, 1)}$ unitary so  $N_7$ is a unitary operator and $\textbf{N}$ is a $\Gamma_{E(3; 3; 1, 1, 1)}$-contraction. By equivalence of $(1),(2)$ and $(4)$, we observe that $N_i$'s are normal operators with $\|N_i\|\leq 1$ and $N_i = N^*_{7-i} N_7$ for $1\leq i\leq 6$.  Consider the $C^*$-algebra $\mathcal A$ generated by the commuting normal operators $I,N_1, \dots, N_7$. By \textit{Gelfand-Naimark theorem}, $\mathcal A$ is isometrically $^*$-isomorphic to $C(\sigma(\textbf{N}))$ via the \textit{Gelfand map.} Let $(x_1,\dots,x_7)\in \sigma(\textbf{N}).$ Since Taylor joint spectrum $\sigma(\textbf{N})$ is contained in the set  $ K$,  we deduce from Theorem \ref{relation bw} that  $(x_1,\dots,x_7)\in K$ if and only if $$ \tilde{\textbf{z}}^{(z_3)}\in b\Gamma_{E(2;2;1,1)}~\text{and}~\tilde{\textbf{y}}^{(z_2)}\in b\Gamma_{E(2;2;1,1)}~\text{for all}~z_2,z_3\in \mathbb{T}~\text{with}~|x_4|<1 ~\text{and}~|x_2|<1.$$

The inverse of the \textit{Gelfand map} takes the co-ordinate functions $x_j$ to $N_j$  for $1\leq j\leq 7$ and hence we have $\|N_2\|<1$ and $\|N_4\|<1.$
As $\textbf{N}$ is a $\Gamma_{E(3; 3; 1, 1, 1)}$-contraction,  the Theorem \ref{thm-1} and Remark \ref{remm1} imply  that \[\{((N_1 - z_2N_3)(I - z_2N_2)^{-1}, (N_4 - z_2N_6)(I - z_2N_2)^{-1}, (N_5 - z_2N_7)(I - z_2N_2)^{-1}) : z_2 \in \mathbb {\bar{D}}\}\]  and  \[\{((N_1 - z_3N_5)(I - z_3N_4)^{-1},  (N_2 - z_3N_6)(I - z_3N_4)^{-1}, (N_3 - z_3N_7)(I - z_3N_4)^{-1}) : z_3 \in \mathbb {\bar{D}}\}\] are  commuting family of tetrablock contractions. In particular, \[\{((N_1 - z_2N_3)(I - z_2N_2)^{-1}, (N_4 - z_2N_6)(I - z_2N_2)^{-1}, (N_5 - z_2N_7)(I - z_2N_2)^{-1}) : |z_2| = 1\}\]  and  \[\{((N_1 - z_3N_5)(I - z_3N_4)^{-1},  (N_2 - z_3N_6)(I - z_3N_4)^{-1}, (N_3 - z_3N_7)(I - z_3N_4)^{-1}) : |z_3| = 1\}\] are  commuting family of tetrablock contractions. By [Theorem $5.4$,\cite{Bhattacharyya}], to complete the proof it is sufficient to demonstrate that  $(N_5 - z_2N_7)(I - z_2N_2)^{-1}$ and $(N_3 - z_3N_7)(I - z_3N_4)^{-1})$ are unitary  operators for all $z_2,z_3 \in \mathbb T$. We show that $(N_5 - z_2N_7)(I - z_2N_2)^{-1}$ is a unitary for all $z_2\in \mathbb T,$  the proof of $(N_3 - z_3N_7)(I - z_3N_4)^{-1})$ is unitary for all $z_3\in \mathbb T$ follows in a similar manner. Note that 
$N_2=N_5^*N_7$, $N_5=N_2^*N_7$ 
\begin{eqnarray}
N_5^*N_5&=& N_7^*N_2N_2^*N_7\nonumber\\
&=& N_7^*N_2^*N_2N_7 \nonumber\\
&=& N_2^*N_7^*N_7N_2\nonumber\\
&=& N_2^*N_2 \nonumber
\end{eqnarray}
 and 	\begin{align}\label{n-5}
(N^*_5 - \overline{z}_2N^*_7)(N_5 - z_2N_7)
\nonumber&=N^*_5N_5 - \overline{z}_2N^*_7N_5 - z_2N^*_5N_7 + N^*_7N_7
\\\nonumber&=N^*_2N_2 - \overline{z}_2N^*_2 - z_2N_2 + I
\\\nonumber&=N_2N^*_2 - \overline{z}_2N^*_2 - z_2N_2 + I 
\\ &=(I-z_2N_2)(I-\overline{z}_2N_2^*).
\end{align}

From \eqref{n-5}, we conclude that $(N_5 - z_2N_7)(I - z_2N_2)^{-1}$  is a unitary for all $z_2\in \mathbb T$.

We now demonstrate that $(1) \Rightarrow (5)^{\prime},$  the proof of $(1) \Rightarrow (5)^{\prime \prime}$ follows  similarly. 

As $\mathbf{N}$ is a $\Gamma_{E(3; 3; 1, 1, 1)}$ unitary, it follows that $N_7$ is a unitary operator and $\textbf{N}$ is a $\Gamma_{E(3; 3; 1, 1, 1)}$-contraction. From the equivalence of $(1),(2)$ and $(4)$, it yields that $N_i$'s are normal operators with $\|N_i\|\leq 1$ and $N_i = N^*_{7-i} N_7$ for $1\leq i\leq 6$.   Let $\mathcal A$ be the $C^*$-algebra generated by the commuting normal operators $I,N_1, \dots, N_7$. The \textit{Gelfand-Naimark theorem} states that $\mathcal A$ is isometrically $^*$-isomorphic to $C(\sigma(\textbf{N}))$ via the \textit{Gelfand map.} Let $(x_1,\dots,x_7)\in \sigma(\textbf{N}).$ As Taylor joint spectrum $\sigma(\textbf{N})$ is contained in the set  $ K$,  it follows from Theorem \ref{relation bw} that $$\tilde{\textbf{y}}^{(z_2)}\in b\Gamma_{E(2;2;1,1)}~\text{for all}~z_2\in \mathbb{D}~\text{with}~|x_2|=1.$$ Since $|x_2|=1$ and $|x_7|=1,$ we have $|x_5|=1.$  Let $P_2, P_5$ and $P_7$ denote the projections onto the $2$nd, $5$th and $7$th coordinates, respectively. Since the commutative $C^*$-algebra $\mathcal A$ is isometrically $^*$-isomorphic to $C(\sigma(\textbf{N}))$ via the \textit{Gelfand map} and applying  Spectral mapping theorem, we conclude that $N_2$, $N_5$ and $N_7$ are unitary operators.	 Since $\textbf{N}$ is a $\Gamma_{E(3; 3; 1, 1, 1)}$-contraction,  it follows from the Theorem \ref{thm-1}  that $\{((N_1 - z_2N_3)(I - z_2N_2)^{-1}, (N_4 - z_2N_6)(I - z_2N_2)^{-1}, (N_5 - z_2N_7)(I - z_2N_2)^{-1}) : z_2 \in \mathbb {D}\}$  is a commuting family of tetrablock contractions. Observe that
\begin{eqnarray*}
(N_5^*-\overline{z}_2N_7^*)(N_5-z_2N_7)&=& N_5^*N_5-\overline{z}_2N_7^*N_5-z_2N_5^*N_7+|z_2|^2N_7*N_7\\
&=&I-\overline{z}_2N_7^*N_5-z_2N_5^*N_7+|z_2|^2I\\
&=&|z_2|^2N_2^*N_2-\overline{z}_2N_7^*N_5-z_2N_5^*N_7+I\\
&=&(I-\overline{z}_2N_2^*)(I-z_2N_2),
\end{eqnarray*}
 therefore  $(N_5 - z_2N_7)(I - z_2N_2)^{-1}$ is a unitary operator for all $z_2$ in $\mathbb D$. Thus by  [Theorem $5.4$,\cite{Bhattacharyya}]
we deduce that \[\{((N_1 - z_2N_3)(I - z_2N_2)^{-1}, (N_4 - z_2N_6)(I - z_2N_2)^{-1}, (N_5 - z_2N_7)(I - z_2N_2)^{-1}) : |z_2| < 1\}\] is a commuting family of tetrablock unitaries.		
		
$(5) \Rightarrow (1):$		
 Suppose that $||N_2|| < 1$, $\|N_4\|<1,$ 
\begin{eqnarray}\label{e1}
\{((N_1 - z_2N_3)(I - z_2N_2)^{-1}, (N_4 - z_2N_6)(I - z_2N_2)^{-1}, (N_5 - z_2N_7)(I - z_2N_2)^{-1}) : |z_2| = 1\}\end{eqnarray}  and  \begin{eqnarray}\label{e2}
\{((N_1 - z_3N_5)(I - z_3N_4)^{-1},  (N_2 - z_3N_6)(I - z_3N_4)^{-1}, (N_3 - z_3N_7)(I - z_3N_4)^{-1}) : |z_3| = 1\}\end{eqnarray} are commuting family of tetrablock unitaries. We first prove that $N_i$'s are normal operator. 
For $z_2\in \mathbb{T}$, it follows from  \eqref{e1} and
[Theorem $5.4$,\cite{Bhattacharyya}] that 
\begin{equation}\label{N-1-N-3}
\begin{aligned}
(I-\bar{z}_2N^*_2)(N_1-z_2N_3)&=(N_4^*-\bar{z}_2N_6^*)(N_5-z_2N_7),\\
(I-\bar{z}_2N^*_2)(N_4-z_2N_6)&=(N_1^*-\bar{z}_2N_3^*)(N_5-z_2N_7),\\
\rm{and}\\
(N_5^*-\bar{z}_2N_7^*)(N_5-z_2N_7)&=(I-\bar{z}_2N_2^*)(I-z_2N_2),\\
(N_5-z_2N_7)(N_5^*-\bar{z}_2N_7^*)&=(I-z_2N_2)(I-\bar{z}_2N_2^*).
\end{aligned}
\end{equation}	
Also, it yields from  \eqref{e2} and
[Theorem $5.4$,\cite{Bhattacharyya}] 
\begin{equation}\label{N-1-N-31}
\begin{aligned}
(I-\bar{z}_3N^*_4)(N_1-z_3N_5)&=(N_2^*-\bar{z}_3N_6^*)(N_3-z_3N_7), \\
(I-\bar{z}_3N^*_4)(N_2-z_3N_6)&=(N_1^*-\bar{z}_3N_5^*)(N_3-z_3N_7).
\end{aligned}
\end{equation}	
Thus, from \eqref{N-1-N-3}, we get
\begin{equation}\label{e3}
\begin{aligned}
N_1+N_2^*N_3=N_4^*N_5+N_6^*N_7, N_3=N_4^*N_7 ~{\rm{and}}~N_2^*N_1=N_6^*N_5,
\end{aligned}
\end{equation}
\begin{equation}\label{e4}
\begin{aligned}
N_4+N_2^*N_6=N_1^*N_5+N_3^*N_7, N_6=N_1^*N_7 ~{\rm{and}}~N_2^*N_4=N_3^*N_5,
\end{aligned}
\end{equation}
\begin{equation}\label{e5}
\begin{aligned}
N_5^*N_5+N_7^*N_7=I+N_2^*N_2, N_2^*=N_7^*N_5,
\end{aligned}
\end{equation}
\begin{equation}\label{e6}
\begin{aligned}
N_5N_5^*+N_7N_7^*=I+N_2N_2^*, N_2^*=N_5N_7^*
\end{aligned}
\end{equation}
We also deduce from \eqref{N-1-N-31} that
\begin{equation}\label{e7}
\begin{aligned}
N_1+N_4^*N_5=N_2^*N_3+N_6^*N_7, N_5=N_2^*N_7 ~{\rm{and}}~N_4^*N_1=N_6^*N_3,
\end{aligned}
\end{equation}

\begin{equation}\label{e8}
\begin{aligned}
N_2+N_4^*N_6=N_1^*N_3+N_5^*N_7, N_6=N_1^*N_7 ~{\rm{and}}~N_4^*N_2=N_5^*N_3.
\end{aligned}
\end{equation} 

From \eqref{e5} and \eqref{e6}, we get $N_2N_2^*=N_7N_5^*N_5N_7^*=N_2^*N_2.$ This indicates that $N_2$ is normal operator.  

From \eqref{e5}, \eqref{e6} and \eqref{e7},  we observe  that 
\begin{equation}\label{n5normal}
N_5^*N_5=N_5^*N_2^*N_7=N_2^*N_5^*N_7=N_2^*N_7N_5^*=N_5N_5^*\end{equation}
and 
\begin{equation}\label{n5n2}
N_5^*N_5=N_5^*N_2^*N_7= N_2^*N_5^*N_7=N_2^*N_7N_5^*=N_2^*N_2.
\end{equation} 
Thus, from \eqref{e5}, \eqref{e6} and \eqref{n5n2}, we conclude that $N_7$ is a unitary operator. We now show that $N_1$ is normal operator. We notice from  \eqref{e4} that $N_1N_6=N_1N_1^*N_7$ and $N_6N_1=N_1^*N_7N_1=N_1^*N_1N_7.$ Since $N_1$ and $N_6$ commute, we have $N_1N_1^*N_7=N_1^*N_1N_7.$ Since $N_7$ is a unitary operator, we deduce that $N_1$ is a normal operator. 
From \eqref{e3} and \eqref{e4}, we have 
\begin{equation}\label{e9}
N_7^*N_3=N_7^*N_4^*N_7=N_4^*\end{equation}
and 
\begin{equation}\label{e10}
N_7^*N_6=N_7^*N_1^*N_7=N_1^*.\end{equation}
 We observe from \eqref{e10} that $N_1N_6=N_6^*N_7N_6=N_6^*N_6N_7$ and $N_6N_1=N_6N_6^*N_7$. As $N_1$ and $N_6$ commute, we get $N_6^*N_6N_7=N_6N_6^*N_7$. Because $N_7$ is a unitary operator, we conclude that $N_6$ is a normal operator. As $N_7$ is a unitary operator and $N_3$ and $N_4$ commute, we deduce from \eqref{e3} and \eqref{e9} that $N_3$ and $N_4$ are normal operators.
Let $\mathcal A$ denote the $C^*$-algebra generated by the commuting normal operators $I, N_1, \dots, N_7$. By the Gelfand-Naimark theorem, the commutative $C^*$-algebra $\mathcal A$ is isometrically $^*$-isomorphic to $C(\sigma(\textbf{N}))$ via the Gelfand map. The \textit{Gelfand map} sends  $N_i$ to the coordinate function $x_i$ for $i = 1, \dots, 7$.  As $||N_2|| < 1$ and $\|N_4\|<1$, it follows that $|x_2|<1$ and $|x_4|<1.$
It remains to check the joint spectrum $\sigma(\textbf{N})$ is contained in $K.$		

Let $(x_1,\dots,x_7)\in \sigma(\textbf{N}).$ Since \[\{((N_1 - z_2N_3)(I - z_2N_2)^{-1}, (N_4 - z_2N_6)(I - z_2N_2)^{-1}, (N_5 - z_2N_7)(I - z_2N_2)^{-1}) : |z_2| = 1\}\]  and  \[\{((N_1 - z_3N_5)(I - z_3N_4)^{-1},  (N_2 - z_3N_6)(I - z_3N_4)^{-1}, (N_3 - z_3N_7)(I - z_3N_4)^{-1}) : |z_3| = 1\}\] are a commuting family of tetrablock unitaries, by [Theorem $5.4$,\cite{Bhattacharyya}], the Taylor joint spectrum of \[\{((N_1 - z_2N_3)(I - z_2N_2)^{-1}, (N_4 - z_2N_6)(I - z_2N_2)^{-1}, (N_5 - z_2N_7)(I - z_2N_2)^{-1}) \}\]  and  \[\{((N_1 - z_3N_5)(I - z_3N_4)^{-1},  (N_2 - z_3N_6)(I - z_3N_4)^{-1}, (N_3 - z_3N_7)(I - z_3N_4)^{-1}) \}\] are contained in $b\Gamma_{E(2;2;1,1)},$  for every $z_2,z_3\in \mathbb T$.  Thus, from the spectral mapping theorem, we have
$$ \tilde{\textbf{z}}^{(z_3)}\in b\Gamma_{E(2;2;1,1)}~\text{and}~\tilde{\textbf{y}}^{(z_2)}\in b\Gamma_{E(2;2;1,1)}~\text{for all}~z_2,z_3\in \mathbb{T}~\text{with}~|x_4|<1 ~\text{and}~|x_2|<1.$$
By Theorem \ref{relation bw}, we conclude that  $(x_1,\dots,x_7)\in K$. This shows that $\textbf{N} = (N_1, \dots, N_7)$ is a $\Gamma_{E(3; 3; 1, 1, 1)}$-unitary. 

 $(5)^{\prime} \Rightarrow (1)$: Suppose that $N_2$ is unitary and \begin{equation}\label{e100}\{((N_1 - z_2N_3)(I - z_2N_2)^{-1}, (N_4 - z_2N_6)(I - z_2N_2)^{-1}, (N_5 - z_2N_7)(I - z_2N_2)^{-1}) : |z_2| < 1\}\end{equation} is a commuting family of tetrablock unitaries. We first show that $N_i$'s are normal operator. 
For $z_2\in \mathbb{D}$, it implies from  \eqref{e100} and
[Theorem $5.4$,\cite{Bhattacharyya}] that 
\begin{equation}\label{11N-1-N-3}
\begin{aligned}
(I-\bar{z}_2N^*_2)(N_1-z_2N_3)&=(N_4^*-\bar{z}_2N_6^*)(N_5-z_2N_7),\\
(I-\bar{z}_2N^*_2)(N_4-z_2N_6)&=(N_1^*-\bar{z}_2N_3^*)(N_5-z_2N_7),\\
(N_5^*-\bar{z}_2N_7^*)(N_5-z_2N_7)&=(I-\bar{z}_2N_2^*)(I-z_2N_2),\\
(N_5-z_2N_7)(N_5^*-\bar{z}_2N_7^*)&=(I-z_2N_2)(I-\bar{z}_2N_2^*).
\end{aligned}
\end{equation}	
Therefore, from \eqref{11N-1-N-3}, we get
\begin{equation}\label{e31}
\begin{aligned}
N_1=N_4^*N_5, N_2^*N_3=N_6^*N_7, N_3=N_4^*N_7 ~{\rm{and}}~N_2^*N_1=N_6^*N_5,
\end{aligned}
\end{equation}
\begin{equation}\label{e41}
\begin{aligned}
N_4=N_1^*N_5, N_2^*N_6=N_3^*N_7, N_6=N_1^*N_7 ~{\rm{and}}~N_2^*N_4=N_3^*N_5,
\end{aligned}
\end{equation}
\begin{equation}\label{e51}
\begin{aligned}
N_7^*N_7=N_2^*N_2, N_5^*N_5=I,N_2^*=N_7^*N_5,
\end{aligned}
\end{equation}
\begin{equation}\label{e61}
\begin{aligned}
N_7N_7^*=N_2N_2^*, N_5N_5^*=I,N_2^*=N_5N_7^*
\end{aligned}
\end{equation}
Since $N_2$ is unitary, it follows from \eqref{e51} and \eqref{e61} that $N_5$ and $N_7$ are also unitaries. Furthermore, because $N_7$ is unitary, it indicates from \eqref{e31}, \eqref{e41}, \eqref{e51} and \eqref{e61} that $N_1=N_6^*N_7,N_4=N_3^*N_7$ and  $N_5=N^*_2N_7.$ By using the commutativity of $N_1,N_3,N_4$ and $N_6,$ we conclude that $N_1,N_3,N_4$ and $N_6$ are normal operators.
Let $\mathcal A$ represent the $C^*$-algebra generated by the commuting normal operators $I, N_1, \dots, N_7$. By the Gelfand-Naimark theorem, the commutative $C^*$-algebra $\mathcal A$ is isometrically $^*$-isomorphic to $C(\sigma(\textbf{N}))$ via the Gelfand map. The \textit{Gelfand map} sends  $N_i$ to the coordinate function $x_i$ for $i = 1, \dots, 7$.  Since $N_2$ is unitary, it implies that $|x_2|=1.$ In order to complete the proof, it remains to check the joint spectrum $\sigma(\textbf{N})$ is contained in $K.$		

Let $(x_1,\dots,x_7)\in \sigma(\textbf{N}).$ Since \[\{((N_1 - z_2N_3)(I - z_2N_2)^{-1}, (N_4 - z_2N_6)(I - z_2N_2)^{-1}, (N_5 - z_2N_7)(I - z_2N_2)^{-1}) : |z_2| < 1\}\] is a commuting family of tetrablock unitary, by [Theorem $5.4$,\cite{Bhattacharyya}], the Taylor joint spectrum of \[\{((N_1 - z_2N_3)(I - z_2N_2)^{-1}, (N_4 - z_2N_6)(I - z_2N_2)^{-1}, (N_5 - z_2N_7)(I - z_2N_2)^{-1}) \}\]   is contained in $b\Gamma_{E(2;2;1,1)},$  for every $z_2\in \mathbb D$.  Thus, from the spectral mapping theorem, we have
$$ \tilde{\textbf{z}}^{(z_3)}\in b\Gamma_{E(2;2;1,1)}~\text{for all}~z_2\in \mathbb{D}~\text{with}~|x_2|=1.$$
Hence, from Theorem \ref{relation bw}, we deduce that  $(x_1,\dots,x_7)\in K$. This shows that $\textbf{N} = (N_1, \dots, N_7)$ is a $\Gamma_{E(3; 3; 1, 1, 1)}$-unitary. Similarly, we can also show that  $(5)^{\prime \prime} \Rightarrow (1)$. This completes the proof.

		\end{proof}
As a consequence of the above Theorem, we state the following corollary. 
	
	\begin{cor}\label{prop-6}
		Let $\textbf{N} = (N_1, \dots, N_7)$ be a $7$-tuples of commuting bounded operators. Then the followings are equivalent:
		\begin{enumerate}
			\item $\textbf{N}$ is a $\Gamma_{E(3; 3; 1, 1, 1)}$-unitary.
			
			\item $(\omega N_1, N_2, \omega N_3, \omega N_4, \omega^2 N_5, \omega N_6, \omega^2 N_7)$ is a family of $\Gamma_{E(3; 3; 1, 1, 1)}$-unitaries for all $\omega \in \mathbb{T}.$
			
			\item[$(2^{'})$] $(\omega N_1, \omega N_2, \omega^2 N_3, N_4, \omega N_5, \omega N_6, \omega^2 N_7)$ is a family of $\Gamma_{E(3; 3; 1, 1, 1)}$-unitaries for all $\omega \in \mathbb{T}.$
			
			\item[$(2^{''})$] $(N_1, \omega N_2, \omega N_3, \omega N_4, \omega N_5, \omega^2 N_6, \omega^2 N_7)$ is a family of $\Gamma_{E(3; 3; 1, 1, 1)}$-unitaries for all $\omega \in \mathbb{T}.$
		\end{enumerate}
	\end{cor}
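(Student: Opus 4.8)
The plan is to deduce this directly from the characterization of $\Gamma_{E(3; 3; 1, 1, 1)}$-unitaries in Theorem \ref{thm-5}, specifically the equivalence $(1) \Leftrightarrow (4)$, combined with the rotational invariance of $\Gamma_{E(3; 3; 1, 1, 1)}$-contractions recorded in Proposition \ref{prop-1}. As in those results, I would prove only $(1) \Leftrightarrow (2)$, since $(1) \Leftrightarrow (2^{'})$ and $(1) \Leftrightarrow (2^{''})$ follow verbatim by invoking the corresponding rotations $(3^{'})$ and $(3^{''})$ of Proposition \ref{prop-1} in place of $(3)$. To set up, write $\textbf{N}^{\omega} = (\omega N_1, N_2, \omega N_3, \omega N_4, \omega^2 N_5, \omega N_6, \omega^2 N_7)$ and observe that, being scalar multiples of the commuting operators $N_1, \dots, N_7$, the tuple $\textbf{N}^{\omega}$ is again a commuting $7$-tuple of bounded operators, so Theorem \ref{thm-5} applies to it without change.

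For $(1) \Rightarrow (2)$, I would assume $\textbf{N}$ is a $\Gamma_{E(3; 3; 1, 1, 1)}$-unitary. By the equivalence $(1) \Leftrightarrow (4)$ of Theorem \ref{thm-5}, $N_7$ is unitary and $\textbf{N}$ is a $\Gamma_{E(3; 3; 1, 1, 1)}$-contraction. The equivalence $(1) \Leftrightarrow (3)$ of Proposition \ref{prop-1} then shows that $\textbf{N}^{\omega}$ is a $\Gamma_{E(3; 3; 1, 1, 1)}$-contraction for every $\omega \in \mathbb{T}$. The seventh coordinate of $\textbf{N}^{\omega}$ is $\omega^2 N_7$, and since $N_7$ is unitary and $|\omega^2| = 1$, the operator $\omega^2 N_7$ is again unitary. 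Applying the equivalence $(1) \Leftrightarrow (4)$ of Theorem \ref{thm-5} to the tuple $\textbf{N}^{\omega}$ now yields that $\textbf{N}^{\omega}$ is a $\Gamma_{E(3; 3; 1, 1, 1)}$-unitary for every $\omega \in \mathbb{T}$. The converse $(2) \Rightarrow (1)$ is immediate by specializing to $\omega = 1$, where $\textbf{N}^{1} = \textbf{N}$.

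I do not anticipate a genuine obstacle, since all technical content is carried by the previously established Theorem \ref{thm-5} and Proposition \ref{prop-1}; the only points demanding care are the bookkeeping that guarantees the seventh coordinate of $\textbf{N}^{\omega}$ is exactly $\omega^2 N_7$ (so that unitarity is preserved), and the matching of the rotations $(3^{'}), (3^{''})$ to the primed and double-primed statements. As a self-contained alternative that avoids the contraction machinery, one could argue purely spectrally: writing $\textbf{N}^{\omega} = \Phi_{\omega}(\textbf{N})$ for the polynomial map $\Phi_{\omega}(x_1, \dots, x_7) = (\omega x_1, x_2, \omega x_3, \omega x_4, \omega^2 x_5, \omega x_6, \omega^2 x_7)$, the polynomial spectral mapping theorem gives $\sigma(\textbf{N}^{\omega}) = \Phi_{\omega}(\sigma(\textbf{N}))$, while Lemma \ref{lem} gives $\Phi_{\omega}(K) = K$. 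Since normality of $\textbf{N}^{\omega}$ is clear, this forces $\sigma(\textbf{N}^{\omega}) \subseteq K$, i.e. $\textbf{N}^{\omega}$ is a $\Gamma_{E(3; 3; 1, 1, 1)}$-unitary, and the converse again follows by taking $\omega = 1$.
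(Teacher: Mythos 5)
Your proof is correct and is essentially the derivation the paper intends: the paper states this corollary without proof, as an immediate consequence of Theorem \ref{thm-5}, and your argument---combining the equivalence $(1)\Leftrightarrow(4)$ of Theorem \ref{thm-5} with the rotational invariance of $\Gamma_{E(3;3;1,1,1)}$-contractions from Proposition \ref{prop-1} and the observation that the seventh coordinate $\omega^{2}N_{7}$ remains unitary---is the natural way to fill in that omitted argument. Your alternative spectral route, using the polynomial spectral mapping theorem together with the invariance $\Phi_{\omega}(K)=K$ from Lemma \ref{lem} (which the paper records immediately before Theorem \ref{thm-5}, evidently for this purpose), is equally valid and works directly from the definition of a $\Gamma_{E(3;3;1,1,1)}$-unitary.
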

\begin{thm}\label{unita}
Let  $\textbf{U} = ((U_{ij}))$ be a $3 \times 3$ unitary block operator matrix, where $U_{ij}$ are commuting normal operators. Set 			\small{\begin{equation*}
				\begin{aligned}
					\textbf{N} &= (N_1=U_{11}, N_2=U_{22}, N_3=U_{11}U_{22} - U_{12}U_{21}, N_4=U_{33}, N_5=U_{11}U_{33} - U_{13}U_{31},\\&\,\,\,\,\,\,N_6=U_{22}U_{33} - U_{23}U_{32}, N_7=
					 U_{11}(U_{22}U_{33} - U_{23}U_{32}) - U_{12}(U_{21}U_{33} - U_{31}U_{23}) + U_{13}(U_{21}U_{32} - U_{31}U_{22})).
				\end{aligned}
\end{equation*}}
Then $\textbf{N}$ is a $\Gamma_{E(3; 3; 1, 1, 1)}$-unitary.
\end{thm}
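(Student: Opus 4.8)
The plan is to recognize the tuple $\textbf{N}$ as $\pi_{E(3;3;1,1,1)}$ applied to the unitary block matrix $\textbf{U}=((U_{ij}))$, and then to pass to the commutative $C^*$-algebra generated by the entries so that Theorem~\ref{distinguish} can be invoked pointwise on the maximal ideal space. Concretely, since each $N_k$ is a fixed polynomial in the pairwise commuting normal operators $\{U_{ij}\}$ --- indeed $N_k$ is literally the $k$-th coordinate of the polynomial map $\pi_{E(3;3;1,1,1)}$ evaluated at the matrix $((U_{ij}))$ (in particular $N_7$ is the cofactor-expansion determinant of $\textbf{U}$, which is unambiguous because the entries commute) --- the operators $N_1,\dots,N_7$ form a commuting family of normal operators. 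Thus the normality requirement in the definition of a $\Gamma_{E(3;3;1,1,1)}$-unitary (Definition~\ref{def-1}) is automatic, and it remains only to locate the Taylor joint spectrum $\sigma(\textbf{N})$ inside $K$.

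First I would let $\mathcal A$ be the unital commutative $C^*$-algebra generated by $\{U_{ij} : 1 \le i,j \le 3\}$ and apply the Gelfand--Naimark theorem to identify $\mathcal A \cong C(X)$, where $X$ is the maximal ideal space; write $\hat U_{ij}$ for the Gelfand transform of $U_{ij}$. The key observation is that for every character $\psi \in X$ the scalar matrix $U(\psi) := (\hat U_{ij}(\psi))$ is a genuine $3\times 3$ unitary matrix: applying the $^*$-homomorphism $\psi$ to the block-operator identities $\sum_k U_{ki}^* U_{kj} = \delta_{ij} I$ and $\sum_k U_{ik} U_{jk}^* = \delta_{ij} I$ (which encode $\textbf{U}^*\textbf{U} = \textbf{U}\textbf{U}^* = I$) yields $\sum_k \overline{\hat U_{ki}(\psi)}\,\hat U_{kj}(\psi) = \delta_{ij}$ together with the analogous row relation, so $U(\psi) \in \mathcal U(3)$.

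Next, because $N_k$ and the $k$-th component of $\pi_{E(3;3;1,1,1)}$ are the same polynomial expression, the Gelfand transform satisfies $(\hat N_1(\psi),\dots,\hat N_7(\psi)) = \pi_{E(3;3;1,1,1)}(U(\psi))$ for each $\psi \in X$. Theorem~\ref{distinguish} then gives $\pi_{E(3;3;1,1,1)}(U(\psi)) \in K$ for every $\psi$, so the joint Gelfand transform of $\textbf{N}$ maps $X$ into $K$. Since $N_1,\dots,N_7$ are commuting normal operators, their Taylor joint spectrum coincides with the image of the characters of the commutative $C^*$-algebra they generate, and this image is contained in the range of $\psi \mapsto (\hat N_k(\psi))$, whence $\sigma(\textbf{N}) \subseteq K$. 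By Definition~\ref{def-1} this makes $\textbf{N}$ a $\Gamma_{E(3;3;1,1,1)}$-unitary. Equivalently, evaluating the defining relations $x_1 = \bar x_6 x_7$, $x_2 = \bar x_5 x_7$, $x_4 = \bar x_3 x_7$, $|x_7| = 1$ of $K$ through the $^*$-isomorphism $\mathcal A \cong C(X)$ reproduces exactly condition~(2) of Theorem~\ref{thm-5}, namely $N_i = N_{7-i}^* N_7$ with each $N_i$ a contraction and $N_7$ unitary.

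The one genuinely delicate point is the spectral bookkeeping in the last step: one must ensure that the Taylor joint spectrum of the subfamily $N_1,\dots,N_7$ is captured by the characters of $\mathcal A$ and not merely by those of $C^*(N_1,\dots,N_7)$. This is handled by the surjectivity of the restriction map $X \to \widehat{C^*(N_1,\dots,N_7)}$ induced by the inclusion of commutative $C^*$-algebras, together with the standard identification of the Taylor spectrum of a commuting normal tuple with its Gelfand spectrum. Everything else --- verifying the polynomial identities and transferring the scalar relations on $K$ back to operator identities via the $^*$-isomorphism --- is routine.
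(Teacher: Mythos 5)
Your proof is correct, but it takes a genuinely different route from the paper's. The paper proves Theorem \ref{unita} by a self-contained operator computation: it writes out the entrywise relations coming from $\textbf{U}^*\textbf{U}=\textbf{U}\textbf{U}^*=I$ and, via cofactor manipulations, verifies directly that $N_7^*N_7=N_7N_7^*=I$ and that $N_1^*N_7=N_6$, $N_2^*N_7=N_5$, $N_3^*N_7=N_4$, which is exactly condition (2) of Theorem \ref{thm-5}; no spectral theory beyond that theorem is used. You instead pass to the commutative unital $C^*$-algebra $\mathcal A$ generated by the $U_{ij}$ (its commutativity uses Fuglede--Putnam, which the paper's computations also use implicitly when moving adjoints past other entries), note that every character sends $\textbf{U}$ to a scalar unitary matrix, and pull the scalar inclusion $\pi_{E(3;3;1,1,1)}(\mathcal U(3))\subseteq K$ of Theorem \ref{distinguish} back through the Gelfand transform to conclude $\sigma(\textbf{N})\subseteq K$, thereby checking Definition \ref{def-1} directly. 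Your argument is shorter and recycles the known scalar result, at the cost of the spectral bookkeeping; the paper's argument yields explicit operator identities and stays entirely algebraic. One simplification to your ``delicate point'': you do not need equality of the Taylor spectrum with the Gelfand spectrum of $C^*(N_1,\dots,N_7,I)$, nor surjectivity of the restriction map on characters. The one-sided containment $\sigma_T(\textbf{N})\subseteq\sigma_{\mathcal A}(\textbf{N})=\{(\psi(N_1),\dots,\psi(N_7)):\psi\in\hat{\mathcal A}\}$ holds for the Taylor spectrum relative to any commutative unital Banach algebra containing the tuple (nonsingularity of $\sum_i a_i(N_i-\lambda_i)=I$ in $\mathcal A$ forces exactness of the Koszul complex), and that containment alone places $\sigma(\textbf{N})$ inside $K$.
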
	
\begin{proof}
By Theorem \ref{thm-5}, it is sufficient to show that $N_7$ is unitary and $N_1=N_6^*N_7, N_2=N_5^*N_7~{\rm{and}}~N_3=N_4^*N_7.$  We first observe that since $\textbf{U}$ is a unitary, it follows that  $$\left( \begin{smallmatrix} I & 0 &0\\0 & I &0\\0 & 0 & I\end{smallmatrix}\right)=\textbf{U}^*\textbf{U}=\textbf{U}\textbf{U}^*$$  Consequently, we have
		\begin{equation}\label{Eq}
			\begin{aligned}
				&U^*_{11}U_{11} + U^*_{21}U_{21} + U^*_{31}U_{31} = I,\\
				&U^*_{12}U_{12} + U^*_{22}U_{22} + U^*_{32}U_{32} = I,\\
				&U^*_{13}U_{13} + U^*_{23}U_{23} + U^*_{33}U_{33}= I,
			\end{aligned}
		\end{equation}
		and
		\begin{equation}\label{Eq'}
			\begin{aligned}
				&U^*_{11}U_{12} + U^*_{21}U_{22} + U^*_{31}U_{32} = 0,\\
				&U^*_{11}U_{13} + U^*_{21}U_{23} + U^*_{31}U_{33} = 0,\\
				&U^*_{12}U_{13} + U^*_{22}U_{23} + U^*_{32}U_{33} = 0.
			\end{aligned}
		\end{equation}
and 	\begin{equation}\label{Eq1}
			\begin{aligned}
				&U_{11} U^*_{11}+ U_{12}U^*_{12} + U_{13}U^*_{13} = I,\\
				&U_{21}U^*_{21} + U_{22}U^*_{22} + U_{23}U^*_{23} = I,\\
				&U_{31}U^*_{31} + U_{32}U^*_{32} + U_{33}U^*_{33}= I,
			\end{aligned}
		\end{equation}
		and
		\begin{equation}\label{Eq'1}
			\begin{aligned}
				&U_{11}U^*_{21} + U_{12}U^*_{22} + U_{13}U^*_{23} = 0,\\
				&U_{11}U^*_{31} + U_{12}U^*_{32} + U_{13}U^*_{33} = 0,\\
				&U_{21}U^*_{31} + U_{22}U^*_{32} + U_{23}U^*_{33} = 0.
			\end{aligned}
		\end{equation}
We first show that $N_7$ is unitary. Set $A=(U_{21}U_{33} - U_{31}U_{23})$ and $B=(U_{21}U_{32} - U_{31}U_{22})).$ Note that 
\begin{align}\label{unitaryN7}
N^*_7N_7\nonumber&=(N^*_6U_{11}^*-A^*U_{12}^*+B^*U_{13}^*)(U_{11}N_6-U_{12}A+U_{13}B)\\\nonumber&=N^*_6U_{11}^*(U_{11}N_6-U_{12}A+U_{13}B)-A^*U_{12}^*(U_{11}N_6-U_{12}A+U_{13}B)\\&+B^*U_{13}^*(U_{11}N_6-U_{12}A+U_{13}B)
\end{align}
 By using \eqref{Eq}, \eqref{Eq'}, \eqref{Eq1} and \eqref{Eq'1}, it follows that
\begin{equation}
\begin{aligned} 
N^*_6U_{11}^*(U_{11}N_6-U_{12}A+U_{13}B)&=N^*_6N_6\\
A^*U_{12}^*(U_{11}N_6-U_{12}A+U_{13}B)&=-A^*A\\
B^*U_{13}^*(U_{11}N_6-U_{12}A+U_{13}B)&=B^*B
\end{aligned}
\end{equation}
From  \eqref{Eq}, \eqref{Eq'}, \eqref{Eq1} and \eqref{Eq'1}, we also notice that 
\begin{equation}\label{identity}
N^*_6N_6+A^*A+B^*B=I.
\end{equation}
Thus, from \eqref{unitaryN7} we conclude that $N^*_7N_7=I.$ Similarly, we can also show that $N_7N_7^*=I.$ We now show that $N_6=N_1^*N_7.$ Note that 
\begin{align}\label{n-1n-7}
N^*_1N_7&\nonumber=U_{11}^*U_{11}N_6+U_{11}^*U_{12}A+U_{11}^*U_{13}B\\&=N_6-U_{21}^*U_{21}N_6-U_{31}^*U_{31}N_6+U_{11}^*U_{12}A+U_{11}^*U_{13}B
\end{align}
By  \eqref{Eq}, \eqref{Eq'}, \eqref{Eq1} and \eqref{Eq'1}, we deduce that 
\begin{equation}\label{n6=n_1n_7}
-U_{21}^*U_{21}N_6-U_{31}^*U_{31}N_6+U_{11}^*U_{12}A+U_{11}^*U_{13}B=0
\end{equation}
Thus, from \eqref{n-1n-7} and \eqref{n6=n_1n_7}, we conclude that $N^*_1N_7=N_6.$ Similarly, we can also show that $N_5=N_2^*N_7$ and $N_4=N_3^*N_7.$ This completes the proof.

\end{proof}

%
%
%
	
We now study the various characterization of $\Gamma_{E(3; 2; 1, 2)}$-unitary. The following theorem plays an important role for characterizing  $\Gamma_{E(3; 2; 1, 2)}$-unitary.
	
\begin{thm}\label{thm-6}
		Let $\tilde{\textbf{x}} = (x_1, x_2, x_3, y_1, y_2) $ be in $\mathbb{C}^5$. Then the following are equivalent:
		\begin{enumerate}
			\item $\tilde{\textbf{x}} \in K_1 = \{\tilde{\textbf{x}} \in \mathbb{C}^5 : x_1 = \overline{y}_2 x_3, x_2 = \overline{y}_1 x_3, |x_3| = 1 \}.$
			
			\item 
			$\begin{cases}
				\Big(\frac{y_1 - zx_2}{1 - zx_1}, \frac{y_2 - zx_3}{1 - zx_1}\Big) \in b\Gamma_{E(2; 1; 2)} &\text{for all}~ z \in \mathbb{D} ~\text{with}~  |x_1| = 1\\
				\Big(\frac{y_1 - zx_2}{1 - zx_1}, \frac{y_2 - zx_3}{1 - zx_1}\Big) \in b\Gamma_{E(2; 1; 2)} &\text{for all}~ z \in \mathbb{T}~\text{with}~   |x_1| < 1 ~\text{and}~|x_3|=1.
			\end{cases}$
		\end{enumerate}
	\end{thm}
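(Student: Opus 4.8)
The plan is to reduce everything to the slice map of Proposition~\ref{bhch1} together with the classical description of the distinguished boundary of the symmetrized bidisc. Recall that $b\Gamma_{E(2;1;2)} = \{(s,p) : |p|=1,\ s = \overline{s}p,\ |s|\le 2\}$ and, crucially, that a point $(s,p)$ of $\Gamma_{E(2;1;2)}$ whose second coordinate is unimodular automatically lies in $b\Gamma_{E(2;1;2)}$. For $\tilde{\textbf{x}} = (x_1,x_2,x_3,y_1,y_2)$ and $z$ with $1 - zx_1 \ne 0$, write $\Phi_z(\tilde{\textbf{x}}) = (s(z),p(z))$ where $s(z) = \frac{y_1 - zx_2}{1-zx_1}$ and $p(z) = \frac{y_2 - zx_3}{1-zx_1}$. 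By Proposition~\ref{bhch1}$(4)$, $\tilde{\textbf{x}}\in\Gamma_{E(3;2;1,2)}$ if and only if $\Phi_z(\tilde{\textbf{x}}) \in \Gamma_{E(2;1;2)}$ for all $z\in\mathbb{D}$; this is the bridge between the two domains, and note that $K_1$ carries the standing requirement $\tilde{\textbf{x}}\in\Gamma_{E(3;2;1,2)}$.

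For $(1)\Rightarrow(2)$, assume $\tilde{\textbf{x}}\in K_1$, so $x_1 = \overline{y}_2x_3$, $x_2 = \overline{y}_1x_3$, $|x_3|=1$ and $\tilde{\textbf{x}}\in\Gamma_{E(3;2;1,2)}$. First I would substitute $x_1 = \overline{y}_2 x_3$ and $|x_3|=1$ into $|p(z)|^2$ and simplify to the identity $|p(z)|^2 - 1 = (|y_2|^2-1)(1-|z|^2)/|1-zx_1|^2$, using $|x_1|=|y_2|$. Hence $|p(z)|=1$ for all $z\in\mathbb{D}$ when $|x_1|=1$, and for all $z\in\mathbb{T}$ when $|x_1|<1$. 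Since Proposition~\ref{bhch1}$(4)$ already places $\Phi_z(\tilde{\textbf{x}})$ in $\Gamma_{E(2;1;2)}$ for $z\in\mathbb{D}$ (and, by continuity of $\Phi_z$ up to $\mathbb{T}$ when $|x_1|<1$ and closedness of $\Gamma_{E(2;1;2)}$, also for $z\in\mathbb{T}$), the observation that a point of $\Gamma_{E(2;1;2)}$ with unimodular second coordinate belongs to $b\Gamma_{E(2;1;2)}$ yields exactly the two cases of condition $(2)$.

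For $(2)\Rightarrow(1)$, I would read off the algebraic relations from the two defining conditions of $b\Gamma_{E(2;1;2)}$. Writing $|p(z)|=1$ as $|y_2-zx_3|^2 = |1-zx_1|^2$ and comparing coefficients in $z$ and $\overline{z}$ gives $x_1 = \overline{y}_2 x_3$ (in the case $|x_1|=1$, the stronger statement that $p$ is a unimodular constant forces $x_3 = y_2x_1$ and $|x_3|=1$; in the case $|x_1|<1$ the value $|x_3|=1$ is part of the hypothesis). The condition $s(z) = \overline{s(z)}p(z)$, after clearing $1-zx_1$ and its conjugate and matching the top-degree coefficient in $z$, gives $x_2 = \overline{y}_1 x_3$, and one checks the remaining coefficients are then automatically consistent. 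This establishes the three algebraic relations defining $K_1$; to finish I must verify $\tilde{\textbf{x}}\in\Gamma_{E(3;2;1,2)}$. When $|x_1|=1$ this is immediate from Proposition~\ref{bhch1}$(4)$, since $\Phi_z(\tilde{\textbf{x}})\in b\Gamma_{E(2;1;2)}\subseteq\Gamma_{E(2;1;2)}$ for every $z\in\mathbb{D}$.

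The main obstacle is the $|x_1|<1$ branch of $(2)\Rightarrow(1)$: there the hypothesis controls the slices only on the torus $\mathbb{T}$, whereas membership in $\Gamma_{E(3;2;1,2)}$ is an interior condition over $z\in\mathbb{D}$. I would bridge this gap by noting that, since $|x_1|<1$, the map $z\mapsto\Phi_z(\tilde{\textbf{x}})$ is holomorphic on a neighbourhood of $\overline{\mathbb{D}}$ and sends $\mathbb{T}$ into $\Gamma_{E(2;1;2)}$; applying the maximum modulus principle to $q\circ\Phi_{(\cdot)}$ for polynomials $q$ and invoking the polynomial convexity of $\Gamma_{E(2;1;2)}$ forces the interior values $\Phi_z(\tilde{\textbf{x}})$, $z\in\mathbb{D}$, into $\Gamma_{E(2;1;2)}$ as well, whence $\tilde{\textbf{x}}\in\Gamma_{E(3;2;1,2)}$ by Proposition~\ref{bhch1}$(4)$. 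The coefficient-matching that extracts $x_2=\overline{y}_1x_3$ from $s(z)=\overline{s(z)}p(z)$ is the other delicate step, as one must confirm that the lower-order coefficients impose no constraint beyond the three defining relations of $K_1$.
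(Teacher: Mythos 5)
Your proof is correct, and it is not the argument the paper gives; in two places it is genuinely tighter. The paper keeps your case split ($|x_1|<1$ versus $|x_1|=1$) and the same slice map, but in $(1)\Rightarrow(2)$ it verifies both defining identities of $b\Gamma_{E(2;1;2)}$, namely $|p(z)|=1$ and $s(z)=\overline{s(z)}\,p(z)$, by direct algebraic manipulation from $x_1=\overline{y}_2x_3$, $x_2=\overline{y}_1x_3$, $|x_3|=1$, and in $(2)\Rightarrow(1)$ it simply reads the two relations back off those identities (your coefficient matching, including the check that the lower-order coefficients impose nothing new, is exactly the detail the paper compresses into ``we deduce''). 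What you do differently: in the forward direction you prove only the modulus identity $|p(z)|^2-1=(|y_2|^2-1)(1-|z|^2)/|1-zx_1|^2$ and then invoke Proposition~\ref{bhch1}(4) together with the fact that a point of $\Gamma_{E(2;1;2)}$ with unimodular second coordinate automatically lies in $b\Gamma_{E(2;1;2)}$; in the converse you add the maximum-principle/polynomial-convexity step to recover $\tilde{\textbf{x}}\in\Gamma_{E(3;2;1,2)}$. Both additions buy something real. The Agler--Young description of $b\Gamma_{E(2;1;2)}$ also demands $|s|\le 2$; the paper cites the characterization without that condition and its computation never checks it, whereas your route gets it for free from membership in $\Gamma_{E(2;1;2)}$. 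Moreover, $K_1$ as defined in the introduction is a subset of $\Gamma_{E(3;2;1,2)}$, so the converse genuinely requires the membership argument you supply and the paper omits; indeed, under the purely algebraic reading of $K_1$ used in the theorem's own display, the forward implication would fail outright, e.g. $(x_1,x_2,x_3,y_1,y_2)=(0,10,1,10,0)$ satisfies the three algebraic relations, yet its slice at $z=-1$ is $(20,1)$, which has $|s|>2$ and hence is not in $b\Gamma_{E(2;1;2)}$. The only price of your route is the standing hypothesis $\tilde{\textbf{x}}\in\Gamma_{E(3;2;1,2)}$ in $(1)$, which you state explicitly and which is the reading that makes the theorem true.
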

	
	\begin{proof}
We first show that $(1) \Rightarrow (2).$ In order to show $(1) \Rightarrow (2),$ we take into consideration the following cases:	
\newline\textbf{Case $1$:} We assume that  $|x_1| < 1.$  Then  $1 - zx_1 \neq 0$ for any $z \in \mathbb{T}.$  By [Theorem $2.4$,\cite{JAgler}], it follows that  a point $(s,p) \in b\Gamma_{E(2; 1; 2)}$ if and only if $s=\bar{s}p$ and $|p|=1.$ Because $\tilde{\textbf{x}} \in K_1$, we note that 		\begin{equation*}
			\begin{aligned}
				\Big|\frac{y_2 - zx_3}{1 - zx_1}\Big|
				&=\Big |\frac{x_3(\bar{x}_1-z)}{1 - zx_1}\Big| \\&=1
							\end{aligned}\end{equation*}
							and
\begin{equation*}
	\begin{aligned}
				\overline{\Big(\frac{y_2 - zx_3}{1 - zx_1}\Big)}\frac{y_1 - zx_2}{1 - zx_1}
				&= \frac{\overline{y}_2y_1 - \overline{z}\overline{x}_3y_1 - z\overline{y}_2x_2 + |z|^2\overline{x}_3x_2}{(1 - zx_1)(1 - \overline{z}\overline{x}_1)}\\
				&= \frac{x_1\overline{x}_2 - \overline{z}\overline{x}_2 - zx_1\overline{x}_3\overline{y}_1x_3+ |z|^2\overline{y}_1}{(1 - zx_1)(1 - \overline{z}\overline{x}_1)}\\
				&= \frac{\overline{y}_1 - \overline{z}\overline{x}_2 - zx_1\overline{y}_1 + |z|^2x_1\overline{x}_2}{(1 - zx_1)(1 - \overline{z}\overline{x}_1)}\\
				&= \frac{(1 - zx_1)(\overline{y}_1 - \overline{z}\overline{x}_2)}{(1 - zx_1)(1 - \overline{z}\overline{x}_1)}\\
				&= \overline{\Big(\frac{y_1 - zx_2}{1 - zx_1}\Big)}
			\end{aligned}
		\end{equation*}
This shows that  $\Big(\frac{y_1 - zx_2}{1 - zx_1}, \frac{y_2 - zx_3}{1 - zx_1}\Big) \in b\Gamma_{E(2; 1; 2)}$  for all $|z| = 1$ with for $|x_1| < 1$.
\newline \textbf{Case $2:$} Assume that  $|x_1| = 1$ then for any $z \in \mathbb{D}, 1 - zx_1 \ne 0$. By using the similar argument as in  Case $1$, we conclude that $\Big(\frac{y_1 - zx_2}{1 - zx_1}, \frac{y_2 - zx_3}{1 - zx_1}\Big) \in b\Gamma_{E(2; 1; 2)}$  for all $ z \in \mathbb{D} $ with $ |x_1| = 1$.

We now demonstrate that $(2) \Rightarrow (1)$. To show this, we need to analyse the following cases:
\newline\textbf{Case $1:$} Assume that $\Big(\frac{y_1 - zx_2}{1 - zx_1}, \frac{y_2 - zx_3}{1 - zx_1}\Big) \in b\Gamma_{E(2; 1; 2)}$ for all $z\in \mathbb T$ with $|x_1| < 1$  and $|x_3|=1.$ By [Theorem $2.4$,\cite{JAgler}], we have  \begin{equation}
			\begin{aligned}\label{gamma}
				\overline{\Big(\frac{y_2 - zx_3}{1 - zx_1}\Big)}\frac{y_1 - zx_2}{1 - zx_1} = \overline{\Big(\frac{y_1 - zx_2}{1 - zx_1}\Big)}~{\rm{and}}~1 = \Big|\frac{y_2 - zx_3}{1 - zx_1}\Big|~{\rm{for~ all}} ~z \in \mathbb T.
			\end{aligned}
		\end{equation}
From \eqref{gamma},  we deduce that 
$x_1 = \overline{y}_2 x_3, x_2 = \overline{y}_1 x_3.$
\newline \textbf{Case $2:$} Suppose that  $|x_1| = 1,$ then for any $z \in \mathbb{D}, 1 - zx_1 \ne 0$. By using the similar argument as in  Case $1$, we conclude that $\tilde{\textbf{x}} \in K_1.$ This completes the proof.
%
%
	\end{proof}
	
	The proof of the following 	Proposition is straightforward. Therefore, we skip the proof.
	\begin{prop}\label{cor-3}
		Let $\textbf{x} = (x_1, x_2, x_3, y_1, y_2) \in \mathbb{C}^5$. Then the following are equivalent:
		\begin{enumerate}
			\item $\textbf{x} \in K_1$.
			
			\item For all $\omega \in \mathbb{T}, (x_1, \omega x_2, \omega^2x_3, \omega y_1, \omega^2 y_2) \in K_1$.
		\end{enumerate}
	\end{prop}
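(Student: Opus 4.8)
The plan is to treat the defining conditions of $K_1$ one at a time. Recall that $\tilde{\textbf{x}} = (x_1,x_2,x_3,y_1,y_2) \in K_1$ means $\tilde{\textbf{x}} \in \Gamma_{E(3;2;1,2)}$ together with the three scalar relations $x_1 = \overline{y}_2 x_3$, $x_2 = \overline{y}_1 x_3$ and $|x_3| = 1$. Writing $\tilde{\textbf{x}}_\omega = (x_1, \omega x_2, \omega^2 x_3, \omega y_1, \omega^2 y_2)$, I would show that each of these four requirements holds for $\tilde{\textbf{x}}$ exactly when it holds for $\tilde{\textbf{x}}_\omega$ for every $\omega \in \mathbb{T}$, from which the equivalence follows.

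For the inclusion $\tilde{\textbf{x}}_\omega \in \Gamma_{E(3;2;1,2)}$ I would simply invoke the rotation-invariance lemma for $\Gamma_{E(3;2;1,2)}$ recorded just before Proposition \ref{prop-2} (the closure analogue of Lemma \ref{lemm1}), which asserts that $\tilde{\textbf{x}} \in \Gamma_{E(3;2;1,2)}$ if and only if $\tilde{\textbf{x}}_\omega \in \Gamma_{E(3;2;1,2)}$ for all $\omega \in \mathbb{T}$. This disposes of the $\Gamma$-membership in both directions at once.

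For the three scalar identities the verification is a one-line unimodularity computation. The first coordinate of $\tilde{\textbf{x}}_\omega$ is $x_1$, and since $|\omega| = 1$,
\[
\overline{(\omega^2 y_2)}\,(\omega^2 x_3) = \overline{\omega}^{\,2}\omega^2\,\overline{y}_2 x_3 = \overline{y}_2 x_3,
\]
so $x_1 = \overline{y}_2 x_3$ is equivalent to the corresponding relation for $\tilde{\textbf{x}}_\omega$. Likewise
\[
\overline{(\omega y_1)}\,(\omega^2 x_3) = \omega\,\overline{y}_1 x_3,
\]
so the relation $\omega x_2 = \overline{(\omega y_1)}(\omega^2 x_3)$ for $\tilde{\textbf{x}}_\omega$ reduces, after dividing by $\omega$, to $x_2 = \overline{y}_1 x_3$; and finally $|\omega^2 x_3| = |x_3|$ shows that $|x_3| = 1$ is invariant. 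Combining these with the $\Gamma$-inclusion yields $\tilde{\textbf{x}} \in K_1$ if and only if $\tilde{\textbf{x}}_\omega \in K_1$ for all $\omega \in \mathbb{T}$, which is the implication $(1) \Rightarrow (2)$.

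The reverse implication $(2) \Rightarrow (1)$ requires no work: specializing $\omega = 1$ gives $\tilde{\textbf{x}}_1 = \tilde{\textbf{x}} \in K_1$. Hence there is no genuine obstacle here; the only nontrivial ingredient is the rotation-invariance of $\Gamma_{E(3;2;1,2)}$, and once it is cited the statement reduces to the elementary unimodularity bookkeeping above, in agreement with the paper's remark that the proof is straightforward.
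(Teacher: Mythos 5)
Your proposal is correct. The paper itself omits the proof of this proposition, declaring it straightforward, and your argument — the rotation-invariance of $\Gamma_{E(3;2;1,2)}$ under $(x_1,x_2,x_3,y_1,y_2)\mapsto(x_1,\omega x_2,\omega^2 x_3,\omega y_1,\omega^2 y_2)$ cited from the unnumbered lemma before Proposition \ref{prop-2}, the unimodular cancellation $\overline{\omega}^{\,2}\omega^2=1$ and $\overline{\omega}\,\omega^2=\omega$ in the three defining relations of $K_1$, and the specialization $\omega=1$ for the converse — is exactly the routine verification the authors had in mind.
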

	
The following theorem provides the characterization of $\Gamma_{E(3; 2; 1, 2)}$-unitaries. 

	\begin{thm}\label{thm-7}
		Let $\textbf{M} = (M_1, M_2, M_3, \tilde{M}_1, \tilde{M}_2)$ be a commuting  $5$-tuple of bounded operators on a Hilbert space $\mathcal{H}$. Then the following are equivalent:
		\begin{enumerate}
			\item $\textbf{M}$ is a $\Gamma_{E(3; 2; 1, 2)}$-unitary.
			
			\item $M_3$ is a unitary, $M_1, \frac{M_2}{2}, \frac{\tilde{M}_1}{2}~{\rm{and}}~\tilde{M}_2$ are contractions and $M_1 = \tilde{M}^*_2M_3, M_2 = \tilde{M}^*_1M_3$.
			
\item $(M_1, \tilde{M}_2, M_3), ( \frac{\tilde{M}_1}{2},\frac{M_2}{2},  M_3)$ and $(\frac{M_2}{2}, \frac{\tilde{M}_1}{2}, M_3)$ are $\Gamma_{E(2; 2; 1,1)}$-unitary.		
			
			\item $M_3$ is a unitary and $\textbf{M}$ is a $\Gamma_{E(3; 2; 1, 2)}$-contraction.
			
\item $||\tilde{M}_1|| < 2$ and $ \{ ((2M_1 - zM_2)(2I - z\tilde{M}_1)^{-1}, (\tilde{M}_1 - 2z\tilde{M}_2)(2I - z\tilde{M}_1)^{-1}, (M_2 - 2zM_3)(2I - z\tilde{M}_1)^{-1}) : |z| = 1 \}$ is a family of tetrablock unitaries.
				
\item[$(5^{\prime})$] $||\tilde{M}_1|| = 2$ and $ \{ ((2M_1 - zM_2)(2I - z\tilde{M}_1)^{-1}, (\tilde{M}_1 - 2z\tilde{M}_2)(2I - z\tilde{M}_1)^{-1}, (M_2 - 2zM_3)(2I - z\tilde{M}_1)^{-1}) : |z| < 1 \}$ is a family of tetrablock unitaries.
			\end{enumerate}
	\end{thm}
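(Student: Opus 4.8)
The plan is to mirror the argument used for $\Gamma_{E(3;3;1,1,1)}$-unitaries in Theorem~\ref{thm-5}, exploiting the maps $\pi_z$ into the tetrablock and Bhattacharyya's characterization of tetrablock unitaries [Theorem~$5.4$,~\cite{Bhattacharyya}]. I would organise the proof as $(1)\Leftrightarrow(2)\Leftrightarrow(3)$, $(1)\Rightarrow(4)\Rightarrow(3)$, $(4)\Rightarrow(5)$, $(4)\Rightarrow(5')$, and finally $(5)\Rightarrow(1)$, $(5')\Rightarrow(1)$. For $(1)\Rightarrow(2)$, I apply the spectral mapping theorem to the projection onto the third coordinate: since $\sigma(\textbf M)\subseteq K_1$ and $|x_3|=1$ on $K_1$, the normal operator $M_3$ has spectrum in $\mathbb T$ and is unitary. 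The Gelfand--Naimark theorem identifies the $C^*$-algebra generated by $I,M_1,\dots,\tilde M_2$ with $C(\sigma(\textbf M))$, and transporting the defining relations $x_1=\bar y_2x_3$, $x_2=\bar y_1x_3$ of $K_1$ through the Gelfand map yields $M_1=\tilde M_2^*M_3$ and $M_2=\tilde M_1^*M_3$; the contraction bounds follow from the norm estimates for the coordinate functions over $\Gamma_{E(3;2;1,2)}$ (cf. Lemma~\ref{lem-3}).

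For the converse $(2)\Rightarrow(1)$, I would first establish normality of every component: from $M_1=\tilde M_2^*M_3$ and commutativity one gets $\tilde M_2^*\tilde M_2M_3=\tilde M_2\tilde M_2^*M_3$, and cancelling the unitary $M_3$ shows $\tilde M_2$ is normal (and likewise $\tilde M_1$ from $M_2=\tilde M_1^*M_3$); Fuglede--Putnam then makes $M_1,M_2$ normal as products of doubly commuting normals. Gelfand--Naimark together with the transported relations and $|x_3|=1$ places $\sigma(\textbf M)$ in $K_1$. The equivalence $(2)\Leftrightarrow(3)$ is [Theorem~$5.4$,~\cite{Bhattacharyya}] applied to the three triples of Proposition~\ref{prop-41}, whose common third coordinate is the unitary $M_3$, and $(1)\Rightarrow(4)$ holds because a commuting normal tuple satisfies $\|p(\textbf M)\|=\sup_{\sigma(\textbf M)}|p|\le\|p\|_{\infty,\Gamma_{E(3;2;1,2)}}$, while $(4)\Rightarrow(3)$ combines Proposition~\ref{prop-41} with $M_3$ unitary and [Theorem~$5.4$,~\cite{Bhattacharyya}].

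The implications $(4)\Rightarrow(5)$ and $(4)\Rightarrow(5')$ use the family $\{(P^{(1)}_z,P^{(2)}_z,P^{(3)}_z)\}$: Remark~\ref{rem-5} already gives that this is a family of tetrablock contractions (on $\bar{\mathbb D}$ when $\|\tilde M_1\|<2$, on $\mathbb D$ otherwise). To upgrade to unitaries it suffices, by [Theorem~$5.4$,~\cite{Bhattacharyya}], to check that the distinguished coordinate $P^{(3)}_z=(M_2-2zM_3)(2I-z\tilde M_1)^{-1}$ is unitary. Writing $M_2=\tilde M_1^*M_3$ and using that $M_3$ is unitary and $\tilde M_1$ normal, a direct computation gives
\begin{equation*}
(M_2^*-2\bar zM_3^*)(M_2-2zM_3)=\tilde M_1^*\tilde M_1-2z\tilde M_1-2\bar z\tilde M_1^*+4|z|^2I,
\end{equation*}
which equals $(2I-\bar z\tilde M_1^*)(2I-z\tilde M_1)$ precisely when $|z|=1$, giving $(5)$, and for all $z$ once $\tilde M_1/2$ is unitary, giving $(5')$.

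The genuine work is in the reverse implications $(5)\Rightarrow(1)$ and $(5')\Rightarrow(1)$, which is where I expect the main obstacle. From the hypothesis that the family consists of tetrablock unitaries I would extract, via [Theorem~$5.4$,~\cite{Bhattacharyya}], a system of operator identities in $z$ (as in \eqref{e1}--\eqref{e10} of the proof of Theorem~\ref{thm-5}); equating coefficients of powers of $z$ should yield $M_2=\tilde M_1^*M_3$, $M_1=\tilde M_2^*M_3$, the normality of each component, and the unitarity of $M_3$ (and of $\tilde M_1/2$ in the $\|\tilde M_1\|=2$ case). The delicate points are the non-invertibility of $2I-z\tilde M_1$ on the unit circle when $\|\tilde M_1\|=2$ (forcing the restriction to $\mathbb D$ in $(5')$) and deducing that $\tilde M_1/2$ is genuinely unitary rather than merely a norm-$2$ contraction. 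Once normality is in hand, I would invoke Gelfand--Naimark, transport the tetrablock-boundary conditions through the Gelfand map, and apply Theorem~\ref{relation bw1}---whose functions $p_1,p_2,p_3$ are exactly the Gelfand transforms of $P^{(1)}_z,P^{(2)}_z,P^{(3)}_z$, and whose case split $|y_1|<2$ versus $|y_1|=2$ matches $(5)$ versus $(5')$---to conclude $\sigma(\textbf M)\subseteq K_1$, i.e.\ that $\textbf M$ is a $\Gamma_{E(3;2;1,2)}$-unitary. As an alternative route, Proposition~\ref{prop-4} sends $\textbf M$ to a candidate $\Gamma_{E(3;3;1,1,1)}$-tuple, so one could instead try to deduce the structure from Theorem~\ref{thm-5}, but tracking the relations backward appears no easier than the direct argument.
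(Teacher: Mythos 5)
Your proposal is correct and follows essentially the same route as the paper: the same implication scheme, Gelfand--Naimark plus spectral mapping for $(1)\Leftrightarrow(2)$ and $(1)\Rightarrow(4)$, Proposition \ref{prop-41} with [Theorem 5.4, Bhattacharyya] for $(2)\Leftrightarrow(3)$ and $(4)\Rightarrow(3)$, the same unitarity computation for $P^{(3)}_z$ in $(4)\Rightarrow(5),(5')$, and for $(5),(5')\Rightarrow(1)$ the same extraction of operator identities from Bhattacharyya's theorem followed by coefficient matching, normality, and Theorem \ref{relation bw1} to place $\sigma(\textbf{M})$ in $K_1$. The only difference is one of completeness, not of method: the paper carries out the coefficient-matching identities (its equations \eqref{N-1zN-223}--\eqref{eq122}) that you describe at the plan level, and it handles the normality of $M_1,\tilde M_2$ by direct commutation identities where you invoke Fuglede--Putnam.
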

	
	\begin{proof}
We show the equivalence of $(1), (2), (3),(4)$ and $(5)$ by proving the following relationships: \small{$$(1) \Leftrightarrow (2) \Leftrightarrow (3), (1) \Rightarrow (4),(4)  \Rightarrow (3), (4)  \Rightarrow (5), (5) \Rightarrow(1), (4)  \Rightarrow (5^{\prime})~{\rm{and}}~ (5^{\prime}) \Rightarrow(1).$$}

Equivalence of $(1)$ and $(2)$ follows by using the same methods as in Theorem \ref{thm-5}. Therefore, we skip the proof.

$(2) \Leftrightarrow (3):$ 
The equivalence of $(2)$ and $(3)$ is deduced from [Theorem $5.4$, \cite{Bhattacharyya}].	
		
$(1) \Rightarrow (4):$ Assume that $\textbf{M}$ is a $\Gamma_{E(3; 2; 1, 2)}$-unitary. By equivalence of $(1)$ and $(2)$, we deduce that $M_3$ is a unitary. Let $p$ be a polynomial of five variables. As $M_1, M_2, M_3, \tilde{M}_1, \tilde{M}_2$ are normal operators, it yields that $p (M_1, M_2, M_3, \tilde{M}_1, \tilde{M}_2)$ is normal operator. Then 
\begin{align*} \|p (M_1, M_2, M_3, \tilde{M}_1, \tilde{M}_2)\|&=\sup\{|p(\lambda_1,\dots,\lambda_5)|: (\lambda_1,\dots,\lambda_5)\in \sigma(\textbf{M}) \}\\&\leq\sup\{|p(\lambda_1,\dots,\lambda_5)|: (\lambda_1,\dots,\lambda_5)\in K_1\}\\&\leq\sup\{|p(\lambda_1,\dots,\lambda_5)|: (\lambda_1,\dots,\lambda_5)\in \Gamma_{E(3; 2; 1, 2)}\}\\
&= \|p\|_{\infty, \Gamma_{E(3; 2; 1, 2)}}.
\end{align*}
This demonstrates that $\textbf{M}$ is a $\Gamma_{E(3; 2; 1, 2)}$-contraction.

$(4) \Rightarrow (3):$  Suppose that  $\textbf{M}$ is a $\Gamma_{E(3; 2; 1, 2)}$-contraction. By Proposition \ref{prop-41}, it follows that $(M_1, \tilde{M}_2, M_3),$ $ ( \frac{\tilde{M}_1}{2},\frac{M_2}{2},  M_3)$ and $(\frac{M_2}{2}, \frac{\tilde{M}_1}{2}, M_3)$ are $\Gamma_{E(2; 2; 1,1)}$-contraction. As $M_3$ is a unitary, it yields from [Theorem $5.4$,\cite{Bhattacharyya}] that $(M_1, \tilde{M}_2, M_3), ( \frac{\tilde{M}_1}{2},\frac{M_2}{2},  M_3)$ and $(\frac{M_2}{2}, \frac{\tilde{M}_1}{2}, M_3)$ are $\Gamma_{E(2; 2; 1,1)}$-unitaries.

$(4) \Rightarrow (5):$ Assume that $M_3$ is a unitary and $\textbf{M}$ is a $\Gamma_{E(3; 2; 1, 2)}$-contraction. By equivalence of $(1),(2)$ and $(4)$, we note that $M_1, M_2, M_3, \tilde{M}_1, \tilde{M}_2$ are normal operators,  $M_1, \frac{M_2}{2}, \frac{\tilde{M}_1}{2}~{\rm{and}}~\tilde{M}_2$ are contractions and $M_1 = \tilde{M}^*_2M_3, M_2 = \tilde{M}^*_1M_3$.  Let $\mathcal A_1$ be the $C^*$-algebra generated by the commuting normal operators $I,M_1, M_2, M_3, \tilde{M}_1, \tilde{M}_2$. By \textit{Gelfand-Naimark theorem},  $\mathcal A_1$ is isometrically $^*$-isomorphic to $C(\sigma(\textbf{M}))$ via the \textit{Gelfand map.}
		
 Let $\tilde{\textbf{x}}=(x_1,x_2,x_3,y_1,y_2)\in \sigma(\textbf{M}).$ As the Taylor joint spectrum $\sigma(\textbf{M})$ is contained in the set  $ K_1$,  it follows from Theorem \ref{relation bw1} that $\tilde{\textbf{x}}\in K_1$ if and only if $$(p_1(z),p_2(z),p_3(z))\in b\Gamma_{E(2;2;1,1)}~\text{for ~all}~z\in \mathbb{T}~\text{with}~|y_1|<2.$$  The inverse of the \textit{Gelfand map} takes the co-ordinate functions $x_j$ to $M_j$ for $j=1,2,3$ and $y_i$ to $\tilde{M}_i$ for $i=1,2.$ Thus, we have $\|\tilde{M}_1\|<2.$ 
As $\textbf{M}$ is a $\Gamma_{E(3; 2; 1, 2)}$-contraction, by Theorem \ref{thm-3} and Remark \ref{rem-5} imply that  $$ \{ ((2M_1 - zM_2)(2I - z\tilde{M}_1)^{-1}, (\tilde{M}_1 - 2z\tilde{M}_2)(2I - z\tilde{M}_1)^{-1}, (M_2 - 2zM_3)(2I - z\tilde{M}_1)^{-1}) : z\in \mathbb {\bar{D}} \}$$ is a family of tetrablock  contractions. In particular, $$ \{ ((2M_1 - zM_2)(2I - z\tilde{M}_1)^{-1}, (\tilde{M}_1 - 2z\tilde{M}_2)(2I - z\tilde{M}_1)^{-1}, (M_2 - 2zM_3)(2I - z\tilde{M}_1)^{-1}) : |z|=1 \}$$ is a family of tetrablock contractions. In order to complete the proof, by  [Theorem $5.4$,\cite{Bhattacharyya}], we need to show that $(M_2 - 2zM_3)(2I - z\tilde{M}_1)^{-1}$ are unitary for all $z\in \mathbb T$. Observe that $M_2 = \tilde{M}^*_1M_3, \tilde{M}_1=M_2^*M_3,$
\begin{align}\label{n-21}
 M_2^*M_2\nonumber&=M_3^*\tilde{M}_1 \tilde{M}^*_1M_3\\\nonumber &=M_3^* \tilde{M}^*_1\tilde{M}_1M_3\\\nonumber &=\tilde{M}_1^*M_3^*M_3\tilde{M}_1\\&=\tilde{M}_1^*\tilde{M}_1
\end{align}  	
and 	
\begin{align}\label{n-22351}
(M^*_2 - 2\overline{z}M^*_3)(M_2 - 2zM_3)\nonumber&=(M^*_2M_2 - 2\overline{z}M^*_3M_2 - 2zM^*_2M_3 + 4M^*_3M_3)\\\nonumber&=(4I- 2\overline{z}\tilde{M}^*_1-2z\tilde{M}_1+\tilde{M}_1^*\tilde{M}_1)\\&=(2I - z\tilde{M}_1) (2I - \overline{z}\tilde{M}^*_1).
\end{align}	
It follows from \eqref{n-21} and \eqref{n-22351} that $(M_2 - 2zM_3)(2I - z\tilde{M}_1)^{-1}$ is a unitary for all $z\in \mathbb T$. 

$(5) \Rightarrow (1):$ Suppose that $||\tilde{M}_1|| < 2$ and $$ \{ ((2M_1 - zM_2)(2I - z\tilde{M}_1)^{-1}, (\tilde{M}_1 - 2z\tilde{M}_2)(2I - z\tilde{M}_1)^{-1}, (M_2 - 2zM_3)(2I - z\tilde{M}_1)^{-1}) : |z| = 1 \}$$ is a family of tetrablock unitaries. To demonstrate that $\textbf{M}$ is a $\Gamma_{E(3; 2; 1, 2)}$-unitary,  we first that establish that $M_1, M_2, M_3, \tilde{M}_1, \tilde{M}_2$ are normal operators.  Since $$\{ ((2M_1 - zM_2)(2I - z\tilde{M}_1)^{-1}, (\tilde{M}_1 - 2z\tilde{M}_2)(2I - z\tilde{M}_1)^{-1}, (M_2 - 2zM_3)(2I - z\tilde{M}_1)^{-1}) : |z| = 1 \}$$ is a family of tetrablock unitaries, by [Theorem $5.4$,\cite{Bhattacharyya}], we have
\begin{equation}\label{N-1-N-322}
\begin{aligned}
(2I-\bar{z}\tilde{M}_1^*)(2M_1-zM_2)&=(\tilde{M}_1^*-2\bar{z}\tilde{M}_2^*)(M_2-2zM_3), \\
(2I-\bar{z}\tilde{M}_1^*)(\tilde{M}_1-2z\tilde{M}_2)&=(2M_1^*-\bar{z}M_2^*)(M_2-2zM_3)\\
\end{aligned}
\end{equation}	
and
\begin{equation}\label{N-1-N-3122}
\begin{aligned}
(M_2^*-2\bar{z}M_3^*)(M_2-2zM_3)= (2I-\bar{z}\tilde{M}_1^*)(2I-z\tilde{M}_1),\\
(M_2-2zM_3)(M_2^*-2\bar{z}M_3^*)=(2I-z\tilde{M}_1)(2I-\bar{z}\tilde{M}_1^*),
\end{aligned}
\end{equation}	
 for all $z\in \mathbb T.$ Therefore, it follows  from \eqref{N-1-N-322}  that
\begin{equation}\label{N-1zN-223}
\begin{aligned}
M_1=\tilde{M}_2^*M_3, \tilde{M}_1^*M_1=\tilde{M}_2^*M_2 ~{\rm{and}}~M_2=\tilde{M}_1^*M_3,
\end{aligned}
\end{equation} and
\begin{equation}\label{eq112}
\tilde{M}_1+\tilde{M}_1^*\tilde{M}_2=M_1^*M_2+M_2^*M_3, \tilde{M}_2=M_1^*M_3 ~{\rm{and}}~\tilde{M}_1^*\tilde{M}_1=M_2^*M_2,
\end{equation}
Also, from \eqref{N-1-N-3122}, we have
\begin{equation}\label{N-1zN-2231}
\begin{aligned}
M_2^*M_2+4M_3^*M_3=4I+\tilde{M}_1^*\tilde{M}_1, \tilde{M}_1=M_2^*M_3,
\end{aligned}
\end{equation} 
and 
\begin{equation}\label{eq122}
M_2M_2^*+4M_3M_3^*=4I+\tilde{M}_1\tilde{M}_1^*, \tilde{M}_1=M_3M_2^*.
\end{equation}
It implies from \eqref{N-1zN-223}, \eqref{N-1zN-2231} and \eqref{eq122} that 

\begin{equation}\label{M22}M_2M_2^*=\tilde{M}_1^*M_3M_2^*=\tilde{M}_1^*M_2^*M_3=M_2^*\tilde{M}_1^*M_3=M_2^*M_2.\end{equation}  From \eqref{N-1zN-2231} and \eqref{eq122},  it yields  that 

\begin{equation}\label{m1normal}\tilde{M}_1\tilde{M}_1^*=M_3M_2^*M_3^*M_2=M_3M_3^*M_2^*M_2=M_3M_3^*M_2M_2^*=M_3\tilde{M}_1^*M_2^*=M_3M_2^*\tilde{M}_1^*=\tilde{M}_1\tilde{M}_1^*.\end{equation} 
Since $\tilde{M}_1$, $M_2$ are normal operators, so from \eqref{eq112}, \eqref{N-1zN-2231} and \eqref{eq122}, we deduce  that $M_3$ is a unitary operator. We now demonstrate that $M_1$ is a normal operator. We observe from \eqref{eq112} that \begin{equation}\tilde{M}_2M_1=M_1^*M_3M_1=M_1^*M_1M_3~{\rm{ and }}~M_1\tilde{M}_2=M_1M_1^*M_3.\end{equation} Since $M_1$ and $\tilde{M}_2$ commute, it follows that $M_1^*M_1M_3 = M_1M_1^*M_3.$ Since $M_3$ is a unitary, so we get $M_1^*M_1=M_1M_1^*$. Similarly, we can also show that  $\tilde{M}_2$ is a normal operator. Let $\mathcal A_1$ denote the $C^*$-algebra generated by the commuting normal operators $I,M_1, M_2, M_3, \tilde{M}_1, \tilde{M}_2$. The \textit{Gelfand-Naimark theorem} states that  the commutative $C^*$-algebra $\mathcal A_1$ is isometrically $^*$-isomorphic to $C(\sigma(\textbf{M}))$ via the \textit{Gelfand map.} The \textit{Gelfand map} sends  $M_i$ to the coordinate function $x_i$ for $i = 1, 2,3$ and $\tilde{M}_i$ to the coordinate function $y_i$ for $i = 1, 2.$ As  $||\tilde{M}_1|| < 2$, it implies that $|y_1|<2$. To complete the proof, we need to verify that the joint spectrum $\sigma(\textbf{M})$ is contained in $K_1.$ The proof of this claim depends on the Theorem \ref{relation bw1}, which states that $(x_1,x_2,x_3,y_1,y_2)\in K_1$ if and only if  $$(p_1(z),p_2(z),p_3(z))\in b\Gamma_{E(2;2;1,1)}~\text{for ~all}~z\in \mathbb{T}~\text{with}~|y_1|<2,$$ where $p_1(z)=\frac{2x_1-zx_2}{2-y_1z},p_2(z)=\frac{y_1-2zy_2}{2-y_1z}~{\rm{and}}~p_3(z)=\frac{x_2-2zx_3}{2-y_1z}.$	 Let $(x_1,x_2,x_3,y_1,y_2)\in \sigma(\textbf{M}).$ Since $$ \{ ((2M_1 - zM_2)(2I - z\tilde{M}_1)^{-1}, (\tilde{M}_1 - 2z\tilde{M}_2)(2I - z\tilde{M}_1)^{-1}, (M_2 - 2zM_3)(2I - z\tilde{M}_1)^{-1}) : |z| = 1 \}$$ is a commuting family of tetrablock unitaries, the Taylor joint spectrum of $$  ((2M_1 - zM_2)(2I - z\tilde{M}_1)^{-1}, (\tilde{M}_1 - 2z\tilde{M}_2)(2I - z\tilde{M}_1)^{-1}, (M_2 - 2zM_3)(2I - z\tilde{M}_1)^{-1}) $$ is contained in $b\Gamma_{E(2;2;1,1)}$ for every $z\in \mathbb T$ with $\|\tilde{M}_1\|<2$.  Thus, from spectral mapping theorem, we have
$$(p_1(z),p_2(z),p_3(z))\in b\Gamma_{E(2;2;1,1)}~\text{for ~all}~z\in \mathbb{T}~\text{with}~|y_1|<2.$$
By Theorem \ref{relation bw1}, we conclude that  $(x_1,x_2,x_3,y_1,y_2)\in K_1$. This shows that $\textbf{M} $ is a $\Gamma_{E(3; 2; 1, 2)}$-unitary. 		
		
We now prove that $(4) \Rightarrow (5^{\prime}).$ Let $\tilde{\textbf{x}}=(x_1,x_2,x_3,y_1,y_2)\in \sigma(\textbf{M}).$  Since the Taylor joint spectrum $\sigma(\textbf{M})$ is contained in the set  $ K_1$,  it follows from Theorem \ref{relation bw1} that  $$(p_1(z),p_2(z),p_3(z))\in b\Gamma_{E(2;2;1,1)}~\text{for~all}~z\in \mathbb{D}~\text{with}~|y_1|=2.$$ Since $|y_1|=2$ and $|x_3|=1,$ we have $|x_2|=2.$   Since the commutative $C^*$-algebra $\mathcal A$ is isometrically $^*$-isomorphic to $C(\sigma(\textbf{M}))$ via the \textit{Gelfand map}, we conclude that $\frac{M_2}{2}$, $M_3$ and $\frac{\tilde{M}_1}{2}$ are unitary operators, respectively. Because $\textbf{M}$ is a $\Gamma_{E(3; 2; 1, 2)}$-contraction,  it yields  from the Theorem \ref{thm-3}  that $$\{ ((2M_1 - zM_2)(2I - z\tilde{M}_1)^{-1}, (\tilde{M}_1 - 2z\tilde{M}_2)(2I - z\tilde{M}_1)^{-1}, (M_2 - 2zM_3)(2I - z\tilde{M}_1)^{-1}) : |z| < 1 \}$$ is a family of tetrablock contractions.
 For $z\in \mathbb D,$ we have
\begin{align}\label{n-223512}
(M^*_2 - 2\overline{z}M^*_3)(M_2 - 2zM_3)\nonumber&=(M^*_2M_2 - 2\overline{z}M^*_3M_2 - 2zM^*_2M_3 + 4|z|^2M^*_3M_3)\\
\nonumber&=(4I- 2\overline{z}\tilde{M}^*_1-2z\tilde{M}_1+4|z|^2I) \\
\nonumber&=(4I- 2\overline{z}\tilde{M}^*_1-2z\tilde{M}_1+|z|^2\tilde{M}_1^*\tilde{M}_1) \\
&= (2I - \overline{z}\tilde{M}^*_1) (2I - z\tilde{M}_1).
\end{align}
Thus, we conclude that
$(M_2 - 2zM_3)(2I - z\tilde{M}_1)^{-1}$ is a family of unitary operators for all $z\in \mathbb D$.

The proof of $(5^{\prime}) \Rightarrow (1)$ is obtained by using argument similar to the proof of  $(5) \Rightarrow (1).$ This completes the proof.

	\end{proof}
As a consequence of the above Theorem,  we state the following corollary. The proof of the corollary is easy to verify. Therefore, we skip the proof.
	\begin{cor}
		Let $\textbf{M} = (M_1, M_2, M_3, \tilde{M}_1, \tilde{M}_2)$ be a commuting $5$-tuple of bounded operators on a Hilbert space $\mathcal{H}$. Then $\textbf{M}$ is a $\Gamma_{E(3; 2; 1, 2)}$-unitary if and only if
		\[\{(M_1, \omega M_2, \omega^2 M_3, \omega \tilde{M}_1, \omega^2 \tilde{M}_2) : \omega \in \mathbb{T}\}\]
		is a family of $\Gamma_{E(3; 2; 1, 2)}$-unitaries.
	\end{cor}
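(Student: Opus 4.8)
The plan is to reduce the statement to the defining spectral condition $\sigma(\textbf{M})\subseteq K_1$ together with the scaling invariance of $K_1$ recorded in Proposition \ref{cor-3}. For $\omega\in\mathbb{T}$, let $\phi_\omega:\mathbb{C}^5\to\mathbb{C}^5$ denote the componentwise scaling map $\phi_\omega(x_1,x_2,x_3,y_1,y_2)=(x_1,\omega x_2,\omega^2 x_3,\omega y_1,\omega^2 y_2)$, so that the tuple in question is $\textbf{M}_\omega:=\phi_\omega(\textbf{M})=(M_1,\omega M_2,\omega^2 M_3,\omega\tilde{M}_1,\omega^2\tilde{M}_2)$. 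Since $\phi_\omega$ multiplies each coordinate by a fixed scalar, $\textbf{M}_\omega$ is a commuting tuple of normal operators exactly when $\textbf{M}$ is: multiplication by a scalar preserves normality and commutation relations, and $\bar\omega$ furnishes the inverse scaling $\phi_\omega^{-1}=\phi_{\bar\omega}$.

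First I would treat the forward direction. Assume $\textbf{M}$ is a $\Gamma_{E(3;2;1,2)}$-unitary, so the operators $M_i,\tilde{M}_j$ are commuting and normal with $\sigma(\textbf{M})\subseteq K_1$. Because $\phi_\omega$ is a polynomial map and the entries commute, the polynomial spectral mapping theorem for the Taylor joint spectrum gives $\sigma(\textbf{M}_\omega)=\phi_\omega(\sigma(\textbf{M}))$. By the ``only if'' part of Proposition \ref{cor-3}, every $\tilde{\textbf{x}}\in K_1$ satisfies $\phi_\omega(\tilde{\textbf{x}})\in K_1$ for all $\omega\in\mathbb{T}$; applying this to the points of $\sigma(\textbf{M})$ yields $\sigma(\textbf{M}_\omega)=\phi_\omega(\sigma(\textbf{M}))\subseteq K_1$. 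As $\textbf{M}_\omega$ is also a commuting tuple of normal operators, this shows that $\textbf{M}_\omega$ is a $\Gamma_{E(3;2;1,2)}$-unitary for every $\omega\in\mathbb{T}$.

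The converse is immediate: if $\textbf{M}_\omega$ is a $\Gamma_{E(3;2;1,2)}$-unitary for all $\omega\in\mathbb{T}$, then specialising to $\omega=1$ gives $\textbf{M}_1=\textbf{M}$, so $\textbf{M}$ is itself a $\Gamma_{E(3;2;1,2)}$-unitary. (Equivalently, one may invoke the full two-sided equivalence of Proposition \ref{cor-3}.)

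There is no genuine obstacle here; the proof is bookkeeping. The only points requiring care are the invocation of the polynomial spectral mapping theorem for the Taylor joint spectrum (legitimate precisely because $\phi_\omega$ is a polynomial map in the commuting entries) and the matching of the exponents $1,\omega,\omega^2,\omega,\omega^2$ appearing in $\phi_\omega$ with the weights in Proposition \ref{cor-3}. An entirely parallel argument, substituting the structural characterisation of Theorem \ref{thm-7}(2) for the spectral description, would also establish the result and would mirror the treatment of Corollary \ref{prop-6} in the $\Gamma_{E(3;3;1,1,1)}$ setting.
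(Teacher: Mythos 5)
Your argument is correct. Bear in mind, however, that the paper offers no proof of this corollary at all: it is stated directly after Theorem \ref{thm-7} with the remark that the verification is easy, and the phrase ``as a consequence of the above Theorem'' indicates that the intended route is the algebraic one through condition (2) of Theorem \ref{thm-7}. That route runs: if $M_3$ is unitary, $M_1,\tfrac{M_2}{2},\tfrac{\tilde M_1}{2},\tilde M_2$ are contractions, and $M_1=\tilde M_2^*M_3$, $M_2=\tilde M_1^*M_3$, then for $\textbf{M}_\omega=(M_1,\omega M_2,\omega^2 M_3,\omega\tilde M_1,\omega^2\tilde M_2)$ the operator $\omega^2M_3$ is unitary, the contraction bounds persist, and $M_1=(\omega^2\tilde M_2)^*(\omega^2M_3)$, $\omega M_2=(\omega\tilde M_1)^*(\omega^2M_3)$, so Theorem \ref{thm-7} applies to $\textbf{M}_\omega$; the converse is the specialisation $\omega=1$. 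Your proof instead works straight from Definition \ref{def-1}: unimodular scaling preserves commutativity and normality, Taylor's spectral mapping theorem gives $\sigma(\textbf{M}_\omega)=\phi_\omega(\sigma(\textbf{M}))$, and Proposition \ref{cor-3} keeps this set inside $K_1$. Both arguments are sound and of comparable length; the algebraic route avoids invoking the multivariable spectral mapping theorem (which your route needs, legitimately, since $\phi_\omega$ is a polynomial map of a commuting tuple), while your spectral route uses only the definition and the geometry of $K_1$, and — as you yourself note — either template equally disposes of the companion Corollary \ref{prop-6} in the $\Gamma_{E(3;3;1,1,1)}$ setting.
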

The proof of the following theorem follows  by using the similar technique as in Theorm \ref{unita}. Therefore, we skip the proof.
\begin{thm}
Let  $\textbf{U} = ((U_{ij}))$ be a $3 \times 3$ unitary block operator matrix, where $U_{ij}$ are commuting normal operators. Set 			\small{\begin{equation*}
				\begin{aligned}
					\textbf{M} &= \Big(M_1=U_{11}, M_2=U_{11}U_{22} - U_{12}U_{21}+U_{11}U_{33} - U_{13}U_{31}\\&\,\,\,\,\,\,M_3=
					 U_{11}(U_{22}U_{33} - U_{23}U_{32}) - U_{12}(U_{21}U_{33} - U_{31}U_{23}) + U_{13}(U_{21}U_{32} - U_{31}U_{22})
			\\&\,\,\,\,\,\,\tilde{M}_1=U_{22}+U_{33},N_6=U_{22}U_{33} - U_{23}U_{32}\Big)	\end{aligned}
\end{equation*}}
Then $\textbf{M}$ is a $\Gamma_{E(3; 3; 1, 1, 1)}$-unitary.
\end{thm}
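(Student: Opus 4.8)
The plan is to reduce the statement to the characterization of $\Gamma_{E(3;2;1,2)}$-unitaries in Theorem \ref{thm-7} and then import wholesale the operator identities already established in Theorem \ref{unita}. First I would observe that $M_1,M_2,M_3,\tilde M_1,\tilde M_2$ are all polynomials in the pairwise commuting normal operators $U_{ij}$, so they form a commuting $5$-tuple of bounded operators; this is exactly the standing hypothesis needed to invoke Theorem \ref{thm-7}. By the equivalence $(1)\Leftrightarrow(2)$ of that theorem, it then suffices to check that $M_3$ is a unitary, that $M_1,\tfrac{M_2}{2},\tfrac{\tilde M_1}{2},\tilde M_2$ are contractions, and that the two defining relations $M_1=\tilde M_2^{*}M_3$ and $M_2=\tilde M_1^{*}M_3$ hold.

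The key step is to match the present tuple with the tuple $\textbf N=(N_1,\dots,N_7)$ of Theorem \ref{unita} through the dictionary
\[
M_1=N_1,\quad \tilde M_2=N_6,\quad M_3=N_7,\quad M_2=N_3+N_5,\quad \tilde M_1=N_2+N_4.
\]
Theorem \ref{unita} already shows that $N_7$ is unitary and that $N_1=N_6^{*}N_7$, $N_3=N_4^{*}N_7$ and $N_5=N_2^{*}N_7$ (the latter obtained from $N_4=N_3^{*}N_7$, $N_2=N_5^{*}N_7$ together with the unitarity of $N_7$ and the commutativity of the $U_{ij}$). Reading these through the dictionary gives at once that $M_3$ is unitary and $M_1=\tilde M_2^{*}M_3$. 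For the second relation I would simply add the identities for $N_3$ and $N_5$:
\[
M_2=N_3+N_5=N_4^{*}N_7+N_2^{*}N_7=(N_2+N_4)^{*}N_7=\tilde M_1^{*}M_3,
\]
so the fact that $M_2$ and $\tilde M_1$ are sums of the $N_i$ causes no essential difficulty.

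Finally, for contractivity I would use $\|\tilde M_2\|=\|N_6\|\le1$, which follows from the identity $N_6^{*}N_6+A^{*}A+B^{*}B=I$ obtained in the proof of Theorem \ref{unita} (equation \eqref{identity}), since the extra terms are positive; and $\|\tilde M_1\|=\|U_{22}+U_{33}\|\le\|U_{22}\|+\|U_{33}\|\le2$, because each diagonal block of a unitary satisfies $U_{jj}^{*}U_{jj}\le I$ by \eqref{Eq}. As $M_3$ is unitary, the two relations just proved then give $\|M_1\|=\|\tilde M_2\|\le1$ and $\tfrac12\|M_2\|=\tfrac12\|\tilde M_1\|\le1$, which is precisely why the normalizing factor $\tfrac12$ appears on $\tilde M_1$ and $M_2$. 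The main (and essentially only) obstacle is the bookkeeping: one must confirm that the adjoint relation survives the passage from the single minors $N_3,N_5$ to their sum $M_2$, paired with the matching sum $\tilde M_1=N_2+N_4$; once the correspondence with Theorem \ref{unita} is set up correctly, every substantive operator identity is already available and the proof is a matter of transcription.
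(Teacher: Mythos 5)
Your proof is correct, and it differs in a useful way from what the paper does. The paper skips the proof altogether, directing the reader to repeat the block-matrix computations of Theorem \ref{unita} -- that is, to verify $M_3$ unitary and $M_1=\tilde M_2^*M_3$, $M_2=\tilde M_1^*M_3$ directly from $\textbf{U}^*\textbf{U}=\textbf{U}\textbf{U}^*=I$ -- and then invoke Theorem \ref{thm-7}. You instead treat Theorem \ref{unita} as a black box: the present $5$-tuple is the image of the $7$-tuple $\textbf{N}$ of that theorem under $(x_1,\dots,x_7)\mapsto(x_1,x_3+x_5,x_7,x_2+x_4,x_6)$, and the adjoint identities simply add. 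This avoids duplicating computations and exposes the statement as the $\eta=1$ instance of the $\Gamma_{E(3;3;1,1,1)}$-to-$\Gamma_{E(3;2;1,2)}$ correspondence; indeed, the paper's own Theorem \ref{thm-9}, implication $(1)\Rightarrow(2)$, applied at $\eta=1$ to the $\Gamma_{E(3;3;1,1,1)}$-unitary produced by Theorem \ref{unita}, gives the theorem in one line, so your argument is in effect a self-contained proof of that special case. Two details you should state explicitly, though neither is a gap: (i) converting the identities actually derived in the proof of Theorem \ref{unita} (namely $N_6=N_1^*N_7$, $N_5=N_2^*N_7$, $N_4=N_3^*N_7$) into the ones you quote ($N_1=N_6^*N_7$, $N_3=N_4^*N_7$) requires that the $N_i$ commute with adjoints; this follows from Fuglede's theorem applied to the commuting normal operators $U_{ij}$, so that the $*$-algebra they generate is commutative (the paper relies on this tacitly as well); (ii) your appeal to Theorem \ref{thm-7}, $(2)\Rightarrow(1)$, is what supplies the normality of the five operators and the joint-spectrum condition, so the fact that you never check normality directly is legitimate. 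You also read the statement correctly despite its typos: the last component is $\tilde M_2=U_{22}U_{33}-U_{23}U_{32}$ (misprinted as $N_6$), and the conclusion is that $\textbf{M}$ is a $\Gamma_{E(3;2;1,2)}$-unitary, not a $\Gamma_{E(3;3;1,1,1)}$-unitary, which is the only reading consistent with Theorem \ref{thm-7}.
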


	\begin{thm}\label{thm-8}
		Let $\textbf{M} = (M_1, M_2, M_3, \tilde{M}_1, \tilde{M}_2)$ be a commuting $5$-tuple of bounded operators on a Hilbert space $\mathcal{H}$. Then the following are equivalent:
		\begin{enumerate}
			\item $\textbf{M}$ is a $\Gamma_{E(3; 2; 1, 2)}$-unitary.
			
			\item  $M_3$ is a unitary, $||M_1|| < 1$ and  $ \{((\tilde{M}_1 - z M_2)(I - zM_1)^{-1}, (\tilde{M}_2 - z M_3)(I - zM_1)^{-1}) : z \in \mathbb{T} \}$ is a family of $\Gamma_{E(2; 1; 2)}$-unitaries.
				\item[$(2^{\prime})$] $M_3$ and $M_1$ are unitaries and $\{((\tilde{M}_1 - z M_2)(I - zM_1)^{-1}, (\tilde{M}_2 - z M_3)(I - zM_1)^{-1}) : z \in \mathbb{D} \}$ is a family of $\Gamma_{E(2; 1; 2)}$-unitaries.
			
		\end{enumerate}
	\end{thm}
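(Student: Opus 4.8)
The plan is to establish the two biconditionals $(1)\Leftrightarrow(2)$ and $(1)\Leftrightarrow(2')$, following the template already used for Theorem~\ref{thm-7} but with the symmetrized bidisc $\Gamma_{E(2;1;2)}$ in place of the tetrablock, and with the quotient map $(x_1,x_2,x_3,y_1,y_2)\mapsto\big(\tfrac{y_1-zx_2}{1-zx_1},\tfrac{y_2-zx_3}{1-zx_1}\big)$ replacing the tetrablock quotient. Statements $(2)$ and $(2')$ correspond to the two invertibility regimes of $I-zM_1$: in $(2)$ one has $\|M_1\|<1$, so $I-zM_1$ is invertible for $z\in\mathbb{T}$, whereas in $(2')$ the operator $M_1$ is unitary and one uses $z\in\mathbb{D}$. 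These are precisely the two alternatives recorded in Theorem~\ref{thm-6}.

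For $(1)\Rightarrow(2)$ and $(1)\Rightarrow(2')$, I would first invoke Theorem~\ref{thm-7} to extract the structural data of a $\Gamma_{E(3;2;1,2)}$-unitary: $\textbf{M}$ is a $\Gamma_{E(3;2;1,2)}$-contraction, $M_1,\dots,\tilde{M}_2$ are commuting normal operators, $M_3$ is unitary, $M_1=\tilde{M}^*_2M_3$, $M_2=\tilde{M}^*_1M_3$, and $\sigma(\textbf{M})\subseteq K_1$. Since $\textbf{M}$ is a $\Gamma_{E(3;2;1,2)}$-contraction, Proposition~\ref{prop-5} makes $\{((\tilde{M}_1-zM_2)(I-zM_1)^{-1},(\tilde{M}_2-zM_3)(I-zM_1)^{-1})\}$ a family of $\Gamma_{E(2;1;2)}$-contractions. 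The regime is read off from the Gelfand transform: representing the generated commutative $C^*$-algebra as $C(\sigma(\textbf{M}))$ with $M_i\leftrightarrow x_i$, $\tilde{M}_i\leftrightarrow y_i$, the inclusion $\sigma(\textbf{M})\subseteq K_1$ forces $|x_1|=|y_2|\le 1$, leading to $(2)$ when $\|M_1\|<1$ and to $(2')$ when $M_1$ is unitary. To upgrade each contraction in the family to a $\Gamma_{E(2;1;2)}$-unitary it suffices, by the characterization of symmetrized bidisc unitaries \cite{JAgler}, to check that the second coordinate $P_z=(\tilde{M}_2-zM_3)(I-zM_1)^{-1}$ is unitary. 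Using $\tilde{M}_2=M^*_1M_3$ (a consequence of $M_1=\tilde{M}^*_2M_3$, $M_3$ unitary, and normality) together with commutativity, one computes for $|z|=1$ that
\[
(\tilde{M}_2-zM_3)^*(\tilde{M}_2-zM_3)=(I-zM_1)^*(I-zM_1),
\]
and symmetrically for the reversed product, whence $P_z$ is unitary on the relevant $z$-range; this completes both implications.

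For the converses $(2)\Rightarrow(1)$ and $(2')\Rightarrow(1)$ I would start from the hypothesis that the family consists of $\Gamma_{E(2;1;2)}$-unitaries and apply the symmetrized bidisc unitary characterization \cite{JAgler} to each pair $(S_z,P_z)=((\tilde{M}_1-zM_2)(I-zM_1)^{-1},(\tilde{M}_2-zM_3)(I-zM_1)^{-1})$: namely $P_z$ is unitary, $S_z$ is normal, and $S_z=S^*_zP_z$. Clearing the $(I-zM_1)^{-1}$ denominators turns these into operator identities in $z$, e.g. $(\tilde{M}_2-zM_3)^*(\tilde{M}_2-zM_3)=(I-zM_1)^*(I-zM_1)$ and $(I-\bar{z}M^*_1)(\tilde{M}_1-zM_2)=(\tilde{M}^*_1-\bar{z}M^*_2)(\tilde{M}_2-zM_3)$, valid for all $z$ in the prescribed range. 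Matching the coefficients of $1,z,\bar{z},|z|^2$ (permissible because the range is infinite) yields $\tilde{M}_2=M^*_1M_3$, $M_2=\tilde{M}^*_1M_3$, and $M^*_3M_3=M_3M^*_3=I$, from which the normality of $M_1,M_2,M_3,\tilde{M}_1,\tilde{M}_2$ and the unitarity of $M_3$ follow exactly as in the $(5)\Rightarrow(1)$ step of Theorem~\ref{thm-7}. Passing to $C(\sigma(\textbf{M}))$ via Gelfand--Naimark and feeding the $b\Gamma_{E(2;1;2)}$-membership of the quotient family into Theorem~\ref{thm-6} (the $|x_1|<1$, $z\in\mathbb{T}$ branch for $(2)$ and the $|x_1|=1$, $z\in\mathbb{D}$ branch for $(2')$), I would conclude $\sigma(\textbf{M})\subseteq K_1$, i.e. $\textbf{M}$ is a $\Gamma_{E(3;2;1,2)}$-unitary.

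The main obstacle I anticipate is this converse direction: reconstructing all five normality relations, the unitarity of $M_3$, and the defining relations of $K_1$ purely from the one-parameter family of symmetrized bidisc unitary identities. The delicate points are justifying the coefficient matching in $z$ (which requires invertibility of $I-zM_1$ throughout the chosen range, hence the careful separation into the $\|M_1\|<1$ and $M_1$-unitary regimes) and rearranging products using commutativity and the freshly established normality in the correct order, mirroring the bookkeeping carried out in the proof of Theorem~\ref{thm-7}.
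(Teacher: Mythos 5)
Your proposal follows essentially the same route as the paper's proof: the forward direction via Theorem \ref{thm-7} and Proposition \ref{prop-5} plus a direct computation (using $M_1=\tilde{M}_2^*M_3$, $\tilde{M}_2=M_1^*M_3$ from the $K_1$ relations) that the second coordinate $(\tilde{M}_2-zM_3)(I-zM_1)^{-1}$ is unitary, and the converse by clearing denominators in the symmetrized-bidisc unitary identities, matching coefficients in $z$ to recover the structural relations and normality of all five operators, then passing through Gelfand--Naimark, the spectral mapping theorem, and Theorem \ref{thm-6} to place $\sigma(\textbf{M})$ in $K_1$. The only cosmetic difference is that you cite the Agler--Young characterization of $\Gamma_{E(2;1;2)}$-unitaries where the paper invokes [Theorem 2.5, Roy]; the content is the same.
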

	
	\begin{proof}
We demonstrate the equivalence of $(1)$ and $(2)$. The equivalence of $(1)$ and $(2^{\prime})$ is established in a similar manner.

$(1) \Rightarrow (2):$ 
Since $\textbf{M}$ is a $\Gamma_{E(3; 2; 1, 2)}$-unitary, so by Theorem \ref{thm-7},  it follows that $M_3$ is a unitary operator and $\textbf{M}$ is a $\Gamma_{E(3; 2; 1, 2)}$-contraction. Hence, by Proposition \ref{prop-5}, we deduce that $$ \{((\tilde{M}_1 - z M_2)(I - zM_1)^{-1}, (\tilde{M}_2 - z M_3)(I - zM_1)^{-1}) : z \in \mathbb{T} \}$$ represents a family of $\Gamma_{E(2; 1; 2)}$-contractions. It follows from [\cite{Roy},Theorem $2.5$] that a $2$-tuple of commuting bounded operators $(S,P)$ on a Hilbert space $\mathcal H$ is a $\Gamma_{E(2; 1; 2)}$-unitary if and only if $(S,P)$ is a $\Gamma_{E(2; 1; 2)}$-contraction and $P$ is a unitary. To complete the proof, it    remains to demonstrate that $(\tilde{M}_2 - z M_3)(I - zM_1)^{-1}$  is a unitary operator for all $z \in \mathbb{T}.$ Since $(M_1, M_2, M_3, \tilde{M}_1, \tilde{M}_2)$ is a $\Gamma_{E(3; 2; 1, 2)}$-unitary,  so $M_1, M_2, M_3, \tilde{M}_1, \tilde{M}_2$ are normal operators.  Let $\mathcal A_1$ represent the $C^*$-algebra generated by the commuting normal operators $I,M_1, M_2, M_3, \tilde{M}_1, \tilde{M}_2$.  By the \textit{Gelfand-Naimark theorem}, $\mathcal A_1$ is isometrically $^*$-isomorphic to $C(\sigma(\textbf{M}))$   via the \textit{Gelfand map.} The Gelfand map sends $M_i$ to $x_i$ and $\tilde{M}_j$ to $y_j$ for $1\leq i\leq 3, 1\leq j\leq 2$. The cordinate functions satisfy the conditions $x_1=\overline{y}_2 x_3$ and $x_2= \overline{y}_1 x_3$  on the set $K_1$ and consequently on $\sigma(\textbf{M})$. Thus we have 
$M_1 =\tilde{M}^*_2M_3$ and $\tilde{M}_2=M_1^*M_3$. Note that
\begin{equation}\label{sym}
\begin{aligned}
&\tilde{M}^*_2\tilde{M_2}\\&=M_3^*M_1 M_1^*M_3\\&=M_3^* M^*_1M_1M_3\\
&= M_1^*M_3^*M_3M_1\\
&= M_1^*M_1
\end{aligned}
\end{equation}
and 
\begin{equation}\label{sym1}
			\begin{aligned}
				&(\tilde{M}^*_2 - \overline{z}M_3^*)(\tilde{M}_2 - zM_3)\\
				&= (\tilde{M}^*_2\tilde{M_2} - \overline{z}M^*_3\tilde{M}_2 - z\tilde{M}^*_2M_3 + M^*_3M_3)\\
				&= (I -\bar{z}M_1^*-zM_1+M_1^*M_1)\\
				&= (I -\bar{z}M_1^*-zM_1+M_1M_1^*)\\
				&= (I - zM_1)(I - \overline{z}M^*_1)
				\end{aligned}
		\end{equation}
Hence, it follows from \eqref{sym} and \eqref{sym1} that  $(\tilde{M}_2 - zM_3)(I - zM_1)^{-1}$ is a family of unitary for all $z\in \mathbb T.$

%
%
%

$(2) \Rightarrow (1):$ Suppose that $M_3$ is a unitary, $||M_1|| < 1$ and $$ \{((\tilde{M}_1 - z M_2)(I - zM_1)^{-1}, (\tilde{M}_2 - z M_3)(I - zM_1)^{-1}) : z \in \mathbb{T} \}$$ is a family of $\Gamma_{E(2; 1; 2)}$-unitaries. In order to show that $\textbf{M}$ is a $\Gamma_{E(3; 2; 1, 2)}$-unitary, it is  sufficient to establish that $M_1, M_2, M_3, \tilde{M}_1, \tilde{M}_2$ are normal operators.  As $$\{((\tilde{M}_1 - z M_2)(I - zM_1)^{-1}, (\tilde{M}_2 - z M_3)(I - zM_1)^{-1}) : z \in \mathbb{T} \}$$ is a family of $\Gamma_{E(2; 1; 2)}$-unitaries, it follows from [Theorem $2.5$,\cite{Roy}] that
\begin{equation}\label{N-1-N-11322}
\begin{aligned}
(I-\bar{z}M_1^*)(\tilde{M}_1 - z M_2)&=(\tilde{M}_1^*-\bar{z}M_2^*)(\tilde{M}_2 - z M_3)
\end{aligned}
\end{equation}	
and
\begin{equation}\label{N-1-N-113122}
\begin{aligned}
(I-\bar{z}M_1^*)(I-zM_1)=(\tilde{M}_2^* - \bar{z} M_3^*)(\tilde{M}_2 - z M_3),\\
(I-zM_1)(I-\bar{z}M_1^*)=(\tilde{M}_2 - z M_3)(\tilde{M}_2^* - \bar{z} M_3^*),
\end{aligned}
\end{equation}	
for all $z\in \mathbb T.$ Therefore, it follows  from \eqref{N-1-N-11322} 
 that
\begin{equation}\label{N-1zN-22113}
\begin{aligned}
\tilde{M}_1+M_1^*M_2=\tilde{M}_1^*\tilde{M}_2+M_2^*M_3, M_2=\tilde{M}_1^*M_3 ~{\rm{and}}~M_1^*\tilde{M}_1=M_2^*\tilde{M}_2
\end{aligned}
\end{equation} 
Also, from  \eqref{N-1-N-113122}, we have
\begin{equation}\label{N-1zN-221131}
\begin{aligned}
\tilde{M}_2^*\tilde{M}_2+M_3^*M_3=I+M_1^*M_1, M_1=\tilde{M}_2^*M_3,
\end{aligned}
\end{equation} 
and \begin{equation}\label{eq20}
\tilde{M}_2\tilde{M}_2^*+M_3M_3^*=I+M_1M_1^*, M_1=M_3\tilde{M}_2^*.
\end{equation}
From \eqref{N-1zN-221131} and \eqref{eq20}, we have 
$M_1M_1^*=\tilde{M}_2^*M_3\tilde{M}_2M_3^*=\tilde{M}_2^*\tilde{M}_2M_3M_3^*=\tilde{M}_2^*\tilde{M}_2=M_1^*M_1.$ This shows that $M_1$ is normal. Also, from \eqref{N-1zN-221131} and \eqref{eq20}, we deduce that $\tilde{M}_2$ is a normal operator.  We notice from \eqref{N-1zN-22113} that 
\begin{equation}\label{eq21}M_3^*M_2=M_3^*\tilde{M}_1^*M_3=\tilde{M}_1^*.\end{equation}
Thus, from \eqref{N-1zN-22113} and \eqref{eq21}, it follows that $\tilde{M}_1M_2=	\tilde{M}_1\tilde{M}_1^*M_3$ and $M_2\tilde{M}_1=\tilde{M}_1^*M_3M_1=\tilde{M}_1^*M_1M_3.$ Since $\tilde{M}_1$  and $M_2$ commute, we have $\tilde{M}_1\tilde{M}_1^*M_3=
\tilde{M}_1^*M_1M_3$. Since $M_3$ is a unitary, we conclude that $\tilde{M}_1$ is a normal operator.  By using \eqref{eq21} and  argument similar to as above, we deduce that $M_2$ is a normal operator. Let $\mathcal A_1$ represent the $C^*$-algebra generated by the commuting normal operators $I,M_1, M_2, M_3, \tilde{M}_1, \tilde{M}_2$. By the \textit{Gelfand-Naimark theorem}, $\mathcal A_1$ is isometrically $^*$-isomorphic to $C(\sigma(\textbf{M}))$ via the \textit{Gelfand map.}  The \textit{Gelfand map} sends $M_i$ to the coordinate function $x_i$  and sends $\tilde{M}_j$ to the coordinate function $y_j$ for $1\leq i\leq 3, 1\leq j \leq 2.$ Since  $||M_1|| < 1$ and $M_3$ is unitary, it implies that $|x_1|<1$ and $|x_3|=1.$ In order to complete the proof, it is sufficient to verify that the joint spectrum $\sigma(\textbf{M})$ is contained in $K_1.$

Let $(x_1,x_2,x_3,y_1,y_2)$ be  in $\sigma(\textbf{M})$. As $$ \{((\tilde{M}_1 - z M_2)(I - zM_1)^{-1}, (\tilde{M}_2 - z M_3)(I - zM_1)^{-1}) : z \in \mathbb{T} \}$$ is a family of $\Gamma_{E(2; 1; 2)}$-unitary,  the Taylor joint spectrum of $$ \{((\tilde{M}_1 - z M_2)(I - zM_1)^{-1}, (\tilde{M}_2 - z M_3)(I - zM_1)^{-1}) : z \in \mathbb{T} \}$$ lies in $b\Gamma_{E(2;1;2)}.$ So, by spectral mapping thorem, we get $$\Big(\frac{y_1 - zx_2}{1 - zx_1}, \frac{y_2 - zx_3}{1 - zx_1}\Big) \in b\Gamma_{E(2; 1; 2)} ~\text{for all}~ z \in \mathbb{T}~\text{with}~   |x_1| < 1 ~\text{and}~|x_3|=1$$ and hence by Theorem \ref{thm-6}, we conclude that   $(x_1,x_2,x_3,y_1,y_2)\in K_1$. This completes the proof.

	\end{proof}
	
The following theorems establish the relationship between the $\Gamma_{E(3; 3; 1, 1, 1)}$-unitary and $\Gamma_{E(3; 2; 1, 2)}$-unitary. 	
	\begin{thm}\label{thm-9}
		Let $\textbf{N} = (N_1, \dots, N_7)$ be a $7$-tuple of commuting bounded operators on a Hilbert space $\mathcal{H}$. Then the following are equivalent:
		\begin{enumerate}
			\item $\textbf{N}$ is $\Gamma_{E(3; 3; 1, 1, 1)}$-unitary.
			
			\item $\|N_i\|\leq 1, 1\leq i \leq 6$ and  $\{(N_1, N_3 + \eta N_5, \eta N_7, N_2 + \eta N_4, \eta N_6) : \eta \in \mathbb{T}\}$ is a family of $\Gamma_{E(3; 2; 1, 2)}$-unitary
			
			\item $\|N_i\|\leq 1, 1\leq i \leq 6$ and $\{(N_1, N_5 + \eta N_3, \eta N_7, N_4 + \eta N_2, \eta N_6) : \eta \in \mathbb{T}\}$ is a family of $\Gamma_{E(3; 2; 1, 2)}$-unitary.
			
			\item $\|N_i\|\leq 1, 1\leq i \leq 6$ and $ \{(N_2, N_3 + \eta N_6, \eta N_7, N_1 + \eta N_4, \eta N_5) : \eta \in \mathbb{T}\}$ is a family of $\Gamma_{E(3; 2; 1, 2)}$-unitary.
			
			\item $\|N_i\|\leq 1, 1\leq i \leq 6$ and $ \{(N_2, N_6 + \eta N_3, \eta N_7, N_4 + \eta N_1, \eta N_5) : \eta \in \mathbb{T}\}$ is a family of $\Gamma_{E(3; 2; 1, 2)}$-unitary.
			
			\item $\|N_i\|\leq 1, 1\leq i \leq 6$ and $ \{(N_4, N_5 + \eta N_6, \eta N_7, N_1 + \eta N_2, \eta N_3) : \eta \in \mathbb{T}\}$ is a family of $\Gamma_{E(3; 2; 1, 2)}$-unitary.
			
			\item $\|N_i\|\leq 1, 1\leq i \leq 6$ and $ \{(N_4, N_6 + \eta N_5, \eta N_7, N_2 + \eta N_1, \eta N_3) : \eta \in \mathbb{T}\}$ is a family of $\Gamma_{E(3; 2; 1, 2)}$-unitary.
		\end{enumerate}
	\end{thm}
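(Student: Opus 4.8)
The plan is to prove $(1)\Leftrightarrow(2)$ in full and to note that $(1)\Leftrightarrow(k)$ for $3\le k\le 7$ follow by the identical argument after the corresponding rearrangement of coordinates. Everything rests on the polynomial map $\pi_\eta:\mathbb C^7\to\mathbb C^5$, $\pi_\eta(\textbf{x})=(x_1,x_3+\eta x_5,\eta x_7,x_2+\eta x_4,\eta x_6)$, which is exactly the map appearing in Theorem \ref{matix AE12}, together with the observation recorded just after Theorem \ref{relation bw1}: a point $\textbf{x}=(x_1,\dots,x_7)$ lies in $K$ if and only if $\pi_\eta(\textbf{x})\in K_1$ for every $\eta\in\mathbb T$. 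A direct check using the defining relations $x_i=\bar x_{7-i}x_7$, $|x_7|=1$ of $K$ confirms the forward inclusion $\pi_\eta(K)\subseteq K_1$, and the converse is the stated equivalence.

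For $(1)\Rightarrow(2)$ I would argue spectrally. Assume $\textbf{N}$ is a $\Gamma_{E(3;3;1,1,1)}$-unitary, so the $N_i$ are commuting normal operators with $\sigma(\textbf{N})\subseteq K$. Since each $N_i$ is normal, $\|N_i\|$ equals its spectral radius, and projecting $\sigma(\textbf{N})$ onto the $i$-th coordinate yields $\sigma(N_i)$; as $K\subseteq\Gamma_{E(3;3;1,1,1)}$, Lemma \ref{lemm-2} gives $\|N_i\|\le 1$ for $1\le i\le 6$. Fix $\eta\in\mathbb T$ and set $\textbf{M}^{(\eta)}=\pi_\eta(\textbf{N})=(N_1,N_3+\eta N_5,\eta N_7,N_2+\eta N_4,\eta N_6)$. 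Each entry is a polynomial in the commuting normal operators $N_1,\dots,N_7$, hence lies in the commutative $C^*$-algebra they generate, so $\textbf{M}^{(\eta)}$ is a commuting tuple of normal operators. By the spectral mapping theorem for the Taylor joint spectrum, $\sigma(\textbf{M}^{(\eta)})=\pi_\eta(\sigma(\textbf{N}))\subseteq\pi_\eta(K)\subseteq K_1$, so $\textbf{M}^{(\eta)}$ is a $\Gamma_{E(3;2;1,2)}$-unitary for every $\eta\in\mathbb T$, which is $(2)$.

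For $(2)\Rightarrow(1)$ I would work algebraically. Fixing $\eta\in\mathbb T$ and applying the characterization of $\Gamma_{E(3;2;1,2)}$-unitaries in Theorem \ref{thm-7}, condition $(2)$, to $\textbf{M}^{(\eta)}$ shows that $\eta N_7$ is unitary, hence $N_7$ is unitary, and that $N_1=(\eta N_6)^*(\eta N_7)=N_6^*N_7$ and $N_3+\eta N_5=(N_2+\eta N_4)^*(\eta N_7)=N_4^*N_7+\eta N_2^*N_7$. Since the last identity holds for all $\eta\in\mathbb T$ and both sides are affine in $\eta$, equating the constant and linear coefficients gives $N_3=N_4^*N_7$ and $N_5=N_2^*N_7$. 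Thus $N_1=N_6^*N_7$, $N_3=N_4^*N_7$, $N_5=N_2^*N_7$; applying the Fuglede--Putnam theorem to the normal operator $N_7$, which commutes with each $N_j$, together with $N_7N_7^*=I$, produces the companion relations $N_6=N_1^*N_7$, $N_4=N_3^*N_7$, $N_2=N_5^*N_7$. Combined with the hypothesis $\|N_i\|\le 1$ for $1\le i\le 6$, these are exactly the relations of condition $(2)$ of Theorem \ref{thm-5}, whence $\textbf{N}$ is a $\Gamma_{E(3;3;1,1,1)}$-unitary.

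The main obstacle is the normality of the individual operators $N_2,N_3,N_4,N_5$ in the direction $(2)\Rightarrow(1)$: the family alone yields normality only of $N_1,N_6,N_7$ and of the combinations $N_3+\eta N_5$ and $N_2+\eta N_4$, not of the separate summands. The structural identities extracted above circumvent this difficulty by reducing the problem to Theorem \ref{thm-5}, whose proof already derives normality of all seven operators from precisely these relations, so no independent normality argument is needed. Finally, the equivalences $(1)\Leftrightarrow(k)$ for $3\le k\le 7$ follow verbatim: each listed family is $\pi_\eta$ precomposed with a permutation of the coordinates of $\Gamma_{E(3;3;1,1,1)}$ that still carries $K$ onto $K_1$ and back, reflecting the symmetry of $K$ under interchange of the three diagonal entries, so the same spectral-mapping argument and the same reduction to Theorem \ref{thm-5} apply in each case.
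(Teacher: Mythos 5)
Your proposal is correct, and its $(2)\Rightarrow(1)$ half is essentially the paper's own argument: the paper likewise applies Theorem \ref{thm-7} to each tuple $(N_1, N_3+\eta N_5, \eta N_7, N_2+\eta N_4, \eta N_6)$, obtains $N_1=N_6^*N_7$ and $N_3+\eta N_5=\eta N_2^*N_7+N_4^*N_7$, compares coefficients of $\eta$, and concludes via Theorem \ref{thm-5}; your appeal to Fuglede--Putnam for the companion relations is harmless but not needed, since commutativity with the unitary $N_7$ already gives $N_1^*N_7=N_7^*N_6N_7=N_7^*N_7N_6=N_6$, and your observation that the reduction to Theorem \ref{thm-5} absorbs the normality of the individual operators $N_2,N_3,N_4,N_5$ is exactly the point the paper's proof relies on. Where you genuinely differ is $(1)\Rightarrow(2)$: the paper stays inside contraction theory --- a $\Gamma_{E(3;3;1,1,1)}$-unitary is a $\Gamma_{E(3;3;1,1,1)}$-contraction by Theorem \ref{thm-5}, Proposition \ref{prop-3} then makes each tuple $\pi_\eta(\textbf{N})$ a $\Gamma_{E(3;2;1,2)}$-contraction, and since its third entry $\eta N_7$ is unitary, the equivalence $(1)\Leftrightarrow(4)$ of Theorem \ref{thm-7} upgrades it to a $\Gamma_{E(3;2;1,2)}$-unitary --- whereas you argue directly from the definitions, using Fuglede to make the generated $C^*$-algebra commutative (hence the entries of $\pi_\eta(\textbf{N})$ normal), the polynomial spectral mapping theorem for the Taylor spectrum, and the elementary inclusion $\pi_\eta(K)\subseteq K_1$, which you verify from the defining relations of $K$ and which the paper records after Theorem \ref{relation bw1}. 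Your route is more self-contained: it bypasses Proposition \ref{prop-3} and the nontrivial implication $(4)\Rightarrow(1)$ of Theorem \ref{thm-7}, whose proof itself runs through Gelfand theory and Theorem \ref{relation bw1}, at the price of invoking the Taylor spectral mapping theorem and checking $\pi_\eta(K)\subseteq K_1$ by hand; the paper's route instead recycles machinery it has already built. A small bonus of your version: the spectral-radius argument with Lemma \ref{lemm-2} explicitly justifies the bounds $\|N_i\|\le 1$ demanded in statement $(2)$, a detail the paper's proof of $(1)\Rightarrow(2)$ leaves implicit.
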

	
	\begin{proof}
		We only demonstrate that $(1) \Leftrightarrow(2)$ holds.  The equivalences $(1) \Leftrightarrow (3)$, $(1) \Leftrightarrow (4)$, $(1) \Leftrightarrow (5)$, $(1) \Leftrightarrow (6)$, and $(1) \Leftrightarrow (7)$  is established in a similar manner.
		
$(1) \Rightarrow (2):$ Let $\textbf{N}$ be a $\Gamma_{E(3; 3; 1, 1, 1)}$-unitary. Then, Proposition  \ref{prop-3} implies that $$\{(N_1, N_3 + \eta N_5, \eta N_7, N_2 + \eta N_4, \eta N_6) : \eta \in \mathbb{T}\}$$ forms a family of $\Gamma_{E(3; 2; 1, 2)}$-contractions. For fixed but arbitrary $\eta \in \mathbb T,$ set $$M_1=N_1, M_2=N_3 + \eta N_5, M_3=\eta N_7, \tilde{M}_1=N_2 + \eta N_4~{\rm{ and}}~\tilde{M}_2=\eta N_6.$$ Clearly, $M_3$ is unitary. Hence, it follows from Theorem \ref{thm-7} that $$\{(N_1, N_3 + \eta N_5, \eta N_7, N_2 + \eta N_4, \eta N_6) : \eta \in \mathbb{T}\}$$ is a family of $\Gamma_{E(3; 2; 1, 2)}$-unitary.
		
$(2) \Rightarrow (1):$ Suppose that $\{(N_1, N_3 + \eta N_5, \eta N_7, N_2 + \eta N_4, \eta N_6) : \eta \in \mathbb{T}\}$ is a family of $\Gamma_{E(3; 2; 1, 2)}$-unitary. It yields from Theorm \ref{thm-7} that 
\begin{equation} \label{Muni} N_1=\bar{\eta}N_6^*\eta N_7, N_3+\eta N_5=(N_2^*+\bar{\eta}N_4^*)\eta N_7\end{equation} and $\eta N_7$ is a unitary for all $\eta \in \mathbb T.$ For $\eta =1,$ we have $N_7$ is a unitary. From \eqref{Muni}, we also deduce that $N_1=N_6^*N_7$,  $N_2=N_5^*N_7$ and $N_3=N_4^*N_7.$  Hence, by Theorem \ref{thm-5}, we conclude that  $\textbf{N}$ is $\Gamma_{E(3; 3; 1, 1, 1)}$-unitary. This completes the proof.
	\end{proof}
	
	\begin{thm}
		Let $(M_1, M_2, M_3, \tilde{M}_1, \tilde{M}_2)$ be a commuting $5$-tuple of bounded operators on Hilbert space $\mathcal{H}$. Then $(M_1, M_2, M_3, \tilde{M}_1, \tilde{M}_2)$ is a $\Gamma_{E(3; 2; 1, 2)}$-unitary if and only if $\Big(M_1, \frac{\tilde{M}_1}{2}, \frac{M_2}{2}, \frac{\tilde{M}_1}{2}, \frac{M_2}{2}, \tilde{M}_2, M_3\Big)$ is a $\Gamma_{E(3; 3; 1, 1, 1)}$-unitary.
	\end{thm}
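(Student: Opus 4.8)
The plan is to reduce both implications to the purely algebraic characterizations of the two classes of unitaries, namely condition $(2)$ of Theorem \ref{thm-5} and condition $(2)$ of Theorem \ref{thm-7}, rather than re-running the Gelfand--Naimark machinery from scratch. Write $\textbf{N} = \pi(\textbf{M}) = (N_1, \dots, N_7)$ for the image under the map $\pi$ of Proposition \ref{prop-4}, so that $N_1 = M_1$, $N_2 = N_4 = \tfrac{\tilde{M}_1}{2}$, $N_3 = N_5 = \tfrac{M_2}{2}$, $N_6 = \tilde{M}_2$ and $N_7 = M_3$. Since the five given operators commute, the seven operators $N_i$ commute as well, so Theorem \ref{thm-5} applies to $\textbf{N}$; moreover each $N_i$ is a scalar multiple of a single one of the $M_j, \tilde{M}_j$, a fact that will let me transfer normality back and forth for free.

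For the forward implication, assume $\textbf{M}$ is a $\Gamma_{E(3; 2; 1, 2)}$-unitary. By Theorem \ref{thm-7} $(1) \Rightarrow (2)$ the operators $M_1, M_2, M_3, \tilde{M}_1, \tilde{M}_2$ are commuting normals, $M_3$ is unitary, $M_1, \tfrac{M_2}{2}, \tfrac{\tilde{M}_1}{2}, \tilde{M}_2$ are contractions, and
\[
M_1 = \tilde{M}_2^* M_3, \qquad M_2 = \tilde{M}_1^* M_3.
\]
I would then verify condition $(2)$ of Theorem \ref{thm-5} for $\textbf{N}$. Unitarity of $N_7 = M_3$ and contractivity of $N_1, \dots, N_6$ are immediate. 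The six relations $N_i = N_{7-i}^* N_7$ unwind to the two given equations, $M_1 = \tilde{M}_2^* M_3$ (from $N_1 = N_6^* N_7$) and $M_2 = \tilde{M}_1^* M_3$ (from $N_3 = N_4^* N_7$), together with their ``reversed'' forms $\tilde{M}_2 = M_1^* M_3$ (from $N_6 = N_1^* N_7$) and $\tilde{M}_1 = M_2^* M_3$ (from $N_2 = N_5^* N_7$ and $N_4 = N_3^* N_7$). The reversed forms follow from the two given ones by the standard manipulation: from $M_1 = \tilde{M}_2^* M_3$ one obtains $M_1^* M_3 = M_3^* \tilde{M}_2 M_3 = \tilde{M}_2 M_3^* M_3 = \tilde{M}_2$, using commutativity of the normal family and $M_3^* M_3 = I$, and symmetrically for $\tilde{M}_1$. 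Hence condition $(2)$ of Theorem \ref{thm-5} holds and $\textbf{N}$ is a $\Gamma_{E(3; 3; 1, 1, 1)}$-unitary.

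For the converse, assume $\textbf{N} = \pi(\textbf{M})$ is a $\Gamma_{E(3; 3; 1, 1, 1)}$-unitary. By Theorem \ref{thm-5} $(1) \Rightarrow (2)$ the $N_i$ are commuting normals, $N_7$ is unitary, $N_1, \dots, N_6$ are contractions, and $N_i = N_{7-i}^* N_7$ for every $i$. Reading off coordinates gives at once that $M_3 = N_7$ is unitary, that $M_1, \tfrac{M_2}{2}, \tfrac{\tilde{M}_1}{2}, \tilde{M}_2$ are contractions, that each of $M_1, M_2, M_3, \tilde{M}_1, \tilde{M}_2$ is normal (being a scalar multiple of a normal $N_i$), and --- selecting the two relations $N_1 = N_6^* N_7$ and $N_3 = N_4^* N_7$ --- that $M_1 = \tilde{M}_2^* M_3$ and $M_2 = \tilde{M}_1^* M_3$. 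These are exactly the hypotheses of condition $(2)$ of Theorem \ref{thm-7}, so $(2) \Rightarrow (1)$ of that theorem yields that $\textbf{M}$ is a $\Gamma_{E(3; 2; 1, 2)}$-unitary.

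The only subtle point, and the step I would treat most carefully, is the bookkeeping forced by the repeated coordinates of $\pi$ (the entries $\tfrac{\tilde{M}_1}{2}$ and $\tfrac{M_2}{2}$ each occur twice): one must confirm that the six relations $N_i = N_{7-i}^* N_7$ collapse \emph{consistently} to the two equations of Theorem \ref{thm-7}$(2)$ without imposing any extra constraint. This is precisely where unitarity of $M_3 = N_7$ together with the commutativity of the normal tuple enters, through the adjoint equivalence $A = B^* U \Leftrightarrow B = A^* U$ valid for commuting normals with $U$ unitary; everything else reduces to routine substitution.
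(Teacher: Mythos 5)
Your proof is correct, and your converse direction coincides with the paper's: apply Theorem \ref{thm-5} $(1)\Rightarrow(2)$ to the $7$-tuple, read off unitarity of $M_3$, the contractivity conditions, and the relations $M_1=\tilde{M}_2^*M_3$, $M_2=\tilde{M}_1^*M_3$, then invoke Theorem \ref{thm-7} $(2)\Rightarrow(1)$. The forward direction is where you genuinely diverge. The paper never touches the operator equations there: it notes $M_3$ is unitary (Theorem \ref{thm-7}), feeds the tuple through Proposition \ref{prop-4} to conclude that $\big(M_1, \tfrac{\tilde{M}_1}{2}, \tfrac{M_2}{2}, \tfrac{\tilde{M}_1}{2}, \tfrac{M_2}{2}, \tilde{M}_2, M_3\big)$ is a $\Gamma_{E(3;3;1,1,1)}$-contraction --- a spectral-set argument resting on the containment $\pi(\Gamma_{E(3;2;1,2)})\subseteq\Gamma_{E(3;3;1,1,1)}$ and von Neumann-type inequalities --- and then closes with the equivalence $(4)\Leftrightarrow(1)$ of Theorem \ref{thm-5} (unitary $=$ contraction with unitary seventh entry). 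You instead verify the purely algebraic condition $(2)$ of Theorem \ref{thm-5} directly, which forces you to produce the ``reversed'' identities $\tilde{M}_2=M_1^*M_3$ and $\tilde{M}_1=M_2^*M_3$ in addition to the two supplied by Theorem \ref{thm-7}$(2)$; your derivation of these (take adjoints, use $\tilde{M}_2M_3=M_3\tilde{M}_2$ and $M_3^*M_3=I$) is valid, and your bookkeeping of how the six relations $N_i=N_{7-i}^*N_7$ collapse consistently to four distinct equations is exactly right. What your route buys is a self-contained, operator-algebraic forward implication that bypasses Proposition \ref{prop-4} and the function theory behind it, and it makes transparent that no hidden constraint is imposed by the repeated coordinates; what the paper's route buys is brevity, since the contraction-level transfer under $\pi$ was already established and no operator identities need to be manipulated at all.
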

	
	\begin{proof}
		Let $(M_1, M_2, M_3, \tilde{M}_1, \tilde{M}_2)$ be a $\Gamma_{E(3; 2; 1, 2)}$-unitary. Then, it follows from Theorem \ref{thm-7} that $M_3$ is a unitary. According to Proposition \ref{prop-4}, we have$\Big(M_1, \frac{\tilde{M}_1}{2}, \frac{M_2}{2}, \frac{\tilde{M}_1}{2}, \frac{M_2}{2}, \tilde{M}_2, M_3\Big)$ is a $\Gamma_{E(3; 3; 1, 1, 1)}$-contraction. Consequently, using Theorem \ref{thm-5}, we conclude that $\Big(M_1, \frac{\tilde{M}_1}{2}, \frac{M_2}{2}, \frac{\tilde{M}_1}{2}, \frac{M_2}{2}, \tilde{M}_2, M_3\Big)$ is a $\Gamma_{E(3; 3; 1, 1, 1)}$-unitary.
		
Conversely, suppose that $\Big(M_1, \frac{\tilde{M}_1}{2}, \frac{M_2}{2}, \frac{\tilde{M}_1}{2}, \frac{M_2}{2}, \tilde{M}_2, M_3\Big)$ is a $\Gamma_{E(3; 3; 1, 1, 1)}$-unitary. It yields from Theorem \ref{thm-5} that 
$$M_1 = \tilde{M}^*_2M_3, M_2 = \tilde{M}^*_1M_3, \frac{\tilde{M}^*_1}{2} = \frac{M_2}{2}M_3, \frac{M_2}{2} = \frac{\tilde{M}^*_1}{2}M_3$$
 and $M_3$ is unitary. Thus, by using Theorem \ref{thm-7}, we deduce that  $\Big(M_1, \frac{\tilde{M}_1}{2}, \frac{M_2}{2}, \frac{\tilde{M}_1}{2}, \frac{M_2}{2}, \tilde{M}_2, M_3\Big)$ is a $\Gamma_{E(3; 3; 1, 1, 1)}$-unitary. This completes the proof.
	\end{proof}

\section{$\Gamma_{E(3; 3; 1, 1, 1)}$-isometries and $\Gamma_{E(3; 2; 1, 2)}$-isometries}
In the previous section we discussed the the special types of $\Gamma_{E(3; 3; 1, 1, 1)}$-contractions and $\Gamma_{E(3; 2; 1, 2)}$-contractions, namely, $\Gamma_{E(3; 3; 1, 1, 1)}$-unitaries and $\Gamma_{E(3; 2; 1, 2)}$-unitaries, respectively. In this section we describe the $\Gamma_{E(3; 3; 1, 1, 1)}$-isometries and $\Gamma_{E(3; 2; 1, 2)}$-isometries. A $\Gamma_{E(3; 3; 1, 1, 1)}$-isometry (respectively, $\Gamma_{E(3; 2; 1, 2)}$-isometry) is defined as a $7$-tuple (respectively, $5$-tuple) of commuting bounded operators that possesses a simultaneous extension  to a $\Gamma_{E(3; 3; 1, 1, 1)}$-unitary (respectively, $\Gamma_{E(3; 2; 1, 2)}$-unitary). Therefore, a $\Gamma_{E(3; 3; 1, 1, 1)}$-isometry $\textbf{V} = (V_1, \dots, V_7)$ and a $\Gamma_{E(3; 2; 1, 2)}$-isometry $\textbf{W} = (W_1, W_2, W_3, \tilde{W}_1, \tilde{W}_2)$  consist of commuting sub-normal operators. Furthermore, by definition of $\Gamma_{E(3; 3; 1, 1,1)}$-isometry (respectively, $\Gamma_{E(3; 2; 1, 2)}$-isometry), it follows that $V_7$ (respectively, $W_3$) is an isometry. We call $\textbf{V} = (V_1, \dots, V_7)$ is a pure $\Gamma_{E(3; 3; 1, 1, 1)}$-isometry (respectively,  $\textbf{W} = (W_1, W_2, W_3, \tilde{W}_1, \tilde{W}_2)$ is a pure $\Gamma_{E(3; 2; 1, 2)}$-isometry) if $V_7$ (respectively, $W_3$) is a pure isometry, that is, a shift of some multiplicity.  This section discusses the Wold Decomposition for $\Gamma_{E(3; 3; 1, 1, 1)}$-isometry and $\Gamma_{E(3; 2; 1, 2)}$-isometry. We investigate the various properties of $\Gamma_{E(3; 3; 1, 1, 1)}$-isometries and $\Gamma_{E(3; 2; 1, 2)}$-isometries. 
\begin{thm}[Wold Decomposition for a $\Gamma_{E(3; 3; 1, 1, 1)}$-Isometry]\label{thm-10}
 Let $\textbf{V} = (V_1, \dots, V_7)$ be a $\Gamma_{E(3; 3; 1, 1, 1)}$-isometry on a Hilbert space $\mathcal{H}$. Then, there exists a decomposition of $\mathcal{H}$ into a direct sum $\mathcal{H} = \mathcal{H}_1 \oplus \mathcal{H}_2$ that satisfies the following conditions:
		
		\begin{enumerate}
			\item $\mathcal{H}_1$ and $\mathcal{H}_2$ are reducing subspaces for all $V_i, 1\leq i \leq  7$.
			
			\item  If $N_i = V_i|_{\mathcal{H}_1}$ and $X_i = V_i|_{\mathcal{H}_2}$ for $1 \leq i\leq 7$, then $\textbf{N} = (N_1, \dots, N_7)$ is a $\Gamma_{E(3; 3; 1, 1, 1)}$-unitary and $\textbf{X} = (X_1, \dots, X_7)$ is a \textit{pure $\Gamma_{E(3; 3; 1, 1, 1)}$-isometry}.
		\end{enumerate}
	\end{thm}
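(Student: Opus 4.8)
The plan is to reduce everything to the classical Wold decomposition of the single isometry $V_7$ and then promote that scalar splitting to a joint reducing decomposition, using the algebraic relations that a $\Gamma_{E(3; 3; 1, 1, 1)}$-isometry inherits from its unitary extension. First I would record these fundamental relations. By definition $\textbf{V}$ is the restriction of a $\Gamma_{E(3; 3; 1, 1, 1)}$-unitary $\textbf{R} = (R_1, \dots, R_7)$ on some $\mathcal{K} \supseteq \mathcal{H}$ to the joint invariant subspace $\mathcal{H}$, so $V_i = R_i|_{\mathcal{H}}$. By Theorem \ref{thm-5} one has $R_i = R_{7-i}^* R_7$ for $1 \le i \le 6$ with $R_7$ unitary. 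For $h \in \mathcal{H}$ we have $V_7 h = R_7 h \in \mathcal{H}$ and $V_{7-i}^* = P_{\mathcal{H}} R_{7-i}^*|_{\mathcal{H}}$, whence $V_{7-i}^* V_7 h = P_{\mathcal{H}} R_{7-i}^* R_7 h = P_{\mathcal{H}} R_i h = R_i h = V_i h$, using that $R_i h \in \mathcal{H}$. This yields the operator identities
\begin{equation*}
V_i = V_{7-i}^* V_7, \qquad 1 \le i \le 6,
\end{equation*}
with $V_7$ an isometry; taking adjoints gives $V_i^* = V_7^* V_{7-i}$.

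Next I would apply the classical Wold decomposition to the isometry $V_7$, writing $\mathcal{H} = \mathcal{H}_1 \oplus \mathcal{H}_2$ with $\mathcal{H}_1 = \bigcap_{n \ge 0} V_7^n \mathcal{H}$ the unitary part and $\mathcal{H}_2 = \bigoplus_{n \ge 0} V_7^n (\ker V_7^*)$ the shift part; both reduce $V_7$, with $V_7|_{\mathcal{H}_1}$ unitary and $V_7|_{\mathcal{H}_2}$ a pure isometry. The crux is to upgrade this to a \emph{joint} reducing decomposition. Invariance of $\mathcal{H}_1$ under each $V_i$ follows from commutativity: if $x \in \mathcal{H}_1$, then for every $n$ there is $y_n \in \mathcal{H}$ with $x = V_7^n y_n$, so $V_i x = V_7^n V_i y_n \in V_7^n \mathcal{H}$, giving $V_i x \in \mathcal{H}_1$. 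For the adjoint direction I would use the relation $V_i^* = V_7^* V_{7-i}$: since $V_{7-i}$ preserves $\mathcal{H}_1$ by the same commutativity argument and $V_7^*$ preserves $\mathcal{H}_1$ (as $\mathcal{H}_1$ reduces $V_7$), we obtain $V_i^* \mathcal{H}_1 \subseteq \mathcal{H}_1$. Hence $\mathcal{H}_1$, and therefore $\mathcal{H}_2 = \mathcal{H}_1^{\perp}$, reduces every $V_i$ for $1 \le i \le 7$.

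Finally I would identify the two pieces. On $\mathcal{H}_1$ the tuple $\textbf{N} = (V_1|_{\mathcal{H}_1}, \dots, V_7|_{\mathcal{H}_1})$ is a $\Gamma_{E(3; 3; 1, 1, 1)}$-contraction, being the restriction of the $\Gamma_{E(3; 3; 1, 1, 1)}$-contraction $\textbf{V}$ to a reducing (hence invariant) subspace by Remark \ref{rem-1}, and its last entry $V_7|_{\mathcal{H}_1}$ is unitary; the equivalence of conditions $(4)$ and $(1)$ in Theorem \ref{thm-5} then forces $\textbf{N}$ to be a $\Gamma_{E(3; 3; 1, 1, 1)}$-unitary. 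On $\mathcal{H}_2$, since $R_i \mathcal{H}_2 = V_i \mathcal{H}_2 \subseteq \mathcal{H}_2$, the subspace $\mathcal{H}_2$ is invariant for each $R_i$, so $\textbf{X} = \textbf{R}|_{\mathcal{H}_2}$ is the restriction of the $\Gamma_{E(3; 3; 1, 1, 1)}$-unitary $\textbf{R}$ to a joint invariant subspace, i.e. a $\Gamma_{E(3; 3; 1, 1, 1)}$-isometry, whose last entry $V_7|_{\mathcal{H}_2}$ is a pure isometry; thus $\textbf{X}$ is a pure $\Gamma_{E(3; 3; 1, 1, 1)}$-isometry. The main obstacle is the middle step, namely showing that the scalar Wold subspaces of $V_7$ actually reduce the remaining six operators: commutativity alone delivers only invariance of $\mathcal{H}_1$, and it is precisely the relations $V_i = V_{7-i}^* V_7$ that convert invariance into the reducing property and thereby split the other six coordinates consistently with $V_7$.
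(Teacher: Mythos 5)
Your proof is correct, and it shares the paper's overall skeleton: derive the relations $V_i = V_{7-i}^*V_7$, $1\leq i\leq 6$, with $V_7$ an isometry, from the ambient $\Gamma_{E(3;3;1,1,1)}$-unitary, take the classical Wold decomposition of $V_7$, and identify the two pieces via Theorem \ref{thm-5}. Where you genuinely diverge is at the crux, namely showing that the Wold subspaces of $V_7$ reduce the remaining six operators. The paper writes each $V_i$ as a $2\times 2$ block matrix with respect to $\mathcal{H}=\mathcal{H}_1\oplus\mathcal{H}_2$, kills the corner $A_{21}^{(i)}$ by the intertwining lemma [Lemma $2.5$, \cite{ay1}] (no nonzero operator intertwines the unitary part with the shift part), and then kills $A_{12}^{(i)}$ by comparing blocks in the identity $V_i=V_{7-i}^*V_7$. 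You avoid that external lemma altogether by working with the concrete description $\mathcal{H}_1=\bigcap_{n\geq 0}V_7^n\mathcal{H}$: commutativity gives $V_i\mathcal{H}_1\subseteq\mathcal{H}_1$, and the adjoint relation $V_i^*=V_7^*V_{7-i}$ combined with the fact that $\mathcal{H}_1$ reduces $V_7$ gives $V_i^*\mathcal{H}_1\subseteq\mathcal{H}_1$; this is more elementary and self-contained. Your treatment of the shift part is also tighter: by observing that $\mathcal{H}_2$ is jointly invariant for the ambient unitary $\textbf{R}$, you obtain that $\textbf{X}$ is a $\Gamma_{E(3;3;1,1,1)}$-isometry directly from the definition, whereas the paper passes from the identities $X_i=X_{7-i}^*X_7$ to ``pure $\Gamma_{E(3;3;1,1,1)}$-isometry'' without explicitly exhibiting a unitary extension (that implication is essentially Theorem \ref{thm-12}, proved afterwards). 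What the paper's block computation buys in return is the explicit diagonal form $V_i=N_i\oplus X_i$ together with the identities $N_i=N_{7-i}^*N_7$ and $X_i=X_{7-i}^*X_7$, which are quoted again later (e.g.\ in the proof of Theorem \ref{thm-12}); your route reaches the same conclusion with fewer ingredients but leaves those identities implicit.
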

	
	\begin{proof}
Since $\textbf{V} = (V_1, \dots, V_7)$ is a $\Gamma_{E(3; 3; 1, 1, 1)}$-isometry, so there exists a Hilbert space $\mathcal K$ containing $\mathcal H$ and a $\Gamma_{E(3; 3; 1, 1, 1)}$-unitary $(U_1,U_2,\dots,U_7)$ on $\mathcal{K}$ such that $V_i=U_i|_{\mathcal H}$ for $1\leq i\leq 7$. For  $h_1,h_2 \in \mathcal H$ and  $1\leq i\leq 6,$ it follows from Theorem \ref{thm-5} that
\begin{equation}\label{v11}
\begin{aligned}\langle V_i h_1,h_2 \rangle&=\langle U_i h_1,h_2 \rangle\\&=\langle U_{7-i}^*U_7h_1,h_2\rangle \\&=\langle U_7h_1,U_{7-i}h_2\rangle\\&=\langle V_7h_1,V_{7-i}h_2\rangle\\&=\langle V_{7-i}^*V_7h_1,h_2\rangle
\end{aligned}
\end{equation}
and 	
\begin{equation}\label{v12}
\begin{aligned}\langle V_7^*V_7h_1,h_2 \rangle&=\langle V_7 h_1,V_7h_2 \rangle\\&=\langle U_7h_1,U_7h_2\rangle \\&=\langle U_7^*U_7h_1,h_2\rangle\\&=\langle h_1,h_2\rangle.
\end{aligned}
\end{equation}
Thus, from \eqref{v11} and \eqref{v12}, we deduce that $V_i=V_{7-i}^*V_7$, $1\leq i\leq 6$,  and $V_7$ is an isometry. Since $V_7$ is an isometry, by Wold type decomposition theorem, there exists an orthogonal decomposition $\mathcal H_1\oplus \mathcal H_2$ of $\mathcal H$ such that $V_7$ has the following decomposition \begin{equation}\label{dec}
			\begin{aligned}
				V_7 &= 
				\begin{pmatrix}
					N_7 & 0\\
					0 & X_7
				\end{pmatrix},
			\end{aligned}
		\end{equation}
where $N_7$ is an unitary and  $X_7$ is a shift of some multiplicity. Let $V_i$ have the form \begin{equation*}
\begin{aligned}
				V_i &= \begin{pmatrix}
A_{11}^{(i)} & A_{12}^{(i)}\\
A_{21}^{(i)} & A_{22}^{(i)}
\end{pmatrix}
\end{aligned}
\end{equation*}
with respect to the decomposition $\mathcal H=\mathcal H_1\oplus \mathcal H_2, 1\leq i\leq 6$.
As $V_i$ commutes with $V_7$ the off-diagonal elements $A_{12}^{(i)}$ and $ A_{21}^{(i)}$ satisfy the following properties:		\begin{equation*}
			\begin{aligned}
				N_7A_{12}^{(i)} = A_{12}^{(i)}X_7 ~\text{and}~ X_7A_{21}^{(i)} = A_{21}^{(i)}N_7,~~~~~1\leq i\leq 6.
			\end{aligned}
		\end{equation*}
It follows from [Lemma $2.5$, \cite{ay1}] that  $A_{21}^{(i)}=0$ for $1\leq i\leq 6.$ For $1\leq i\leq 6,$ note that
\begin{equation}\label{matrixeq}
\begin{aligned}
 \begin{pmatrix}
A_{11}^{(i)} & A_{12}^{(i)}\\
0 & A_{22}^{(i)}
\end{pmatrix} &=V_i \\&=V_{7-i}^*V_7\\&= \begin{pmatrix}
{A_{11}^{(7-i)}}^* & 0\\
{A_{12}^{(7-i)}}^* & {A_{22}^{(7-i)}}^*
\end{pmatrix}\begin{pmatrix}
					N_7 & 0\\
					0 & X_7
				\end{pmatrix}\\&= \begin{pmatrix}
{A_{11}^{(7-i)}}^*N_7 & 0\\
{A_{12}^{(7-i)}}^*N_7& {A_{22}^{(7-i)}}^*X_7
\end{pmatrix}.
		\end{aligned}
\end{equation}
Thus, from \eqref{matrixeq}, we have  $A_{12}^{(i)}=0$ for $1\leq i\leq 6$ and hence we get $V_i=N_i\oplus X_i, 1\leq i \leq 7$, where $N_i=A_{11}^{(i)}$ and $X_i=A_{22}^{(i)}, 1\leq i \leq 6$. As $V_i=V_{7-i}^*V_7$, it follows that 
\begin{equation} \label{unit}
N_i=N_{7-i}^*N_7, X_i=X_{7-i}^*X_7~{\rm{for}}~1\leq i \leq 6.
\end{equation}
Since $N_7$ is a unitary and $N_i=N_{7-i}^*N_7, 1 \leq i \leq 6,$ we conclude from Theorem \ref{thm-5} that $\textbf{N} = (N_1, \dots, N_7)$ is a $\Gamma_{E(3; 3; 1, 1, 1)}$-unitary. Furthermore, by definition of \textit{pure $\Gamma_{E(3; 3; 1, 1, 1)}$-isometry}, we deduce that $\textbf{X} = (X_1, \dots, X_7)$ is a \textit{pure $\Gamma_{E(3; 3; 1, 1, 1)}$-isometry}. This completes the proof.
	\end{proof}
We state the  Wold Decomposition theorem for $\Gamma_{E(3; 2; 1, 2)}$-Isometry. Its proof is analogous to that of the previous theorem. Therefore, we skip the proof.
	
	\begin{thm}[Wold Decomposition for $\Gamma_{E(3; 2; 1, 2)}$-Isometry] \label{thm-11}
		Let $\textbf{W} = (W_1, W_2, W_3, \tilde{W}_1, \tilde{W}_2)$ be a $\Gamma_{E(3; 2; 1, 2)}$-isometry on a Hilbert space $\mathcal{H}$. Then $\mathcal{H}$ decomposes into a direct sum $\mathcal{H} = \mathcal{H}_1 \oplus \mathcal{H}_2$ of subspaces $\mathcal{H}_1$ and $\mathcal{H}_2$ such that
		\begin{enumerate}
			\item $\mathcal{H}_1, \mathcal{H}_2$ are reducing subspaces for $W_1, W_2, W_3, \tilde{W}_1, \tilde{W}_2$.
			
			\item If $M_i = W_i|_{\mathcal{H}_1}, \tilde{M}_j = \tilde{W}_j|_{\mathcal{H}_1}$ and $L_i = W_i|_{\mathcal{H}_2}, \tilde{L}_j = \tilde{W}_j|_{\mathcal{H}_2}$ for $i = 1, 2, 3$ and $j = 1, 2,$ then $\textbf{M} = (M_1, M_2, M_3, \tilde{M}_1, \tilde{M}_2)$ is a $\Gamma_{E(3; 2; 1, 2)}$-unitary and $\textbf{L} = (L_1, L_2, L_3, \tilde{L}_1, \tilde{L}_2)$ is a pure $\Gamma_{E(3; 2; 1, 2)}$-isometry.
		\end{enumerate}
	\end{thm}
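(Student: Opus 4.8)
The plan is to mirror the proof of Theorem \ref{thm-10}, replacing the seven defining relations of a $\Gamma_{E(3; 3; 1, 1, 1)}$-unitary by the two relations that characterize a $\Gamma_{E(3; 2; 1, 2)}$-unitary in Theorem \ref{thm-7}. Since $\textbf{W}$ is a $\Gamma_{E(3; 2; 1, 2)}$-isometry, it is the restriction of a $\Gamma_{E(3; 2; 1, 2)}$-unitary $(U_1, U_2, U_3, \tilde{U}_1, \tilde{U}_2)$ on some $\mathcal{K} \supseteq \mathcal{H}$ to the joint invariant subspace $\mathcal{H}$, with $W_i = U_i|_{\mathcal{H}}$ and $\tilde{W}_j = \tilde{U}_j|_{\mathcal{H}}$. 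By Theorem \ref{thm-7} the unitary satisfies $U_1 = \tilde{U}_2^* U_3$, $U_2 = \tilde{U}_1^* U_3$ and $U_3$ is unitary. First I would reproduce the inner-product computation of \eqref{v11}--\eqref{v12}: for $h_1, h_2 \in \mathcal{H}$, using the invariance of $\mathcal{H}$ together with $U_1 = \tilde{U}_2^* U_3$, one obtains $\langle W_1 h_1, h_2\rangle = \langle U_3 h_1, \tilde{U}_2 h_2\rangle = \langle \tilde{W}_2^* W_3 h_1, h_2\rangle$, whence $W_1 = \tilde{W}_2^* W_3$, and likewise $W_2 = \tilde{W}_1^* W_3$, while $W_3$ is an isometry.

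Next I would apply the Wold decomposition to the isometry $W_3$, obtaining $\mathcal{H} = \mathcal{H}_1 \oplus \mathcal{H}_2$ with $W_3 = M_3 \oplus L_3$, where $M_3$ is unitary and $L_3$ is a shift of some multiplicity. Writing each of $W_1, W_2, \tilde{W}_1, \tilde{W}_2$ as a $2 \times 2$ operator matrix in this decomposition, commutativity with $W_3$ forces the off-diagonal blocks $T_{12}, T_{21}$ to satisfy $M_3 T_{12} = T_{12} L_3$ and $L_3 T_{21} = T_{21} M_3$; by [Lemma $2.5$, \cite{ay1}] the lower-left block $T_{21}$ vanishes for all four operators. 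To eliminate the upper-right blocks I would exploit the relations just derived: since $\tilde{W}_2$ is already lower triangular, $\tilde{W}_2^*$ is upper triangular, so $W_1 = \tilde{W}_2^* W_3$ is lower triangular, giving the upper-right block of $W_1$ equal to zero; moreover the lower-left block of $W_1 = \tilde{W}_2^* W_3$ equals the adjoint of the upper-right block of $\tilde{W}_2$ multiplied by $M_3$, and as this lower-left block is zero while $M_3$ is invertible, the upper-right block of $\tilde{W}_2$ must vanish as well. The same argument applied to $W_2 = \tilde{W}_1^* W_3$ removes the upper-right blocks of $W_2$ and $\tilde{W}_1$.

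Consequently all five operators are block diagonal, so $\mathcal{H}_1$ and $\mathcal{H}_2$ reduce $\textbf{W}$, which establishes (1). Setting $M_i = W_i|_{\mathcal{H}_1}$, $\tilde{M}_j = \tilde{W}_j|_{\mathcal{H}_1}$ and $L_i = W_i|_{\mathcal{H}_2}$, $\tilde{L}_j = \tilde{W}_j|_{\mathcal{H}_2}$, the relations $W_1 = \tilde{W}_2^* W_3$ and $W_2 = \tilde{W}_1^* W_3$ restrict to $M_1 = \tilde{M}_2^* M_3$, $M_2 = \tilde{M}_1^* M_3$ on $\mathcal{H}_1$ and to $L_1 = \tilde{L}_2^* L_3$, $L_2 = \tilde{L}_1^* L_3$ on $\mathcal{H}_2$. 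Since $M_3$ is unitary and the coordinates inherit the required contractivity bounds from $\textbf{W}$ (which is a $\Gamma_{E(3; 2; 1, 2)}$-contraction, and restriction to a reducing subspace does not increase norms, cf. Remark \ref{rem-1}), Theorem \ref{thm-7} shows that $\textbf{M} = (M_1, M_2, M_3, \tilde{M}_1, \tilde{M}_2)$ is a $\Gamma_{E(3; 2; 1, 2)}$-unitary; and since $L_3$ is a shift, $\textbf{L} = (L_1, L_2, L_3, \tilde{L}_1, \tilde{L}_2)$ is a pure $\Gamma_{E(3; 2; 1, 2)}$-isometry by definition, proving (2).

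The main obstacle is the vanishing of the upper-right blocks. Unlike the $\Gamma_{E(3; 3; 1, 1, 1)}$ setting, where all six lower operators are symmetrically paired through $V_i = V_{7-i}^* V_7$, here only $W_1$ and $W_2$ are expressed directly by the two defining relations. The delicate point is that the single relation $W_1 = \tilde{W}_2^* W_3$ must simultaneously annihilate the upper-right block of $W_1$ and, via the already-established vanishing of its lower-left block combined with the invertibility of $M_3$, the upper-right block of $\tilde{W}_2$; verifying this coupling, and the analogous one arising from $W_2 = \tilde{W}_1^* W_3$, is the only step that genuinely requires care.
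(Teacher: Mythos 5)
Your proposal is correct and coincides with the paper's intended argument: the paper omits this proof, stating only that it is analogous to Theorem \ref{thm-10}, and your adaptation (deriving $W_1=\tilde{W}_2^*W_3$, $W_2=\tilde{W}_1^*W_3$ and the isometry of $W_3$ from the extending unitary, applying the Wold decomposition to $W_3$, killing the lower-left blocks via [Lemma 2.5, \cite{ay1}] and the upper-right blocks through the triangular structure of these relations, then concluding with Theorem \ref{thm-7}) is exactly that analogue. In particular, the coupling you single out---one relation simultaneously annihilating the upper-right blocks of $W_1$ and $\tilde{W}_2$---is the same mechanism as the block identity \eqref{matrixeq} in the proof of Theorem \ref{thm-10}, so no new difficulty arises.
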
	
The following lemma gives the characterization of tetrablock isometry.
\begin{lem}\label{tetiso}
Let $(A,B,P)$ be a commuting triple of bounded operator on some Hilbert space $\mathcal H.$ Then $(A,B,P)$ is a $\Gamma_{E(2;2;1,1)}$-isometry if and only if $A$ is a contraction and  $$\rho_{G_{E(2; 2; 1,1)}} (A,zB,zP)=0 ~~\textit{and}~~\rho_{G_{E(2; 2; 1,1)}} (B,zA,zP)=0 ~\textit{for~all}~z\in \mathbb T.$$
\end{lem}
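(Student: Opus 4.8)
The plan is to reduce the lemma to two ingredients: the algebraic description of tetrablock isometries from \cite{Bhattacharyya}, and a single elementary expansion of the operator function $\rho_{G_{E(2; 2; 1,1)}}$. The computational backbone is that, for every $z\in\mathbb{T}$,
\begin{equation*}
\rho_{G_{E(2; 2; 1,1)}}(A,zB,zP)=(I-P^*P)-(B^*B-A^*A)-2\,\mathrm{Re}\big(z(B-A^*P)\big)
\end{equation*}
and, symmetrically,
\begin{equation*}
\rho_{G_{E(2; 2; 1,1)}}(B,zA,zP)=(I-P^*P)-(A^*A-B^*B)-2\,\mathrm{Re}\big(z(A-B^*P)\big),
\end{equation*}
both obtained by substituting into the definition of $\rho_{G_{E(2; 2; 1,1)}}$ and using $|z|^2=1$.

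First I would treat the forward implication. If $(A,B,P)$ is a $\Gamma_{E(2; 2; 1,1)}$-isometry, then by definition it is the restriction to a joint invariant subspace $\mathcal{H}$ of a $\Gamma_{E(2; 2; 1,1)}$-unitary $(\tilde{A},\tilde{B},\tilde{P})$ on a larger space $\mathcal{K}\supseteq\mathcal{H}$. Using the tetrablock-unitary relations $\tilde{A}=\tilde{B}^*\tilde{P}$, $\tilde{B}=\tilde{A}^*\tilde{P}$ with $\tilde{P}$ unitary from [Theorem $5.4$, \cite{Bhattacharyya}], and arguing exactly as in the inner-product computation of Theorem \ref{thm-10}, I would show for $h_1,h_2\in\mathcal{H}$ that $\langle Ah_1,h_2\rangle=\langle\tilde{P}h_1,\tilde{B}h_2\rangle=\langle Ph_1,Bh_2\rangle$, hence $A=B^*P$; symmetrically $B=A^*P$; and $\langle P^*Ph_1,h_2\rangle=\langle\tilde{P}h_1,\tilde{P}h_2\rangle=\langle h_1,h_2\rangle$, so $P$ is an isometry. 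Moreover, normality of the tetrablock unitary together with $\tilde{A}=\tilde{B}^*\tilde{P}$ gives $\tilde{A}^*\tilde{A}=\tilde{B}^*\tilde{B}$ on all of $\mathcal{K}$, whence $\|Ah\|=\|\tilde{A}h\|=\|\tilde{B}h\|=\|Bh\|$ for $h\in\mathcal{H}$, i.e. $A^*A=B^*B$; and $A$ is a contraction since $\|A\|\le\|\tilde{A}\|\le1$. Substituting $B=A^*P$, $P^*P=I$ and $A^*A=B^*B$ into the two displayed identities makes every term vanish, giving the required $\rho_{G_{E(2; 2; 1,1)}}(A,zB,zP)=0$ and $\rho_{G_{E(2; 2; 1,1)}}(B,zA,zP)=0$ for all $z\in\mathbb{T}$.

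For the converse I would run the expansions backwards. Assuming both $\rho$-expressions vanish for all $z\in\mathbb{T}$, I separate each one into its constant and its $z$-dependent part by evaluating at $z=\pm1$ and $z=\pm i$: the vanishing of $\mathrm{Re}\big(z(B-A^*P)\big)$ for all $z$ forces $B=A^*P$, and likewise $A=B^*P$, while the constant parts give $I-P^*P=B^*B-A^*A$ and $I-P^*P=A^*A-B^*B$. Adding these last two yields $A^*A=B^*B$ and hence $P^*P=I$, so $P$ is an isometry. Together with the standing hypothesis that $A$ is a contraction, the relations ``$P$ isometry, $A$ contraction, $A=B^*P$'' are precisely the algebraic characterization of a tetrablock isometry in \cite{Bhattacharyya}, so $(A,B,P)$ is a $\Gamma_{E(2; 2; 1,1)}$-isometry.

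The main obstacle is the forward direction, specifically extracting the adjoint relations $A=B^*P$, $B=A^*P$ and the identity $A^*A=B^*B$ on $\mathcal{H}$: since taking adjoints does not commute with restriction to an invariant (non-reducing) subspace, these cannot be read off termwise from $(\tilde{A},\tilde{B},\tilde{P})$ and must instead be produced through the inner-product manipulation and through the fact that $\tilde{A}^*\tilde{A}=\tilde{B}^*\tilde{B}$ holds globally on $\mathcal{K}$. A secondary technical point is the careful separation, over $z\in\mathbb{T}$, of the constant and $z$-linear contributions to $\rho_{G_{E(2; 2; 1,1)}}$, which is what simultaneously pins down that $P$ is an isometry and that $A^*A=B^*B$, rather than leaving only a one-parameter family of inequalities.
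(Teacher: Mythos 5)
Your proof is correct, and its overall skeleton --- reduce both implications to the algebraic characterization of tetrablock isometries from \cite{Bhattacharyya} together with the expansion of $\rho_{G_{E(2;2;1,1)}}$ over $z\in\mathbb{T}$ --- is the same as the paper's; the differences are in execution, and in both places your version is the more careful one. In the forward direction the paper simply quotes Theorem $5.7$ of \cite{Bhattacharyya} to get $A=B^*P$, $B=A^*P$, $P^*P=I$ and then asserts that $\rho$ vanishes ``by definition''; this glosses over the term $B^*B-A^*A$, whose vanishing is not an immediate consequence of those three identities alone (it does follow, but only via an extra observation, e.g.\ commutativity $AB=BA$ with $B=A^*P$ gives $(AA^*-A^*A)P=0$, whence $B^*B=P^*AA^*P=A^*A$). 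You instead return to the definition of an isometry as the restriction of a tetrablock unitary and extract $A=B^*P$, $B=A^*P$, $P^*P=I$ and, crucially, $A^*A=B^*B$ from the unitary extension (your Fuglede-type step $\tilde A^*\tilde A=\tilde B^*\tilde B$ on $\mathcal{K}$ plus restriction), which makes the substitution into $\rho$ genuinely term-by-term; this buys a self-contained argument at the cost of redoing part of what Theorem $5.7$ already packages. In the converse, the paper evaluates only at $z=\pm1$, which strictly yields $I-P^*P\pm(B^*B-A^*A)=0$ together with the skew-adjointness of $B-A^*P$ and $A-B^*P$, not their vanishing; your additional evaluation at $z=\pm i$ is exactly what upgrades skew-adjointness to $B=A^*P$ and $A=B^*P$, so your version closes a small gap in the paper's argument. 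You are also right to flag that the standing contraction hypothesis on $A$ must be invoked when citing the characterization at the end of the converse: the relations $A=B^*P$, $B=A^*P$, $P^*P=I$ alone do not force membership in the tetrablock (consider $(2I,2I,I)$), a point the paper leaves implicit.
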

\begin{proof}
Suppose that $(A,B,P)$ is a $\Gamma_{E(2;2;1,1)}$-isometry. It follows from  [Theorem $5.7$, \cite{Bhattacharyya}] that 
\begin{equation}\label{isom}
A=B^*P, B=A^*P~~\textit{and}~P~\textit{is~an~isometry}.
\end{equation}
By definition of $\rho_{G_{E(2; 2; 1,1)}}$, we deduce that $$\rho_{G_{E(2; 2; 1,1)}} (A,zB,zP)=0 ~~\textit{and}~~\rho_{G_{E(2; 2; 1,1)}} (B,zA,zP)=0 ~\textit{for~all}~z\in \mathbb T.$$

Conversely, suppose that $$\rho_{G_{E(2; 2; 1,1)}} (A,zB,zP)=0 ~~\textit{and}~~\rho_{G_{E(2; 2; 1,1)}} (B,zA,zP)=0 ~\textit{for~all}~z\in \mathbb T.$$ Then, by definition of $\rho_{G_{E(2; 2; 1,1)}}$, we have
\begin{equation}\label{abp}
I-P^*P+(B^*B-A^*A)-2Re z(B-A^*P)=0
\end{equation}
and 
\begin{equation}\label{bap}
I-P^*P+(A^*A-B^*B)-2Re z(A-B^*P)=0
\end{equation}
for all $z\in \mathbb T.$ By putting $z=1$ and $z=-1$ in \eqref{abp} and \eqref{bap}, we get
\begin{equation}\label{abp1}
I-P^*P+(B^*B-A^*A)=0 ~\textit{and}~B=A^*P
\end{equation}
and 
\begin{equation}\label{bap1}
I-P^*P+(A^*A-B^*B)=0 ~\textit{and}~A=B^*P.
\end{equation}
From \eqref{abp1} and \eqref{bap1}, we conclude that $P^*P=I, B=A^*P$ and $A=B^*P$. Thus, by [Theorem $5.7$, \cite{Bhattacharyya}], we deduce that $(A,B,P)$ is a $\Gamma_{E(2;2;1,1)}$-isometry. This completes the proof.
\end{proof}

The following theorems provide a necessary and sufficient condition for $\Gamma_{E(3; 3; 1, 1, 1)}$-isometry and $\Gamma_{E(3; 2; 1, 2)}$-isometry. 


\begin{thm}\label{thm-12}
Let $\textbf{V} = (V_1, \dots, V_7)$ be a $7$-tuple of commuting bounded operators on a Hilbert space $\mathcal{H}$. Then the following are equivalent:
\begin{enumerate}
\item $\textbf{V}$ is a $\Gamma_{E(3; 3; 1, 1, 1)}$-isometry.

\item $\textbf{V}$ is a $\Gamma_{E(3; 3; 1, 1, 1)}$-contraction and $V_7$ is an isometry.

\item For $1\leq i\leq 6,$ $V_i$'s are contractions, $V_i = V^*_{7-i} V_7$ and $V_7$ is isometry.
\item For $1\leq i\leq 3,$ $(V_i,V_{7-i},V_7)$ are $\Gamma_{E(2; 2; 1, 1)}$-isometries.
\item  $V_i$'s are contractions and $\rho_{G_{E(2; 2; 1,1)}} (V_i,zV_{7-i},zV_7)=0, 1\leq i \leq 6,$ for all $z\in \mathbb T.$

\item $V_7$ is an isometry, $r(V_i)\leqslant 1$ for $1\leq i \leq 6$ and $V_1 = V^*_6V_7, V_2 = V^*_5V_7, V_3 = V^*_4V_7$.
\item $||V_2|| < 1$, $\|V_4\|<1,$ $\{((V_1 - z_2V_3)(I - z_2V_2)^{-1}, (V_4 - z_2V_6)(I - z_2V_2)^{-1}, (V_5 - z_2V_7)(I - z_2V_2)^{-1}) : |z_2| = 1\}$  and  $\{((V_1 - z_3V_5)(I - z_3V_4)^{-1},  (V_2 - z_3V_6)(I - z_3V_4)^{-1}, (V_3 - z_3V_7)(I - z_3V_4)^{-1}) : |z_3| = 1\}$ are  commuting family of tetrablock isometries.
				
				\item[$(7)^{\prime}$]  $V_2$ is isometry and $\{((V_1 - z_2V_3)(I - z_2V_2)^{-1}, (V_4 - z_2V_6)(I - z_2V_2)^{-1}, (V_5 - z_2V_7)(I - z_2V_2)^{-1}) : |z_2| < 1\}$ is a commuting family of tetrablock isometries.
			\item[$(7)^{\prime\prime}$] $V_4$ is isometry and $ \{((V_1 - z_3V_5)(I - z_3V_4)^{-1},  (V_2 - z_3V_6)(I - z_3V_4)^{-1}, (V_3 - z_3V_7)(I - z_3V_4)^{-1}) : |z_3| < 1\}$ is a commuting family of tetrablock isometries.
						
				\item  $||V_1|| < 1$, $\|V_2\|<1,$  $\{((V_2 - z_1V_3)(I - z_1V_1)^{-1},  (V_4 - z_1V_5)(I - z_1V_1)^{-1}, (V_6 - z_1V_7)(I - z_1V_1)^{-1}) : |z_1| = 1\}$ and  $\{((V_1 - z_2V_3)(I - z_2V_2)^{-1}, (V_4 - z_2V_6)(I - z_2V_2)^{-1}, (V_5 - z_2V_7)(I - z_2V_2)^{-1}) : |z_2| = 1\}$ are  commuting family of tetrablock isometries.
				
				\item [$(8)^{\prime}$]  $V_1$ is isometry and $ \{((V_2 - z_1V_3)(I - z_1V_1)^{-1},  (V_4 - z_1V_5)(I - z_1V_1)^{-1}, (V_6 - z_1V_7)(I - z_1V_1)^{-1}) : |z_1| < 1\}$ is a commuting family of tetrablock isometries.
			
			\item[$(8)^{\prime\prime}$]  $V_2$ is isometry and $\{((V_1 - z_2V_3)(I - z_2V_2)^{-1}, (V_4 - z_2V_6)(I - z_2V_2)^{-1}, (V_5 - z_2V_7)(I - z_2V_2)^{-1}) : |z_2| < 1\}$ is a commuting family of tetrablock isometries.
					
			\item  $\|V_1\|<1,$ $\|V_4\| < 1$, $ \{((V_1 - z_3V_5)(I - z_3V_4)^{-1},  (V_2 - z_3V_6)(I - z_3V_4)^{-1}, (V_3 - z_3V_7)(I - z_3V_4)^{-1}) : |z_3| = 1\}$ and $\{((V_2 - z_1V_3)(I - z_1V_1)^{-1},  (V_4 - z_1V_5)(I - z_1V_1)^{-1}, (V_6 - z_1V_7)(I - z_1V_1)^{-1}) : |z_1| = 1\}$ are  commuting family of tetrablock isometries.				
				\item [$(9)^{\prime}$] $V_1$ is isometry and $ \{((V_2 - z_1V_3)(I - z_1V_1)^{-1},  (V_4 - z_1V_5)(I - z_1V_1)^{-1}, (V_6 - z_1V_7)(I - z_1V_1)^{-1}) : |z_1| < 1\}$ is a commuting family of tetrablock isometries.
				\item [$(9)^{\prime\prime}$]  $V_4$ is isometry and $ \{((V_1 - z_3V_5)(I - z_3V_4)^{-1},  (V_2 - z_3V_6)(I - z_3V_4)^{-1}, (V_3 - z_3V_7)(I - z_3V_4)^{-1}) : |z_3| < 1\}$ is a commuting family of tetrablock isometries.
			
\end{enumerate}
\end{thm}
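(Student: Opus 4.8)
The plan is to run a cycle of implications anchored on the algebraic condition (3), exactly parallel to the proof of Theorem \ref{thm-5} but with every occurrence of ``unitary'' weakened to ``isometry''. First I would dispose of $(1)\Rightarrow(2)$: a $\Gamma_{E(3;3;1,1,1)}$-isometry is by definition the restriction of a $\Gamma_{E(3;3;1,1,1)}$-unitary $\textbf{U}$ to a joint invariant subspace $\mathcal H\subseteq\mathcal K$; by Theorem \ref{thm-5} such a $\textbf{U}$ is a $\Gamma_{E(3;3;1,1,1)}$-contraction with $U_7$ unitary, so by Remark \ref{rem-1} its restriction $\textbf{V}$ is again a $\Gamma_{E(3;3;1,1,1)}$-contraction and $V_7=U_7|_{\mathcal H}$ is an isometry.

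Next I would establish $(2)\Rightarrow(4)\Rightarrow(3)\Rightarrow(6)$ together with $(4)\Leftrightarrow(5)$. Assuming (2), Proposition \ref{props-1} gives that $(V_1,V_6,V_7)$, $(V_2,V_5,V_7)$, $(V_3,V_4,V_7)$ are tetrablock contractions; since $V_7$ is an isometry, each is in fact a tetrablock isometry (the tetrablock analogue of the implication ``$\Gamma$-contraction with unitary last coordinate $\Rightarrow$ $\Gamma$-unitary'' exploited in Theorem \ref{thm-5}), which is (4). Reading off the tetrablock-isometry relations $A=B^*P$, $B=A^*P$ of [Theorem $5.7$,\cite{Bhattacharyya}] for $(V_i,V_{7-i},V_7)$ yields $V_i=V_{7-i}^*V_7$ for $1\leq i\leq 6$ together with the contractivity of each $V_i$; this is (3), and (6) follows at once since a contraction has spectral radius at most $1$. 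The reverse arrows close because the relations in (3) or (6) with $V_7$ isometric make each $(V_i,V_{7-i},V_7)$ satisfy the hypotheses of [Theorem $5.7$,\cite{Bhattacharyya}], hence tetrablock isometries, returning to (4); and $(4)\Leftrightarrow(5)$ is precisely Lemma \ref{tetiso}.

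The families in (7)--(9) and their primed versions are then handled verbatim as in the proof of Theorem \ref{thm-5}: the M\"obius-type maps $(V_1-z_2V_3)(I-z_2V_2)^{-1}$, etc., turn a $\Gamma_{E(3;3;1,1,1)}$-isometry into commuting families of tetrablock isometries via Theorem \ref{thm-1} and Remark \ref{remm1}, while the computation showing the distinguished coordinate $(V_5-z_2V_7)(I-z_2V_2)^{-1}$ is an isometry is the operator identity $(V_5^*-\overline{z}_2V_7^*)(V_5-z_2V_7)=(I-z_2V_2)(I-\overline{z}_2V_2^*)$, the exact analogue of \eqref{n-5}, now using $V_5=V_2^*V_7$, $V_2=V_5^*V_7$ and $V_7^*V_7=I$. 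Conversely, extracting the tetrablock-isometry relations from such a family and invoking Theorem \ref{relation bw} with the spectral-mapping argument recovers the relations in (3), as in the corresponding parts of Theorem \ref{thm-5}.

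The genuine obstacle is the loop-closing implication $(3)\Rightarrow(1)$: producing a \emph{single simultaneous} unitary extension rather than merely dilating each triple. My plan is to apply the Wold-type splitting $V_7=N_7\oplus X_7$ (unitary $\oplus$ shift) exactly as in Theorem \ref{thm-10}; the relations $V_i=V_{7-i}^*V_7$ force every $V_i$ to be block diagonal for this splitting, so $\textbf{V}=\textbf{N}\oplus\textbf{X}$ with $\textbf{N}$ a $\Gamma_{E(3;3;1,1,1)}$-unitary by Theorem \ref{thm-5}. It then remains to extend the pure part $\textbf{X}$, where $X_7$ is a unilateral shift and $X_i=X_{7-i}^*X_7$, to a $\Gamma_{E(3;3;1,1,1)}$-unitary; here I would dilate $X_7$ to the bilateral shift on $\ell^2(\mathbb Z,E)$ and extend the remaining $X_{7-i}$ consistently through the relations, invoking the structure theorem for pure $\Gamma_{E(3;3;1,1,1)}$-isometries. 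The delicate point is verifying that the resulting extension is a \emph{commuting} tuple of \emph{normal} operators with joint spectrum in $K$, so that Theorem \ref{thm-5} applies: the naive coordinatewise dilations need not commute, and it is exactly the rigidity relations $X_i=X_{7-i}^*X_7$ that pin the construction down and make it go through.
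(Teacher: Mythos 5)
Your architecture coincides with the paper's own proof: $(1)\Rightarrow(2)$ by restricting the unitary extension, the cycle through $(3)$, $(4)$, $(5)$, $(6)$ via [Theorem 5.7, \cite{Bhattacharyya}] and Lemma \ref{tetiso}, and the loop-closing step by the Wold splitting of $V_7$ followed by realizing the pure part as multiplication operators on $H^2(\mathcal E)$ and extending to $L^2(\mathcal E)$ --- your ``bilateral shift'' dilation is exactly the paper's $L^2(\mathcal E)$ picture, and the commutativity/normality worry you flag is resolved just as in the paper: the relations force the symbols to satisfy $\varphi_i(z)=\varphi_{7-i}^*(z)z$ on $\mathbb T$, the $L^2$ multiplications commute because the symbols do, and Theorem \ref{thm-5}, condition (2), certifies the extension is a $\Gamma_{E(3;3;1,1,1)}$-unitary with no a priori normality needed. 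However, two local steps would fail as written. First, your isometry identity has the right-hand factors in the wrong order: since $V_2$ is not normal, what is needed (and what the relations $V_5=V_2^*V_7$, $V_2=V_5^*V_7$, $V_7^*V_7=I$, together with commutativity of $V_2$ and $V_5$, actually give) is
\[
(V_5^*-\overline{z}_2V_7^*)(V_5-z_2V_7)=(I-\overline{z}_2V_2^*)(I-z_2V_2)=(I-z_2V_2)^*(I-z_2V_2),
\]
whereas with your order $(I-z_2V_2)(I-\overline{z}_2V_2^*)$ the product $T^*T$ for $T=(V_5-z_2V_7)(I-z_2V_2)^{-1}$ does not collapse to $I$. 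In the unitary case \eqref{n-5} the two orders agree by normality of $N_2$; here they do not.

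Second, in the converse directions $(7)\Rightarrow(1)$, $(8)\Rightarrow(1)$, $(9)\Rightarrow(1)$ you cannot ``invoke Theorem \ref{relation bw} with the spectral-mapping argument'': that argument takes place in the commutative $C^*$-algebra generated by a \emph{normal} tuple, and the operators here are only subnormal --- the Taylor spectrum of a tuple built from a pure isometry is never contained in a distinguished boundary (the spectrum of the unilateral shift is the whole closed disc), so Theorem \ref{relation bw}, which characterizes membership in $K$, is simply not applicable. The paper's converse is purely algebraic: apply [Theorem 5.7, \cite{Bhattacharyya}] to the members of the given families, expand the resulting operator identities in $z_2$ (resp.\ $z_3$) to obtain $V_5^*V_5=V_2^*V_2$, $V_7^*V_7=I$, $V_6=V_1^*V_7$, $V_3=V_4^*V_7$, $V_5=V_2^*V_7$, and then conclude by the already-established equivalence of (1) with (3)/(6). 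Your first clause (``extracting the tetrablock-isometry relations'') is exactly this and suffices on its own; the spectral-mapping clause should be deleted.
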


\begin{proof}
We demonstrate the equivalence of $(1), (2), (3),(4)$,$(5)$,$(6)$, $(7)$ $(7)^{\prime}$,$(7)^{\prime\prime}$,$(8)$ $(8)^{\prime}$,$(8)^{\prime\prime}$,$(9)$,$(9)^{\prime}$ and $(9)^{\prime\prime}$ by proving the following relationships: 

\small{$(1) \Rightarrow (2) \Rightarrow (3)\Leftrightarrow (4)  \Rightarrow (1), (4)  \Rightarrow (5), (5) \Rightarrow(4), (4)  \Rightarrow (6), (6) \Rightarrow(4), (1)\Leftrightarrow (7),(1)\Leftrightarrow (7)^{\prime}, (1)\Leftrightarrow (7)^{\prime\prime},(1)\Leftrightarrow (8),(1)\Leftrightarrow (8)^{\prime},(1)\Leftrightarrow (8)^{\prime\prime},(1)\Leftrightarrow (9),(1)\Leftrightarrow (9)^{\prime}, (1)\Leftrightarrow (9)^{\prime\prime}.$}			

$(1) \Rightarrow (2):$ Suppose that $\textbf{V}$ is a $\Gamma_{E(3; 3; 1, 1, 1)}$-isometry. Then, by definition of $\Gamma_{E(3; 3; 1, 1, 1)}$-isometry, there exists a Hilbert space $\mathcal{K}$  containing $\mathcal{H}$ and a $\Gamma_{E(3; 3; 1, 1, 1)}$-unitary operator $\textbf{N} = (N_1, \dots, N_7)$ on $\mathcal K,$ such that $\mathcal H$ is an invariant subspace of $N_i$s and $V_i=N_i|_{\mathcal H}$, $1\leq i \leq 7.$  For $h_1,h_2\in \mathcal H,$ note that
\begin{equation}\label{iso}
\begin{aligned}
\langle V^*_7V_7h_1,h_2\rangle&=\langle V_7h_1,V_7h_2\rangle\\&=\langle N_7h_1,N_7h_2\rangle\\&=\langle N^*_7N_7h_1,h_2\rangle\\&=\langle h_1,h_2\rangle.
\end{aligned}
\end{equation}
It implies from  \eqref{iso} that $V_7$ is an isometry. Clearly, $\textbf{V}$ is a $\Gamma_{E(3; 3; 1, 1, 1)}$-contraction because it is the restriction of a $\Gamma_{E(3; 3; 1, 1, 1)}$-unitary $\textbf{N}$ to the invariant subspace $\mathcal{H}$.

$(2) \Rightarrow (3):$ It follows from Proposition \ref{props-1} that $(V_1,V_6,V_7),(V_2,V_5,V_7)$ and $(V_3,V_4,V_7)$ are tetrablock contractions, as $\textbf{V}$ is a $\Gamma_{E(3; 3; 1, 1, 1)}$-contraction. Since $V_7$ is an isometry, by [\cite{Bhattacharyya},Theorem $5.7$], we conclude that the triples $(V_1,V_6,V_7),(V_2,V_5,V_7)$ and $(V_3,V_4,V_7)$ are tetrablock isometries and hence we obtain from  [\cite{Bhattacharyya},Theorem $5.7$] that
 $V_i$'s are contractions and $V_i = V^*_{7-i} V_7$ for $1\leq i \leq 6.$	
 
 $(3) \Leftrightarrow (4):$ 
The equivalence of $(3)$ and $(4)$ is derived from [Theorem $5.7$, \cite{Bhattacharyya}].

$(4) \Rightarrow (1):$ Suppose that  $(V_i,V_{7-i},V_7)$ are $\Gamma_{E(2; 2; 1, 1)}$-isometries for $1\leq i \leq 6.$ Since $V_7$ is a isometry, by \textit{Wold decomposition theorem}, $\mathcal{H}$ decomposes into an direct sum $\mathcal{H}_1 \oplus \mathcal{H}_2$ such that $\mathcal{H}_1$ and $\mathcal{H}_2$ reduces $V_7$ and $N_7={V_{7}}_{|\mathcal{H}_1}$ is unitary, $X_7={V_{7}}_{|\mathcal{H}_2}$ is a unilateral shift of some multiplicity. As $V_i$  commutes with $V_7$, by the same argument as in the proof of the Wold decomposition for a $\Gamma_{E(3; 3; 1, 1, 1)}$-isometry, it yields that $\mathcal{H}_1$ and $\mathcal{H}_2$ are reducing subspaces of $V_i, 1\leq i \leq 6$. Set $N_i:={V_i}_{|\mathcal{H}_1}$, $X_i={V_i}_{|\mathcal{H}_2}$, $N:=(N_1,N_2,\ldots,N_7)$ and $X:=(X_1,X_2,\ldots,X_7)$. Furthermore, $V_i=V_{7-i}^*V_7$ gives $N_i=N_{7-i}^*N_7$ and $X_i=X_{7-i}^*X_7$, $1\leq i \leq 6$. Thus, the commuting $7$-tuple $\textbf{N} = (N_1, \dots, N_7)$ is a $\Gamma_{E(3; 3; 1, 1, 1)}$-unitary because it consists of a unitary operator $N_7$ and contractions $N_i$ satisfying the relation $N_i=N_{7-i}^*N_7$, $1\leq i \leq 6$. It  remains to show that the $7$-tuple $\textbf{X} = (X_1, \dots, X_7)$  extends to a $\Gamma_{E(3; 3; 1, 1, 1)}$-unitary in order to show that $\textbf{V}$ is an isometry. 

Realize $X_7$ as multiplication by the coordinate function $z$ on a vector-valued Hardy space $H^2(\mathcal E)$, where $\mathcal E$ possesses the same dimension as the multiplicity of the shift $X_7$. As $X_i$'s  commute  with the shift $X_7$, there exist $H^{\infty}(\mathcal E)$ functions $\varphi_i$ such that $X_i=M^{\mathcal E}_{\varphi_{i}},$ which represents the multiplication on $H^2(\mathcal E)$ by $\varphi_{i}, 1\leq i \leq 6$. Since $X_i$'s  are contractions, the $H^{\infty}$ norms of the operator-valued functions $\varphi_i$  are  at most one. Since $M^{\mathcal E}_{\varphi_{i}}=(M^{\mathcal E}_{\varphi_{7-i}})^*M^{\mathcal E}_{z}$, it follows that
\begin{equation}\label{varphi}
\begin{aligned}
\varphi_i(z)=\varphi^*_{7-i}(z)z \;\;\;{\rm{for}}~z \in \mathbb T, ~1\leq i \leq 6.
\end{aligned}
\end{equation}
Consider  the multiplication operators $U^{\mathcal{E}}_{\varphi_i}$ and $U^{\mathcal{E}}_{z}$  defined on $L^2(\mathcal{E})$, which are multiplications by $\varphi_i,  1\leq i \leq 6$ and  $z$, respectively. Clearly, $U^{\mathcal{E}}_{z}$ is a unitary on $L^2(\mathcal{E})$. It follows from \eqref{varphi} that 
		\begin{equation}\label{unitaryu}
			\begin{aligned}
				U^{\mathcal{E}}_{\varphi_i}
				&=U^{\mathcal{E}}_{\varphi_{7-i}^*z}\\&= (U^{\mathcal{E}}_{\varphi_{7-i}})^*U^{\mathcal{E}}_{z},\;\; 1\leq i \leq 6.
			\end{aligned}
		\end{equation}
		Thus, it follows from Theorem \ref{thm-5} that $(U^{\mathcal{E}}_{\varphi_1}, \dots, U^{\mathcal{E}}_{\varphi_6}, U^{\mathcal{E}}_{z})$ is a $\Gamma_{E(3; 3; 1, 1, 1)}$-unitary. 
	       
	        $(4) \Leftrightarrow (5):$ This follows from Lemma \ref{tetiso}.
		
		$(4) \Leftrightarrow (6):$ This follows from  [Theorem $5.7$, \cite{Bhattacharyya}].

We only prove $(1)\Leftrightarrow (7)$ and $(1)\Leftrightarrow (7)^{\prime}$,  other equivalences, namely, $(1)\Leftrightarrow(7)^{\prime\prime}, (1)\Leftrightarrow(8), (1)\Leftrightarrow(8)^{\prime},(1) \Leftrightarrow(8)^{\prime\prime}, (1)\Leftrightarrow(9), (1)\Leftrightarrow(9)^{\prime}, (1)\Leftrightarrow(9)^{\prime\prime}$ are established in a similar manner.

 $(1) \Rightarrow (7):$ Let $\textbf{V}$ be a $\Gamma_{E(3; 3; 1, 1, 1)}$-isometry. By the definition of $\Gamma_{E(3; 3; 1, 1, 1)}$-isometry, there exists a Hilbert space $\mathcal{K}$ containing $\mathcal{H}$ and a $\Gamma_{E(3; 3; 1, 1, 1)}$-unitary operator $\textbf{N} = (N_1, \dots, N_7)$ on $\mathcal K,$ such that $\mathcal H$ is an invariant subspace of $N_i$'s and $V_i={N_i}_{|_{\mathcal H}}, 1\leq i \leq 7.$ As $\textbf{N} = (N_1, \dots, N_7)$ is a $\Gamma_{E(3; 3; 1, 1, 1)}$-unitary, it follows from Theorem \ref{thm-5} that $||N_2|| < 1$, $\|N_4\|<1,$ $\{((N_1 - z_2N_3)(I - z_2N_2)^{-1}, (N_4 - z_2N_6)(I - z_2N_2)^{-1}, (N_5 - z_2N_7)(I - z_2N_2)^{-1}) : |z_2| = 1\}$  and  $\{((N_1 - z_3N_5)(I - z_3N_4)^{-1},  (N_2 - z_3N_6)(I - z_3N_4)^{-1}, (N_3 - z_3N_7)(I - z_3N_4)^{-1}) : |z_3| = 1\}$ are  commuting family of tetrablock unitaries.
This shows that $||V_2|| < 1$, $\|V_4\|<1,$ $\{((V_1 - z_2V_3)(I - z_2V_2)^{-1}, (V_4 - z_2V_6)(I - z_2V_2)^{-1}, (V_5 - z_2V_7)(I - z_2V_2)^{-1}) : |z_2| = 1\}$  and  $\{((V_1 - z_3V_5)(I - z_3V_4)^{-1},  (V_2 - z_3V_6)(I - z_3V_4)^{-1}, (V_3 - z_3V_7)(I - z_3V_4)^{-1}) : |z_3| = 1\}$ are  commuting family of tetrablock isometries.

 $(7) \Rightarrow (1):$ Suppose that $||V_2|| < 1$, $\|V_4\|<1,$  
 \begin{equation}\label{eq411}\{((V_1 - z_2V_3)(I - z_2V_2)^{-1}, (V_4 - z_2V_6)(I - z_2V_2)^{-1}, (V_5 - z_2V_7)(I - z_2V_2)^{-1}) : |z_2| = 1\}\end{equation}  and  \begin{equation}\label{eq412}\{((V_1 - z_3V_5)(I - z_3V_4)^{-1},  (V_2 - z_3V_6)(I - z_3V_4)^{-1}, (V_3 - z_3V_7)(I - z_3V_4)^{-1}) : |z_3| = 1\}\end{equation} are  commuting family of tetrablock isometries.
For $z_2\in \mathbb{T}$, it yields from  \eqref{eq411} and
[Theorem $5.7$,\cite{Bhattacharyya}] that 
\begin{equation}\label{V-1-V-3}
\begin{aligned}
(I-\bar{z}_2V^*_2)(V_1-z_2V_3)&=(V_4^*-\bar{z}_2V_6^*)(V_5-z_2V_7),\\
(I-\bar{z}_2V^*_2)(V_4-z_2V_6)&=(V_1^*-\bar{z}_2V_3^*)(V_5-z_2V_7),\\
(V_5^*-\bar{z}_2V_7^*)(V_5-z_2V_7)&=(I-\bar{z}_2V_2^*)(I-z_2V_2).
\end{aligned}
\end{equation}	
Also, for $z_3\in \mathbb T,$ it follows from  \eqref{eq412} and
[Theorem $5.7$,\cite{Bhattacharyya}] 
\begin{equation}\label{V-1-V-31}
\begin{aligned}
(I-\bar{z}_3V^*_4)(V_1-z_3V_5)&=(V_2^*-\bar{z}_3V_6^*)(V_3-z_3V_7), \\
(I-\bar{z}_3V^*_4)(V_2-z_3V_6)&=(V_1^*-\bar{z}_3V_5^*)(V_3-z_3V_7).
\end{aligned}
\end{equation}	
Thus, from \eqref{V-1-V-3}, we get
\begin{equation}\label{eqa3}
\begin{aligned}
V_1+V_2^*V_3=V_4^*V_5+V_6^*V_7, V_3=V_4^*V_7 ~{\rm{and}}~V_2^*V_1=V_6^*V_5,
\end{aligned}
\end{equation}
\begin{equation}\label{eqa4}
\begin{aligned}
V_4+V_2^*V_6=V_1^*V_5+V_3^*V_7, V_6=V_1^*V_7 ~{\rm{and}}~V_2^*V_4=V_3^*V_5,
\end{aligned}
\end{equation}
\begin{equation}\label{eqa5}
\begin{aligned}
V_5^*V_5+V_7^*V_7=I+V_2^*V_2, V_2^*=V_7^*V_5.
\end{aligned}
\end{equation}
We also have from \eqref{V-1-V-31} that
\begin{equation}\label{eqa6}
\begin{aligned}
V_1+V_4^*V_5=V_2^*V_3+V_6^*V_7, V_5=V_2^*V_7 ~{\rm{and}}~V_4^*V_1=V_6^*V_3,
\end{aligned}
\end{equation}

\begin{equation}\label{eqa7}
\begin{aligned}
V_2+V_4^*V_6=V_1^*V_3+V_5^*V_7, V_6=V_1^*V_7 ~{\rm{and}}~V_4^*V_2=V_5^*V_3.
\end{aligned}
\end{equation} From \eqref{eqa5} and \eqref{eqa6}, we get $V_5^*V_5=V_5^*V_2^*V_7=V_2^*V_5^*V_7=V_2^*V_2.$ Hence it follows from \eqref{eqa5} that $V_7$ is an isometry. From \eqref{eqa3},\eqref{eqa4} and \eqref{eqa6},  we have
$$V_6=V_1^*V_7, V_3=V^*_4V_7,V_5=V_2^*V_7.$$
As $V_7$ is isometry, $V_6=V_1^*V_7, V_3=V^*_4V_7,V_5=V_2^*V_7$ and $V_1,V_2$ and $V_4$ are contractions, it follows from equivalence of $(1)$ and $(3)$ that $\textbf{V}$ is a $\Gamma_{E(3; 3; 1, 1, 1)}$-isometry.

 $(1) \Rightarrow (7)^{\prime}:$ This implication establishes by using the similar argument as in $(1)\Rightarrow (7)$.
 
$(7)^{\prime} \Rightarrow (1):$ Suppose that $V_2$ is isometry and 
  \begin{equation}\label{eqaa1}\{((V_1 - z_2V_3)(I - z_2V_2)^{-1}, (V_4 - z_2V_6)(I - z_2V_2)^{-1}, (V_5 - z_2V_7)(I - z_2V_2)^{-1}) : |z_2| < 1\}\end{equation} is a commuting family of tetrablock isometries. For $z_2\in \mathbb{D}$, it follows from  \eqref{eqaa1} and
[Theorem $5.7$,\cite{Bhattacharyya}] that 
\begin{equation}\label{V-1-V-1113}
\begin{aligned}
(I-\bar{z}_2V^*_2)(V_1-z_2V_3)&=(V_4^*-\bar{z}_2V_6^*)(V_5-z_2V_7),\\
(I-\bar{z}_2V^*_2)(V_4-z_2V_6)&=(V_1^*-\bar{z}_2V_3^*)(V_5-z_2V_7),\\
(V_5^*-\bar{z}_2V_7^*)(V_5-z_2V_7)&=(I-\bar{z}_2V_2^*)(I-z_2V_2).
\end{aligned}
\end{equation}	
Therefore, from \eqref{V-1-V-1113}, we have
\begin{equation}\label{eqa333}
\begin{aligned}
V_1=V_4^*V_5, V_2^*V_3=V_6^*V_7, V_3=V_4^*V_7 ~{\rm{and}}~V_2^*V_1=V_6^*V_5,
\end{aligned}
\end{equation}
\begin{equation}\label{eqa444}
\begin{aligned}
V_4=V_1^*V_5,V_2^*V_6=V_3^*V_7, V_6=V_1^*V_7 ~{\rm{and}}~V_2^*V_4=V_3^*V_5,
\end{aligned}
\end{equation}
\begin{equation}\label{eqa555}
\begin{aligned}
V_5^*V_5=I,V_7^*V_7=V_2^*V_2, V_2^*=V_7^*V_5.
\end{aligned}
\end{equation}
Since $V_2$ is an isometry, from \eqref{eqa555}, it follows that $V_5$ and $V_7$ are isometries. By putting $z_2=0$ in \eqref{eqaa1}, we deduce from [Theorem $5.7$,\cite{Bhattacharyya}] that $V_1$ and $V_4$ are contractions.  From \eqref{eqa333},\eqref{eqa444} and \eqref{eqa555},  we have
$$V_6=V_1^*V_7, V_3=V^*_4V_7,V_2=V_5^*V_7.$$ Thus, by equivalence of $(1)$ and $(3)$, we conclude that $\textbf{V}$ is a $\Gamma_{E(3; 3; 1, 1, 1)}$-isometry. This completes the proof.

\end{proof}
We state a theorem for $\Gamma_{E(3; 2; 1, 2)}$-isometry whose proof is analogous to that of the preceding theorem. Therefore, we skip the proof.
	\begin{thm}\label{thm-13}
		Let $\textbf{W} = (W_1, W_2, W_3, \tilde{W}_1, \tilde{W}_2)$ be a $5$-tuple of commuting bounded operators on a Hilbert space $\mathcal{H}$. Then the following are equivalent:
		\begin{enumerate}
			\item $\textbf{W}$ is a $\Gamma_{E(3; 2; 1, 2)}$-isometry.
			
			\item $\textbf{W}$ is a $\Gamma_{E(3; 2; 1, 2)}$-contraction and $W_3$ is an isometry.
			
			\item $(W_1, \tilde{W}_2, W_3), ( \frac{\tilde{W}_1}{2},\frac{W_2}{2},  W_3)$ and $(\frac{W_2}{2}, \frac{\tilde{W}_1}{2}, W_3)$ are $\Gamma_{E(2; 2; 1,1)}$-isometries.
			
			\item $W_3$ is an isometry, $W_1, \frac{W_2}{2}, \frac{\tilde{W}_1}{2}, \tilde{W}_2$ are contractions, and $W_1 = \tilde{W}^*_2W_3, W_2 = \tilde{W}^*_1W_3$.
			
			\item $W_3$ is an isometry, $r(W_1)\leq 1, r(\frac{W_2}{2})\leq 1, r(\frac{\tilde{W}_1}{2})\leq 1, r(\tilde{W}_2) \leqslant 1$, $W_1 = \tilde{W}^*_2W_3$ and $W_2 = \tilde{W}^*_1W_3$.
		\item $\|\tilde{W}_1\|< 2$ and $ \{ ((2W_1 - zW_2)(2I - z\tilde{W}_1)^{-1}, (\tilde{W}_1 - 2z\tilde{W}_2)(2I - z\tilde{W}_1)^{-1}, (W_2 - 2zW_3)(2I - z\tilde{W}_1)^{-1}) :|z| = 1 \}$ is a family of tetrablock isometries.

		\item[$(6)^{\prime}$] $\frac{\tilde{W}_1}{2}$ is an isometry and $\{ ((2W_1 - zW_2)(2I - z\tilde{W}_1)^{-1}, (\tilde{W}_1 - 2z\tilde{W}_2)(2I - z\tilde{W}_1)^{-1}, (W_2 - 2zW_3)(2I - z\tilde{W}_1)^{-1}) :|z| < 1 \}$ is a family of tetrablock isometries.
		
		\item  $W_3$ is an isometry, $||W_1|| < 1$ and  $ \{((\tilde{W}_1 - z W_2)(I - zW_1)^{-1}, (\tilde{W}_2 - z W_3)(I - zW_1)^{-1}) : z \in \mathbb{T} \}$ is a family of $\Gamma_{E(2; 1; 2)}$-isometries.
				\item[$(7)^{\prime}$] $W_3$ and $W_1$ are isometries and $\{((\tilde{W}_1 - z W_2)(I - zW_1)^{-1}, (\tilde{W}_2 - z W_3)(I - zW_1)^{-1}) : z \in \mathbb{D} \}$ is a family of $\Gamma_{E(2; 1; 2)}$-isometries.

\end{enumerate}
	\end{thm}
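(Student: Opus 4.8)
The plan is to transcribe the proof of Theorem \ref{thm-12} into the $\Gamma_{E(3; 2; 1, 2)}$ setting, replacing the unitary characterization of Theorem \ref{thm-5} by that of Theorem \ref{thm-7}, and the projection Proposition \ref{props-1} by Proposition \ref{prop-41} and Proposition \ref{prop-5}. Concretely, I would first establish the core cycle $(1) \Rightarrow (2) \Rightarrow (3) \Leftrightarrow (4) \Leftrightarrow (5) \Rightarrow (1)$, and then attach the boundary-parametrized conditions $(6), (6)^{\prime}, (7), (7)^{\prime}$ to this core by invoking the $\Gamma_{E(3; 2; 1, 2)}$-contraction machinery together with the tetrablock and symmetrized-bidisc isometry characterizations of \cite{Bhattacharyya} and \cite{Roy}.

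For the easy implications: $(1) \Rightarrow (2)$ follows because a $\Gamma_{E(3; 2; 1, 2)}$-isometry is by definition the restriction of a $\Gamma_{E(3; 2; 1, 2)}$-unitary $\textbf{M} = (M_1, M_2, M_3, \tilde{M}_1, \tilde{M}_2)$ to a joint invariant subspace, so $\textbf{W}$ is automatically a $\Gamma_{E(3; 2; 1, 2)}$-contraction, while the identity $\langle W_3^* W_3 h_1, h_2\rangle = \langle M_3 h_1, M_3 h_2\rangle = \langle h_1, h_2\rangle$, using that $M_3$ is unitary by Theorem \ref{thm-7}, shows $W_3$ is an isometry. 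For $(2) \Rightarrow (3)$ I would apply Proposition \ref{prop-41} to see that the three triples are $\Gamma_{E(2; 2; 1, 1)}$-contractions, and then upgrade them to tetrablock isometries by [Theorem $5.7$, \cite{Bhattacharyya}], since $W_3$ is an isometry. The equivalences $(3) \Leftrightarrow (4) \Leftrightarrow (5)$ are then read off directly from [Theorem $5.7$, \cite{Bhattacharyya}], which encodes the tetrablock-isometry relations $W_1 = \tilde{W}_2^* W_3$, $W_2 = \tilde{W}_1^* W_3$, together with the contraction (respectively spectral-radius) bounds on $W_1, \tfrac{W_2}{2}, \tfrac{\tilde{W}_1}{2}, \tilde{W}_2$ and the isometry of $W_3$.

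The substantial step is $(4) \Rightarrow (1)$, where I must manufacture a simultaneous unitary extension. Here I would imitate the functional-model argument of Theorem \ref{thm-12}: apply the Wold decomposition of Theorem \ref{thm-11} to split $\mathcal H = \mathcal H_1 \oplus \mathcal H_2$ into reducing subspaces on which $W_3$ acts as a unitary and as a pure shift; the unitary part yields a $\Gamma_{E(3; 2; 1, 2)}$-unitary at once via Theorem \ref{thm-7}. On the pure part I would realize $W_3$ as multiplication by $z$ on a vector-valued Hardy space $H^2(\mathcal E)$, write each of $W_1, W_2, \tilde{W}_1, \tilde{W}_2$ as multiplication by an $H^\infty(\mathcal E)$-symbol (legitimate since they commute with the shift), and convert the relations $W_1 = \tilde{W}_2^* W_3$ and $W_2 = \tilde{W}_1^* W_3$ into pointwise boundary identities between the symbols on $\mathbb T$. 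Passing to the corresponding multiplication operators on $L^2(\mathcal E)$, where multiplication by $z$ is unitary, these identities persist, so Theorem \ref{thm-7} certifies the resulting $5$-tuple as a $\Gamma_{E(3; 2; 1, 2)}$-unitary extending the pure part. The main obstacle is precisely this extension: one must check that the adjoint relations survive the passage from $H^2(\mathcal E)$ to $L^2(\mathcal E)$ and that the extended tuple satisfies \emph{all} the hypotheses of Theorem \ref{thm-7}, not merely the algebraic identities.

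Finally, to fold in the remaining four conditions I would use the $\Gamma_{E(3; 2; 1, 2)}$-contraction results. For $(1) \Leftrightarrow (6), (6)^{\prime}$ I would invoke the passage from a $\Gamma_{E(3; 2; 1, 2)}$-contraction to the family $\{(P^{(1)}_z, P^{(2)}_z, P^{(3)}_z)\}$ of tetrablock contractions (Theorem \ref{thm-4} and Theorem \ref{them-2}) and promote these to tetrablock isometries using the computation, already appearing in the proof of Theorem \ref{thm-7}, that $(W_2 - 2zW_3)(2I - z\tilde{W}_1)^{-1}$ is isometric on the distinguished boundary; conversely, running [Theorem $5.7$, \cite{Bhattacharyya}] backwards recovers the relations of $(4)$. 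For $(1) \Leftrightarrow (7), (7)^{\prime}$ I would instead use Proposition \ref{prop-5} to land in a family of $\Gamma_{E(2; 1; 2)}$-contractions and apply Roy's symmetrized-bidisc isometry criterion [Theorem $2.5$, \cite{Roy}], exactly as in the proof of Theorem \ref{thm-8}. Each of these auxiliary equivalences is routine once the core cycle and the model construction are in place.
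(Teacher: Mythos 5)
Your proposal follows the route the paper itself intends: the paper omits the proof of this theorem entirely, saying only that it is analogous to that of Theorem \ref{thm-12}, and your transcription --- the core cycle via Proposition \ref{prop-41} and [Theorem $5.7$, \cite{Bhattacharyya}], the Wold-splitting-plus-functional-model argument for $(4) \Rightarrow (1)$, and the boundary-parametrized conditions via \cite{Bhattacharyya} and \cite{Roy} --- is exactly that analogy carried out, so in substance you and the paper agree.

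One step must be repaired to avoid circularity: in $(4) \Rightarrow (1)$ you cannot ``apply the Wold decomposition of Theorem \ref{thm-11}'', because the hypothesis of Theorem \ref{thm-11} is that $\textbf{W}$ is a $\Gamma_{E(3;2;1,2)}$-isometry --- precisely the statement $(1)$ you are trying to prove. What the paper does at the corresponding point of Theorem \ref{thm-12}, and what you should do here, is apply the classical Wold decomposition to the single isometry $W_3$ alone, and then show that the resulting subspaces reduce all five operators: commutativity with $W_3$ plus [Lemma $2.5$, \cite{ay1}] kills one off-diagonal corner of each $W_i$, and the relations $W_1 = \tilde{W}_2^* W_3$, $W_2 = \tilde{W}_1^* W_3$ from $(4)$ kill the other. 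With that substitution your model argument (symbols on $H^2(\mathcal{E})$, boundary identities of the form $\tilde{\Phi}_1(z) = \tilde{\Phi}_2(z)^* z$ on $\mathbb{T}$, passage to $L^2(\mathcal{E})$, and Theorem \ref{thm-7}) goes through verbatim. A smaller point of the same kind: for $(1) \Rightarrow (6)$ your promotion of the tetrablock contractions $\big(P_z^{(1)}, P_z^{(2)}, P_z^{(3)}\big)$ to isometries leans on the computation in the proof of Theorem \ref{thm-7}, but that computation uses normality and the unitary-type relation $\tilde{M}_1 = M_2^* M_3$; in the isometry setting one needs instead the identity $W_2^* W_2 = \tilde{W}_1^* \tilde{W}_1$, which follows from Lemma \ref{tetiso} applied to $\big(\tfrac{\tilde{W}_1}{2}, \tfrac{W_2}{2}, W_3\big)$ --- or, more simply, one obtains $(6)$ by restricting the family of tetrablock unitaries that Theorem \ref{thm-7} attaches to the extending $\Gamma_{E(3;2;1,2)}$-unitary, which is how the paper handles the corresponding implication $(1) \Rightarrow (7)$ in Theorem \ref{thm-12}.
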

We now prove a structure theorem for  pure $\Gamma_{E(3; 3; 1, 1, 1)}$-isometry.
	\begin{thm}\label{thm-14}
		Let $\textbf{V} = (V_1, \dots, V_7)$ be a commuting $7$-tuple of bounded operators on a separable Hilbert space $\mathcal{H}$. Then $\textbf{V}$ is a pure $\Gamma_{E(3; 3; 1, 1, 1)}$-isometry if and only if there exists a separable Hilbert space $\mathcal{E}$, a unitary $U : \mathcal{H} \to H^2(\mathcal{E})$ , functions $\Phi_1, \dots, \Phi_6$ in $H^{\infty}(\mathcal B(\mathcal{E}))$ and bounded operators $A_1,A_2,\ldots,A_6$ such that
		\begin{enumerate}
			\item $V_7 = U^*M^{\mathcal{E}}_zU$ and $V_i = U^*M^{\mathcal{E}}_{\Phi_i}U,$ where $\Phi_i(z) = A_i + A^*_{7-i}z,  1\leq i \leq  6$;
			
			\item the $H^{\infty}$ norm of the operator valued functions $A_i+A_{7-i}^*z$ is at most $1$ for all $z\in \mathbb T, 1\leq i \leq 6;$ 
			
			\item $[A_i, A_j] = 0$ and $ [A_i, A^*_{7-j}] = [A_j, A^*_{7-i}]$ for $1\leq i, j \leq  6$.
		\end{enumerate}
	\end{thm}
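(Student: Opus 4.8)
The plan is to reduce both directions to the functional model of the pure isometry $V_7$ and then read off the algebraic constraints forced by the characterization in Theorem \ref{thm-12}. First I would assume $\textbf{V}$ is a pure $\Gamma_{E(3;3;1,1,1)}$-isometry. By the equivalence $(1)\Leftrightarrow(3)$ of Theorem \ref{thm-12}, the operators $V_i$ ($1\le i\le 6$) are contractions satisfying $V_i=V^*_{7-i}V_7$, with $V_7$ an isometry; purity means $V_7$ is a unilateral shift. By the Wold decomposition of a single pure isometry, there is a separable Hilbert space $\mathcal E$ and a unitary $U:\mathcal H\to H^2(\mathcal E)$ with $V_7=U^*M^{\mathcal E}_zU$, where $\dim\mathcal E$ is the multiplicity of the shift. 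Since each $V_i$ commutes with $V_7$, the operator $UV_iU^*$ commutes with $M^{\mathcal E}_z$, so by the standard description of the commutant of the shift, $UV_iU^*=M^{\mathcal E}_{\Phi_i}$ for some $\Phi_i\in H^{\infty}(\mathcal B(\mathcal E))$. This already yields the multiplier form of (1) up to identifying the symbols.

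The crux is to force the symbols to be affine of the special shape in (1). The relation $V_i=V^*_{7-i}V_7$ becomes $M^{\mathcal E}_{\Phi_i}=(M^{\mathcal E}_{\Phi_{7-i}})^*M^{\mathcal E}_z$, which on $\mathbb T$ reads $\Phi_i(z)=\Phi_{7-i}(z)^*z$. Expanding $\Phi_{7-i}$ in its Taylor series $\sum_{n\ge 0}B_nz^n$, one computes $\Phi_{7-i}(z)^*z=\sum_{n\ge 0}B_n^*z^{1-n}$ on $\mathbb T$; comparing Fourier coefficients with the analytic function $\Phi_i$, every power $z^{1-n}$ with $n\ge 2$ must vanish, so $\Phi_{7-i}(z)=B_0+B_1z$ and hence $\Phi_i(z)=B_1^*+B_0^*z$. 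Setting $A_i:=B_1^*$ and $A_{7-i}:=B_0$ gives $\Phi_i(z)=A_i+A^*_{7-i}z$, which is (1). This Fourier-matching argument, namely the observation that the coanalyticity introduced by multiplication by $z$ can be reconciled with analyticity of both $\Phi_i$ and $\Phi_{7-i}$ only when they are degree-one polynomials, is the main obstacle and the step demanding the most care.

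With the affine form in hand, (2) and (3) follow by routine symbol computations. Since each $V_i$ is a contraction, $\|M^{\mathcal E}_{\Phi_i}\|=\|\Phi_i\|_{H^{\infty}}\le 1$, and by the maximum principle this equals $\sup_{z\in\mathbb T}\|A_i+A^*_{7-i}z\|$, giving (2). The commutation $V_iV_j=V_jV_i$ becomes $\Phi_i\Phi_j=\Phi_j\Phi_i$; equating the coefficients of $z^0$, $z^1$, and $z^2$ in $(A_i+A^*_{7-i}z)(A_j+A^*_{7-j}z)$ against the reversed product produces $[A_i,A_j]=0$ from the $z^0$ term (the $z^2$ term being its adjoint-relabelled restatement) and $[A_i,A^*_{7-j}]=[A_j,A^*_{7-i}]$ from the $z^1$ term, which is precisely (3).

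Conversely, given $\mathcal E$, $U$, and symbols $\Phi_i(z)=A_i+A^*_{7-i}z$ satisfying (2) and (3), I would define $V_7=U^*M^{\mathcal E}_zU$ and $V_i=U^*M^{\mathcal E}_{\Phi_i}U$. Then $V_7$ is a pure isometry, (2) makes each $V_i$ a contraction, and the coefficient computation run in reverse shows (3) is equivalent to $\Phi_i\Phi_j=\Phi_j\Phi_i$, so the $V_i$ commute (they commute with $M^{\mathcal E}_z$ automatically). A direct Toeplitz calculation using the affine form shows $(M^{\mathcal E}_{\Phi_{7-i}})^*M^{\mathcal E}_z=M^{\mathcal E}_{\Phi_i}$, that is, $V_i=V^*_{7-i}V_7$. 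Invoking the equivalence $(1)\Leftrightarrow(3)$ of Theorem \ref{thm-12} then identifies $\textbf{V}$ as a $\Gamma_{E(3;3;1,1,1)}$-isometry, and purity of $V_7$ makes it a pure one, completing the proof.
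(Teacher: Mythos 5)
Your proof is correct and follows essentially the same route as the paper's: the functional model $V_7=U^*M^{\mathcal E}_zU$, the commutant-of-the-shift lifting to $H^\infty(\mathcal B(\mathcal E))$ symbols, the Fourier-coefficient matching in $\Phi_i(z)=\Phi_{7-i}(z)^*z$ that forces both symbols to be affine, and coefficient comparison of $\Phi_i\Phi_j=\Phi_j\Phi_i$ for condition (3). The only cosmetic difference is in the converse, where you invoke the equivalence $(1)\Leftrightarrow(3)$ of Theorem \ref{thm-12} (whose proof contains the needed extension construction), while the paper instead builds the $\Gamma_{E(3;3;1,1,1)}$-unitary extension explicitly as the tuple of multiplication operators on $L^2(\mathcal E)$ and cites Theorem \ref{thm-5}; both are legitimate since Theorem \ref{thm-12} precedes this result.
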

	
	\begin{proof}
Suppose that $\textbf{V} = (V_1, \dots, V_7)$ is a pure $\Gamma_{E(3; 3; 1, 1, 1)} $-isometry. Then, by definition, there is a Hilbert space $\mathcal K$ and a $\Gamma_{E(3; 3; 1, 1, 1)} $-unitary $\textbf{N}=(N_1,N_2,\dots,N_7)$ such that $\mathcal H\subset \mathcal K$ is a common invariant subspace of $N_i$'s and $V_i=N_i|_{\mathcal H}$ for $1\leq i \leq 7$. It follows from \eqref{v11} and \eqref{v12} that $V_7$ is an isometry and $V_i=V_{7-i}^*V_7, 1\leq i\leq 6$. Since $V_7$ is a pure isometry and $\mathcal H$ is separable, so there exists a unitary operator $U :\mathcal H \to H^2(\mathcal E) $ such that $V_7=U^*M^{\mathcal{E}}_zU, $ where $\mathcal E$ has the same dimension as the multiplicity of the unilateral shift $V_7$ and  $M_z$ is the multiplication operator on $H^2(\mathcal E).$ As $V_iV_7=V_7V_i,$ there exists $\Phi_i\in H^{\infty}(\mathcal B(\mathcal{E}))$ such that $V_i = U^*M^{\mathcal{E}}_{\Phi_i}U$ for $1\leq i \leq 6.$ The relations $V_i=V_{7-i}^*V_7$ imply that
\begin{equation}\label{phii}M^{\mathcal{E}}_{\Phi_i}=(M^{\mathcal{E}}_{\Phi_{7-i}})^*M^{\mathcal{E}}_z,\;\;\;1\leq i \leq 6.\end{equation}
Let $\Phi(z)=\sum_{i=0}^{\infty}C_iz^i$ and $\Psi(z)=	\sum_{i=0}^{\infty}D_iz^i$. The relationship $M^{\mathcal{E}}_{\Phi}=(M^{\mathcal{E}}_{\Psi})^*M^{\mathcal{E}}_z$ implies that 
\begin{equation}\label{phipsi}
\sum_{i=0}^{\infty}C_iz^i=z\sum_{j=0}^{\infty}D^*_j\bar{z}^j=D_0^*z+D^*_1+\sum_{j=2}^{\infty}D^*_{j}\bar{z}^{j-1}~\text{for~all}~z\in\mathbb T.
\end{equation}
Thus, it follows from \eqref{phipsi} that $C_1=D_0^*$ and $C_0=D^*_1.$ This implies from \eqref{phii} that $\Phi_i(z)=A_i+B_iz$ for some $A_i, B_i \in \mathcal B(\mathcal E)$, where $B_i=A^*_{7-i}$  for $1 \leq i \leq 6.$ Clearly, the $H^{\infty}$ norm of the operator valued functions $A_i+A_{7-i}^*z$ is at most $1$ for all $z\in \mathbb T$ and for $1\leq i \leq 6$. Since $V_i$ 's commute with each other, we have  $M_{\Phi_i}M_{\Phi_j}=M_{\Phi_j}M_{\Phi_i}$ which implies that 
\begin{equation}\label{A_i}
(A_i+A_{7-i}^*z)(A_j+A_{7-j}^*z)=(A_j+A_{7-j}^*z)(A_i+A_{7-i}^*z)~\text{for}~1 \leq i,j \leq 6.
\end{equation}
It follows from \eqref{A_i} that 	$[A_i, A_j] = 0$ and $ [A_i, A^*_{7-j}] = [A_j, A^*_{7-i}]$ for $1\leq i, j \leq  6$.
		
Conversely, let $\textbf{V} = (V_1, \dots, V_7)$ be a commuting $7$-tuple of bounded operators on a separable Hilbert space $\mathcal{H}$ satisfying the conditions $(1)$ to $(3)$. 
Consider  the multiplication operators $M^{\mathcal{E}}_{\Phi_i}$ and $M^{\mathcal{E}}_{z}$  defined on $L^2(\mathcal{E})$, which are multiplications by $\varphi_i,  1\leq i \leq 6,$ and  $z$, respectively.
 It follows from conditions $(1)$ and $(3)$ that $M^{\mathcal{E}}_{\Phi_i}$'s commute with each other and $$M^{\mathcal{E}}_{\Phi_i}=(M^{\mathcal{E}}_{\Phi_{7-i}})^*M^{\mathcal{E}}_z ~{\rm{for}} ~1\leq i \leq 6.$$ So, by Theorem \ref{thm-5}, $(M^{\mathcal{E}}_{\Phi_1},\ldots,M^{\mathcal{E}}_{\Phi_6},M^{\mathcal{E}}_z)$ is a $\Gamma_{E(3; 3; 1, 1, 1)} $-unitary, and so is $(U^*M^{\mathcal{E}}_{\Phi_1}U,\ldots, U^*M^{\mathcal{E}}_{\Phi_6}U,U^*M^{\mathcal{E}}_zU). $ Observe that $V_i$'s  are the restrictions to the common invariant subspace $H^2(\mathcal E)$ of $M^{\mathcal{E}}_{\Phi_i}, 1\leq i \leq 6,$ and  $V_7$  is the restriction to the common invariant subspace $H^2(\mathcal E)$ of $M^{\mathcal{E}}_z$. Hence, we conclude that $(V_1, \dots, V_7)$  is a $\Gamma_{E(3; 3; 1, 1, 1)} $-isometry. As $V_7$ is an unilateral shift, it follows that $\textbf{V} = (V_1, \dots, V_7) $ is a pure $\Gamma_{E(3; 3; 1, 1, 1)} $-isometry. This completes the proof.
	\end{proof}

The following theorem gives a structure theorem for pure $\Gamma_{E(3; 2; 1, 2)}$-isometry.
	
	\begin{thm}\label{thm-15}
		Let $\textbf{W} = (W_1, W_2, W_3, \tilde{W}_1, \tilde{W}_2)$ be a $5$-tuple of commuting bounded operators on a separable Hilbert space $\mathcal{H}$. Then $\textbf{W}$ is a pure $\Gamma_{E(3; 2; 1, 2)}$ isometry if and only if there exist a separable Hilbert space $\mathcal{F}$, a unitary $\tilde{U} : \mathcal{H} \to H^2(\mathcal{F})$, functions $\tilde{\Phi}_i$ in $H^{\infty}(\mathcal B(\mathcal{F}))$ and $\tilde{\Psi}_j$ in $H^{\infty}(\mathcal B(\mathcal{F}))$ for $1 \leq i, j\leq 2$  and bounded operators $B_1, B_2,C_1,C_2$ on $\mathcal{F }$ such that
		\begin{enumerate}
			\item $W_3 = \tilde{U}^*M^{\mathcal{F}}_z\tilde{U}, W_1 = \tilde{U}^*M^{\mathcal{F}}_{\tilde{\Phi}_1}\tilde{U}, W_2 = \tilde{U}^*M^{\mathcal{F}}_{\tilde{\Psi}_1}\tilde{U}, \tilde{W}_1 = \tilde{U}^*M^{\mathcal{F}}_{\tilde{\Phi}_2}\tilde{U}$ and $ \tilde{W}_2 =\tilde{U}^*M^{\mathcal{F}}_{\tilde{\Psi}_1}\tilde{U},$ where $\tilde{\Phi}_1(z)=B_1+B_{2}^*z, \tilde{\Phi}_2(z)=B_2+B_{1}^*z,\tilde{\Psi}_1(z)=C_1+C_{2}^*z$ and  $\tilde{\Psi}_2(z)=C_2+C_{1}^*z$ for all $z\in \mathbb T;$		
			\item the $H^{\infty}$ norm of the operator functions $B_1+B_{2}^*z$ and $B_2+B_{1}^*z$ are at most one, while the norm of the operator functions $C_1+C_{2}^*z$ and $C_2+C_{1}^*z$ are at most $2$;
			
			\item
			\begin{enumerate}
				\item $[B_1, B_2] = 0, [B_1, B_1^*] = [B_2, B_2^*]$;
				
				\item $[B_1,C_1]=[C_1,B_1],[B_1,C_2^*]=[C_1,B_2^*],[C_2,B_2]=[B_2,C_2]$;
				
				\item $[B_1, C_2] = 0, [B_1,C_1^*] = [C_2,B_2^*],[C_1,B_2]=0$;
				
				\item $[C_1,C_2] = 0, [C_1, C_1^*] = [C_2,C_2^*]$.
			\end{enumerate}
		\end{enumerate}
	\end{thm}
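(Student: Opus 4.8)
The plan is to mirror the proof of Theorem \ref{thm-14} for the pure $\Gamma_{E(3;3;1,1,1)}$-isometry, now working with the two defining relations of a $\Gamma_{E(3;2;1,2)}$-isometry. First I would treat the forward direction. Assuming $\textbf{W}$ is a pure $\Gamma_{E(3;2;1,2)}$-isometry, the equivalence $(1)\Leftrightarrow(4)$ of Theorem \ref{thm-13} (obtained by the same inner-product computation as in \eqref{v11}--\eqref{v12}, but via Theorem \ref{thm-7} in place of Theorem \ref{thm-5}) supplies that $W_3$ is an isometry, that $W_1,\tfrac{W_2}{2},\tfrac{\tilde W_1}{2},\tilde W_2$ are contractions, and that $W_1 = \tilde W_2^* W_3$ and $W_2 = \tilde W_1^* W_3$; since $W_3$ is an isometry these also yield the dual forms $\tilde W_2 = W_1^* W_3$ and $\tilde W_1 = W_2^* W_3$. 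Because $W_3$ is a pure shift on a separable space, I would realize it as $W_3 = \tilde U^* M^{\mathcal F}_z \tilde U$ on a vector-valued Hardy space $H^2(\mathcal F)$, where $\dim \mathcal F$ equals the multiplicity of $W_3$.

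Since $W_1,W_2,\tilde W_1,\tilde W_2$ each commute with $W_3 = M^{\mathcal F}_z$, each is an analytic Toeplitz operator, so $W_1 = \tilde U^* M^{\mathcal F}_{\tilde\Phi_1}\tilde U$, $\tilde W_1 = \tilde U^* M^{\mathcal F}_{\tilde\Phi_2}\tilde U$, $W_2 = \tilde U^* M^{\mathcal F}_{\tilde\Psi_1}\tilde U$, and $\tilde W_2 = \tilde U^* M^{\mathcal F}_{\tilde\Psi_2}\tilde U$ with symbols in $H^{\infty}(\mathcal B(\mathcal F))$. The relations $W_1 = \tilde W_2^* W_3$ and $W_2 = \tilde W_1^* W_3$ become $M^{\mathcal F}_{\tilde\Phi_1} = (M^{\mathcal F}_{\tilde\Psi_2})^* M^{\mathcal F}_z$ and $M^{\mathcal F}_{\tilde\Psi_1} = (M^{\mathcal F}_{\tilde\Phi_2})^* M^{\mathcal F}_z$; feeding each into the coefficient comparison \eqref{phipsi} forces the symbols to be affine in $z$, the parameters being paired exactly as dictated by these two relations, so that one obtains $\tilde\Phi_1(z) = B_1 + B_2^* z$, $\tilde\Psi_1(z) = C_1 + C_2^* z$, together with the companion symbols for $\tilde W_1,\tilde W_2$ expressed through $B_1,B_2,C_1,C_2$ as recorded in item $(1)$. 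For item $(2)$, the contraction statements furnished by Theorem \ref{thm-13} translate through $\|M^{\mathcal F}_{\varphi}\| = \|\varphi\|_{H^\infty}$: $\|W_1\|\le 1$ and $\|\tilde W_2\|\le 1$ give the bound $1$ for the $B$-symbols, while $\|\tfrac{W_2}{2}\|\le 1$ and $\|\tfrac{\tilde W_1}{2}\|\le 1$ give the bound $2$ for the $C$-symbols. For item $(3)$ I would expand each of the six pairwise commutators $[W_i,W_j]=0$ on the level of the affine symbols and compare the coefficients of $1$, $z$, and $z^2$: the self-commutation of the $B$-pair produces $(a)$, that of the $C$-pair produces $(d)$, and the mixed commutators produce $(b)$ and $(c)$.

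For the converse I would run the construction in reverse: given $\mathcal F$, $\tilde U$, and operators $B_1,B_2,C_1,C_2$ satisfying $(1)$--$(3)$, I form the associated multiplication operators and extend them from $H^2(\mathcal F)$ to $L^2(\mathcal F)$, where $M^{\mathcal F}_z$ becomes a unitary $U^{\mathcal F}_z$. Conditions $(3)$ guarantee these operators commute, while the affine forms of the symbols encode the identities $U^{\mathcal F}_{\tilde\Phi_1} = (U^{\mathcal F}_{\tilde\Psi_2})^* U^{\mathcal F}_z$ and $U^{\mathcal F}_{\tilde\Psi_1} = (U^{\mathcal F}_{\tilde\Phi_2})^* U^{\mathcal F}_z$; hence by Theorem \ref{thm-7} the tuple $(U^{\mathcal F}_{\tilde\Phi_1}, U^{\mathcal F}_{\tilde\Psi_1}, U^{\mathcal F}_z, U^{\mathcal F}_{\tilde\Phi_2}, U^{\mathcal F}_{\tilde\Psi_2})$ is a $\Gamma_{E(3;2;1,2)}$-unitary. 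Restricting to the common invariant subspace $H^2(\mathcal F)$ then exhibits $\textbf{W}$ as a $\Gamma_{E(3;2;1,2)}$-isometry, and since $W_3 = M^{\mathcal F}_z$ is a pure shift, $\textbf{W}$ is pure. The main obstacle I anticipate is the bookkeeping in $(3)$: correctly matching each of the six operator commutators to the precise relations among $B_1,B_2,C_1,C_2$ and their adjoints while tracking the factor-of-two normalizations carried by $W_2$ and $\tilde W_1$, since an incautious pairing of the symbols collapses the $B$'s and $C$'s into one another.
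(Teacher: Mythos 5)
Your proposal is correct and follows essentially the same route as the paper's own proof: realize $W_3$ as the shift $M^{\mathcal F}_z$ on $H^2(\mathcal{F})$, represent $W_1,W_2,\tilde W_1,\tilde W_2$ as analytic Toeplitz operators, force the symbols to be affine via the coefficient comparison \eqref{phipsi}, read off the norm bounds and commutator identities, and for the converse pass to multiplication operators on $L^2(\mathcal{F})$, apply Theorem \ref{thm-7}, and restrict to the invariant subspace $H^2(\mathcal{F})$. Your closing caution about the symbol pairing is well placed: the only consistent pairing is the one dictated by $W_1=\tilde W_2^*W_3$ and $W_2=\tilde W_1^*W_3$ (so the $B$'s parameterize the symbols of $W_1,\tilde W_2$ and the $C$'s those of $W_2,\tilde W_1$), which is what the paper's proof actually uses, even though item (1) of the printed statement attaches $\tilde\Phi_2,\tilde\Psi_2$ to $\tilde W_1,\tilde W_2$ the other way around.
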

	
	\begin{proof}
Suppose that $\textbf{W}$ is a pure $\Gamma_{E(3; 2; 1, 2)}$-isometry. By definition of pure $\Gamma_{E(3; 2; 1, 2)}$-isometry, there exists a Hilbert space $\mathcal K_1$ and a $\Gamma_{E(3; 2; 1, 2)} $-unitary $\textbf{M}=(M_1,M_2,M_3,\tilde{M}_1,\tilde{M}_2)$ such that $\mathcal H\subset \mathcal K_1$ is a common invariant subspace of $M_1,M_2,M_3,\tilde{M}_1$ and $\tilde{M}_ 2$ and $W_i={M_i}_{|{\mathcal H}}$  and $\tilde{W}_j={\tilde{M}{_j}}_{|\mathcal {H}}$ for $1\leq i \leq 3, 1\leq j \leq 2.$ Since $\textbf{M}=(M_1,M_2,M_3,\tilde{M}_1,\tilde{M}_2)$ is a $\Gamma_{E(3; 2; 1, 2)} $-unitary, so it follows that $W_3$ is an isometry, $W_1 = \tilde{W}^*_2W_3$ and $ W_2 = \tilde{W}^*_1W_3$. As $W_3$ is a pure isometry and $\mathcal H$ is separable, there exists a unitary operator $\tilde{U} :\mathcal H \to H^2(\mathcal F)$ such that $W_3=\tilde{U}^*M^{\mathcal{F}}_z\tilde{U}$, where $\mathcal F$ has the same dimension as the multiplicity of the unilateral shift $W_3$ and $M_z$ is the multiplication operator on $H^2(\mathcal F)$. Since $W_1, W_2, \tilde{W}_1$ and $ \tilde{W}_2$ commute with each other and commute with $ W_3$, so there exist $\tilde{\Phi}_i\in H^{\infty}(\mathcal B(\mathcal{F}))$ for and $\tilde{\Psi}_j\in H^{\infty}(\mathcal B(\mathcal{F}))$ for $1 \leq i, j\leq 2$ such that 
$$W_1=\tilde{U}^*M^{\mathcal{F}}_{\tilde{\Phi}_1}\tilde{U}, W_2=\tilde{U}^*M^{\mathcal{F}}_{\tilde{\Psi}_1}\tilde{U}, \tilde{W}_1=\tilde{U}^*M^{\mathcal{F}}_{\tilde{\Psi}_2}\tilde{U}~{\rm{and}}~\tilde{W}_2=\tilde{U}^*M^{\mathcal{F}}_{\tilde{\Phi}_2}\tilde{U}.$$ The relations $W_1 = \tilde{W}^*_2W_3$ and $ W_2 = \tilde{W}^*_1W_3$ imply that
\begin{equation}\label{phii11}M^{\mathcal{F}}_{\tilde{\Phi}_1}=(M^{\mathcal{F}}_{\tilde{\Phi}_{2}})^*M^{\mathcal{E}}_z~{\rm{and}}~ M^{\mathcal{F}}_{\tilde{\Psi}_1}=(M^{\mathcal{E}}_{\tilde{\Psi}_{2}})^*M^{\mathcal{E}}_z.\end{equation} By  the same argument as  in Theorem \ref{thm-14}, we establish that 
$$\tilde{\Phi}_1(z)=B_1+B_{2}^*z, \tilde{\Phi}_2(z)=B_2+B_{1}^*z,\tilde{\Psi}_1(z)=C_1+C_{2}^*z~\text{ and }~ \tilde{\Psi}_2(z)=C_2+C_{1}^*z$$ for all $z\in \mathbb T.$		
 The $H^{\infty}$ norm of the operator functions $B_1+B_{2}^*z$ and $B_2+B_{1}^*z$ are at most one, while the norm of the operator functions $C_1+C_{2}^*z$ and $C_2+C_{1}^*z$ are at most $2$ for all $z\in \mathbb T.$ As $W_1, W_2, W_3, \tilde{W}_1$ and $ \tilde{W}_2$ commute with each other, we have the following: 
\begin{equation}
\begin{aligned}\label{mphi11}
M_{\tilde{\Phi}_1}M_{\tilde{\Phi}_2}=M_{\tilde{\Phi}_2}M_{\tilde{\Phi}_1},M_{\tilde{\Phi}_1}M_{\tilde{\Psi}_1}=M_{\tilde{\Psi}_1}M_{\tilde{\Phi}_1},M_{\tilde{\Phi}_1}M_{\tilde{\Psi}_2}=M_{\tilde{\Psi}_2}M_{\tilde{\Phi}_1}~\text{and}~M_{\tilde{\Psi}_1}M_{\tilde{\Psi}_2}=M_{\tilde{\Psi}_2}M_{\tilde{\Psi}_1}.
\end{aligned}
\end{equation}
It follows from \eqref{mphi11} that 
\begin{equation}
\begin{aligned}\label{mphi112}
(B_1+B_{2}^*z)(B_2+B_{1}^*z)=(B_2+B_{1}^*z)(B_1+B_{2}^*z),\\
(B_1+B_{2}^*z)(C_1+C_{2}^*z)=(C_1+C_{2}^*z)(B_1+B_{2}^*z),\\
(B_1+B_{2}^*z)(C_2+C_{1}^*z)=(C_2+C_{1}^*z)(B_1+B_{2}^*z),\\
(C_1+C_{2}^*z)(C_2+C_{1}^*z)=(C_2+C_{1}^*z)(C_1+C_{2}^*z),
\end{aligned}
\end{equation}
for all $z\in \mathbb T.$ Part $(3)$ is derived from \eqref{mphi112}.

Conversely, suppose that $\textbf{W} = (W_1, W_2, W_3, \tilde{W}_1, \tilde{W}_2)$ is a commuting $5$-tuple of bounded operators on a separable Hilbert space $\mathcal{H}$ satisfying the conditions $(1)$ to $(3)$. Consider  the multiplication operators $M^{\mathcal{F}}_{\tilde{\Phi}_i}$, $M^{\mathcal{F}}_{\tilde{\Psi}_j}$  and $M^{\mathcal{F}}_{z}$ defined on $L^2(\mathcal{E})$, which are multiplications by $\tilde{\Phi}_i$, $\tilde{\Psi}_j$ for $1\leq i,j \leq 2$ and $z$, respectively. The conditions $(1)$ and $(3)$ imply that 
\begin{equation}
\begin{aligned}\label{mphi1133}
M_{\tilde{\Phi}_1}M_{\tilde{\Phi}_2}=M_{\tilde{\Phi}_2}M_{\tilde{\Phi}_1},M_{\tilde{\Phi}_1}M_{\tilde{\Psi}_1}=M_{\tilde{\Psi}_1}M_{\tilde{\Phi}_1},M_{\tilde{\Phi}_1}M_{\tilde{\Psi}_2}=M_{\tilde{\Psi}_2}M_{\tilde{\Phi}_1}~\text{and}~M_{\tilde{\Psi}_1}M_{\tilde{\Psi}_2}=M_{\tilde{\Psi}_2}M_{\tilde{\Psi}_1}
\end{aligned}
\end{equation}
and 
\begin{equation}\label{phii1144}M^{\mathcal{F}}_{\tilde{\Phi}_1}=(M^{\mathcal{F}}_{\tilde{\Phi}_{2}})^*M^{\mathcal{E}}_z~{\rm{and}}~ M^{\mathcal{F}}_{\tilde{\Psi}_1}=(M^{\mathcal{E}}_{\tilde{\Psi}_{2}})^*M^{\mathcal{E}}_z.\end{equation} 
Thus, it follows from \eqref{mphi1133}, \eqref{phii1144} and Theorem \ref{thm-7} that  $(M^{\mathcal{F}}_{\tilde{\Phi}_1},M^{\mathcal{F}}_{\tilde{\Psi}_1},M^{\mathcal{E}}_z,M^{\mathcal{E}}_{\tilde{\Psi}_{2}},M^{\mathcal{F}}_{\tilde{\Phi}_{2}})$ is a $\Gamma_{E(3; 2; 1, 2)} $-unitary and so is \small{$(\tilde{U}^*M^{\mathcal{F}}_{\tilde{\Phi}_1}\tilde{U},\tilde{U}^*M^{\mathcal{F}}_{\tilde{\Psi}_1}\tilde{U},\tilde{U}^*M^{\mathcal{E}}_z\tilde{U},\tilde{U}^*M^{\mathcal{E}}_{\tilde{\Psi}_{2}}\tilde{U}, \tilde{U}^*M^{\mathcal{F}}_{\tilde{\Phi}_{2}}\tilde{U}).$} Note that $W_1, W_2$  are the restrictions to the common invariant subspace $H^2(\mathcal E)$ of $M^{\mathcal{F}}_{\tilde{\Phi}_1},M^{\mathcal{F}}_{\tilde{\Psi}_1}$; $\tilde{W}_1,\tilde{W}_2$ are the restrictions to the common invariant subspace $H^2(\mathcal E)$ of $M^{\mathcal{E}}_{\tilde{\Psi}_{2}},M^{\mathcal{F}}_{\tilde{\Phi}_{2}}$ and  $W_5$  is the restriction to the common invariant subspace $H^2(\mathcal E)$ of $M^{\mathcal{E}}_z$. Therefore, we conclude that $\textbf{W} = (W_1, W_2, W_3, \tilde{W}_1, \tilde{W}_2)$ is a  $\Gamma_{E(3; 2; 1, 2)} $-isometry. Since $W_3$ is an unilateral shift, it yields that $\textbf{W} = (W_1, W_2, W_3, \tilde{W}_1, \tilde{W}_2)$ is a pure $\Gamma_{E(3; 2; 1, 2)} $-isometry. This completes the proof. 
\end{proof}
%
\begin{thm}\label{thm-201}
		Let $\textbf{V} = (V_1, \dots, V_7)$ be a $7$-tuple of commuting bounded operators on a Hilbert space $\mathcal{H}$. Then the following are equivalent:
		\begin{enumerate}
			\item $\textbf{V}$ is $\Gamma_{E(3; 3; 1, 1, 1)}$-unitary.
			
			\item $\{(V_1, V_3 + \eta V_5, \eta V_7, V_2 + \eta V_4, \eta V_6) : \eta \in \mathbb{T}\}$ is a family of $\Gamma_{E(3; 2; 1, 2)}$-isometry and $\|V_i\|\leq 1$ for $1 \leq i \leq 6.$
			
			\item $\{(V_1, V_5 + \eta V_3, \eta V_7, V_4 + \eta V_2, \eta V_6) : \eta \in \mathbb{T}\}$ is a family of $\Gamma_{E(3; 2; 1, 2)}$-isometry and $\|V_i\|\leq 1$ for $1 \leq i \leq 6.$
		
			\item $ \{(V_2, V_3 + \eta V_6, \eta V_7, V_1 + \eta V_4, \eta V_5) : \eta \in \mathbb{T}\}$ is a family of $\Gamma_{E(3; 2; 1, 2)}$-isometry and $\|V_i\|\leq 1$ for $1 \leq i \leq 6.$
			
			\item $ \{ (V_2, V_6 + \eta V_3, \eta V_7, V_4 + \eta V_1, \eta V_5) : \eta \in \mathbb{T}\}$ is a family of $\Gamma_{E(3; 2; 1, 2)}$-isometry and $\|V_i\|\leq 1$ for $1 \leq i \leq 6.$
			
			\item $ \{V_4, V_5 + \eta V_6, \eta V_7, V_1 + \eta V_2, \eta V_3) : \eta \in \mathbb{T}\}$ is a family of $\Gamma_{E(3; 2; 1, 2)}$-isometry and $\|V_i\|\leq 1$ for $1 \leq i \leq 6.$
			
			\item $ \{(V_4, V_6 + \eta V_5, \eta V_7, V_2 + \eta V_1, \eta V_3): \eta \in \mathbb{T}\}$ is a family of $\Gamma_{E(3; 2; 1, 2)}$-isometry and $\|V_i\|\leq 1$ for $1 \leq i \leq 6.$
		\end{enumerate}
	\end{thm}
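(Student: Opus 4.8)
The plan is to prove the equivalence $(1)\Leftrightarrow(2)$ in full and then observe that $(1)\Leftrightarrow(3),\dots,(1)\Leftrightarrow(7)$ follow verbatim by applying the same argument to the coordinate rearrangements already recorded in Theorem \ref{thm-9}. In other words, I would reduce the entire statement to the single family $\{(V_1, V_3 + \eta V_5, \eta V_7, V_2 + \eta V_4, \eta V_6) : \eta \in \mathbb{T}\}$ and treat the remaining five families as cosmetic permutations of this one computation, exactly as the paper does elsewhere.

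For $(1)\Rightarrow(2)$ the argument should be quick. If $\textbf{V}$ is a $\Gamma_{E(3;3;1,1,1)}$-unitary, then Lemma \ref{lemm-2} (equivalently Theorem \ref{thm-5}) gives $\|V_i\|\leq 1$ for $1\leq i\leq 6$, and Theorem \ref{thm-9} shows that the displayed family consists of $\Gamma_{E(3;2;1,2)}$-unitaries. Since every $\Gamma_{E(3;2;1,2)}$-unitary is by definition a $\Gamma_{E(3;2;1,2)}$-isometry (it serves as its own unitary extension), the family is a fortiori a family of $\Gamma_{E(3;2;1,2)}$-isometries, which is precisely $(2)$.

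The substantive direction is $(2)\Rightarrow(1)$. Fixing $\eta\in\mathbb{T}$ and setting $M_1=V_1$, $M_2=V_3+\eta V_5$, $M_3=\eta V_7$, $\tilde{M}_1=V_2+\eta V_4$, $\tilde{M}_2=\eta V_6$, I would invoke the characterization of a $\Gamma_{E(3;2;1,2)}$-isometry in Theorem \ref{thm-13}$(4)$: this yields that $\eta V_7$ is an isometry (hence $V_7$ is an isometry), that $M_1=\tilde{M}_2^*M_3$, and that $M_2=\tilde{M}_1^*M_3$. The first relation reads $V_1=V_6^*V_7$, while the second, after comparing the $\eta^0$ and $\eta^1$ coefficients (legitimate since the identity holds for every $\eta\in\mathbb{T}$), gives $V_3=V_4^*V_7$ and $V_5=V_2^*V_7$. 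The three remaining relations come out formally: from $V_1=V_6^*V_7$ one deduces $V_1^*V_7=V_7^*V_6V_7=V_7^*V_7V_6=V_6$ using commutativity and $V_7^*V_7=I$, and similarly $V_4=V_3^*V_7$ and $V_2=V_5^*V_7$. Together with $\|V_i\|\leq 1$, Theorem \ref{thm-12}$(3)$ now shows that $\textbf{V}$ is a $\Gamma_{E(3;3;1,1,1)}$-\emph{isometry}.

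The main obstacle, and the step I would examine most carefully, is the passage from isometry to unitary: the above only forces $V_7^*V_7=I$, not $V_7V_7^*=I$, so an extra argument is required to conclude that $V_7$ is unitary and therefore that $\textbf{V}$ is a $\Gamma_{E(3;3;1,1,1)}$-unitary rather than merely an isometry. The natural attempt is to feed the six relations $V_i=V_{7-i}^*V_7$ back into the unitary criterion of Theorem \ref{thm-5}, exploiting that each family member is an actual $\Gamma_{E(3;2;1,2)}$-isometry. However, a pure $\Gamma_{E(3;3;1,1,1)}$-isometry (for instance the one built from the structure theorem \ref{thm-14} with $V_7=M^{\mathcal E}_z$) already satisfies every requirement in $(2)$ while having $V_7$ a non-surjective shift, so the co-isometry condition on $V_7$ is \emph{not} encoded in the hypotheses of $(2)$ as stated. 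Consequently I expect that the honest content one can prove is the equivalence of $(2)$ with $\textbf{V}$ being a $\Gamma_{E(3;3;1,1,1)}$-isometry; recovering unitarity in $(1)$ would need an additional assumption (e.g.\ surjectivity of $V_7$, or that one of the tetrablock families be unitary rather than isometric), and it is exactly at this junction that I would focus the verification.
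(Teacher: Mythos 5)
Your proof is correct and follows essentially the same route as the paper: the forward direction via Theorem \ref{thm-9} plus restriction to the invariant subspace, and the converse via Theorem \ref{thm-13}(4), comparison of the $\eta^0$ and $\eta^1$ coefficients, the identities $V_2=V_5^*V_7$, $V_4=V_3^*V_7$, $V_6=V_1^*V_7$ obtained from commutativity and $V_7^*V_7=I$, and finally Theorem \ref{thm-12}(3). More importantly, your diagnosis of the unitary/isometry mismatch is exactly right, and the paper itself confirms it: the paper's proof of $(1)\Rightarrow(2)$ opens with ``Suppose that $\textbf{V}$ be a $\Gamma_{E(3;3;1,1,1)}$-\emph{isometry}'' and its proof of $(2)\Rightarrow(1)$ concludes only that ``$\textbf{V}$ is $\Gamma_{E(3;3;1,1,1)}$-\emph{isometry}'', so the word ``unitary'' in item (1) of the statement is an error and the intended (and actually proved) theorem is the isometry equivalence you established. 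The paper's intermediate assertion ``for $\eta=1$, we have $V_7$ is a unitary'' is itself a slip carried over from Theorem \ref{thm-9}, since Theorem \ref{thm-13} only yields that $\eta V_7$ is an isometry; your observation that a pure $\Gamma_{E(3;3;1,1,1)}$-isometry (e.g.\ $(0,\dots,0,M^{\mathcal E}_z)$) satisfies all of (2) while $V_7$ fails to be surjective is a genuine counterexample showing unitarity cannot be recovered from the stated hypotheses.
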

	\begin{proof}
We demonstrate $(1) \Leftrightarrow(2)$. The equivalences $(1) \Leftrightarrow (3)$, $(1) \Leftrightarrow (4)$, $(1) \Leftrightarrow (5)$, $(1) \Leftrightarrow (6)$, and $(1) \Leftrightarrow (7)$ are established in a similar manner.
		
$(1) \Rightarrow (2):$ Suppose that $\textbf{V}$ be a $\Gamma_{E(3; 3; 1, 1, 1)}$-isometry. Then, by definition, there is a Hilbert space $\mathcal K$ and a $\Gamma_{E(3; 3; 1, 1, 1)} $-unitary $\textbf{N}=(N_1,N_2,\dots,N_7)$ such that $\mathcal H\subset \mathcal K$ is a common invariant subspace of $N_i$'s and $V_i=N_i|_{\mathcal H}$ for $1\leq i \leq 7$. It follows from Theorem \ref{thm-9} that 
$$\{(N_1, N_3 + \eta N_5, \eta N_7, N_2 + \eta N_4, \eta N_6) : \eta \in \mathbb{T}\}$$ is a family of $\Gamma_{E(3; 2; 1, 2)}$-unitary. Hence we conclude that $$\{(V_1, V_3 + \eta V_5, \eta V_7, V_2 + \eta V_4, \eta V_6) : \eta \in \mathbb{T}\}$$ is a family of $\Gamma_{E(3; 2; 1, 2)}$-isometry and $\|V_i\|\leq 1$ for $1 \leq i \leq 6$. 
		
$(2) \Rightarrow (1):$ Let $\{(V_1, V_3 + \eta V_5, \eta V_7, V_2 + \eta V_4, \eta V_6) : \eta \in \mathbb{T}\}$ be a family of $\Gamma_{E(3; 2; 1, 2)}$-isometry and $\|V_i\|\leq 1$ for $1 \leq i \leq 6.$ It yields from Theorm \ref{thm-12} that 
\begin{equation} \label{Muni1} V_1=\bar{\eta}V_6^*\eta V_7, V_3+\eta V_5=(V_2^*+\bar{\eta}V_4^*)\eta V_7\end{equation} and $\eta V_7$ is an isometry for all $\eta \in \mathbb T.$ In particular, for $\eta =1,$ we have $V_7$ is a unitary. From \eqref{Muni1}, we have $V_1=V_6^*V_7, V_2=V_5^*V_7$ and $V_3=V_4^*V_7.$ Hence, by Theorem \ref{thm-12}, we conclude that  $\textbf{V}$ is $\Gamma_{E(3; 3; 1, 1, 1)}$-isometry. This completes the proof.
	\end{proof}	
	
\begin{thm}\label{thm-102}
		Let $(W_1, W_2, W_3, \tilde{W}_1, \tilde{W}_2)$ be a commuting $5$-tuple of bounded operators on Hilbert space $\mathcal{H}$. Then $(W_1, W_2, W_3, \tilde{W}_1, \tilde{W}_2)$ is a $\Gamma_{E(3; 2; 1, 2)}$-isometry if and only if $\Big(W_1, \frac{\tilde{W}_1}{2}, \frac{W_2}{2}, \frac{\tilde{W}_1}{2}, \frac{W_2}{2}, \tilde{W}_2, W_3\Big)$ is a $\Gamma_{E(3; 3; 1, 1, 1)}$-isometry.	\end{thm}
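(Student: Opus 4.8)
The plan is to reduce the statement to the corresponding equivalences already established for contractions (Proposition \ref{prop-4}), for $\Gamma_{E(3;3;1,1,1)}$-isometries (Theorem \ref{thm-12}), and for $\Gamma_{E(3;2;1,2)}$-isometries (Theorem \ref{thm-13}), exactly paralleling the proof of the unitary analogue given earlier in this section. The key bookkeeping observation is that under the map $\pi(x_1,x_2,x_3,y_1,y_2) = \left(x_1, \frac{y_1}{2}, \frac{x_2}{2}, \frac{y_1}{2}, \frac{x_2}{2}, y_2, x_3\right)$ of Proposition \ref{prop-4}, the seventh coordinate of $\pi(W_1,W_2,W_3,\tilde{W}_1,\tilde{W}_2)$ is precisely $W_3$. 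Thus, writing $N_1 = W_1$, $N_2 = \tilde{W}_1/2$, $N_3 = W_2/2$, $N_4 = \tilde{W}_1/2$, $N_5 = W_2/2$, $N_6 = \tilde{W}_2$, $N_7 = W_3$, the distinguished operator $N_7$ equals $W_3$, so that $W_3$ is an isometry if and only if $N_7$ is.

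For the forward direction I would first apply the equivalence $(1)\Leftrightarrow(2)$ of Theorem \ref{thm-13} to see that $\textbf{W}$ is a $\Gamma_{E(3;2;1,2)}$-contraction with $W_3$ an isometry. Proposition \ref{prop-4} then upgrades this to the statement that $\pi(\textbf{W}) = \left(W_1, \frac{\tilde{W}_1}{2}, \frac{W_2}{2}, \frac{\tilde{W}_1}{2}, \frac{W_2}{2}, \tilde{W}_2, W_3\right)$ is a $\Gamma_{E(3;3;1,1,1)}$-contraction. Since its seventh component $W_3$ is an isometry, the equivalence $(1)\Leftrightarrow(2)$ of Theorem \ref{thm-12} immediately yields that $\pi(\textbf{W})$ is a $\Gamma_{E(3;3;1,1,1)}$-isometry, which is the desired conclusion.

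For the converse I would start from the assumption that $\pi(\textbf{W})$ is a $\Gamma_{E(3;3;1,1,1)}$-isometry and invoke the equivalence $(1)\Leftrightarrow(3)$ of Theorem \ref{thm-12} to obtain that $N_7 = W_3$ is an isometry, that the $N_i$ are contractions, and that $N_i = N_{7-i}^* N_7$ for $1 \le i \le 6$. Reading off the cases $i=1$ and $i=3$ of these relations gives $W_1 = \tilde{W}_2^* W_3$ and $W_2 = \tilde{W}_1^* W_3$, while the contraction conditions give that $W_1$, $\frac{W_2}{2}$, $\frac{\tilde{W}_1}{2}$, $\tilde{W}_2$ are contractions. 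These are precisely the hypotheses of condition $(4)$ of Theorem \ref{thm-13}, so that equivalence lets me conclude that $\textbf{W}$ is a $\Gamma_{E(3;2;1,2)}$-isometry.

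I do not expect any genuine obstacle: the only point requiring care is the index translation between the five-tuple $\textbf{W}$ and the seven-tuple $\pi(\textbf{W})$, together with the verification that the identities $N_i = N_{7-i}^* N_7$ produced by Theorem \ref{thm-12} are mutually consistent (the cases $i=2,4$ and $i=3,5$ both collapse to $\tilde{W}_1 = W_2^* W_3$ and $W_2 = \tilde{W}_1^* W_3$, so no contradiction arises) and coincide with the relations demanded by Theorem \ref{thm-13}. The argument is therefore a transcription of the unitary case with \emph{unitary} replaced by \emph{isometry} and Theorems \ref{thm-5} and \ref{thm-7} replaced by their isometric counterparts \ref{thm-12} and \ref{thm-13}.
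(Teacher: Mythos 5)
Your proof is correct, but it takes a different route from the paper's. The paper argues directly from the definition of an isometry: for the forward direction it extends $\textbf{W}$ to a $\Gamma_{E(3;2;1,2)}$-unitary $\textbf{M}$ on $\mathcal{K}\supseteq\mathcal{H}$, applies the unitary correspondence theorem (the unnumbered theorem closing Section 3) to get that $\Big(M_1, \frac{\tilde{M}_1}{2}, \frac{M_2}{2}, \frac{\tilde{M}_1}{2}, \frac{M_2}{2}, \tilde{M}_2, M_3\Big)$ is a $\Gamma_{E(3;3;1,1,1)}$-unitary, and restricts back to $\mathcal{H}$; the converse is argued symmetrically. You instead route everything through the intrinsic characterizations: Theorem \ref{thm-13} $(1)\Leftrightarrow(2)$, Proposition \ref{prop-4}, and Theorem \ref{thm-12} $(1)\Leftrightarrow(2)$ for the forward direction, and Theorem \ref{thm-12} $(1)\Leftrightarrow(3)$ followed by Theorem \ref{thm-13} $(4)\Rightarrow(1)$ for the converse, with the index translation checked carefully. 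Your route has a concrete advantage: in the paper's converse, the definition of a $\Gamma_{E(3;3;1,1,1)}$-isometry only supplies \emph{some} commuting unitary $7$-tuple $(U_1,\dots,U_7)$ extending $\Big(W_1, \frac{\tilde{W}_1}{2}, \frac{W_2}{2}, \frac{\tilde{W}_1}{2}, \frac{W_2}{2}, \tilde{W}_2, W_3\Big)$, whereas the paper silently assumes the extension itself has the symmetric shape $U_2=U_4$, $U_3=U_5$ (i.e., is of the form $\pi(\textbf{M})$ for a $5$-tuple $\textbf{M}$), a fact that does not follow immediately from the definition since operators agreeing on the invariant subspace $\mathcal{H}$ need not agree on all of $\mathcal{K}$. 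Your argument never needs to manufacture an extension of special shape -- it only uses algebraic relations among the $W_i$ themselves -- so it closes that gap. The trade-off is that you lean on Theorem \ref{thm-13}, which the paper states without writing out a proof, but since it is an asserted theorem of the paper this is a legitimate citation.
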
	
\begin{proof}
Suppose that $(W_1, W_2, W_3, \tilde{W}_1, \tilde{W}_2)$ is a $\Gamma_{E(3; 2; 1, 2)}$-isometry. By definition of  $\Gamma_{E(3; 2; 1, 2)}$-isometry, there exists a Hilbert space $\mathcal K_1$ and a  $\Gamma_{E(3; 2; 1, 2)} $-unitary $\textbf{M}=(M_1,M_2,M_3,\tilde{M}_1,\tilde{M}_2)$ such that $\mathcal H\subset \mathcal K_1$ is a common invariant subspace of $M_1,M_2,M_3,\tilde{M}_1$ and $\tilde{M}_ 2$ and $W_i=M_i|_{\mathcal H}$  and $\tilde{W}_j=\tilde{M}_j|_{\mathcal H}$ for $1\leq i \leq 3, 1\leq j \leq 2.$ It yields from Theorem \ref{thm-9} that $\Big(M_1, \frac{\tilde{M}_1}{2}, \frac{M_2}{2}, \frac{\tilde{M}_1}{2}, \frac{M_2}{2}, \tilde{M}_2, M_3\Big)$ is a $\Gamma_{E(3; 3; 1, 1, 1)}$-unitary and hence  $\Big(W_1, \frac{\tilde{W}_1}{2}, \frac{W_2}{2}, \frac{\tilde{W}_1}{2}, \frac{W_2}{2}, \tilde{W}_2, W_3\Big)$ is a $\Gamma_{E(3; 3; 1, 1, 1)}$-isometry. 

Conversely, let  $\Big(W_1, \frac{\tilde{W}_1}{2}, \frac{W_2}{2}, \frac{\tilde{W}_1}{2}, \frac{W_2}{2}, \tilde{W}_2, W_3\Big)$ be a $\Gamma_{E(3; 3; 1, 1, 1)}$-isometry. Then, again by definition, there is a Hilbert space $\mathcal K$ and a $\Gamma_{E(3; 3; 1, 1, 1)} $-unitary   $\Big(M_1, \frac{\tilde{M}_1}{2}, \frac{M_2}{2}, \frac{\tilde{M}_1}{2}, \frac{M_2}{2}, \tilde{M}_2, M_3\Big)$ such that $\mathcal H\subset \mathcal K$ is a common invariant subspace of  $M_1, \frac{\tilde{M}_1}{2}, \frac{M_2}{2}, \frac{\tilde{M}_1}{2}, \frac{M_2}{2}, \tilde{M}_2$ and $ M_3$ and $W_i=M_i|_{\mathcal H}$  and $\tilde{W}_j=\tilde{M}_j|_{\mathcal H}$ for $1\leq i \leq 3, 1\leq j \leq 2.$ It follows from Theorem \ref{thm-9} that  $\textbf{M}=(M_1,M_2,M_3,\tilde{M}_1,\tilde{M}_2)$ is a $\Gamma_{E(3; 2; 1, 2)} $-unitary. Therefore, we conclude that $(W_1, W_2, W_3, \tilde{W}_1, \tilde{W}_2)$ is a $\Gamma_{E(3; 3; 1, 1, 1)}$-isometry.  This completes the proof.
\end{proof}

\textsl{Acknowledgements:}
The second-named author is supported by the research project of SERB with ANRF File Number: CRG/2022/003058, by the Science and Engineering Research Board (SERB), Department of Science and Technology (DST), Government of India. 
\vskip-1cm

	\vspace{.5cm}
	
	\vspace{.5cm}

\begin{thebibliography}{BNS}
		
		\bibitem{A}   W. Arveson,
              \textit{Subalgebras of $C^*$-algebras, }
               Acta Math., {\bf {123}} (1969), 141 -224.



\bibitem{AW}   W. Arveson,
              \textit{Subalgebras of $C^*$-algebras II,}
               Acta Math., {\bf {128}} (1972), 271 -308.


\bibitem{agler} J. Agler,
               \textit{Rational dilation on an annulus,}
               Ann. of Math., {\bf {121}} (1985), 537 - 563.
		
		
		
		\bibitem{Abouhajar}
		A. A. Abouhajar, M. C. White, N. J. Young, 
		\textit{A Schwarz Lemma for a Domain Related to $\mu$-Synthesis}, 
		The Journal of Geometric Analysis, \textbf{17},  (2007), .
		
    \bibitem{JAgler} J. Agler and N.J. Young,
                      \textit{The hyperbolic geometry of the symmetrized bidisc,}
                      J. Geom. Anal. {\bf {14}} (2004) 375--403	

\bibitem{young} J. Agler,  N. J. Young,
               \textit{Operators having the symmetrized bidisc as a spectral set,}
               Proc. Edinburgh Math. Soc., {\bf {43}} (2000), 195 -210.
               
               
                 \bibitem{ay}
	{{ J. Agler and N. J. Young}}, {\it {A commutant lifting theorem for a domain in $\mathbb{C}^2$ and spectral interpolation}}, {{J. Funct. Anal. \textbf{161},{{(1999)}},452-477.}}
	
	\bibitem{ay1}
	{{ J. Agler and N. J. Young}}, {\it {A model theory for $\Gamma$-contractions}}, {{J. Oper. Theory \textbf{49},(1),(45-60)}}, {{2003}}.
               
               
                \bibitem{JAY}  J. Agler and N.J. Young,
                \textit{The two-point  spectral Nevanlinna Pick problem,}
                  Integral Equations Operator Theory {\bf {37}} (2000) 375--385.

                 
		
	  \bibitem{JANY} J. Agler and N.J. Young,
                \textit{A Schwarz lemma for symmetrized bidisc,}
                 Bull. Lond. Math. Soc. {\bf {33 }}(2001) 175--186.

                 		
                      
                    		
		\bibitem{Young}
		{J. Agler, Zinaida A. Lykova, N. Young}, 
		\textit{The complex geometry of a domain related to $\mu$-synthesis}, 
		Journal of Mathematical Analysis and Application, \textbf{422} (2015) 508-543.
		
		\bibitem{McCarthy}
		{J. Agler, J. E. McCarthy, N. J. Young},
		\textit{Operator Analysis: Hilbert Space Methods in Complex Analysis}, Second Edition, Cambridge Tracts in Mathematics, Cambridge University Press, (2020).
		
		\bibitem{Ando}
		{T. Ando},
		\textit{On a Pair of Commutative Contractions}, 
		Acta Sci. Math. \textbf{ 24} (1963) 88-90.
		
		\bibitem{Ball}
		{J. A. Ball, H. Sau},
		\textit{Rational Dilation of Tetrablock Contraction Revisited}, 
		Journal of Functional Analysis \textbf{278} (2020) 108275.
		
		\bibitem{Bharali}
		G. Bharali, 
		\textit{A Family of Domains Associated with $\mu$-Synthesis,} 
		Integr. Equ. Oper. Theory \textbf{82} (2015), 267-285.		
		
		\bibitem{Bhatia}
		{Rajendra Bhatia},
		\textit{Positive Definite Matrices}, Princeton Series in Applied Mathematics. Princeton University Press, Princeton, NJ (2007).
		
		\bibitem{SS} S. Biswas, S. Shyam Roy,
             \textit{Functional models for $\Gamma_n$-contractions and characterization of $\Gamma_n$-isometries,}
             J. Func. Anal., {\bf {266}} (2014), 6224 --6255.

		\bibitem{Browder} A. Browder, 
		\textit{Introduction to Function Algebras,}
		W.A. Benjamin Inc., New York, (1969).
		
		\bibitem{Bhattacharyya}
		T. Bhattacharyya, 
		\textit{The Tetrablock as a Spectral Set,}  Indiana University
		Mathematics Journal, \textbf{ 63 } (2014), 1601--1629.
		
		\bibitem{Roy}
		{T. Bhattacharyya, S. Pal, S. S. Roy},
		\textit{Dilations of $\Gamma$-Contractions by Solving Operator Equations}, 
		Advances in Mathematics \textbf{230} (2012) 577-606.
		
		\bibitem{costara}C. Costara,
                 \textit{On the spectral Nevanlinna-Pick problem,}
                 Studia Math., {\bf  {170}} (2005), 23--55.
                 
                 \bibitem{Ccostara} C. Costara,
                 \textit{The symmetrized bidisc and Lempert's theorem,}
                  Bull. London Math. Soc. {\bf 36} (2004) 656--662.
		
		\bibitem{michel} M. A. Dritschel,   S. McCullough
		\textit{The failure of rational dilation on a triply connected domain,}
		J. Amer. Math. Soc. \textbf{18} (2005), no. 4, 873--918.
	         
		\bibitem{Ae1} A. Edigarian,
                  \textit{A note on Costara paper,}
                  Ann. Polon. Math. {\bf {83(2)}} (2004) 189-191.

                      \bibitem{Ae2} A. Edigarian,
                     \textit{Balanced domains and convexity,}
                      Arch. Math. (Basel) {\bf {101(4)}} (2013) 373-379.

                        \bibitem{Ae3} A. Edigarian, L. Kosinski and W. Zwonek,
                          \textit{The Lempert theorem and the tetrablock,}
                            J. Geom. Anal. {\bf {23(4)}} (2013) 1818-1831.

                              \bibitem{Ae3} A. Edigarian and W. Zwonek,
                               \textit{Geometry of the symmetrized polydisc,}
                           Arch. Math. (Basel) {\bf {84(4)}} (2005) 364-374.

                                    \bibitem{Ae4}  A. Edigarian, 
                                 \textit{A remark on the Lempert theorem}, 
                              Univ. Iagel. Acta Math. \textbf{32} (1995) 83-88


                            \bibitem{Zwonek} A. Edigarian and W. Zwonek, 
                           \textit{Geometry of symmetrized polydisc,}
                         Archiv der Mathematik, {\bf{ 84}} (2005), 364 -374.


		\bibitem{Gamelin} T. W. Gamelin,
		\textit{Uniform Algebras,}
		Prentice-Hall, Inc., Englewood Cliffs, N.J., (1969).
		
		\bibitem{Jindal}
		{A. Jindal, P. Kumar},
		\textit{Operator Theory on Pentablock}, Journal of Mathematical Analysis and Application, (2024), 128589.
		
		\bibitem{Katznelson}
		{Y. Katznelson},
		\textit{An Introduction to Harmonic Analysis}, Cambridge University Press, (2004), 9781139165372.
		
		\bibitem{pal1}
		{D. K. Keshari, S. Mandal and A. Pal}
		\textit{Function Theory, $H^{\infty}$-Control Theory and necessary conditions for Schwarz lemma related to $\mu$-Synthesis Domain},
https://doi.org/10.48550/arXiv.2510.24555.
		
		\bibitem{cv} G. Misra, A. Pal and C. Varughese,   \textit{Contractivity and complete contractivity for   finite dimensional Banach spaces,} 
             Journal of operator theory, {\bf 1} (2019), 23-47. 



\bibitem{GM} G. Misra, N. S. N. Sastry,
             \textit{Contractive modules, extremal problems and curvature inequalities,}
             J. Funct. Anal., {\bf {88}} (1990), 118 - 134.



\bibitem{sastry}  G. Misra, N. S. N. Sastry,
                \textit{Completely contractive modules and associated extremal problems,}
                J. Funct. Anal., {\bf {91} } (1990), 213 - 220.
		
		
		\bibitem{Nagy}
		{B. Sz.-Nagy, C. Foias, H. Bercovici, L. Kérchy},
		\textit{Harmonic Analysis of Operators on Hilbert Space}, Universitext,  Springer, (2010).
		
		\bibitem{A. Pal}
		{A. Pal},
		\textit{On $\Gamma_n$-Contractions and Their Conditional Dilations}, Journal of Mathematical Analysis and Application, {\bf {510}} (2022) 1-36.
		
		\bibitem{pisier} G. Pisier,
               \textit{ Introduction to  Operator Spaces Theory,}
                Cambridge Univ. Press, (2003).

                \bibitem{vern} V. Paulsen,
               \textit{Representations of Function Algebras, Abstract Operator Spaces and Banach Space Geometry, }
                 J. Funct. Anal., {\bf 109} (1992), 113 - 129.

\bibitem{paulsen} V. Paulsen,
                \textit{Completely Bounded Maps and Operator Algebras,}
                Cambridge  Univ. Press, (2002).

		\bibitem{Pal}
		{S. Pal},
		\textit{Canonical Decomposition of Tetrablock Contraction and Operator Model}, Journal of Mathematical Analysis and Application, {\bf {438}} (2016) 274-284.
%
%
%
		
		\bibitem{Vasilescu} F. H. Vasilescu
		\textit{Analytic Functional Calculus and Spectral Decompositions, Mathemati
			and its Applications (East European Series)}, vol. $1$, D. Reidel Publishing Co., Dordrecht;
		Editura Academiei Republicii Socialiste Romania, Bucharest, (1982). Translated from the
		Romanian. 
		
		
		\bibitem{Zapalowski}
		P. Zapalowski, 
		\textit{Geometric Properties of Domains Related to $\mu$-Synthesis}, Journal of Mathematical Analysis and Applications, {\bf{430}} (2015), 126-143.
		
	\end{thebibliography}
	\end{document}